\newcommand{\beq}{\begin{equation}}
\newcommand{\bea}{\begin{eqnarray}}
\newcommand{\eea}{\end{eqnarray}} 
\newcommand{\eeq}{\end{equation}}
\newcommand{\ddb}{{\partial\delbar}}
\newcommand{\rd}{{\rm d}}
\newcommand{\be}{{\bf e}}
\newcommand{\cF}{\mathcal{F}}
\newcommand{\cO}{\mathcal{O}}
\newcommand{\cP}{\mathcal{P}}
\newcommand{\cR}{\mathcal{R}}
\newcommand{\sE}{\mathscr{E}}
\newcommand{\sF}{\mathscr{F}}
\newcommand{\sH}{\mathscr{H}}
\newcommand{\sT}{\mathscr{T}}
\newcommand{\R}{\mathbb{R}}
\newcommand{\C}{\mathbb{C}}
\newcommand{\SU}{{\rm SU}}
\newcommand{\End}{{\mathrm{End}}}
\renewcommand{\epsilon}{\varepsilon}
\newcommand{\delbar}{\bar \del}
\newcommand{\del}{\partial}
\newcommand{\id}{\mathrm{id}}
\newcommand{\im}{\mathop{\mathrm{im}}}
\newcommand{\supp}{\mathop{\mathrm{supp}}}
\newcommand{\tr}{\mathop{\mathrm{tr}}\nolimits}
\newcommand{\qandq}{\quad\text{and}\quad}
\newcommand{\qforq}{\quad\text{for}\quad}
\def\<{\mathopen{}\left<}
\def\>{\right>\mathclose{}}
\def\({\mathopen{}\left(}
\def\){\right)\mathclose{}}
\definecolor{gold}{rgb}{0.85,.66,0}
\definecolor{cherry}{rgb}{0.9,.1,.2}
\definecolor{burgundy}{rgb}{0.8,.2,.2}
\definecolor{orangered}{rgb}{0.85,.3,0}
\definecolor{orange}{rgb}{0.85,.4,0}
\definecolor{olive}{rgb}{.45,.4,0}
\definecolor{lime}{rgb}{.6,.9,0}
\definecolor{green}{rgb}{.2,.7,0}
\definecolor{grey}{rgb}{.4,.4,.2}
\definecolor{brown}{rgb}{.4,.3,.1}
\newtheorem{theorem}{Theorem}[section]
\newtheorem{prop}[theorem]{Proposition}
\newtheorem{corollary}[theorem]{Corollary}
\newtheorem{lemma}[theorem]{Lemma}
\theoremstyle{remark}
\newtheorem{remark}[theorem]{Remark}
\theoremstyle{definition}
\newtheorem{definition}[theorem]{Definition}
\newtheorem{notation}[theorem]{Notation}
\numberwithin{equation}{section}
\title{Local descriptions of the heterotic SU(3) moduli space}
\author{Hannah de Lázari, Jason D. Lotay, Henrique Sá Earp, Eirik Eik Svanes}
\date{\today}
\begin{document}

\begin{abstract}
    The heterotic $\SU(3)$ system, also known as the Hull--Strominger system, arises from compactifications of heterotic string theory to six dimensions. This paper investigates the local structure of the moduli space of solutions to this system on a compact 6-manifold $X$, using a vector bundle $Q=(T^{1,0}X)^* \oplus {\End}(E) \oplus T^{1,0}X$, where $E\to X$ is the classical gauge bundle arising in the system. We establish that the moduli space has an expected dimension of zero.  We achieve this by studying the deformation complex associated to a differential operator $\bar{D}$, which emulates a holomorphic structure on $Q$, and demonstrating an isomorphism between the two cohomology groups which govern the infinitesimal deformations and obstructions in the deformation theory for the system.  We also provide a Dolbeault-type theorem linking these cohomology groups to \v{C}ech cohomology, a result which might be of independent interest, as well as potentially valuable for future research.
\end{abstract}

\maketitle

\begin{adjustwidth}{0.85cm}{1.20cm}
    \tableofcontents
\end{adjustwidth}


\newpage
\section{Introduction}

The heterotic $\SU(3)$ (or Hull--Strominger) system arises as an approximation to the equations of motion associated to heterotic String Theory in 10 dimensions, when compactified to 6 dimensions (cf.~\cites{Strominger:1986uh,Hull1986,Ossa2014a, GarciaFernandez2016}). As such, the system is therefore of importance in high energy theoretical physics.  Mathematically, it is an intricate coupled system for an ambient $\SU(3)$-structure on a real $6$-manifold $X$ and instanton (Hermitian Yang--Mills) connections on bundles over $X$.  One subtlety is that whilst the equations on the ambient structure and connections are \emph{first order} conditions, the coupling happens at \emph{second order} via the \emph{anomaly condition},  also called the \emph{heterotic Bianchi identity}.  Specifically, the anomaly condition intertwines derivatives of the $\SU(3)$-torsion  with the curvatures of the instanton connections and involves a real constant $\alpha'$, which we assume to be  non-zero (since the case where $\alpha'=0$ is a degenerate situation both mathematically and physically).  This is fascinating mathematically, since the question of existence for the specific classes of $\SU(3)$-structures and instanton connections are well-understood, yet we have very little knowledge about the corresponding question about the heterotic $\SU(3)$ system.  

Another important mathematical motivation for the heterotic $\SU(3)$ system arises from \emph{Reid's fantasy}: the hope that one might be able to connect any two compact complex 3-folds with trivial canonical bundle  through a sequence of procedures known as \emph{conifold transitions}, where the manifold degenerates to form singularities and then is resolved with a topological change.  The key point is that, though conifold transitions preserve the triviality of the canonical bundle, the K\"ahler condition may not be preserved, and in fact cannot be preserved in some cases for purely topological reasons.  It has been advocated that the heterotic $\SU(3)$ system provides a natural candidate for a canonical geometry on such compact complex 3-folds (often called \emph{non-K\"ahler Calabi--Yau}), which may give an approach to understanding Reid's fantasy: see e.g. \cites{CollinsPicardYau2021,FriedmanPicardSuan2024} for recent progress.

Our current understanding of the space of heterotic $\SU(3)$ solutions on compact (or even non-compact) manifolds is very limited. A fundamental question is whether the space is non-empty: see e.g~\cites{GarciaFernandez2020b, Fino2021, CollinsPicardYau2022,   GarciaFernandez2023} for some recent progress.  If it is non-empty, then the next natural question is about the local structure of the moduli space of solutions, which is the focus of this paper.  

\subsection{Results} Our main result is the following. 
\begin{theorem}
\label{thm:main} 
    Let $X$ be a compact 6-manifold endowed with a solution of the heterotic $\SU(3)$ system which we denote by $s_0$.  There exist vector spaces $\mathcal{I}$ and $\mathcal{O}$ of the same finite dimension, an open set $U\ni 0$ in $\mathcal{I}$ and a smooth map $\pi:U\to \mathcal{O}$ with $\pi(0)=0$ such that any point in the moduli space of solutions to the heterotic $\SU(3)$ system near $s_0$ corresponds to an element in $\pi^{-1}(0)$. 
\end{theorem}
Since generic points in $\mathcal{O}$ will be regular values of $\pi$ by Sard's theorem, we may infer that $\pi^{-1}(0)$, and hence the moduli space of solutions to the heterotic $\SU(3)$ system, has expected dimension zero in this sense. (Another approach to the expected dimension of the moduli space is through derived geometry, cf.~\cite{Tellez-Dominguez:2023wwr}). 
\begin{remark}
    The notation in Theorem \ref{thm:main} is suggestive: $\mathcal{I}$ is for infinitesimal deformation space and $\mathcal{O}$ is for obstruction space. The map $\pi$ may be referred to as the obstruction or Kuranishi map.
\end{remark}

Theorem \ref{thm:main} is somewhat surprising, given that there are certainly known deformation families of solutions to the heterotic $\SU(3)$ system arising from Calabi--Yau 3-folds, but one should expect these to be non-generic (and so, in particular, $0$ would not be a regular value of the map $\pi$ in these cases).    It also demonstrates the importance of the anomaly condition, since the conditions on the ambient $\SU(3)$-structure are undetermined and so lead to infinite-dimensional families, which will also admit instanton connections.  

Theorem \ref{thm:main} also has clear implications for the approach to Reid's fantasy for non-K\"ahler Calabi--Yau 3-folds using the heterotic $\SU(3)$ system.  It suggests that one should not expect to be able connect any two solutions to the system along a path, even supposing there is no need for conifold transitions, as the existence of such a path would be non-generic.  On the other hand, Theorem \ref{thm:main} also raises the possibility of a solution to this issue: namely, to allow the real constant $\alpha'$ appearing in the anomaly condition to vary along the path.  Since the case $\alpha'=0$ represents a degenerate situation not covered by our analysis, this observation further motivates the investigation of the behaviour of solutions of the heterotic $\SU(3)$ system as $\alpha'\to 0$, which is an important and essentially unexplored aspect of the field.

\begin{remark} 
    In view of  Theorem \ref{thm:main}, one may speculate as to whether one may ``count'' the number of solutions to the heterotic $\SU(3)$ system on a compact 6-manifold $X$ and whether this carries any interesting information about $X$ or the chosen bundles over it.  Defining such a count motivates the study of the compactness theory for solutions of the heterotic $\SU(3)$ system.  If a well-defined number can be assigned to the solution space, the next intriguing question would be whether it is invariant under suitable perturbations. 
\end{remark}
\begin{remark} 
    Theorem \ref{thm:main} is also interesting from a physical point of view. Indeed, it appears very difficult in string theory to find (perturbative) string compactifications where all the moduli are ``stabilised''. This translates mathematically to having a zero-dimensional moduli space, and is known as the string theory moduli problem. One might hope that the moduli are stabilised by some unknown higher order perhaps non-perturbative (quantum) effect, and the fact that the moduli space has expected dimension zero  lends credence to this idea. See \cite{Chisamanga:2024xbm} for a more in-depth physics discussion of this point. 
\end{remark}

\begin{remark}
The map $\pi$ in Theorem \ref{thm:main} is not yet understood. In particular,  the moduli space of solutions to the heterotic $\SU(3)$ system may yet be positive-dimensional.  For heterotic $\SU(3)$ solutions coming from Calabi--Yau 3-folds $\pi$ will be the zero map,  which will then always yield a positive-dimensional moduli space as a consequence of the Bogomolov--Tian--Todorov theorem but, as remarked earlier, one might expect these to be non-generic solutions.  However, there are other situations where the moduli space can be positive-dimensional,  since $\pi$ vanishes on a positive-dimensional submanifold of $\mathcal{I}$ \cite{GarciaFernandez2022}*{Theorem 5.8}.  Therefore, an important next step in the study of the heterotic $\SU(3)$ system would be to examine the map $\pi$ further.
\end{remark}

To provide some more detail on our work, it was shown in \cite{McOrist2022} that the moduli space of solutions is governed by a certain first-order differential operator $\bar{D}$ acting on sections of a specific vector bundle $Q\to X^6$, which is related to the tangent bundle and the gauge bundle on which the instanton connections are defined. In particular, the infinitesimal deformations were shown to correspond to elements of a cohomology group $H^{0,1}_{\bar{D}}(Q)$, i.e.~the kernel of $\bar{D}$ acting on $Q$-valued $(0,1)$-forms, modulo the image of $\bar{D}$ acting on sections of $Q$.  Moreover, though the full Maurer--Cartan equation for deformations is yet to be worked out, it is shown that the potential obstructions lie in $H^{0,2}_{\bar{D}}(Q)$, which is defined in a similar manner. We prove the following when $X$ is compact, which can be viewed as an analogue of Serre duality in this context.
\begin{theorem}\label{thm:index}
    The  cohomology groups $H^{0,1}_{\bar{D}}(Q)$ and $H^{0,2}_{\bar{D}}(Q)$ are isomorphic. 
\end{theorem}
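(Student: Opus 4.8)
The plan is to deduce Theorem~\ref{thm:index} from a Serre-type duality tailored to the operator $\bar D$. First I would assemble the full deformation complex
\[
0 \to \Omega^{0,0}(Q) \xrightarrow{\ \bar D\ } \Omega^{0,1}(Q) \xrightarrow{\ \bar D\ } \Omega^{0,2}(Q) \xrightarrow{\ \bar D\ } \Omega^{0,3}(Q) \to 0
\]
and verify that it is elliptic. This should be immediate from the structure of $\bar D$: since $E$ is holomorphic (its instanton connection has vanishing $(0,2)$-curvature) and $X$ is complex, each summand of $Q=(T^{1,0}X)^*\oplus\End(E)\oplus T^{1,0}X$ carries a natural $\bar\partial$-operator, and $\bar D$ differs from the resulting $\bar\partial_Q$ only by a zeroth-order bundle map $\mathcal B\in\Omega^{0,1}(\End Q)$ built from the gauge curvature and the $\SU(3)$-torsion. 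Hence its principal symbol sequence is the Koszul complex of $\bar\partial$ on the $6$-manifold $X$, which is exact away from the zero section. Together with $\bar D^2=0$ (from \cite{McOrist2022}), ellipticity on the compact $X$ yields finite-dimensionality of $H^{0,q}_{\bar D}(Q)$ and Hodge-theoretic representatives $\mathcal H^{0,q}_{\bar D}(Q)\cong H^{0,q}_{\bar D}(Q)$ once Hermitian metrics are fixed on $X$ and on $Q$.

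The central step is to build a non-degenerate pairing $H^{0,1}_{\bar D}(Q)\times H^{0,2}_{\bar D}(Q)\to\C$. Two structural facts make this possible. First, $Q$ is canonically self-dual: the duality pairing between $T^{1,0}X$ and $(T^{1,0}X)^*$ together with the trace (Killing) form on $\End(E)$ gives a fibrewise isomorphism $Q\cong Q^*$ interchanging the outer summands. Second, $X$ is a non-K\"ahler Calabi--Yau, so $K_X=\Lambda^3(T^{1,0}X)^*$ carries a nowhere-vanishing holomorphic section $\Omega$. Contracting $Q$-coefficients via $Q\cong Q^*$, wedging the form parts, multiplying by $\Omega$ and integrating over $X$ then defines a pairing $\Omega^{0,q}(Q)\times\Omega^{0,3-q}(Q)\to\C$. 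The key point is that $\bar D$ is graded self-adjoint for this pairing up to an exact term, so that $\int_X\langle\bar D\alpha,\beta\rangle=\pm\int_X\langle\alpha,\bar D\beta\rangle$ and the pairing descends to cohomology; equivalently, one must check that the self-duality of $Q$ and the chosen trivialisation of $K_X$ are $\bar D$-compatible. Granting this, $H^{0,1}_{\bar D}(Q)$ and $H^{0,2}_{\bar D}(Q)$ are dual finite-dimensional vector spaces, hence isomorphic. Alternatively, and perhaps more cleanly, one identifies the formal adjoint $\bar D^*$ on $\Omega^{0,q}(Q)$ with $\pm\,\star\circ\bar D\circ\star$ on $\Omega^{0,3-q}(Q)$, where $\star$ is the conjugate-linear operator assembled from the Riemannian Hodge star, complex conjugation and the Hermitian metric on $Q$ (mimicking $\bar\partial^{*}=-\bar\ast\,\partial\,\bar\ast$); then $\star$ interchanges the $\bar D$-Laplacians in complementary degrees and restricts to an isomorphism $\mathcal H^{0,1}_{\bar D}(Q)\xrightarrow{\ \sim\ }\mathcal H^{0,2}_{\bar D}(Q)$.

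I expect the main obstacle to be precisely the compatibility of $\bar D$ with the natural self-duality of $Q$, i.e.\ making the integration-by-parts identity above rigorous rather than merely formal. Because $\bar D$ contains off-diagonal terms coupling $T^{1,0}X$, $\End(E)$ and $(T^{1,0}X)^*$ through the gauge curvature $F$, the torsion $H$, and their derivatives (the latter entering via the anomaly condition with its factor of $\alpha'$), verifying self-adjointness should reduce to the instanton/Hermitian Yang--Mills equations for $E$ together with the heterotic Bianchi identity; carrying out this reduction with all signs and $\alpha'$-dependent constants tracked correctly is the technical heart of the argument. A secondary point to watch is whether the resulting isomorphism is complex-linear or only conjugate-linear, but since a composition of two conjugate-linear isomorphisms is complex-linear — and in any case equality of complex dimensions already gives the statement — this does not affect the conclusion.
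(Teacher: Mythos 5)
Your overall strategy --- a Serre-type duality built from a pairing with the holomorphic volume form $\Omega$ and graded self-adjointness of $\bar D$ --- is in the right spirit, and the paper's proof of Theorem~\ref{prop:general.Serre} does ultimately rest on a very similar compatibility computation (equation~\eqref{eq:Hdual}, established via \eqref{eq:F.T.identities} and \eqref{eq:R.nabla+.identities}). However, there are two concrete errors in your proposal that affect its viability.

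First, your claim that $\bar D$ ``differs from the resulting $\bar\partial_Q$ only by a zeroth-order bundle map'' is false. The entry $\alpha'\cR\nabla^+$ in $\bar D$ involves the holomorphic covariant derivative $\nabla^+$ (Definition~\ref{def:Rnabla+}), so $\bar D - \bar\partial_Q$ contains a genuinely \emph{first-order} off-diagonal piece. Consequently the principal symbol of $\bar D$ is not that of $\bar\partial$ alone: as the paper's Proposition~\ref{prop:elliptic} shows, the symbol acquires extra $\xi$-dependent terms $\propto \alpha' R \xi$, and injectivity (hence overdetermined ellipticity of $\bar D + \bar D^*$) is only established when $\alpha'$ is sufficiently small relative to a curvature bound. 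Your claim that ellipticity is ``immediate'' therefore fails, and with it your proposed Hodge-theoretic alternative route via a $\star$-operator: $\star$ would need to intertwine the first-order $\alpha'\cR\nabla^+$ term across complementary degrees, which is not automatic and you give no argument for it. Second, you suggest that the compatibility of $\bar D$ with the pairing should ``reduce to the instanton/Hermitian Yang--Mills equations.'' That is not what happens: the key computations use the algebraic symmetry of $\sF$ and $\sT$ under the contraction pairing, the identity for $[\delbar,\nabla^+]$ (Corollary~\ref{cor:commute}), the $\delbar$-closedness of the representatives, and $\delbar\Omega=0$; the HYM/D-term equations play no role, and the Bianchi identity enters only to guarantee $\bar D^2=0$.

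The paper instead sidesteps both issues by working with the double-extension short exact sequences \eqref{eq:SES2} and \eqref{eq:DualSES2}, passing to the long exact sequences, and invoking ordinary Serre duality on the factor bundles. The only non-standard input is that the images of the connecting maps $\sH$ and $\sH^*$ have the same dimension, which is exactly the integration-by-parts identity you correctly identify as the ``technical heart.'' This reorganisation buys two things your proposal lacks: it proves finite-dimensionality from ordinary Dolbeault cohomology without needing $\bar D+\bar D^*$ elliptic, and it gives the result for \emph{all} $\alpha'\neq 0$ with no smallness hypothesis. Your proposal is missing precisely the computation you flagged as the hard part, and the incorrect claims about ellipticity and HYM dependence suggest you would run into trouble carrying it out.
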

 \noindent Theorem \ref{thm:main} then follows immediately from Theorem \ref{thm:index}, by the work in \cite{McOrist2022} and the discussion in Appendix \ref{app:Moduli}, with $\mathcal{I}=H^{0,1}_{\bar{D}}(Q)$ and $\mathcal{O}=H^{0,2}_{\bar{D}}(Q)$.  We further prove that, in cases of physical interest, the cohomology groups in Theorem \ref{thm:index} are isomorphic to the kernel of $\bar{D}+\bar{D}^*$ acting on appropriate $Q$-valued forms, by showing that $\bar{D}+\bar{D}^*$ is (overdetermined) elliptic.
 
Since the cohomology groups in Theorem \ref{thm:index} are defined using a non-standard differential operator, it is useful to have an alternative description to potentially facilitate computation.  To this end, we define certain \v{C}ech cohomology groups $\check{H}^p(Q)$, which can be interpreted as sheaf cohomology groups, and show the following Dolbeault-type theorem.

\begin{theorem}\label{thm:Cech}
    The cohomology groups $H^{0,1}_{\bar{D}}(Q)$ and $\check{H}^1(Q)$ are isomorphic.
\end{theorem}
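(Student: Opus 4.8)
The plan is to establish a Dolbeault-type isomorphism between the operator-defined cohomology $H^{0,1}_{\bar D}(Q)$ and a \v{C}ech cohomology group $\check H^1(Q)$, following the classical pattern for the Dolbeault isomorphism but adapted to the non-standard first-order operator $\bar D$. The conceptual heart of the argument is to show that $\bar D$ behaves like a Dolbeault operator: that is, that the complex of sheaves of $Q$-valued forms equipped with $\bar D$ provides a fine (hence acyclic) resolution of an appropriate sheaf $\mathcal{Q}$, whose definition is forced to be the kernel sheaf $\mathcal{Q} := \ker\bigl(\bar D\colon Q \to \Omega^{0,1}(Q)\bigr)$. First I would set up the sheaves $\mathcal{A}^{0,q}(Q)$ of smooth $Q$-valued $(0,q)$-forms on open sets, observe they are fine sheaves (partitions of unity act $C^\infty$-linearly on the fibres of $Q$), and form the complex
\begin{equation*}
0 \to \mathcal{Q} \to \mathcal{A}^{0,0}(Q) \xrightarrow{\ \bar D\ } \mathcal{A}^{0,1}(Q) \xrightarrow{\ \bar D\ } \mathcal{A}^{0,2}(Q) \xrightarrow{\ \bar D\ } \cdots
\end{equation*}
whose definition already presupposes $\bar D^2 = 0$ on $Q$-valued forms, a fact established (or at least used) in the earlier part of the paper via the structure of $\bar D$ on $Q$.

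The key technical step — and the one I expect to be the main obstacle — is the local exactness (Poincaré lemma) for $\bar D$: on a sufficiently small polydisc, a $\bar D$-closed $Q$-valued $(0,q)$-form with $q \geq 1$ is $\bar D$-exact. Because $\bar D$ is not literally $\bar\partial$ but a deformation of it built from the three summands $(T^{1,0}X)^* \oplus \End(E) \oplus T^{1,0}X$ — involving the instanton connection on $E$, contraction with the torsion/curvature data, and the maps coupling the summands — one cannot simply quote the ordinary $\bar\partial$-Poincaré lemma. The strategy I would pursue is to exploit the upper- (or lower-)triangular structure of $\bar D$ with respect to the three-step filtration of $Q$: on the associated graded pieces the operator reduces to a standard twisted $\bar\partial$-operator (the Dolbeault operator of a holomorphic bundle, namely $\End(E)$ with its instanton $(0,1)$-part, and $\bar\partial$ on $(T^{1,0}X)^*$ and $T^{1,0}X$), for which the Poincaré lemma is classical; then one runs an induction up the filtration, at each stage solving a $\bar\partial$-equation with a correction term coming from the off-diagonal maps and absorbing it, shrinking the polydisc if necessary. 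Making the estimates uniform enough to close the induction — and checking that the off-diagonal terms are genuinely lower-order or at least do not destroy solvability — is the delicate point.

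Once local exactness is in hand, the rest is formal sheaf theory: the complex above is a resolution of $\mathcal{Q}$ by fine, hence $\Gamma$-acyclic, sheaves, so the abstract de Rham / Leray theorem identifies $H^q\bigl(\Gamma(X, \mathcal{A}^{0,\bullet}(Q)), \bar D\bigr) \cong H^q(X, \mathcal{Q})$, the sheaf cohomology of $\mathcal{Q}$. The left-hand side for $q=1$ is exactly $H^{0,1}_{\bar D}(Q)$ by definition. To conclude, I would identify $H^1(X,\mathcal{Q})$ with the \v{C}ech group $\check H^1(Q)$ appearing in the statement: this requires matching the paper's definition of $\check H^1(Q)$ — presumably \v{C}ech cohomology of a particular cover with coefficients in the sheaf of local $\bar D$-flat sections of $Q$, i.e. precisely $\mathcal{Q}$ — and then invoking Leray's theorem on a good cover (acyclic for $\mathcal{Q}$, which follows from the Poincaré lemma applied on contractible polydiscs and their intersections) to pass from \v{C}ech to sheaf cohomology. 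I would also remark that the same argument gives $H^{0,2}_{\bar D}(Q) \cong \check H^2(Q)$ and more generally in all degrees, even though only degree one is stated; combined with Theorem \ref{thm:index} this yields $\check H^1(Q) \cong \check H^2(Q)$ as a corollary worth noting. The only genuinely new ingredient beyond bookkeeping is the $\bar D$-Poincaré lemma, so the write-up should foreground that lemma as a standalone result.
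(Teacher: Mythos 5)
Your proposal takes a genuinely different route from the paper. The paper proceeds concretely: it shows that $\bar D$ is locally conjugate to $\bar\partial$ via operators $\phi_i = \phi_i(\tilde\tau_i, A_i, \Gamma_i\cdot\nabla^+)$ that crucially involve a \emph{first-order differential} term, defines a bespoke \v{C}ech complex whose cochains are $\bar\partial$-holomorphic sections of $Q$ over intersections and whose symmetry conditions and coboundary involve the transition \emph{operators} $\psi_{ij}=\phi_i\circ\phi_j^{-1}$, and then builds explicit mutually inverse maps $\Phi$ and $\Psi$ between $H^{0,1}_{\bar D}(Q)$ and $\check H^1(Q)$. You instead invoke the abstract machinery of fine resolutions and the de Rham/Leray theorem, reducing the theorem to a $\bar D$-Poincar\'e lemma plus an identification of $\check H^1(Q)$ with $\check H^1(\mathcal U,\mathcal Q)$ for the sheaf $\mathcal Q = \ker\bar D$. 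This is a legitimate and arguably cleaner strategy, and it has the bonus (which the paper leaves to future work, cf.\ Remark \ref{thm: Cech}'s surroundings) of giving all degrees $p$ at once, hence $\check H^p(Q)\cong\check H^{n-p}(Q)$ by Theorem \ref{thm:index}.

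Two points deserve more care than your sketch gives them. First, the Poincar\'e lemma. You correctly flag this as the crux and propose an induction up the three-step filtration of $Q$; that will work, but the hard part is not ``lower order or at least does not destroy solvability'' in the generic sense, because the off-diagonal term $\cR\nabla^+$ is genuinely \emph{first order}. The induction closes only because of two facts specific to this setup: Corollary \ref{cor:commute}, which lets one commute $\bar\partial$ past $\nabla^+$ at the cost of a curvature term, and the anomaly identity $\bar\partial T = \tfrac{\alpha'}{2}(\tr R\wedge R - \tr F\wedge F)$, which makes the correction at the top row $\bar\partial$-closed. (The paper's slicker resolution of the same difficulty is to allow the local trivialisation $\phi$ itself to contain the first-order term $\Gamma\cdot\nabla^+$, conjugating $\bar D$ directly to $\bar\partial$; the Poincar\'e lemma is then literally the $\bar\partial$-Poincar\'e lemma applied after conjugation.) Second, your identification of $\check H^1(Q)$ with \v{C}ech cohomology of $\mathcal Q$ is asserted rather than checked, and the ``presumably'' is doing real work. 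The paper's $\check H^1(Q)$ uses $\bar\partial$-holomorphic cochains and transition operators $\psi_{ij}$ that are \emph{differential operators}, not bundle automorphisms; in particular $\mathcal Q$ is \emph{not} an $\mathcal O_X$-module (one can check that $\bar D$ does not satisfy a Leibniz rule against holomorphic functions precisely because of $\cR\nabla^+$), so this is not the sheaf of sections of a holomorphic bundle. The abstract machinery still applies to sheaves of $\C$-vector spaces, and the map $\eta_{i_0\dots i_p}\mapsto \phi_{i_0}^{-1}\eta_{i_0\dots i_p}$ does conjugate the paper's complex to the ordinary \v{C}ech complex of $\mathcal Q$, but that conjugation is exactly the step that must be verified — and it is essentially the content of the paper's explicit maps $\Phi$ and $\Psi$. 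So your abstract route would not let you avoid those calculations; it would repackage them as a good-cover Leray argument for an exotic (non-$\mathcal O_X$-linear) coefficient sheaf, with the added benefit of automatically showing independence of the cover and trivialisations $\mathcal P$.
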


\noindent We will prove Theorem \ref{thm:Cech} by constructing an explicit isomorphism between $H^{0,1}_{\bar{D}}(Q)$ and $\check{H}^1(Q)$. Theorem \ref{thm:Cech} then should enable us to use topological and algebraic methods to ascertain whether $H^{0,1}_{\bar{D}}(Q)$ and $H^{0,2}_{\bar{D}}(Q)$ both vanish or not (by Theorem \ref{thm:index}).
 
\subsection{Double extensions and self-duality}\label{ss:double.extension}  

We now make some further observations about $Q$ and $\bar{D}$ introduced above, which motivated our study.  The vector bundle is explicitly given as 
$$
Q=(T^{1,0}X)^* \oplus {\End}(E) \oplus T^{1,0}X,
$$ 
and the differential operator $\bar{D}$ allows us to realise $Q$  as a ``double extension'':
\begin{equation}
\label{eq:Atiyah}
\xymatrix @R=0.8pc  {
    0 \ar[r] & 
    \End(E)
    \ar[r] & 
    Q_1
    \ar[r] & 
    T^{1,0}X
    \ar[r] & 0; \\
    0 \ar[r] & 
    (T^{1,0}X)^*
    \ar[r] & 
    Q
    \ar[r] & 
    Q_1
    \ar[r] & 0.
}
\end{equation}
Here, the first extension defining $Q_1$ is often referred to as the \emph{Atiyah extension}, which allows us to control simultaneous deformations of a complex manifold equipped with a holomorphic bundle \cite{Atiyah1957}. Alternatively, we can also construct $Q$ by the following double extension:
\begin{equation}\label{eq:Atiyah.dual}
\xymatrix @R=0.8pc {
    0 \ar[r] & (T^{1,0}X)^* \ar[r] & Q_1^* \ar[r] & \End(E) \ar[r] & 0; \\
    0 \ar[r] & Q_1^* \ar[r] & Q \ar[r] & T^{1,0}X \ar[r] & 0.
}
\end{equation}
Note that the extension defining $Q_1^*$ is precisely the dual of the Atiyah extension. In this sense, the bundle $Q$ is ``self-dual''.

As we see from Theorem \ref{thm:index}, Theorem \ref{thm:main} follows from an isomorphism $H^{0,1}_{\bar{D}}(Q)\simeq H^{0,2}_{\bar{D}}(Q)$.  
In analogous situations,  when the ambient 6-manifold $X$ is a Calabi--Yau 3-fold, given a self-dual bundle $\tilde{Q}\to X$ one can show that $H^{0,2}_{\bar{D}}(\tilde{Q})\simeq H^{0,1}_{\bar{D}}(\tilde{Q})$ using Serre duality. However, in our setting $X$ is not necessarily Calabi--Yau and, more importantly, $\bar D$ is \emph{not} a connection on the bundle $Q$ in the usual sense, so self-duality is not \emph{a priori} a well-defined notion for $Q$. In its stead, our approach was to establish the isomorphism directly,  by mediation of  auxiliary \v{C}ech cohomology groups.

\subsection{The heterotic SU(3) or Hull--Strominger system} 
The main object of consideration in this paper is the heterotic $\SU(3)$ (or Hull--Strominger) system \cites{Hull1986, Strominger:1986uh}, which derives from supersymmetric solutions to heterotic supergravity on complex $3$-folds $X$.  Following \cite{Ossa2014a}, the manifold $X$ is endowed with the following data:
\begin{itemize} 
    \item an $\SU(3)$-structure defined by a Hermitian $(1,1)$-form $\omega$ and nowhere vanishing $(3,0)$-form $\Omega$;
    \item the Chern connection $\nabla$ on $TX$, with curvature $R$;
    \item a holomorphic vector bundle $E\to X$, with a Hermitian metric on its fibres,  together with the (unitary) Chern connection $A$ on $E$, with curvature $F$;
    \item a non-zero real constant $\alpha'$. 
\end{itemize} 
Physics requires $E$ to have a structure group contained in $E_8\times E_8$ or $\textrm{Spin}(32)/\mathbb{Z}_2$, and $\alpha'$ to be positive and ``small''.  The size of $\alpha'$ is reflected in the fact that the system of equations we now describe is derived by ignoring all terms of order $O(\alpha')^2$.

These structures on $X$ are required to satisfy the following differential constraints, often referred to as the F-term constraints:
\begin{align}
    \textrm{d}\Omega&=0;\label{eq:F1}\\
    2i\del\delbar\omega&=\alpha'\big(\tr(F\wedge F)-\tr(R\wedge R)\big).\label{eq:F2}
\end{align}
    Note that equation \eqref{eq:F1} forces $c_1(X)=0$.  

  Equation \eqref{eq:F2} is referred to as the \emph{anomaly cancellation} condition, or the \emph{heterotic Bianchi identity}.  In physics, one may alternatively find the anomaly condition expresssed at the level of primitives for the terms arising in \eqref{eq:F2}.  Explicitly, if $\text{CS}(A)$ denotes the Chern--Simons 3-form associated to $A$, given by 
\begin{equation*}
\text{CS}(A)=\tr(A\wedge\textrm{d} A+\frac{2}{3}A\wedge A\wedge A)\:,
\end{equation*}
which satisfies
\[
\textrm{d}\text{CS}(A)=\tr(F\wedge F)\:,
\]
and similarly define $\text{CS}(\nabla)$, then we may impose the condition
\begin{equation}\label{eq:anomaly.def}
2\textrm{d}^c\omega=\alpha'(\text{CS}(A)-\text{CS}(\nabla))+\textrm{d} B\:,
\end{equation}
where $B$ is a multi-valued 2-form which can be used to account for the fact that the Chern--Simons forms are multi-valued, whereas $\textrm{d}^c\omega$ is single-valued. We may then note that \eqref{eq:anomaly.def} implies \eqref{eq:F2}.

In addition, the geometric structures on $X$ are  required to satisfy the following D-term constraints:
\begin{align}
    F\wedge \omega\wedge\omega&=0;\label{eq:D1}\\
    \textrm{d}(\vert\Omega\vert_{\omega}\omega\wedge\omega)&=0.\label{eq:D2}
\end{align}
Given the initial assumptions on $A$, \eqref{eq:D1}  is the (degree zero) Hermitian Yang-Mills instanton condition on $A$, while \eqref{eq:D2} is known as a \emph{conformally balanced} condition for the $\SU(3)$-structure. The D-terms will be less relevant for the analysis of this paper, but we include them here for completeness.

\begin{remark}
    The equations \eqref{eq:F1}--\eqref{eq:D2} form the system Strominger defined in the seminal paper \cite{Strominger:1986uh}. Hull \cite{Hull1986} used the more physically accurate Hull connection on $TX$, in place of the Chern connection, to define the curvature term $\tr(R\wedge R)$ in \eqref{eq:F2}. However,  this choice leads to a system of equations which is not mathematically closed: e.g.~the curvature of the Hull connection is only type $(1,1)$ modulo higher order $\alpha'$ corrections.
\end{remark}

The equations \eqref{eq:F1}--\eqref{eq:D2} for the given data we have described on $X$ are commonly referred to as the \mbox{\emph{(Hull--)Strominger system}}  in the mathematics literature, e.g.~in the study by Fu--Yau of solutions to the system \cite{Fu2008}, or in the analysis of the associated anomaly flow in \cite{Phong:2016ggz}.      We shall therefore refer to \eqref{eq:F1}--\eqref{eq:D2} for the data $\omega$, $\Omega$, $(TX,\nabla)$, $(E,A)$ and $\alpha'$ as the \emph{heterotic $\SU(3)$ system}.

\begin{remark}
\label{rem:HYM1} 
    If one wants the solution of \eqref{eq:F1}-\eqref{eq:D2} to lead to a solution of the heterotic equations of motion, it is known that the connection on $TX$ whose curvature $R$ appears in \eqref{eq:F2} should also be an instanton \cites{Ivanov2010, Martelli2011}, but this is generally not the case for the Chern connection $\nabla$. One common fix to this issue is to replace $\nabla$ by an additional instanton connection $\tilde\nabla$ on $TX$, see e.g.~\cites{Anderson2014, Ossa2014, GarciaFernandez2017}. Introducing such a new degree of freedom is however not warranted from a physics perspective, so we do not take this approach here.  Yet, we shall explain how one can modify our analysis to accommodate this choice, in Remark \ref{rem:HYM2}.
\end{remark}

As previously stated, we base our analysis of the moduli space of heterotic $\SU(3)$ solutions on \cite{McOrist2022}.  This moduli space was also studied in the earlier works of \cites{Anderson2014, Ossa2014a, GarciaFernandez2017} but, as mentioned in Remark \ref{rem:HYM1}, these sources employ a mathematical framework that promotes a specific tangent bundle connection to an extra degree of freedom for the heterotic $\SU(3)$ system. This makes the moduli problem mathematically easier, but leaves extra moduli deformation parameters which are ``non-physical'' and thus should be disregarded.

We should advise the reader that the analysis of \cite{McOrist2022} was performed within the framework of physics; in particular, terms of order $O(\alpha'^2)$ were ignored, just as in the derivation of the heterotic $\SU(3)$ system itself.  Further details, as well as progress towards removing this assumption, are given in Appendix \ref{app:Moduli}. In any case, we believe that a full treatment of the moduli problem following the formalism of \cite{McOrist2022} will reveal a differential complex and a cohomology which, if not exactly the same, will have the same features as $H^{0,p}_{\bar{D}}(Q)$. Analysing the mathematical properties of $\bar D$ is therefore a useful and worthwhile pursuit, which will illuminate the moduli problem for the heterotic $\SU(3)$ system. 

\begin{remark}
    All our results  require $\alpha'\neq 0$, which is the situation of primary significance in mathematics and physics.  However, one is often interested in considering the limit as $\alpha'$ approaches zero, so it is important to be aware that certain parts of our work do not hold at the limit $\alpha'=0$.  This may be significant for the intriguing problem of understanding the limiting behaviour of solutions to the heterotic $\SU(3)$ system as $\alpha'\to 0$.
\end{remark}

\noindent\textbf{Acknowledgements:} The authors would like to thank Luis Álvarez-Cónsul, Beatrice Chisamanga, Mario Garcia-Fernandez, Raul Gonzalez Molina, Jock McOrist, Javier José Murgas Ibarra, Sébastien Picard and Markus Upmeier for valuable discussions. We particularly thank Javier José Murgas Ibarra for valuable insights into defining the notion of \v{C}ech cohomology considered in this paper. ES would like to thank the mathematical research institute MATRIX in Australia, where part of this research was performed, for a lively and rewarding environment. 

\bigskip
\section{The deformation operator}

In this section we study the first order differential operator $\bar D$ on $Q$-valued forms, which will control the deformation theory of solutions to the heterotic $\SU(3)$ system.  After recalling its definition, we will show that $\bar D^2=0$, which will in fact be equivalent to the anomaly cancellation condition in the heterotic $\SU(3)$ system. This coboundary property enables us to define the cohomology groups $H^{0,p}_{\bar{D}}(Q)$.  We will then compute the formal adjoint $\bar D^*$ of $\bar D$, and finally show that $\bar D+\bar D^*$ defines an (overdetermined) elliptic operator.  This leads us to conclude that $H^{0,p}_{\bar{D}}(Q)$ is isomorphic to the kernel of $\bar{D}+\bar{D}^*$ acting on $Q$-valued $(0,p)$-forms.

\subsection{Preliminaries}
 We begin by recalling the set up for the deformation theory for the heterotic $\SU(3)$ system as in \cite{Ossa2014a}.  Throughout the article we shall use the following notation.

 \begin{notation}\label{notation1}
     Let $X$ be a complex 3-manifold with local holomorphic coordinates denoted by $(z_1,z_2,z_3)$.  
  \begin{itemize}   
   \item   Let $\omega$ be a Hermitian metric on $X$, viewed as a $(1,1)$-form, which we      write locally as $i g_{j \bar{k}} dz^j \wedge d \bar{z}^k$ where, here and throughout, we use summation convention.   Note that $g_{jk}$ are the local coefficients of the metric $g$ associated to $\omega$.
   \item Let $\nabla$ denote the Chern connection of the metric $g$ on $X$, which extends naturally to all differential forms on $X$, and let $R$ be the curvature of $\nabla$, whose local coefficients satisfy
\begin{equation}
\label{R-conv}
    [\nabla_j,\nabla_{\bar{k}}]V^l 
    =-R_{\bar{k}j}{}^l{}_mV^m,
   \end{equation}
   for a local $(1,0)$-vector field with coefficients $V^l$.
   \item Let $T=i\partial\omega$, which is often called the torsion, be given locally by $T_{\bar{l}jk}d\bar{z}^{\bar{l}}\wedge dz^j\wedge dz^k$.
   This is the $(2,1)$-part of the heterotic NS flux $H=T+\bar T$. 
   \item Let $\Omega$ be a nowhere vanishing $(3,0)$-form on $X$.
   \item  
     Let $E \rightarrow X$ be a holomorphic vector bundle with fibrewise Hermitian metric $h$, whose local coefficients are $h_{jk}$, with respect to a local frame $\{e_j\}$ on $E$ and its dual coframe $\{e^j\}$. We assume the structure group of $E$ is such that endomorphisms of $E$ are trace-free.
 \item    
Let $A$ be the unique unitary integrable (Chern) connection on $E$, with curvature  $F\in\Omega^{1,1}(\End E)$, which  is written locally as $F_{j\bar{k}}dz^j\wedge d\bar{z}^k$ for endomorphisms $F_{j\bar{k}}$.
    \item Let $\delbar$ denote the Dolbeault operator acting on tensors on $X$ and let $\delbar_E$ denote the Dolbeault operator arising from the holomorphic structure on $E$ which acts on tensor-valued sections of $\mathrm{End}(E)$.
\item Let $\alpha'\in\R\setminus\{0\}$.  For physical relevance, we shall be primarily interested in the case when $\alpha'>0$ and small relative to the other quantities defined on $X$.  
\end{itemize}

We assume that this data satisfies the heterotic $\SU(3)$ system \eqref{eq:F1}-\eqref{eq:D2}.  Note, in particular, that the deformation theory we refer to uses the anomaly cancellation condition in the form \eqref{eq:anomaly.def}, though we shall only use it in the form \eqref{eq:F2} in this article.
 \end{notation}

We now define one of the key objects of study in this section.
\begin{definition}
\label{def:Q} 
    Using Notation \ref{notation1},  define the complex vector bundle 
    $$Q = (T^{1,0}X)^* \oplus {\End} (E) \oplus T^{1,0}X,$$ 
    and define $Q^{0,p}=Q\otimes \Lambda^p(T^{0,1}X)^*$.  We denote sections of $Q^{0,p}$ by
\begin{equation}
\label{eq:s}
    s=\begin{bmatrix} 
        \kappa \\ \gamma \\ W 
    \end{bmatrix},
\end{equation}
where the rows  correspond to the direct sum decomposition of $Q$, written locally as
\begin{equation}
\label{eq:s.local}
      \kappa
      = \kappa_{j\bar{k_1}\ldots\bar{k_p}} dz^j\otimes d\bar{z}^{\bar{k_1}\ldots\bar{k_p}}, \quad \gamma=\gamma_{\bar{k_1}\ldots\bar{k_p}}\otimes d\bar{z}^{\bar{k_1}\ldots\bar{k_p}}, \quad W=W^j{}_{\bar{k_1}\ldots\bar{k_p}}\frac{\partial}{\partial z^j}\otimes d\bar{z}^{\bar{k_1}\ldots\bar{k_p}},
\end{equation}
where $d\bar{z}^{\bar{k_1}\ldots\bar{k_p}} =d\bar{z}^{\bar{k_1}}\wedge\ldots\wedge d\bar{z}^{\bar{k_p}}$. When there is no confusion, we will drop the indices $\bar{k_1},\ldots\bar{k_p}$ for ease of notation. Also for conciseness, we introduce the following `formal position  vectors' on $Q$:
\begin{equation}
\label{eq: vectors e_i}
    \be_1:= \begin{bmatrix} 
        \mathbf{1}_{(T^{1,0}X)^*} \\ 0 \\ 0 
    \end{bmatrix},
    \quad
    \be_2:= \begin{bmatrix} 
        0 \\ \mathbf{1}_{\End(E)} \\ 0 
    \end{bmatrix},
    \quad
    \be_3:= \begin{bmatrix} 
        0 \\ 0 \\ \mathbf{1}_{T^{1,0}X} 
    \end{bmatrix},
\end{equation}
so that
$$
s = \kappa\otimes\be_1 + \gamma\otimes\be_2 + W\otimes\be_3.
$$
NB.: In practice we will most often use $\be_1$.
\end{definition}

\begin{remark}
    In terms of moduli of the heterotic $\SU(3)$ system, the infinitesimal deformations will be parametrised by certain sections $s$ of $Q^{0,1}$ as in \eqref{eq:s}.  
    Here 
\begin{itemize}
    \item $\kappa$ is a (complexified) deformation of the Hermitian structure, or the metric, on $X$,
    \item $\gamma$ is a deformation of the  connection, or the Hermitian metric, on the vector bundle $E\rightarrow X$,
    \item  $W$ is the deformation of the complex structure of $X$, often referred to as the Beltrami differential.
\end{itemize} 
\end{remark}

To define the deformation operator $\bar D$ we are required to introduce two algebraic operators.

\begin{definition}
\label{def:F} 
    Recall the curvature $F$ of the connection on $E$ from Notation \ref{notation1}.  
    For $\gamma$ and $W$ as in \eqref{eq:s} we define $\sF\gamma\in \Gamma((T^{1,0}X)^*\otimes \Lambda^{p+1}(T^{0,1}X)^*)$ and $\sF W\in \Gamma(\End(E)\otimes \Lambda^{p+1}(T^{0,1}X)^*) $ via the local formulae 
\begin{equation}
\label{eq:sF}
    \sF \gamma 
    = \tr (F_{j \bar{k}} \gamma_{\bar{k_1}\ldots\bar{k_p}})dz^j \otimes d\bar{z}^{\bar{k}\bar{k_1}\ldots\bar{k_p}}, \quad \sF W = F_{j \bar{k}}W^j{}_{\bar{k_1}\ldots\bar{k_p}}\otimes d\bar{z}^{\bar{k}\bar{k_1}\ldots\bar{k_p}}\:,
\end{equation}
    which extend to globally defined sections on $X$.  These are natural actions of   $F$ on $\gamma$ and $W$.
\end{definition}

\begin{definition}
\label{def:T} 
    Recall $T$ given in Notation \ref{notation1}.  We define $\sT W\in\Gamma((T^{1,0}X)^*\otimes\Lambda^p(T^{0,1}X)^*)$  for $W$ as in \eqref{eq:s} by the local formula
\begin{equation}
\label{eq:sT}
      \sT W =T_{lj \bar{k}}W^l{}_{\bar{k_1}\ldots\bar{k_p}}dz^j\otimes d\bar{z}^{\bar{k}\bar{k_1}\ldots\bar{k_p}}\:,
\end{equation}
    which again extends globally on $X$.
\end{definition}

Beyond these algebraic operators, we also require a differential operator defined using an auxiliary connection which typically differs from the Chern connection.

\begin{definition}
\label{def:nabla+} 
    Recall the torsion $T$ from Notation \ref{notation1}; the \emph{Bismut connection} $\nabla^+$ on $T^{1,0}X$ is given locally by
\begin{equation}
\label{nabla+}
    \nabla^+_kW^j =\nabla_kW^j-T^j{}_{kl}W^l\:,
    \qforq
    W\in \Gamma(T^{1,0}X),
\end{equation}
    which again defines a global covariant derivative. We extend the Bismut connection to $T^{1,0}X\otimes\Lambda^p(T^{0,1}X)^*$ as 
    $$
    \nabla^+:= \nabla^+\otimes \mathbf{1}_{\Lambda^p(T^{0,1}X)^*} + \mathbf{1}_{T^{1,0}X}\otimes \nabla,
    \qforq   W\in\Gamma(T^{1,0}X\otimes\Lambda^p(T^{0,1}X)^*),
    $$
    ie. $\nabla^+$ acts as the Chern connection $\nabla$ on $(0,p)$-form indices of $W$.
\end{definition}

The connection $\nabla^+$ enables us to create a differential operator from the curvature $R$.
\begin{definition}
\label{def:Rnabla+} 
    Recall the curvature $R$ of the Chern connection of $(X,g)$, given in Notation \ref{notation1}, and the Bismut connection $\nabla^+$ given in Definition \ref{def:nabla+}.  For $W$ as in \eqref{eq:s}, we define $ \mathcal{R}\nabla^+ W\in\Gamma((T^{1,0}X)^*\otimes\Lambda^{p+1}(T^{0,1}X)^*)$ by the local formula
\begin{equation}
\label{eq:Rnabla+}
    \cR\nabla^+ W  
    = R_{\bar{k}j}{}^l{}_m\nabla^+_lW^m{}_{\bar{k_1}\ldots\bar{k_p}} dz^j\otimes d\bar{z}^{\bar{k}\bar{k_1}\ldots\bar{k_p}}\:.
\end{equation}
\end{definition}

\subsection{The operator \texorpdfstring{$\bar{D}$}{barD}}

With all of these definitions in place, we now define our deformation operator.

\begin{definition}
\label{def:Dbar} 
    The deformation operator $\bar{D}: \Gamma(Q^{0,p})  \rightarrow \Gamma(Q^{0,p+1}) $ is defined with respect to the splitting as in \eqref{eq:s} by
\begin{equation}
\label{eq: Dbar}
    \bar{D} = 
    \begin{bmatrix}
        \delbar & \alpha' \sF & \sT + \alpha'  \cR \nabla^+  \\
        0 & \delbar_E & \sF \\
        0 & 0 & \delbar
    \end{bmatrix},
\end{equation}
    where $\sF$ is given in Definition \ref{def:F}, $\sT$ is given in Definition \ref{def:T}, and $\cR\nabla^+$ is given in Definition \ref{def:Rnabla+}.
\end{definition}

\begin{remark}
\label{rmk:inf.defs.obs.1}
    It is shown in \cite{McOrist2022} that elements of $\ker\bar{D} \subset Q^{0,1}$ lead to infinitesimal deformations of heterotic $\SU(3)$ solutions. The moduli are then parametrised by the cohomology $H^{0,1}_{\bar{D}}(Q)$, 
    up to (complexified) symmetries of the system given by $\bar D$-exact terms.  We give a brief review of the infinitesimal deformations of heterotic $\SU(3)$ moduli in appendix \ref{app:Moduli}. Moreover, $\ker\bar{D}\subset Q^{0,2}$ gives potential obstructions to such deformations, and again it is the cohomology group $H^{0,2}_{\bar{D}}(Q)$ that parametrises these potential obstructions.  This motivates our interest in $\bar{D}$.
\end{remark}

We now show that $\bar{D}^2=0$ if, and only if, anomaly cancellation holds.  This enables us to define a cohomology theory associated to the operator $\bar{D}$, in the context of heterotic $\SU(3)$ solutions. The following proposition can also be found in \cite{McOrist:2024zdz}, but we reproduce it here, with our notation, for the reader's convenience.

\begin{prop}
\label{prop:Nilpotent}
    The operator $\bar{D}$ defined in \eqref{eq: Dbar} induces a differential complex if, and only if,  the anomaly cancellation condition \eqref{eq:F2} holds. 
\end{prop}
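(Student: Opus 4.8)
The plan is to compute $\bar D^2$ blockwise using the $3\times 3$ upper-triangular matrix form \eqref{eq: Dbar}, and to isolate the single block entry that fails to vanish automatically. Writing $\bar D = \bigl[\begin{smallmatrix} \delbar & \alpha'\sF & \sT+\alpha'\cR\nabla^+ \\ 0 & \delbar_E & \sF \\ 0 & 0 & \delbar \end{smallmatrix}\bigr]$ and squaring, the diagonal entries give $\delbar^2 = 0$, $\delbar_E^2 = 0$, $\delbar^2 = 0$, which hold since $X$ is complex and $E$ is holomorphic. The $(1,2)$ and $(2,3)$ entries give $\alpha'(\delbar\circ\sF + \sF\circ\delbar_E)$ and $\delbar_E\circ\sF + \sF\circ\delbar$ respectively; both should vanish because $F$ is $\delbar_E$-closed (Bianchi identity for the Chern connection, $\delbar_E F = 0$, $F$ being of type $(1,1)$ on a holomorphic bundle), which is exactly the statement that $\sF$ is a cochain map up to sign — I would record this as a short lemma or inline computation. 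The only substantive entry is the $(1,3)$ corner:
\[
\delbar\circ(\sT+\alpha'\cR\nabla^+) + \alpha'\sF\circ\sF + (\sT+\alpha'\cR\nabla^+)\circ\delbar,
\]
which must be shown to vanish if and only if \eqref{eq:F2} holds.

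To analyse the corner term I would expand each piece acting on $W\otimes\be_3$ in local coordinates. The term $\alpha'\sF\circ\sF$ produces $\alpha'\tr(F_{j\bar k}F_{l\bar m})W^l\, dz^j\otimes d\bar z^{\cdots}$, i.e.\ a contraction of $\tr(F\wedge F)$ against $W$. The terms $\delbar(\sT W) + \sT(\delbar W)$ combine, after using $T = i\partial\omega$ and the torsion symmetries, into a contraction of $\delbar T = i\delbar\partial\omega$ against $W$ (the cross terms where $\delbar$ hits the $(0,p)$-form indices cancel between the two orderings, leaving only the derivative of the torsion coefficients). Similarly $\delbar(\alpha'\cR\nabla^+W) + \alpha'\cR\nabla^+(\delbar W)$ should collapse to a contraction of $\alpha'$ times something built from $\delbar R$ — and here the point of using the \emph{Bismut} connection $\nabla^+$ rather than the Chern connection is precisely that the extra torsion term in $\nabla^+$ converts $\delbar$ of the curvature contraction into $\tr(R\wedge R)$ up to terms that cancel against $\delbar T$. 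Modulo a Bianchi-type identity for $R$ (i.e.\ $\delbar R$ expressed via $\nabla^+$), the corner term reduces to a contraction of
\[
i\partial\delbar\omega \;-\; \tfrac{\alpha'}{2}\bigl(\tr(F\wedge F) - \tr(R\wedge R)\bigr)
\]
against $W$ (up to an overall nonzero constant), which is zero for all $W$ exactly when \eqref{eq:F2} holds. The ``only if'' direction then follows by choosing $W$ to be an arbitrary local $(1,0)$-vector field, so that vanishing of the contraction for all $W$ forces the $(2,2)$-form identity \eqref{eq:F2} pointwise.

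\textbf{Main obstacle.} The delicate step is the bookkeeping in the corner entry: showing that $\delbar(\sT W) + \sT(\delbar W)$ and $\alpha'\bigl(\delbar(\cR\nabla^+W) + \cR\nabla^+(\delbar W)\bigr)$ reorganise — using the first Bianchi identity for the Chern connection, the symmetries $T_{\bar l jk} = T_{\bar l kj}$ coming from $T=i\partial\omega$, and the defining relation \eqref{nabla+} of $\nabla^+$ — into the clean combination $i\partial\delbar\omega$ versus $\tfrac{\alpha'}{2}(\tr F\wedge F - \tr R\wedge R)$, with all the ``wrong'' derivative terms (those differentiating the antiholomorphic form indices) cancelling between the two composition orders. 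This is where the choice of $\nabla^+$ is essential and where a sign or index error is most likely; everything else is formal. I would organise this as: (i) reduce to the corner via the blockwise computation; (ii) a lemma that $\sF$ commutes with $\delbar,\delbar_E$ up to sign using $\delbar_E F=0$; (iii) the corner computation, citing \cite{McOrist2022} or \cite{McOrist:2024zdz} for the torsion/curvature identities if a self-contained derivation is too long; (iv) conclude both directions.
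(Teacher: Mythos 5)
Your proposal matches the paper's proof exactly: square $\bar D$ blockwise, observe that the diagonal entries vanish trivially, kill the $(1,2)$ and $(2,3)$ entries with the Bianchi identity $\delbar F=0$, and reduce the $(1,3)$ corner — via the Chern curvature identity $R_{\bar k q}{}^m{}_\ell = R_{\bar k\ell}{}^m{}_q + \delbar_{\bar k}T^m{}_{\ell q}$ applied to the $\cR\nabla^+$ piece — to a contraction of the anomaly $(2,2)$-form against $W$, with the ``only if'' direction following since $W$ is arbitrary. Two minor slips in your ``main obstacle'' paragraph that would surface in a full write-up: $T=i\del\omega$ makes $T_{\bar l jk}$ antisymmetric (not symmetric) in $j,k$, and the $\tr(R\wedge R)$ term arises entirely from $\delbar\circ(\cR\nabla^+)+(\cR\nabla^+)\circ\delbar$ on its own — there is no cancellation against $\delbar T$; the $\delbar T=-i\del\delbar\omega$ contribution comes separately from the $\sT$ piece.
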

\begin{proof}
    For $s\in\Gamma(Q)$ as in \eqref{eq:s}, our goal is to show that the local coefficients of $\bar{D^2}s\in\Gamma(Q^{0,2})$ are 
\begin{align*}
     (\bar{D}^2 s)_{\bar{k} \bar{j}}
     &= \begin{bmatrix} \big( i \ddb \omega + \tfrac{\alpha'}{2}\tr (R \wedge R) - \tfrac{\alpha'}{2} \tr (F \wedge F)\big)_{m \bar{k} l \bar{j}} W^l dz^m \\ 0 \\0 \end{bmatrix} \\
     &=  (\big( i \ddb \omega + \tfrac{\alpha'}{2}\tr (R \wedge R) - \tfrac{\alpha'}{2} \tr (F \wedge F)\big)_{m \bar{k} l \bar{j}} W^l dz^m )\otimes\be_1
\end{align*}
     The computation for $s\in\Gamma(Q^{0,p})$ is similar for $p>0$, so we restrict to $p=0$ for convenience. By Definition \ref{def:Dbar}, we have
\begin{equation}
\label{eq: barD}
    \bar{D} s = \bar{D} 
    \begin{bmatrix} 
        \kappa \\ \gamma \\ W 
    \end{bmatrix} 
    = \begin{bmatrix}
        \delbar \kappa 
        + \alpha' \sF \gamma + \sT W + \alpha'  \cR \nabla^+  W \\ 
        \delbar_E \gamma + \sF W \\ \delbar W
    \end{bmatrix}
\end{equation}
    and so, using $\delbar^2 = 0$ and $\delbar_E^2=0$,
\begin{equation}
    \bar{D}^2 s 
    = \begin{bmatrix}
        ({\rm I}) + ({\rm II}) + ({\rm III}) \\ 
        ({\rm IV}) \\ 0 
    \end{bmatrix},
\end{equation}
    where
\begin{align*}
 ({\rm I}) 
    &= \alpha' \delbar (\sF \gamma) + \alpha' \sF \delbar_E \gamma,\\
    ({\rm II}) 
    &= \delbar (\sT W) + \sT \delbar W + \alpha' \sF (\sF W),\\
    ({\rm III}) 
    &= \alpha' \delbar (\cR \nabla^+ W) + \alpha' \cR \nabla^+ \delbar W,\\
    ({\rm IV}) 
    &= \delbar_E (\sF W) + \sF \delbar W.
\end{align*}
    
  We claim that terms $({\rm I})$ and $({\rm IV})$ are both zero by virtue of the Bianchi identity $\textrm{d}_AF=0$.  If we choose a local trivialisation where $\delbar_E=\delbar$, then since $A$ is the Chern connection it is given by a $(1,0)$-form, and  $F$ is of type $(1,1)$ so the Bianchi identity forces $\delbar F=0$.  Hence, for $({\rm I})$ we see that
\begin{align}
    \delbar_{\bar{l}} (\sF \gamma)_{j\bar{k}} + \sF (\delbar_{\bar{k}} \gamma)_{j\bar{l}} &= \delbar_{\bar{l}} \tr (F_{j \bar{k}} \gamma) +  \tr  (F_{j \bar{l}} \delbar_{\bar{k}} \gamma) \nonumber\\
    &=  
    \tr F_{j \bar{k} } \delbar_{\bar{l}} \gamma + \tr F_{j \bar{l} } \delbar_{\bar{k}} \gamma\:, \nonumber
\end{align}
   which is zero after antisymmetrisation in $j,k$. The same argument clearly applies to $({\rm IV})$.
    
    We can rewrite term $({\rm II})$ locally as
\begin{equation*}
  \big(\delbar_{\bar{k}} (T_{l m \bar{j}}W^l) +  T_{l m \bar{k}} \delbar_{\bar{j}} W^l + \alpha' \tr (F_{m \bar{k}} F_{l \bar{j}}W^l)\big)dz^m\otimes d\bar{z}^{\bar{k}\bar{j}}\:.
\end{equation*}
Recalling that $T=i\del\omega$ so $i\del\delbar\omega=-\delbar T$, after antisymmetrisation we are then left with
\begin{align*}
     (\bar{D}^2 s)_{\bar{k} \bar{j}} 
     &= \left(\big(\delbar_{\bar{k}} T_{lm \bar{j}} - \delbar_{\bar{j}} T_{lm \bar{k}} + \alpha' \tr (F_{m \bar{k}} F_{l \bar{j}}) - \alpha' \tr (F_{m \bar{j}} F_{l \bar{k}})\big) W^l dz^m + ({\rm III})_{\bar{k} \bar{j}} \right)\otimes\be_1\\
    &= \left( \big( i \ddb \omega -  \tfrac{\alpha'}{2} \tr (F \wedge F)\big)_{m \bar{k}l \bar{j}} W^l dz^m + ({\rm III})_{\bar{k} \bar{j}} \right) \otimes \mathbf{e}_1.
\end{align*}
    
    Finally, we expand term $({\rm III})$; using the fact that $R$ is a Riemann curvature tensor, and so the standard Bianchi identity gives $\delbar_{\bar{k}} R_{\bar{j} p}{}^\ell{}_m - (j \leftrightarrow k) =0$ (here and henceforth we use $(j \leftrightarrow k)$ to indicate the preceding expression with $j$ and $k$ exchanged):
\begin{align*}
    [({\rm III})_{\bar{k} \bar{j}}]_p 
    &= \alpha' \delbar_{\bar{k}} (R_{\bar{j} p}{}^\ell{}_m \nabla^+_\ell W^m) + \alpha' R_{\bar{k} p}{}^\ell{}_m \nabla^+_\ell \partial_{\bar{j}} W^m - (j\leftrightarrow k)
    \\&= \alpha' R_{\bar{j} p}{}^\ell{}_m ( \delbar_{\bar{k}} \nabla^+_\ell W^m - \nabla^+_\ell \delbar_{\bar{k}} W^m ) - (j \leftrightarrow k)\:.
\end{align*}
    Using  Definition \ref{def:nabla+} for the connection $\nabla^+$, this is
\begin{align*}
    [({\rm III})_{\bar{k} \bar{j}}]_p &= \alpha' R_{\bar{j} p}{}^\ell{}_i ( \delbar_{\bar{k}} \nabla_\ell W^m - \delbar_{\bar{k}} (T^i{}_{\ell q} W^q) - \nabla_\ell \delbar_{\bar{k}} W^m + T^m{}_{\ell q} \delbar_{\bar{k}} W^q) - (j \leftrightarrow k) \nonumber\\
    &= 
    \alpha' R_{\bar{j} p}{}^\ell{}_m ( \delbar_{\bar{k}} \nabla_\ell W^m - \nabla_\ell \delbar_{\bar{k}} W^m - \delbar_{\bar{k}} T^m{}_{\ell q} W^q)  - (j \leftrightarrow k) \nonumber\\
    &= 
    \alpha' R_{\bar{j} p}{}^\ell{}_m ( - R_{\bar{k} \ell}{}^m{}_q W^q - \delbar_{\bar{k}} T^m{}_{\ell q} W^q)  - (j \leftrightarrow k)\:.
\end{align*}
    We now use the following identity for the Chern connection in complex (non-K\"ahler) geometry,
\begin{equation}
\label{eq:Chern.R.sym}
    R_{\bar{k} q}{}^m{}_\ell 
    = R_{\bar{k} \ell}{}^m{}_q + \delbar_{\bar{k}} T^m{}_{\ell q},
\end{equation}
    which can be seen by direct computation using \eqref{R-conv}. We deduce that
\begin{equation*}
    [({\rm III})_{\bar{k} \bar{j}}]_p 
    = - \alpha' R_{\bar{j} p}{}^\ell{}_m R_{\bar{k} q}{}^m{}_\ell W^q - (j \leftrightarrow k)
\end{equation*}
and so, as a $1$-form,
\begin{align}
    ({\rm III})_{\bar{k} \bar{j}} &= 
    \alpha'  (R_{\bar{k} p}{}^\ell{}_m R_{\bar{j} q}{}^m{}_\ell - R_{\bar{j} p}{}^\ell{}_m R_{\bar{k} q}{}^m{}_\ell) W^q dz^p \nonumber\\
    &=\tfrac{\alpha'}{2}\big(\tr (R \wedge R)\big)_{\bar{k} p \bar{j} q} W^q dz^p\:.
\end{align}

  Altogether,
\begin{equation}
    (\bar{D}^2 s)_{\bar{k} \bar{j}} 
    = \left( \big( i \ddb \omega + \tfrac{\alpha'}{2}\tr(R \wedge R) - \tfrac{\alpha'}{2} \tr( F \wedge F)\big)_{m \bar{k} l \bar{j}} W^l dz^m \right) \otimes \be_1
\end{equation}
    and we see that $\bar{D}^2 s = 0$ is equivalent to $i \ddb \omega + \tfrac{\alpha'}{2} \left(\tr( R \wedge R) - \tr(F \wedge F) \right)=0$: this is precisely the anomaly cancellation condition \eqref{eq:F2} when the Chern connection is used on the tangent bundle.
\end{proof}    
From the proof of Proposition \ref{prop:Nilpotent}, we also get the following useful corollary.
\begin{corollary}
\label{cor:commute}
    Let $W=W^j\in\Omega^{0,p}(T^{1,0}X)$  and let $R=R_j{}^k{}_l\in\Omega^{0,1}\left((T^{1,0}X)^*\otimes\End(T^{1,0}X)\right)$ be the curvature of the Chern connection. Then
    \begin{equation}
        \delbar\left(\nabla_l^+W^k\right)=R_j{}^k{}_lW^j+\nabla^+_l\delbar W^k\:.
    \end{equation}
\end{corollary}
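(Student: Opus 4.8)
The plan is to extract the identity directly from the proof of Proposition~\ref{prop:Nilpotent}: it is precisely the content of the bracket that appeared there in the analysis of the term $({\rm III})$, once that bracket is detached from the surrounding factor $R_{\bar j p}{}^\ell{}_m(\,\cdot\,)-(j\leftrightarrow k)$ and the $(0,p)$-form indices of $W$ are carried along for the ride. Rather than reprove it from scratch I would isolate and repackage that step, while also recording the short self-contained argument.

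In a local holomorphic frame, take first $W=W^m$ (the case $p=0$). Expanding $\nabla^+$ via Definition~\ref{def:nabla+} as $\nabla^+_\ell W^m = \nabla_\ell W^m - T^m{}_{\ell q}W^q$, the quantity $\delbar_{\bar k}(\nabla^+_\ell W^m) - \nabla^+_\ell(\delbar_{\bar k}W^m)$ breaks into the Chern-connection commutator $\delbar_{\bar k}\nabla_\ell W^m - \nabla_\ell\delbar_{\bar k}W^m$ together with torsion terms, of which the two pieces where $T^m{}_{\ell q}$ hits $\delbar_{\bar k}W^q$ cancel, leaving a single contribution $(\delbar_{\bar k}T^m{}_{\ell q})W^q$. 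Since $\delbar$ is the $(0,1)$-component of the Chern connection on $T^{1,0}X$ (Notation~\ref{notation1}), the commutator equals a pure curvature term prescribed by \eqref{R-conv}, namely $-R_{\bar k\ell}{}^m{}_q W^q$. Finally the Chern-geometry identity \eqref{eq:Chern.R.sym}, $R_{\bar k q}{}^m{}_\ell = R_{\bar k\ell}{}^m{}_q + \delbar_{\bar k}T^m{}_{\ell q}$, absorbs the surviving $\delbar_{\bar k}T^m{}_{\ell q}$ into the curvature and collapses everything to $-R_{\bar k q}{}^m{}_\ell W^q$; relabelling indices as in Definition~\ref{def:Q} and reading $R$ as the element of $\Omega^{(0,1)}((T^{1,0}X)^*\otimes\End(T^{1,0}X))$ described in the statement yields exactly $\delbar(\nabla^+_l W^k)=R_j{}^k{}_l W^j+\nabla^+_l\delbar W^k$.

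For general $p$ there is nothing more to prove: by Definitions~\ref{def:nabla+} and \ref{def:Dbar}, both $\delbar$ and $\nabla^+$ act on the $(0,p)$-form indices of $W$ as the same operator (the Chern $\delbar$), so those indices are passive spectators, the Leibniz rule contributes no extra signs, and the $p=0$ computation applies verbatim.

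The only point requiring care --- and the nearest thing to an obstacle --- is bookkeeping: one must confirm that the identity holds at the level of coefficients \emph{without} the antisymmetrisation used in the proof of Proposition~\ref{prop:Nilpotent} (it does, as that antisymmetrisation acted on the surrounding curvature factor and played no role in simplifying the bracket), and that the placement of the suppressed $(0,1)$-form index and the ordering of the endomorphism indices in the statement are consistent with \eqref{R-conv} and \eqref{eq:Chern.R.sym}. Once these conventions are aligned the result is immediate.
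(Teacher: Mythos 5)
Your approach is exactly the paper's own: the printed proof simply points back to the $({\rm III})$-term analysis in Proposition~\ref{prop:Nilpotent} together with \eqref{eq:Chern.R.sym}, which is precisely what you carry out. Your structural remarks — the cancellation of the two $T^m{}_{\ell q}\delbar_{\bar k}W^q$ pieces, the passivity of the $(0,p)$-form indices, and the fact that the $(j\leftrightarrow k)$-antisymmetrisation in Proposition~\ref{prop:Nilpotent} acted only on the outer curvature factor rather than on the bracket — are correct and well placed.

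There is, however, a sign that does not close. You arrive at $\delbar_{\bar k}(\nabla^+_\ell W^m)-\nabla^+_\ell\delbar_{\bar k}W^m=-R_{\bar k q}{}^m{}_\ell W^q$, and relabelling $\ell\to l$, $m\to k$, $q\to j$ with $\bar k$ suppressed gives $-R_j{}^k{}_lW^j$, not $+R_j{}^k{}_lW^j$ as the statement of Corollary~\ref{cor:commute} asserts and as you claim the relabelling ``yields exactly''. The $-$ enters with your assertion that the Chern commutator equals $-R_{\bar k\ell}{}^m{}_qW^q$: reading \eqref{R-conv} as $\nabla_j\nabla_{\bar k}-\nabla_{\bar k}\nabla_j=-R_{\bar k j}{}^l{}_m(\,\cdot\,)$ gives $\delbar_{\bar k}\nabla_\ell W^m-\nabla_\ell\delbar_{\bar k}W^m=+R_{\bar k\ell}{}^m{}_qW^q$, the opposite sign. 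You are transcribing the same $-$ that appears in the printed intermediate line of Proposition~\ref{prop:Nilpotent}, where it compensates against the sign choice built into \eqref{eq:Chern.R.sym} inside the antisymmetrised product, so the anomaly identity at the end of that proof is unaffected; but here the sign of the bracket \emph{is} the statement of the corollary, so those two signs cannot be borrowed — they must be pinned down independently from \eqref{R-conv} and from $R=\delbar\Gamma$ together with the Chern torsion, and doing so produces $+R_j{}^k{}_l W^j$ as Corollary~\ref{cor:commute} requires.
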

\begin{proof}
    This can be shown by direct computation, analogously following the computation of the $({\rm III})$-term in the proof of Proposition \ref{prop:Nilpotent}, using \eqref{eq:Chern.R.sym}.
\end{proof}

\begin{remark}
\label{rem:HYM2}
    As mentioned above, there is an alternative description of the heterotic $\SU(3)$ system, where in the anomaly cancellation condition \eqref{eq:F2} one uses a Hermitian Yang-Mills (HYM) connection on the tangent bundle instead of the Chern connection of $g$. To accommodate this choice, one needs to introduce a new metric $\tilde g$ on $TX$, whose Chern connection is now HYM. The construction then mimics the story above, except that we now use the Chern curvature of $\tilde g$ and the Bismut connection of $\tilde g$ when constructing the term $\cR \nabla^+$ in $\bar D$.
\end{remark}

\subsection{The adjoint operator \texorpdfstring{$\bar{D}^*$}{barD*}}

To compute the adjoint of $\bar D$, we first need to find explicitly the natural metrics on $Q$ and $Q^{0,1}=Q\otimes (T^{0,1}X)^*$. We will refer frequently to objects given in Notation \ref{notation1}.

Let us express the metric on $Q$ induced by the Riemannian metric $g$ on $X$ and the Hermitian metric $h$ on the bundle $E$. The metric on $T^{1,0}X$ is given by $g=g_{jk}dz^jdz^k$. Similarly, $(T^{1,0}X)^*$ has metric $g^*$ with $g^*=g^{jk}\frac{\partial}{\partial z^j} \frac{\partial}{\partial z^k}$, where $g^{jl}g_{lk}=\delta^j_k$. On $\End(E)$, the induced Hermitian metric $h_{\End}$ is given for  $\beta,\gamma\in \End(E)$ with local components  $\beta=\beta^j_k e_j\otimes e^k$ and $\gamma=\gamma^j_k e_j\otimes e^k$ by $$h_{\End}(\beta,\gamma)=h_{jk}h^{lm}\beta^j_l\gamma^k_m,$$
where again $h^{jk}$ denotes the components of the inverse of $(h_{jk})$.   Thus the metric $g_{Q}$ on $Q$ is given by
\begin{equation}
    g_Q=\begin{bmatrix}
        g^*&0&0\\
        0&h_{\End}&0\\
        0&0&g
    \end{bmatrix}.
\end{equation}
Finally, we   extend $g_{Q}$ to a metric on $Q^{0,1}$  using $g^*$ with local coefficients $g^{\bar i \bar j}$ on $(T^{0,1}X)^*$.

Let $s\in \Gamma(Q),\ t\in \Gamma(Q)\otimes\Omega^{0,1}$  with local expressions
\begin{equation*}
     s=\begin{bmatrix}\theta\\\beta\\ V\end{bmatrix}=\begin{bmatrix}
         \theta_a dz^a\\
         \beta^b{}_c e_b\otimes e^c\\
         V^d \frac{\partial}{\partial z^d}
     \end{bmatrix}\quad\text{and}\quad
     t=\begin{bmatrix}
         \kappa\\ \gamma \\ W
     \end{bmatrix}=\begin{bmatrix}
         \kappa_{j\bar k}dz^j\\
         \gamma^l{}_{m\bar k} e_l\otimes e^m\\
         W^p{}_{\bar{k}} \frac{\partial}{\partial z^p}
    \end{bmatrix}\otimes d\bar{z}^{\bar{k}}.
\end{equation*}
We want to find $\bar D^*:\Gamma(Q^{0,1})\to\Gamma(Q)$ such that 
$$\int_X g_Q(\bar Ds,t)=\int_X g_Q(s,\bar D^*t)
$$ when $s,t$ have compact support, i.e.~so that $\bar{D}^*$ is the $L^2$ adjoint of $\bar{D}$. Using \eqref{eq: Dbar} and the formulae \eqref{eq:sF}, \eqref{eq:sT} and \eqref{eq:Rnabla+}:
\begin{align*}
\label{your_label_here}
\bar{D}s
  &=\begin{bmatrix} 
      \delbar\theta + \alpha' \sF \beta + \sT V + \alpha' \cR \nabla^+ V \\
      \delbar_E \beta + \sF V \\
      \delbar V
    \end{bmatrix} \\
  &=\begin{bmatrix}
      \big( \delbar_{\bar{q}} \theta_a + \alpha' F_{a\bar{q}}{}^c{}_b \beta^b{}_c + T_{da\bar{q}} V^d 
            + \alpha' R_{\bar{q}a}{}^r{}_d \nabla^+_r V^d \big) dz^a \\[2pt]
      \big( \delbar_{\bar{k}} \beta^b{}_c + F_{d\bar{k}}{}^b{}_{c} V^d \big) e_b \otimes e^c \\[2pt]
      \delbar_{\bar{q}} V^d \frac{\partial}{\partial z^d}
    \end{bmatrix} 
    \otimes d\bar{z}^{\bar{q}}.
\end{align*}By definition, the $L^2$ adjoints of $\delbar$ and $\delbar_E$ are $\delbar^*$ and $\delbar_E^*$ respectively.  Since the operators $\sF$ in \eqref{eq:sF} and $\sT$ in \eqref{eq:sT} are linear,  their pointwise (and hence $L^2$) adjoints are also linear maps that we can denote $\sF^*$ and $\sT^*$.  To compute $\sF^*$ and $\sT^*$ we see first that
\begin{equation}
\label{eq:sF*.1}
    g(\sF\beta,\kappa)
    = \beta^b{}_cF_{a\bar{q}}{}^c{}_b \kappa_{j\bar{k}}g^{aj}g^{\bar{q}\bar{k}}  
\end{equation}
and, if we write $\sF^*\kappa \in \Gamma(\End(E))$ locally as 
\begin{equation}
\label{eq:sF*.2}
    \sF^*\kappa = (F^*)^{j\bar{k}l}{}_m \kappa_{j\bar{k}} e_l \otimes e^m,
\end{equation}
then
\begin{equation}
\label{eq:sF*.3}
    h_{\End}(\beta,\sF^*\kappa)
    = \beta^b{}_c (F^*)^{j\bar{k}l}{}_m \kappa_{j\bar{k}} h_{bl} h^{cm}
\end{equation}
Comparing \eqref{eq:sF*.1} and \eqref{eq:sF*.3} we deduce that
\begin{equation}
\label{eq:sF*.4}
    (F^*)^{j\bar{k}l}{}_m
    = g^{aj}g^{\bar{q}\bar{k}}F_{a\bar{q}}{}^c{}_b h^{bl} h_{cm},
\end{equation}
which is equivalent to taking the usual adjoint (i.e. Hermitian transpose) on the endomorphism indices, and raising the 2-form indices using $g$. Similarly, we can define $\sF^*\gamma\in\Gamma(T^{1,0}X)$ locally by
\begin{equation}
\label{eq:sF*.5}
    \sF^*\gamma
    = g^{dp}g^{\bar{q}\bar{k}}F_{d\bar{q}}{}^b{}_c h_{bl} h^{cm} \gamma^{l}{}_{m\bar{k}} \frac{\partial}{\partial z_p}.
\end{equation}
and see that
\begin{equation*}
    g(V,\sF^*\gamma)=h_{\End}(\sF V,\gamma).
\end{equation*}
A similar calculation shows that if we define $\sT^*\kappa\in\Gamma(T^{1,0}X)$ by
\begin{equation}
\label{eq:sT*}
  \sT^*\kappa= g^{dp}g^{aj}g^{\bar{q}\bar{k}}T_{da\bar{q}}\kappa_{j\bar{k}}\frac{\partial}{\partial z^p}  
\end{equation}
then
\begin{equation*}
g(V,\sT^*\kappa)=g^*(\sT V,\kappa).
\end{equation*}

We are then left with computing the adjoint of $\cR\nabla^+$.   Using integration by parts for $\nabla$ and the fact that $\nabla$ is a metric connection, we see that when $V,\kappa$ are compactly supported we have
\begin{align*}
\int_Xg^*(\cR\nabla^+V,\kappa)&=\int_Xg^{aj}g^{\bar{k}\bar{q}}R_{\bar{q}a}{}^r{}_d(\nabla^+_rV^d)\kappa_{j\bar{k}}\\
&=\int_X \nabla_r(g^{aj}g^{\bar{k}\bar{q}}R_{\bar{q}a}{}^r{}_dV^d\kappa_{j\bar{k}})-\int_X \nabla_r(g^{aj}g^{\bar{k}\bar{q}}R_{\bar{q}a}{}^r{}_d\kappa_{j\bar{k}})V^d\\&\quad-\int_X g^{aj}g^{\bar{k}\bar{q}}R_{\bar{q}a}{}^r{}_d{T^d}_{rs}V^s\kappa_{j\bar{k}}\\
&=-\int_XV^dg^{aj}g^{\bar{k}\bar{q}}\big(\nabla_r( R_{\bar{q}a}{}^r{}_d\kappa_{j\bar{k}})+{T^s}_{rd}R_{\bar{q}a}{}^r{}_s\kappa_{j\bar{k}}\big)\\
&=-\int_X g\left(V^d\frac{\partial}{\partial z^d},g^{aj}g^{bp}g^{\bar{k}\bar{q}}\big(\nabla_r(R_{\bar{q}a}{}^r{}_b\kappa_{j\bar{k}})+{T^s}_{rb}R_{\bar{q}a}{}^r{}_s\kappa_{j\bar{k}}\big)\frac{\partial}{\partial z^p}\right).
\end{align*}
Therefore, if we define $(\nabla^+)^*\cR^*\kappa\in\Gamma(T^{1,0}X)$  by the local formula
\begin{equation}\label{eq:nabla+*R}
   (\nabla^+)^*\cR^*\kappa=-g^{aj}g^{bp}g^{\bar{k}\bar{q}}\big(\nabla_r(R_{\bar{q}a}{}^r{}_b\kappa_{j\bar{k}})+{T^s}_{rb}R_{\bar{q}a}{}^r{}_s\kappa_{j\bar{k}}\big)\frac{\partial}{\partial z^p}, 
\end{equation}
our observations combine into the following result. 

\begin{prop}\label{prop:adjoint}
    The adjoint $\bar{D}^*:\Gamma(Q^{0,1})\to\Gamma(Q)$ of the deformation operator $\bar{D}:\Gamma(Q)\to\Gamma(Q^{0,1})$ given in Definition \ref{def:Dbar} is 
\begin{equation}
\label{eq: barD*}
    \bar D^*
    =\begin{bmatrix}
    \delbar^*&0&0\\
    \alpha'\sF^*&\delbar_E^*&0\\
    \sT^*+\alpha'(\nabla^+)^*\cR^* &\sF^*&\delbar^*
    \end{bmatrix},
\end{equation}
    where $\sF^*$ is given by \eqref{eq:sF*.2}, \eqref{eq:sF*.4} and \eqref{eq:sF*.5}, $\sT^*$ is given by \eqref{eq:sT*}, and  $(\nabla^+)^*\cR^*$ is given by \eqref{eq:nabla+*R}.
\end{prop}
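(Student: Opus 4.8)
The plan is to exploit the fact that $\bar D$ is presented in \eqref{eq: Dbar} as a $3\times 3$ array of operators between the summands of the orthogonal direct sum $Q = (T^{1,0}X)^{*}\oplus\End(E)\oplus T^{1,0}X$, and that the metric $g_Q$, extended to $Q^{0,1}$ by means of $g$ on the $(0,1)$-form factor, is block-diagonal for this splitting. Writing $\bar D = (D_{ij})$, where $D_{ij}$ sends the $j$-th summand of $\Gamma(Q)$ into the $i$-th summand of $\Gamma(Q^{0,1})$, one has for compactly supported $s=(s_j)$, $t=(t_i)$ that $\int_X g_Q(\bar D s,t) = \sum_{i,j}\int_X g_Q(D_{ij}s_j,t_i) = \sum_{i,j}\int_X g_Q(s_j, D_{ij}^{*}t_i)$, so that $(\bar D^{*})_{ji} = D_{ij}^{*}$: the adjoint is the block transpose of the array of entrywise $L^2$-adjoints. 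Since $X$ is compact this requires no boundary hypothesis, and it remains only to identify $D_{ij}^{*}$ for each of the six nonzero entries of \eqref{eq: Dbar} — computations that have in effect already been carried out above.

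I would organise these in three families. First, the diagonal Dolbeault operators $\delbar$ on $(T^{1,0}X)^{*}$- and $T^{1,0}X$-valued forms and $\delbar_E$ on $\End(E)$-valued forms have, by definition, $L^2$-adjoints $\delbar^{*}$ and $\delbar_E^{*}$ with respect to $g$ (and $h$), and they stay on the diagonal. Second, the zeroth-order, bundle-linear operators $\alpha'\sF$ and $\sT$ have pointwise adjoints computed by the linear algebra in \eqref{eq:sF*.1}--\eqref{eq:sF*.5} and in the line preceding \eqref{eq:sT*}: the endomorphism indices are transposed with $h$ and the form indices raised with $g$, yielding $\sF^{*}$ and $\sT^{*}$. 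Note there are two incarnations of $\sF$ — one $\End(E)\to(T^{1,0}X)^{*}$ and one $T^{1,0}X\to\End(E)$ — and correspondingly two of $\sF^{*}$, each landing in the appropriate summand. Third, the first-order operator $\alpha'\cR\nabla^{+}$ is handled by integration by parts: using that the Bismut connection $\nabla^{+}$ of Definition \ref{def:nabla+} is compatible with $g$, and acts as the (also $g$-compatible) Chern connection on the $(0,p)$-form factor, the derivative is moved off $V$ with no boundary contribution on the compact $X$ — precisely the computation leading to \eqref{eq:nabla+*R} — and the resulting operator is denoted $(\nabla^{+})^{*}\cR^{*}$.

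Assembling: from \eqref{eq: Dbar}, the $(1,1)$ entry $\delbar$ contributes $\delbar^{*}$ in position $(1,1)$; the $(1,2)$ entry $\alpha'\sF$ contributes $\alpha'\sF^{*}$ in position $(2,1)$; the $(1,3)$ entry $\sT+\alpha'\cR\nabla^{+}$ contributes $\sT^{*}+\alpha'(\nabla^{+})^{*}\cR^{*}$ in position $(3,1)$; the $(2,2)$ entry $\delbar_E$ contributes $\delbar_E^{*}$ in position $(2,2)$; the $(2,3)$ entry $\sF$ contributes $\sF^{*}$ in position $(3,2)$; the $(3,3)$ entry $\delbar$ contributes $\delbar^{*}$ in position $(3,3)$; all remaining positions are zero. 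This is exactly the matrix \eqref{eq: barD*}.

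The block-transpose bookkeeping and the pointwise computations of $\sF^{*}$ and $\sT^{*}$ are routine. The one step that genuinely needs care is the integration by parts for $\cR\nabla^{+}$: one must confirm that $\nabla^{+}$, as extended in Definition \ref{def:nabla+} to $T^{1,0}X\otimes\Lambda^{p}(T^{0,1}X)^{*}$, really is a metric connection for $g_Q$, so that neither a boundary term nor a ``divergence of the connection'' term survives and $(\nabla^{+})^{*}$ in \eqref{eq:nabla+*R} is the honest formal adjoint of $\nabla^{+}$. Once this is in hand, the identification of $\bar D^{*}$ is the assembly above.
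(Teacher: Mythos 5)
Your proposal follows the same approach as the paper's proof: it exploits the block-diagonal structure of $g_Q$ with respect to the direct sum $Q=(T^{1,0}X)^*\oplus\End(E)\oplus T^{1,0}X$ to reduce the problem to computing entrywise $L^2$-adjoints of the six nonzero blocks of $\bar D$ — the diagonal Dolbeault operators have their standard adjoints, the zeroth-order operators $\alpha'\sF$ and $\sT$ are handled by the pointwise linear algebra of \eqref{eq:sF*.1}--\eqref{eq:sF*.5} and \eqref{eq:sT*}, and $\alpha'\cR\nabla^+$ is treated by integration by parts using that $\nabla^+$ is metric, giving \eqref{eq:nabla+*R}. The paper carries out exactly these steps, merely without stating the block-transpose observation as an explicit organizing principle, and it too takes the metric-compatibility of $\nabla^+$ as known rather than verifying it.
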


\subsection{Ellipticity} Now that we have formulae \eqref{eq: barD} for $\bar{D}$ and \eqref{eq: barD*} for $\bar{D}^*$, we can compute the (principal) symbol of $\bar{D}+\bar{D}^*:\Gamma(Q^{0,1})\to \Gamma(Q^{0,2}\oplus Q)$, so as to deduce its ellipticity.

\begin{prop}
\label{prop:elliptic}
    The symbol of    $\bar{D}+\bar{D}^*:\Gamma(Q^{0,1})\to \Gamma(Q^{0,2}\oplus Q)$ is given by its action on a section $s$ of $Q^{0,1}$ as in \eqref{eq:s} given a cotangent vector $\xi$ by the local formula 
\begin{align}
    \sigma_{\xi}(\bar{D}+\bar{D}^*)(s)
    &= \sigma_{\xi}(\bar{D}+\bar{D}^*)
    \begin{bmatrix} 
        \kappa \\ \gamma \\ W 
    \end{bmatrix} \nonumber \\
    &= \begin{bmatrix}        \big(\xi_{\bar{k}}\kappa_{j\bar{l}} + \alpha'R_{\bar{k}j}{}^m{}_n\xi_mW^n{}_{\bar{l}}\big)dz^j\otimes d\bar{z}^{\bar{k}\bar{l}} + \xi_{\bar{k}}\kappa_{j\bar{k}}dz^j \\        \xi_{\bar{k}}\gamma_{\bar{l}}\otimes d\bar{z}^{\bar{k}{l}} + \xi_{\bar{k}}\gamma_{\bar{k}} 
    \\        \xi_{\bar{k}}W^j{}_{\bar{l}}\frac{\partial}{\partial z^j}\otimes d\bar{z}^{\bar{k}{l}} + \big(-\alpha'R_{\bar{l}j}{}^m{}_n\xi_m\kappa_{j\bar{l}} + \xi_{\bar{k}}W^j{}_{\bar{k}}\big)\frac{\partial}{\partial z^j}
    \end{bmatrix}.
    \label{eq:symbol}
\end{align}
    Hence, if $R$ is bounded by $R_0$ and $\alpha'/R_0$ is sufficiently small, then $\ker\sigma_{\xi}(\bar{D}+\bar{D}^*)=0$ for $\xi\neq 0$, and thus $\bar{D}+\bar{D}^*$ is overdetermined elliptic.
\end{prop}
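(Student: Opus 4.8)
The plan is two-stage: read off the principal symbol directly from the block matrices \eqref{eq: Dbar} and \eqref{eq: barD*}, then prove injectivity by treating the result as a curvature perturbation of the symbol of $\delbar\oplus\delbar^*$. For the first stage, observe that $\sF,\sT$ and their adjoints $\sF^*,\sT^*$ are algebraic, hence of order zero, so the only first-order entries of $\bar D+\bar D^*$ are the Dolbeault operators $\delbar,\delbar_E,\delbar$ on the diagonal of $\bar D$, their formal adjoints $\delbar^*,\delbar_E^*,\delbar^*$ on the diagonal of $\bar D^*$, the entry $\cR\nabla^+$ in the top-right corner of $\bar D$, and its adjoint $(\nabla^+)^*\cR^*$ in the bottom-left corner of $\bar D^*$. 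The symbol of $\delbar$ (and of $\delbar_E$) is the wedge map $\phi\mapsto\xi^{0,1}\wedge\phi$, where $\xi^{0,1}$ is the $(0,1)$-component of $\xi$, and the symbol of $\delbar^*$ (and of $\delbar_E^*$) is its pointwise Hermitian adjoint, a contraction with $\xi^{0,1}$; these reproduce all the $R$-free terms of \eqref{eq:symbol}. Since the torsion correction in $\nabla^+$ is of order zero (Definition~\ref{def:nabla+}), the symbol of $\cR\nabla^+$ is obtained from \eqref{eq:Rnabla+} by replacing $\nabla^+_l$ with the holomorphic component $\xi_l$, which gives the term $\alpha' R_{\bar k j}{}^m{}_n\xi_m W^n\,dz^j\otimes d\bar{z}^{\bar{k}}$; dually, reading off \eqref{eq:nabla+*R} produces the $\alpha' R$-term in the last row. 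Assembling these entries into the block structure of $\bar D+\bar D^*$ yields \eqref{eq:symbol}; this step is routine bookkeeping with the $(1,0)/(0,1)$ splitting of $\xi$ and the index conventions of Notation~\ref{notation1}.

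For the second stage, fix $0\neq\xi\in T^*_xX$ and suppose $\sigma_\xi(\bar D+\bar D^*)(s)=0$ for $s=(\kappa,\gamma,W)$ as in \eqref{eq:s}. The $\End(E)$-row of \eqref{eq:symbol} decouples entirely, so $\sigma_\xi(\delbar_E)\gamma=0$ and $\sigma_\xi(\delbar_E^*)\gamma=0$; by the standard pointwise identity $|\sigma_\xi(\delbar_E)\gamma|^2+|\sigma_\xi(\delbar_E^*)\gamma|^2=|\xi^{0,1}|^2|\gamma|^2$ (i.e.\ ellipticity of $\delbar_E\oplus\delbar_E^*$ on $(0,1)$-forms), this forces $\gamma=0$. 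It remains to treat the coupled $(\kappa,W)$-block. Write the restriction of the symbol to this block as $L_\xi+\alpha' M_\xi$, where $L_\xi$ collects the $\delbar$/$\delbar^*$ entries, acting diagonally on the $(T^{1,0}X)^*$- and $T^{1,0}X$-summands, and $\alpha' M_\xi$ collects the two curvature couplings. The same identity, applied on each summand, gives the exact equality
\[
|L_\xi(\kappa,W)|^2=|\xi^{0,1}|^2\bigl(|\kappa|^2+|W|^2\bigr),
\]
a lower bound that does not involve $R$, whereas the couplings satisfy $|M_\xi(\kappa,W)|\le C R_0|\xi|\bigl(|\kappa|^2+|W|^2\bigr)^{1/2}$ for a constant $C=C(X,g)$ arising from the metric contractions raising indices. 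Combining these,
\[
|\sigma_\xi(\bar D+\bar D^*)(s)|\ \ge\ \bigl(|\xi^{0,1}|-C\alpha' R_0|\xi|\bigr)\bigl(|\kappa|^2+|W|^2\bigr)^{1/2},
\]
and since $|\xi^{0,1}|$ is a fixed positive multiple of $|\xi|$, the coefficient on the right is strictly positive as soon as $\alpha' R_0$ is small compared with that geometric constant -- which is the smallness hypothesis of the statement. Hence $\kappa=W=0$, so $s=0$; therefore $\ker\sigma_\xi(\bar D+\bar D^*)=0$ for every $\xi\neq0$, and $\bar D+\bar D^*$ is overdetermined elliptic.

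The main obstacle here is organisational rather than analytic: one must single out, within \eqref{eq:symbol}, the part of the symbol that is independent of the curvature -- namely the model $L_\xi$ built from $\delbar$ and $\delbar^*$ -- and verify that it is uniformly bounded below on the relevant fibres, so that the remaining terms, which are linear in both $\alpha'$ and $R$, can be absorbed as a perturbation. Once this splitting is in place, the pointwise Clifford identity and the crude bound on the curvature couplings finish the argument; in particular the role of the smallness of $\alpha'$ is exactly to keep the perturbation below the fixed ellipticity constant of the Dolbeault model.
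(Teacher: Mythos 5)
Your proof is correct and follows essentially the same approach as the paper: compute the symbol directly from the block formulae for $\bar{D}$ and $\bar{D}^*$, observe that at $\alpha'=0$ it reduces to the symbol of $\delbar+\delbar^*$, and absorb the curvature terms by smallness of $\alpha'/R_0$. The paper invokes openness of the set of injective linear maps together with the bound on $R$, whereas you make the perturbation estimate explicit via the pointwise Clifford identity $|\xi^{0,1}\wedge\phi|^2+|\xi^{0,1}\lrcorner\phi|^2=|\xi^{0,1}|^2|\phi|^2$ after first decoupling the $\End(E)$-component; this is a cosmetic (but tidy) refinement of the same argument.
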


\begin{proof}
The computation of the symbol in \eqref{eq:symbol} is straightforward given the formulae for $\bar{D}$ and $\bar{D}^*$.  One point to note is that $\nabla^+$ and $\nabla$ only differ by an algebraic term in the torsion $T$, and so their symbols agree.  A similar consideration shows why the connection $A$ does not appear in the symbol.

Note that \eqref{eq:symbol} agrees with the symbol of $\delbar+\delbar^*$ when $\alpha'=0$, and so it will be injective for $\xi\neq 0$ in that case.  Since the symbol is linear in $\xi$, we can restrict now to $|\xi|=1$.  The set of injective linear maps is open, so if we have a bound $R_0$ on $R$, then we can choose $\alpha'$ sufficiently small so that the terms involving $\alpha'$ in the symbol are negligible, for any $\xi$ with $|\xi|=1$.  The result then follows.
\end{proof}

Proposition \ref{prop:elliptic} enables us, in particular, to infer that the kernel of $\bar{D}+\bar{D}^*$ is finite-dimensional.  Moreover, by Hodge theory (cf.~\cite{Wells1980}*{Theorem IV.5.2}) for the elliptic complex associated to $\bar{D}$ acting on $Q^{0,p}$,  we can deduce the following result.

\begin{theorem}
\label{prop:Hodge} 
    In the context of Notation \ref{notation1}, if $X$ is compact and $\alpha'$ is sufficiently small relative to a bound for the curvature $R$, then
\begin{equation*}
    H^{0,p}_{\bar{D}}(Q)\cong \{s\in Q^{0,p}\,:\bar{D}s=0\;\,
    \text{and}\;\,\bar{D}^*s=0\}.
\end{equation*}    
\end{theorem}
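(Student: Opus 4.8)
The plan is to identify $(\Gamma(Q^{0,\bullet}),\bar D)$ as an \emph{elliptic complex} on the compact manifold $X$ and then invoke the Hodge decomposition theorem for elliptic complexes, \cite{Wells1980}*{Theorem IV.5.2}.

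\textbf{Step 1: the complex is elliptic.} By Proposition \ref{prop:Nilpotent}, the anomaly cancellation condition \eqref{eq:F2} assumed throughout (Notation \ref{notation1}) gives $\bar D^2=0$, so $0\to\Gamma(Q^{0,0})\xrightarrow{\bar D}\Gamma(Q^{0,1})\xrightarrow{\bar D}\Gamma(Q^{0,2})\xrightarrow{\bar D}\Gamma(Q^{0,3})\to 0$ is a cochain complex, terminating at $p=3=\dim_{\C}X$. To see it is elliptic I must check that for every cotangent vector $\xi\neq 0$ the symbol sequence $0\to Q^{0,0}\xrightarrow{\sigma_\xi(\bar D)}Q^{0,1}\xrightarrow{\sigma_\xi(\bar D)}Q^{0,2}\xrightarrow{\sigma_\xi(\bar D)}Q^{0,3}\to 0$ is exact. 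Since $\bar D$ is first order, exactness of this sequence is equivalent to $\ker\sigma_\xi(\bar D)\cap\ker\sigma_\xi(\bar D)^{*}=0$ on each $Q^{0,p}$, that is, to injectivity of $\sigma_\xi(\bar D+\bar D^{*})$ on each $Q^{0,p}$. For $p=1$ this is exactly the conclusion of Proposition \ref{prop:elliptic} under the smallness hypothesis on $\alpha'/R_0$. The identical argument applies in degrees $p=0,2,3$: at $\alpha'=0$ the operator $\bar D$ degenerates to the block-diagonal operator $\diag(\delbar,\delbar_E,\delbar)$ acting on $(T^{1,0}X)^{*}$-, $\End(E)$- and $T^{1,0}X$-valued $(0,\bullet)$-forms, each summand being the Dolbeault complex of a holomorphic vector bundle, hence elliptic; injectivity of the symbol is an open condition and the symbol is linear in $\xi$ (so it suffices to test $|\xi|=1$), so a curvature bound $R_0$ together with $\alpha'/R_0$ small keeps $\sigma_\xi(\bar D+\bar D^{*})$ injective in every degree. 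Thus the complex is elliptic once $\alpha'$ is sufficiently small relative to $R_0$.

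\textbf{Step 2: Hodge theory.} With ellipticity established, form the Laplacian $\Delta_p:=\bar D\bar D^{*}+\bar D^{*}\bar D$ on $\Gamma(Q^{0,p})$, where $\bar D^{*}$ in each degree is the formal $L^2$-adjoint (computed in degree one in Proposition \ref{prop:adjoint}, with the evident analogue in other degrees). Its symbol is $\sigma_\xi(\bar D)\sigma_\xi(\bar D)^{*}+\sigma_\xi(\bar D)^{*}\sigma_\xi(\bar D)$, which by the exactness of Step 1 is an isomorphism for $\xi\neq 0$; hence $\Delta_p$ is a formally self-adjoint, non-negative \emph{elliptic} operator on the compact manifold $X$. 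Standard elliptic theory then gives the orthogonal decomposition $\Gamma(Q^{0,p})=\mathcal H^{0,p}\oplus\im\bar D\oplus\im\bar D^{*}$ with $\mathcal H^{0,p}:=\ker\Delta_p$ finite-dimensional, the identity $\langle\Delta_p s,s\rangle=\|\bar D s\|^2+\|\bar D^{*}s\|^2$ forces $\mathcal H^{0,p}=\ker\bar D\cap\ker\bar D^{*}$, and each $\bar D$-cohomology class of $Q^{0,p}$ contains a unique harmonic representative, yielding $H^{0,p}_{\bar D}(Q)\cong\mathcal H^{0,p}=\{s\in Q^{0,p}: \bar D s=0\text{ and }\bar D^{*}s=0\}$. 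This is precisely \cite{Wells1980}*{Theorem IV.5.2} applied to the present elliptic complex.

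\textbf{Main obstacle.} The only genuine point to check is that the symbol computation of Proposition \ref{prop:elliptic}, recorded there only in degree $p=1$, really does carry over to degrees $p=0,2,3$ — in particular that the $\alpha'=0$ model is an honest direct sum of elliptic Dolbeault complexes, so that the openness/perturbation argument applies uniformly in $\xi$ — after which the conclusion is the textbook Hodge theorem for elliptic complexes. As in Proposition \ref{prop:elliptic}, the smallness of $\alpha'$ relative to the curvature bound is used solely to dominate the $\alpha'$-linear off-diagonal terms $\alpha' R_{\bar k j}{}^{m}{}_{n}\xi_m$ occurring in $\sigma_\xi(\bar D+\bar D^{*})$.
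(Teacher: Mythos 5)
Your proposal is correct and follows essentially the same route as the paper: Proposition \ref{prop:elliptic} supplies ellipticity of the symbol (under the same smallness hypothesis on $\alpha'$ relative to a curvature bound), and the conclusion is then an application of Hodge theory for elliptic complexes, citing \cite{Wells1980}*{Theorem IV.5.2}. The only difference is that you make explicit the point—left implicit in the paper, where the symbol computation is recorded only in degree $p=1$—that the same perturbative openness argument establishes symbol exactness in every degree $p$, which is needed for the complex to be elliptic in the sense required by the Hodge theorem.
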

Theorem \ref{prop:Hodge} may prove useful, in the future, to compute the cohomology groups governing the deformation theory for the heterotic $\SU(3)$ system in concrete instances. See \S\ref{sec:Iwasawa} for a concrete example of this in the case of the Iwasawa manifold.

\section{Serre duality}
In this section we will prove a version of Serre duality using homological algebra techniques. 
This will, in particular, prove Theorem \ref{thm:index}, which in turns yields our main result in Theorem \ref{thm:main}.  As mentioned above, this has important implications for the heterotic moduli problem, potentially leading to new invariants.  

\subsection{Double extensions and exact sequences} 
Recall the operator $\bar{D}$ given in Definition \ref{def:Dbar}: 
\begin{equation}
\label{eq:BarD2}
\bar{D}
=\left[\begin{array}{lll}
    \delbar & \alpha'\sF  & \sE \\ 
    0 & \delbar_E & \sF 
    \\ 0 & 0 & \delbar
    \end{array}\right] 
    \quad
    \begin{gathered} 
        (T^{1,0}X)^* \\ 
        \End (E)\\
        T^{1,0}X
    \end{gathered}\:,
\end{equation}
where 
\begin{equation}\label{eq:sE}
    \sE=\sT+\alpha'\cR\nabla^+: \Omega^{0,p}\left(T^{1,0}X\right) \rightarrow \Omega^{0, p+1}\left((T^{1,0}X)^*\right).
\end{equation}
As mentioned in \S\ref{ss:double.extension}, the differential operator \eqref{eq:BarD2} gives rise to two double extensions \eqref{eq:Atiyah} and \eqref{eq:Atiyah.dual} which enable us to study $H^{0,p}_{\bar{D}}(Q)$.
The first sequence in \eqref{eq:Atiyah} is the Atiyah extension \cite{Atiyah1957}, which yields the short exact sequence
\begin{equation}
\label{eq:SES1}
\xymatrix{
    0 \ar[r] & \Omega_{\delbar}^{0,p}(\End (E)) \ar[r] & \Omega_{\bar{D}_{1}}^{0,p}(Q_{1}) \ar[r] & \Omega_{\delbar}^{0,p}(T^{1,0}X) \ar[r] & 0\:,
}    
\end{equation}
where
\begin{equation}
\label{eq:barD1}
    \bar{D}_{1}
    = \left[\begin{array}{cc}
        \delbar_E & \sF  \\ 
        0 & \delbar
    \end{array}\right] \quad 
    \begin{gathered}    
        \End (E) \\ 
        T^{1,0}X
    \end{gathered}\:.
\end{equation}
Now note that we may write
\begin{equation}
\label{eq:Dbar.split}
    \bar{D}
    = \left[\begin{array}{ll}
        \delbar & \sH  \\ 
        0 & \bar{D}_1
    \end{array}\right]
    \quad 
    \begin{gathered}
        (T^{1,0}X)^* \\ 
        Q_1
    \end{gathered}\:,
\end{equation}
where 
\begin{equation}
\label{eq:sH}
    \sH\left[\begin{array}{c} 
        \gamma\\ 
        W 
    \end{array}\right]
    = \alpha' \sF \gamma + \sE W = \alpha' \sF \gamma + \sT W + \alpha' \cR \nabla^+W.
\end{equation}
The operator \eqref{eq:Dbar.split} gives a second extension as in \eqref{eq:Atiyah}, which yields the short exact sequence:
\begin{equation}
\label{eq:SES2}
    \xymatrix{
        0 \ar[r] & \Omega_{\delbar}^{0,p}\left((T^{1,0}X)^*\right) 
        \ar[r] & \Omega_{\bar{D}}^{0,p}(Q) \ar[r] & 
        \Omega_{\bar{D}_{1}}^{0,p}\left(Q_{1}\right) \ar[r] & 0
    }\:.
\end{equation}
Though the extension map of this sequence is non-tensorial, the maps in this sequence are the standard inclusion and projection of the sub-bundles of $Q=T^{1,0}X\oplus Q_1$ as a topological sum of bundles, respectively.

The dual Atiyah sequence (the first extension in \eqref{eq:Atiyah.dual})  leads to the short exact sequence
\begin{equation}
\label{eq:DualSES1}
\xymatrix{
    0 \ar[r] & \Omega_{\delbar}^{0,p}\left((T^{1,0}X)^*\right) \ar[r] & \Omega_{\bar{D}_{2}}^{0,p}(Q^*_{1}) \ar[r] & \Omega_{\delbar}^{0,p}(\End (E)) \ar[r] & 0\:,
}    
\end{equation}
where 
\begin{equation}
\label{eq:barD2}
    \bar{D}_{2}
=\left[\begin{array}{cc}
    \delbar & \alpha'\sF  \\ 
    0 & \delbar_E
\end{array}\right] \quad 
\begin{gathered}  (T^{1,0}X)^*  \\ 
     \End (E)
\end{gathered}\:.
\end{equation}  
It should be noted that, in order to view $\bar{D}_2$ as the correct dual operator on $Q_1^*$, we have to define $Q_1^*$ as the dual of $Q_1$ with respect to the (non-degenerate) pointwise pairing
\begin{equation}
\label{eq:pairing}
    \left\langle\left(\begin{array}{l}
    \beta \\ 
    V 
    \end{array}\right),
    \left(\begin{array}{l}
    \kappa \\ 
    \gamma 
    \end{array}\right)\right\rangle
    =V\lrcorner\kappa-\alpha'\tr(\beta\gamma)\:,
\end{equation}
where $(\beta,V)\in Q_1$ and $(\kappa,\gamma)\in Q_1^*$.  We may then write $\bar{D}$ as
\begin{equation}
\label{eq:barD2.dual}
    \bar{D}
    =\left[\begin{array}{ll}
    \bar{D}_2 & \sH^*  \\ 
    0 & \delbar\end{array}\right]
\quad 
\begin{gathered}
   Q^*_1 \\ 
   T^{1,0}X 
\end{gathered}\:,
\end{equation}
where  
\begin{equation}
\label{eq:sH*}
\sH^*(W)=\left[\begin{array}{cc} \sE W \\ \sF W\end{array}\right].
\end{equation}
We will see later, in the proof of Theorem \ref{prop:general.Serre}, that $\sH^*$ can be interpreted as a dual of $\sH$ in \eqref{eq:sH}.  Given $\bar D$ in \eqref{eq:barD2.dual}, we may then define $Q$ as an extension of $Q_1^*$ over the holomorphic tangent bundle as in \eqref{eq:Atiyah.dual}, which leads to the short exact sequence
\begin{equation}
\label{eq:DualSES2}
    \xymatrix{
        0 \ar[r] & \Omega_{\bar{D}_{2}}^{0,p}\left(Q_1^*\right) 
        \ar[r] & \Omega_{\bar{D}}^{0,p}(Q) \ar[r] & 
        \Omega_{\delbar}^{0,p}\left(T^{1,0}X\right) \ar[r] & 0
    }
\end{equation}

The short exact sequences \eqref{eq:SES1}, \eqref{eq:SES2}, \eqref{eq:DualSES1} and \eqref{eq:DualSES2} will be crucial tools in our proof of the analogue of Serre duality for $\bar{D}$.

\subsection{Simplified case}
We start in the context of Notation \ref{notation1}, so $\dim_{\C}(X)=3$, and that our original bundles, $\End (E), T^{1,0}X, (T^*X)^{1,0}$, have no holomorphic sections. The latter is true e.g.~when $E$ is (normalised and) stable and, in particular for the holomorphic tangent and cotangent bundles, when $X$ admits a Calabi--Yau metric.  In this simplified setting we prove our first version of Serre duality.  

\begin{prop}
\label{prop:Serre.simple}
    In the setting of Notation \ref{notation1}, suppose $X^3$ is a compact complex $3$-fold and that the holomorphic tangent, cotangent and gauge-endomorphism bundles on $X$ have no (non-trivial) holomorphic sections.  Then
 \begin{equation}
     H^{0,p}_{\bar D}(Q)\cong H^{0,3-p}_{\bar D}(Q)\:.
 \end{equation}
\end{prop}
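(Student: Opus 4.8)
The plan is to run the usual Serre duality argument adapted to the non-standard operator $\bar D$, using the short exact sequences of complexes established above together with the long exact sequences they induce in $\bar D$-cohomology. First I would set up the three ``building block'' dualities: since $E$ is such that $\End(E)$ has no holomorphic sections (and is self-dual as a bundle via the Killing form, recalling endomorphisms are trace-free), ordinary Serre duality on the compact complex 3-fold gives $H^{0,p}_{\delbar_E}(\End E)\cong H^{0,3-p}_{\delbar_E}(\End E)$; similarly, Serre duality pairs $H^{0,p}_{\delbar}(T^{1,0}X)$ with $H^{3,3-p}_{\delbar}((T^{1,0}X)^*)\otimes(\text{canonical})$, and since $\eqref{eq:F1}$ forces $c_1(X)=0$ (so $K_X$ is topologically trivial, and by Notation \ref{notation1} we have a nowhere-vanishing $\Omega$ trivialising it holomorphically once we note $\mathrm{d}\Omega=0$ makes $\Omega$ holomorphic), this yields $H^{0,p}_{\delbar}(T^{1,0}X)\cong H^{0,3-p}_{\delbar}((T^{1,0}X)^*)$ and vice versa. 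The no-holomorphic-sections hypotheses ensure the relevant $H^0$ and (by duality) $H^3$ groups all vanish, which is what makes the long exact sequences manageable.

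Next I would exploit the matching pair of double extensions. From \eqref{eq:SES1} and \eqref{eq:DualSES1}, the vanishing of $H^0$ and $H^3$ of $\End E$, $T^{1,0}X$, $(T^{1,0}X)^*$ forces $H^0_{\bar D_1}(Q_1)=H^3_{\bar D_1}(Q_1)=0$ and $H^0_{\bar D_2}(Q_1^*)=H^3_{\bar D_2}(Q_1^*)=0$, and moreover the connecting maps let me identify $H^{0,p}_{\bar D_1}(Q_1)$ with a mapping cone / iterated extension built from $H^{0,*}_{\delbar}(T^{1,0}X)$ and $H^{0,*}_{\delbar_E}(\End E)$, with the connecting homomorphism induced by the algebraic operator $\sF$ on cohomology. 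The crucial observation is that the dual Atiyah sequence \eqref{eq:DualSES1} is literally the dual complex of \eqref{eq:SES1}: applying the building-block Serre dualities term by term identifies the long exact sequence of \eqref{eq:DualSES1} with the ``dual'' long exact sequence of \eqref{eq:SES1}, and one checks the connecting map for $Q_1^*$ is the Serre-dual of the connecting map for $Q_1$. A diagram chase / five-lemma argument then gives $H^{0,p}_{\bar D_1}(Q_1)\cong H^{0,3-p}_{\bar D_2}(Q_1^*)$.

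Finally I would feed this into the second-stage extensions \eqref{eq:SES2} and \eqref{eq:DualSES2}. Their long exact sequences read
\begin{equation*}
\cdots \to H^{0,p}_{\delbar}((T^{1,0}X)^*) \to H^{0,p}_{\bar D}(Q) \to H^{0,p}_{\bar D_1}(Q_1) \xrightarrow{\ \delta\ } H^{0,p+1}_{\delbar}((T^{1,0}X)^*) \to \cdots
\end{equation*}
and, respectively,
\begin{equation*}
\cdots \to H^{0,p}_{\bar D_2}(Q_1^*) \to H^{0,p}_{\bar D}(Q) \to H^{0,p}_{\delbar}(T^{1,0}X) \xrightarrow{\ \delta'\ } H^{0,p+1}_{\bar D_2}(Q_1^*) \to \cdots
\end{equation*}
Serre duality on $(T^{1,0}X)^*$ and $T^{1,0}X$ pairs the ``outer'' terms of the first sequence with those of the second in degree $3-p$, and the isomorphism $H^{0,*}_{\bar D_1}(Q_1)\cong H^{0,3-*}_{\bar D_2}(Q_1^*)$ from the previous step pairs the remaining terms. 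Checking that the connecting map $\delta$ for \eqref{eq:SES2} is taken to $\delta'$ for \eqref{eq:DualSES2} under these identifications is where the self-duality of $\bar D$ does the real work: the operator $\sH$ in \eqref{eq:sH} and $\sH^*$ in \eqref{eq:sH*} must be shown to be genuinely adjoint under the pairing \eqref{eq:pairing} extended to forms together with the Serre pairing on the form part, so that $\delta$ and $\delta'$ are transposes. Granting this compatibility, the five-lemma on the two long exact sequences yields $H^{0,p}_{\bar D}(Q)\cong H^{0,3-p}_{\bar D}(Q)$.

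The main obstacle, I expect, is precisely this last compatibility: verifying that all the connecting homomorphisms are exchanged correctly under Serre duality, i.e.\ that the various algebraic pieces ($\sF$, $\sT$, $\cR\nabla^+$) assemble into operators that are mutually transpose with respect to the Serre pairing twisted by the pairing \eqref{eq:pairing}. This requires identifying $\sH^*$ as the honest dual of $\sH$ — integrating by parts the $\cR\nabla^+$ term against $\Omega$ and tracking how $\sT$ (built from $T=i\partial\omega$) pairs with itself — which is the content flagged for ``the proof of Theorem \ref{prop:general.Serre}''. The homological-algebra skeleton (long exact sequences, vanishing of extreme terms, five-lemma) is routine; making the duality of the connecting maps canonical is the delicate point.
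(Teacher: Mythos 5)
Your proposal is mathematically sound but takes a genuinely different route, closer in spirit to the paper's proof of the \emph{general} Serre duality (Theorem \ref{prop:general.Serre}) than to its proof of this simplified proposition. The paper's argument here uses only the first pair of extensions \eqref{eq:SES1}, \eqref{eq:SES2} and runs a pure dimension count: the no-holomorphic-sections hypotheses kill $H^0$ and (via harmonic representatives, or ordinary Serre duality) $H^3$ of all the building blocks, so the long exact sequences split into the short pieces \eqref{eq:LES1b} and \eqref{eq:LES1c}, giving
\begin{align*}
  h^1(Q) &= h^1(T^*)+|\ker\sH|, \qquad
  h^2(Q) = h^2(Q_1) + h^2(T^*) - |\im\sH|,
\end{align*}
and likewise for $Q_1$ via $\sF$. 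Feeding in $h^2(T)=h^1(T^*)$, $h^2(T^*)=h^1(T)$, $h^1(\End E)=h^2(\End E)$ and rank--nullity for $\sF$ and $\sH$ gives $h^1(Q)=h^2(Q)$ with no information about $\sH$ beyond its rank. As the paper explicitly remarks afterwards, this means the argument is insensitive to the precise form of $\sH$ and applies verbatim to variants with spurious modes.

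Your approach, in contrast, brings in the dual extensions \eqref{eq:DualSES1}, \eqref{eq:DualSES2} and aims to establish $H^p_{\bar D_1}(Q_1)\cong H^{3-p}_{\bar D_2}(Q_1^*)$ and the compatibility of the connecting maps $\sH$, $\sH^*$, before concluding by a five-lemma-type comparison. This works and buys you the general-$n$, general-$p$ statement (indeed it is essentially the strategy used for Theorem \ref{prop:general.Serre}, there phrased as a dimension count rather than a literal five-lemma to avoid arrow-direction headaches), but it requires the hard verification you correctly flag: that $\sH^*$ is genuinely dual to $\sH$ under the pairing \eqref{eq:pairing} integrated against $\Omega$, which needs the explicit $\cR\nabla^+$ integration by parts and Corollary \ref{cor:commute}. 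Two smaller points worth noting: (i) you do not need a five-lemma at the $Q_1\leftrightarrow Q_1^*$ stage, since $\bar D_1$ and $\bar D_2$ are honest holomorphic structures on mutually dual bundles, so ordinary Serre duality applies directly; (ii) in this simplified setting the only nontrivial case is $p=1$ versus $p=2$, so the extra machinery is overkill — the paper's dimension count is the more economical route when the no-sections hypothesis is available.
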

\begin{proof}  
    To avoid cluttering, we shall use the superscript $p$ to indicate $(0,p)$ for our cohomology groups, we simply write $T$ for $T^{1,0}X$ and $T^*$ for $(T^*X)^{1,0}$, and we drop the subscripts indicating the operators defining the cohomology groups.  Recall that $X$ has holomorphic $(3,0)$-form $\Omega$, and we let $\partial_{\nabla}$ denote the Dolbeault operator associated to the connection on the respective holomorphic bundles.

    We first observe that the statement is trivially true for $p=0$ by the assumption of no holomorphic sections, which forces $H^0(Q)=H^3(Q)=0$, hence also $H^0(Q_1)=H^3(Q_1)=0$, cf.~\eqref{eq:Atiyah}.  Therefore we need only consider $p=1$. We now notice that the sequence \eqref{eq:SES2}, together with the assumption about absence of holomorphic sections, gives the following long exact sequence in cohomology, which will be the key tool for the remainder of the proof:
\begin{equation}
    \vcenter{
    \xymatrix{
    0 \ar[r] & H^1(T^{*}) \ar[r] & H^1(Q) \ar[r] & H^1(Q_{1}) \ar[dll]_-{\sH} \\
    & H^2(T^{*}) \ar[r] & H^2(Q) \ar[r] & H^2(Q_{1}) \ar[dll]_-{\sH} \\
    & H^3(T^{*}) \ar[r] & \ldots
    }
    }.
    \label{eq:LES1}
\end{equation}

Our first goal is to show that $H^3(T^*)$ vanishes. This can also be seen by ordinary Serre duality, as $H^0(T)\cong H^3(T^*)$. Note that $[\alpha] \in H^{3}\left(T^{*}\right)$ implies that $\alpha=\bar{\Omega} \otimes \kappa$ for some $\kappa\in \Omega^0\left(T^{*}\right)$. Moreover, if we choose $\alpha$ to be the harmonic representative of the class $[\alpha]$, then $$0=\delbar^{*} \alpha=-* \partial_{\nabla} * \alpha,$$ where $*$ is the complex linear extension of the real Hodge star operator (which is not sesquilinear). We deduce that 
$$
\del_{\nabla}\kappa=0.
$$
Complex-conjugating and raising the index on $\kappa$ to view it as a $W\in \Omega^0(T)$, we see that 
$\delbar W=0$, and hence 
$W\in H^0(T)=\{0\}$.   
Therefore 
$\kappa=0$, and thus $[\alpha]=0$. 
We conclude that
\begin{equation}
\label{eq:H3.zero}
    H^3(T^*)=0.
\end{equation}

Given \eqref{eq:H3.zero}, the exact sequence \eqref{eq:LES1} becomes:
\begin{equation}
    \vcenter{
    \xymatrix{
    0 \ar[r] & H^1(T^{*}) \ar[r] & H^1(Q) \ar[r] & H^1(Q_{1}) \ar[dll]_-{\sH} \\
    & H^2(T^{*}) \ar[r] & H^2(Q) \ar[r] & H^2(Q_{1}) \ar[r] & 0
    }
    }.
    \label{eq:LES1b}
\end{equation}
By exactness of \eqref{eq:LES1b}, we have 
\begin{equation}
\label{eq:LES.iso.1}
\begin{aligned}
    & H^1(Q) \cong H^1\left(T^{*}\right) \oplus \ker (\sH ) ,\\
    & H^{2}(Q) \cong H^{2}\left(Q_{1}\right) \oplus \frac{H^{2}\left(T^{*}\right)}{\im (\sH )}.
\end{aligned}
\end{equation}

Since $\End (E)$ is self-dual and we have assumed it has no holomorphic sections, standard Serre duality implies $$H^{3}(\End (E))=H^{0}(\End(E))=0\quad\text{and}\quad H^1(\End(E))\cong H^2(\End(E)).$$
This fact, together the assumption that $H^0(T)=0$ means that the long exact sequence which follows from \eqref{eq:SES1} becomes:
\begin{equation}
    \vcenter{
    \xymatrix{
    0 \ar[r] & H^1(\End (E)) \ar[r] & H^1(Q_{1}) \ar[r] & H^1(T) \ar[dll]_-{\sF} \\
    & H^2(\End (E)) \ar[r] & H^2(Q_{1}) \ar[r] & H^2(T) \ar[r] & 0
    }
    }
    \label{eq:LES1c}
\end{equation}
By exactness of \eqref{eq:LES1c},
\begin{equation}
\label{eq:LES.iso.2}
\begin{aligned}
    & H^1(Q_1) \cong H^1\left(\End(E)\right) \oplus \ker (\sF )\cong  H^2\left(\End(E)\right) \oplus \ker (\sF ),\\
    & H^{2}\left(Q_{1}\right) \cong H^{2}(T) \oplus \frac{H^{2}(\End (E))}{\im (\sF )}.
\end{aligned}
\end{equation}
Combining \eqref{eq:LES.iso.1} and \eqref{eq:LES.iso.2} we see that
$$
\begin{aligned}
    H^{2}(Q) 
    &\cong H^{2}\left(Q_{1}\right) \oplus \frac{H^{2}\left(T^{*}\right)}{\im (\sH )} \\
    & \cong H^{2}(T) \oplus \frac{H^{2}(\End (E))}{\im (\sF )} \oplus \frac{H^{2}\left(T^{*}\right)}{\im (\sH )}.
 \end{aligned}
 $$
 Therefore,
\begin{equation}
\label{eq:h2Q} 
    h^{2}(Q)
    =h^{2}(T) +h^{2}(\End (E))+h^{2}\left(T^{*}\right)  -|\im (\sF )|-|\im  \mathscr{H}|.
\end{equation}

Now we observe that
\begin{equation}
\label{eq:h2T}
    h^{2}(T) =h^{2,2} =h^{1,1} =h^{1}\left(T^{*}\right),
\end{equation} where the first equality uses contraction by $\Omega$ and the second equality is Hodge duality. We also have 
\begin{equation}
\label{eq:h2T*}
    h^{2}(T^{*}) =h^{1,2} =h^{2,1} =h^{1}(T),
\end{equation} where the middle equality is complex conjugation.  Note that \eqref{eq:h2T} and \eqref{eq:h2T*} are just consequences of ordinary Serre duality. Returning to \eqref{eq:h2Q} and using \eqref{eq:LES.iso.1}, \eqref{eq:LES.iso.2}, \eqref{eq:h2T} and \eqref{eq:h2T*}, we conclude: 
\begin{align*}
    h^{2}(Q)
    &=h^{1}\left(T^{*}\right)  +h^{2}(\End (E))+\overset{|\ker (\sF )|}{\overbrace{h^{1}(T) -|\im (\sF )|} }
     -|\im (\sH )|\\
    &= h^{1}\left(T^{*}\right)+\underbrace{h^{2}(\End (E))+|\ker(\sF)|}_{h^{1}\left(Q_{1}\right)} -|\im (\sH )|\\
&=h^{1}\left(T^{*}\right)+h^{1}\left(Q_{1}\right) -|\im (\sH )| \\
    &= h^{1}\left(T^{*}\right)+|\ker (\sH )|=h^1(Q).
    \qedhere
\end{align*}
\end{proof}

\begin{remark}
    The proof of Proposition \ref{prop:Serre.simple}  does not rely on the details of the map $\sH$, so it is also true for the moduli problems of \cites{Anderson2014, Ossa2014a}, with spurious modes included. By contrast, our proof of Serre duality in greater  generality (Theorem \ref{prop:general.Serre} below) does rely on the specificities of  $\sH$.
\end{remark}

\subsection{General case}
\label{sec:GenSerre}

We now move to a more general setting, where we (i) relax the assumption on complex dimension; (ii) assume a looser form of the anomaly cancellation condition; and (iii) allow the various bundle components of $Q$ to have holomorphic sections. 

Specifically, on a complex $n$-fold $X$ with holomorphic volume $(n,0)$-form $\Omega$, Hermitian metric $g$, and holomorphic vector bundle $E\to X$, we ask that  
\begin{equation}
\label{eq:anomaly.modified}
    \delbar T=\tfrac{\alpha'}{2}\left(\tr  R \wedge R - \tr F \wedge F\right),
\end{equation}
for \emph{some}  $T\in \Omega^{2,1}(X)$ which need \emph{not} be the torsion, and $R$ and $F$ are as in Notation \ref{notation1}.   
By the proof of Proposition \ref{prop:Nilpotent}, the corresponding nilpotent operator $\bar D$ in \eqref{eq:BarD2} may be defined, now using this tensor $T$ instead: 

\begin{theorem}
\label{prop:general.Serre}
    Let $X^n$ be a compact complex manifold endowed with a holomorphic volume $(n,0)$-form $\Omega$, Hermitian metric $g$, and a holomorphic vector bundle $E\to X$ with Chern connection $A$ satisfying \eqref{eq:anomaly.modified}.  Let $\bar{D}$ be defined by \eqref{eq:BarD2}, where the operator $\sT$ in \eqref{eq:sE} is defined instead by $T$ in \eqref{eq:anomaly.modified}. Then the following notion of Serre duality holds:
\begin{equation}
     H^{0,p}_{\bar D}(Q)\cong H^{0,n-p}_{\bar D}(Q)\:.
\end{equation}
\end{theorem}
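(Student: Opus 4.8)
The plan is to deduce the statement from ordinary Serre duality — which is available because $\Omega$ trivialises $K_X$ — combined with a careful bootstrapping through the two double extensions \eqref{eq:Atiyah} and \eqref{eq:Atiyah.dual}. First I would record that every cohomology group in sight is finite-dimensional: the Dolbeault groups $H^{0,q}(T^{1,0}X)$, $H^{0,q}((T^{1,0}X)^*)$ and $H^{0,q}(\End E)$ are finite-dimensional by ordinary Hodge theory, and then $H^{0,q}_{\bar{D}_1}(Q_1)$, $H^{0,q}_{\bar{D}_2}(Q_1^*)$ and $H^{0,q}_{\bar{D}}(Q)$ are finite-dimensional by the long exact sequences attached to \eqref{eq:SES1}, \eqref{eq:DualSES1}, \eqref{eq:SES2} and \eqref{eq:DualSES2} (which exist precisely because anomaly cancellation makes the relevant operators nilpotent, by Proposition~\ref{prop:Nilpotent}). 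Hence it suffices to produce, for each $p$, a perfect pairing $H^{0,p}_{\bar{D}}(Q)\times H^{0,n-p}_{\bar{D}}(Q)\to\C$; equivalently, to verify the dimension identity $\dim H^{0,p}_{\bar D}(Q)=\dim H^{0,n-p}_{\bar D}(Q)$ by Euler-characteristic bookkeeping in those long exact sequences. Applied to the building blocks, ordinary Serre duality gives $H^{0,q}(T^{1,0}X)^*\cong H^{0,n-q}((T^{1,0}X)^*)$ and, via the trace form on $\End E$, $H^{0,q}(\End E)^*\cong H^{0,n-q}(\End E)$; moreover it is compatible with cup products, so cup product by the class $[F]$ is, up to sign, Serre-dual to itself.

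Second, I would establish the intermediate statement $H^{0,p}_{\bar{D}_1}(Q_1)^*\cong H^{0,n-p}_{\bar{D}_2}(Q_1^*)$. The pairing \eqref{eq:pairing} followed by $\int_X(\,\cdot\,)\wedge\Omega$ descends to a bilinear map on $\bar{D}_1$- and $\bar{D}_2$-cohomology because $\bar{D}_2$ is the formal transpose of $\bar{D}_1$ with respect to \eqref{eq:pairing} (the bilinear transpose, not the Hermitian $L^2$-adjoint of Section~2): this is a short integration by parts using $\delbar\Omega=0$, the unitarity of $\delbar_E$, and the pointwise identity $\tr((\sF V)\gamma)=V\lrcorner(\sF\gamma)$, the placement of $\alpha'$ in \eqref{eq:pairing} and in $\bar{D}_2$ being exactly what makes the two $\sF$-contributions cancel. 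The resulting map then fits into a commutative ladder between the long exact sequence of \eqref{eq:DualSES1} and the dual of that of \eqref{eq:SES1}: on the Dolbeault terms the vertical maps are the Serre-duality isomorphisms above, on the connecting homomorphisms they match by compatibility of Serre duality with cup product by $[F]$, so the five lemma gives the isomorphism.

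Third, the same mechanism applied to the outer extension yields the theorem. The natural pairing on $Q$ — coupling $(T^{1,0}X)^*$ with $T^{1,0}X$ by contraction and $\End E$ with itself by $-\alpha'\tr$ — followed by $\int_X(\,\cdot\,)\wedge\Omega$ descends to $H^{0,p}_{\bar{D}}(Q)\times H^{0,n-p}_{\bar{D}}(Q)\to\C$ once one knows that $\bar{D}$ is, up to sign, its own formal transpose. Granting from step~2 that $\delbar$ is formally self-transpose and that $\bar{D}_1$, $\bar{D}_2$ are mutual transposes, this reduces to the claim that $\sH^*$ in \eqref{eq:sH*} is the formal transpose of $\sH$ in \eqref{eq:sH} — equivalently, that $\sE=\sT+\alpha'\cR\nabla^+$ of \eqref{eq:sE} is formally self-transpose up to sign with respect to contraction and $\int_X(\,\cdot\,)\wedge\Omega$; for the $\sT$-part this is immediate from the antisymmetry of $T$ in its holomorphic indices. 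With this ``key lemma'' in hand, the induced maps fit into a commutative ladder between the long exact sequence of \eqref{eq:DualSES2} and the dual of that of \eqref{eq:SES2}, with vertical maps given by Serre duality on the $T^{1,0}X$-terms, by step~2 on the $Q_1$/$Q_1^*$-terms, and by the key lemma on the connecting maps $\sH$, $\sH^*$; the five lemma delivers $H^{0,p}_{\bar{D}}(Q)^*\cong H^{0,n-p}_{\bar{D}}(Q)$, whence the theorem by finite-dimensionality.

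The main obstacle is precisely this key lemma for the first-order piece $\alpha'\cR\nabla^+$ of $\sE$. Unlike the $\sF$-terms, which are honest cup products and behave formally, checking $\int_X(\cR\nabla^+V)\lrcorner W\wedge\Omega=\pm\int_X V\lrcorner(\cR\nabla^+W)\wedge\Omega$ requires integrating by parts against the Bismut connection $\nabla^+$, which is neither torsion-free nor compatible with $\Omega$, so curvature and torsion terms appear; these must be rearranged using the Chern-geometry identity \eqref{eq:Chern.R.sym} and then absorbed using the loosened anomaly identity \eqref{eq:anomaly.modified} — so the anomaly condition enters twice, once to make $\Omega^{0,\bullet}_{\bar D}(Q)$ a complex at all and once here, which is exactly why this argument, unlike that of Proposition~\ref{prop:Serre.simple}, depends on the specifics of $\sH$. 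A secondary but non-trivial chore is to keep track of the degree-dependent signs produced by wedging $(0,\bullet)$-forms and to verify that the two five-lemma ladders genuinely commute rather than being merely term-by-term isomorphic.
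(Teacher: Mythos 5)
Your proposal follows essentially the same route as the paper: it works with the long exact sequences from the two double extensions \eqref{eq:SES2} and \eqref{eq:DualSES2}, feeds in ordinary Serre duality for the building blocks (available since $\Omega$ trivialises $K_X$), and reduces everything to the duality of $\sH$ and $\sH^*$ under the pairing $\int_X\langle\cdot,\cdot\rangle\wedge\Omega$ — this is exactly the paper's identity \eqref{eq:Hdual}. Whether one then assembles the pieces by a five-lemma ladder or by the paper's dimension bookkeeping \eqref{eq:first.ordinary.Serre}--\eqref{eq:second.ordinary.Serre} is a matter of taste. Your intermediate step 2 is, however, redundant: $(Q_1,\bar D_1)$ is a genuine holomorphic bundle (the Atiyah algebroid — $\sF$ is zeroth order and $\bar D_1^2=0$ is just the Bianchi identity for $F$), $(Q_1^*,\bar D_2)$ is its dual, and the identification of $H^{0,p}_{\bar D_1}(Q_1)^*$ with $H^{0,n-p}_{\bar D_2}(Q_1^*)$ is plain Serre duality, requiring no five-lemma of its own.

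More substantively, your account of how the key lemma for the first-order piece $\alpha'\cR\nabla^+$ is proved is off. You anticipate integrating by parts against $\nabla^+$ directly, generating curvature and torsion terms from the Bismut connection's failure to be torsion-free or $\Omega$-compatible, and then appealing to the anomaly identity \eqref{eq:anomaly.modified} to absorb them, concluding that the anomaly condition ``enters twice''. That is not what happens: the anomaly condition is used exactly once, to make $\bar D$ nilpotent, and does not reappear. The actual mechanism behind \eqref{eq:R.nabla+.identities} is this: on $\delbar$-closed representatives $V$, $W$, Corollary \ref{cor:commute} — a consequence of the general Chern-geometry identity \eqref{eq:Chern.R.sym}, valid for any Hermitian metric and independent of any anomaly condition — rewrites $R_j{}^l{}_m V^j$ as $\delbar(\nabla^+_m V^l)$; one then integrates by parts against $\delbar$ alone, using $\delbar\Omega=0$, and applies Corollary \ref{cor:commute} once more on the $W$-side to recover $R_j{}^m{}_l W^j$. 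There is no integration by parts against $\nabla^+$, no stray torsion terms to absorb, and no second appeal to the anomaly. Attempting the computation as you sketch it — IBP against a connection that neither is torsion-free nor preserves $\Omega$ — would likely produce a tangle with no clean cancellation. With that mechanism corrected, your outline is sound and agrees with the paper's proof.
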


\begin{proof}
    As in the proof of Proposition \ref{prop:Serre.simple}, we simplify notation by using the superscript $p$ to denote $(0,p)$ for the cohomology groups, and we drop the subscripts denoting the differential operator defining the cohomology.  We also use $T$ for $T^{1,0}X$ and $T^*$ for $(T^{1,0}X)^*$.

    Using the long exact sequence derived from the dual extension \eqref{eq:DualSES2}, we can compute the dimension of $H^p(Q)$ by
\begin{equation}
\label{eq:first.ordinary.Serre}
    h^p(Q)=h^p(Q_1^*)+h^p(T)-|\im (\sH^*_p )|-|\im (\sH^*_{p-1} )|\:,
\end{equation}
    where $\sH^*_p$ acts on $T$-valued $(0,p)$-forms and is given in \eqref{eq:sH*}.  If we compute the dimension of $H^{n-p}(Q)$ using the original extension \eqref{eq:SES2}, we find that
\begin{align}
    h^{n-p}(Q) 
    &= h^{n-p}(Q_1) + h^{n-p}(T^*) - |\im (\sH_{n-p})| - |\im (\sH_{n-p-1})| \notag \\
    &= h^p(Q^*_1) + h^p(T) - |\im (\sH_{n-p})| - |\im (\sH_{n-p-1})|\:,
    \label{eq:second.ordinary.Serre}
\end{align}
    where $\sH_{q}$ acts $Q_1$-valued $(0,q)$-forms as in \eqref{eq:sH}. We have used ordinary Serre duality in the second equality in \eqref{eq:second.ordinary.Serre}, which gives $h^p(T)=h^{n-p}(T^*)$. It will then be clear that $h^p(Q)=h^{n-p}(Q)$ if we can show the following identities:
\begin{align}
\label{eq:H*H}
    |\im (\sH^*_p )|=|\im (\sH_{n-p-1} )|\quad\text{and}\quad
    |\im (\sH^*_{p-1} )|=|\im (\sH_{n-p} )|\:.
\end{align}
    Note that the second identity in \eqref{eq:H*H} follows from the first, by duality.

    To prove \eqref{eq:H*H}, as in ordinary Serre duality, we will use a non-degenerate pairing induced by the holomorphic $(n,0)$-form $\Omega$, which we denote by
\begin{equation}
\label{eq:pairing.2}
\begin{gathered}
    (\cdot,\cdot): \;H^q(Q_1)\times H^{n-q}(Q^*_1)\rightarrow\mathbb{C}\\
    \left(u,v\right) =\int_X\langle u,v\rangle\wedge\Omega\;,
\end{gathered}
\end{equation}
    where $\langle\cdot,\cdot\rangle$ is induced  by \eqref{eq:pairing}.  We also have the analogous pairing between $H^q(T)$ and $H^{n-q}(T^*)$,  for which we use the same symbol. We then suppose
\begin{equation}
\label{eq:w.z}
    w=\left[W\right]
    \in  H^p(T)\quad\text{and}\quad u=\left[\left(\begin{array}{c} \beta \\ V\end{array}\right)\right]\in H^{n-p-1}(Q_1).
\end{equation}
Our goal is to show that
\begin{equation}
\label{eq:Hdual}
    (u,\sH^*_p(w))
    =(-1)^{n-p}(\sH_{n-p-1}(u),w)\:.
\end{equation}
Since pairings appearing on each side are non-degenerate, this immediately yields \eqref{eq:H*H}.

Writing out the left-hand side of \eqref{eq:Hdual} using \eqref{eq:sH*} and \eqref{eq:pairing}, we have:
\begin{align}
    (u,\sH^*_p(w))
    &=\int_X \big(V\lrcorner(\sE W)-\alpha'\tr(\beta\wedge \sF W)\big)\wedge\Omega \notag\\
    &=\int_X \big(-\alpha'\tr(\beta\wedge\sF W)+V\lrcorner(\sT W)+\alpha'V\lrcorner(\cR\nabla^+ W)\big)\wedge\Omega.\label{eq:LHS} 
\end{align}
Note that the contractions in \eqref{eq:LHS} also involve wedge products of the forms, so the ordering of the $T$-valued and $T^*$-valued forms matters.
Similarly, the pairing on the right-hand side of \eqref{eq:Hdual} can be computed using \eqref{eq:sH}:
\begin{align}
    (\sH_{n-p-1}(u),w)&=\int_X (\alpha'\sF\beta+\sE V)(W)\wedge\Omega\notag\\
    &=\int_X\big(\alpha'\sF\beta(W)+(\sT V)(W)+\alpha'(\cR\nabla^+V)(W)\big)\wedge\Omega.\label{eq:RHS}
\end{align}
Here we have written the contraction of $T$-valued and $T^*$-valued forms differently to emphasise the ordering.
Using Definition \ref{def:F} of $\sF$ and Definition \ref{def:T} of $\sT$ we quickly see that
\begin{equation}
\label{eq:F.T.identities}
    \tr(\beta\wedge\sF W)
    =(-1)^{n-p-1}(\sF\beta)(W)\quad\text{and}\quad V\lrcorner(\sT W)=(-1)^{n-p}(\sT V)(W).
\end{equation}
Recall now Definition \ref{def:Rnabla+}, Corollary \ref{cor:commute} and the fact that $\delbar V=\delbar W=0$.  Using these observations, and suppressing local indices which do not play a role, we may compute
\begin{align}
\int_X V\lrcorner (\cR\nabla^+W)\wedge\Omega&=\int_X V^j\wedge R_j{}^l{}_m\nabla^+_lW^m\wedge\Omega\notag\\
&=(-1)^{n-p-1}\int_X R_j{}^l{}_mV^j\wedge \nabla^+_lW^m\wedge\Omega\notag\\
&=(-1)^{n-p-1}\int_X\delbar (\nabla^+_mV^l)\wedge\nabla^+_lW^m\wedge\Omega\notag\\
&=-(-1)^{2(n-p-1)}\int_X \nabla_m^+V^l\wedge\delbar(\nabla_l^+W^m)\wedge\Omega\notag\\
&=-\int_X\nabla_m^+V^l\wedge R_j{}^m{}_lW^j\wedge\Omega\notag\\
&=(-1)^{n-p}\int_X (\cR\nabla^+V)(W)\wedge\Omega.\label{eq:R.nabla+.identities}
\end{align}
Combining \eqref{eq:LHS}, \eqref{eq:RHS}, \eqref{eq:F.T.identities} and \eqref{eq:R.nabla+.identities}, we quickly see that \eqref{eq:Hdual} holds as claimed. 
Hence \eqref{eq:H*H} holds and thus, by \eqref{eq:first.ordinary.Serre} and \eqref{eq:second.ordinary.Serre} we have $h^p(Q)=h^{n-p}(Q)$. 
\end{proof}

Theorem \ref{thm:index} follows directly from Theorem \ref{prop:general.Serre}. Furthermore, we immediately conclude the following:
\begin{corollary}
    For manifolds $X$ of odd complex dimension $n$, the Euler characteristic of the complex vanishes, that is
    \begin{equation}
    \label{eq:Index}
        \chi(X,\bar{D}) := \sum_{k=0}^n (-1)^k \dim \left(H^{(0,k)}_{\bar{D}}(Q)\right)= 0\:.
    \end{equation}
    In particular, for the heterotic $\SU(3)$ system with $\alpha'$ sufficiently small, the operator $\bar{D}+\bar{D}^*$ has vanishing index. 
\end{corollary}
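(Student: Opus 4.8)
The plan is to obtain both assertions as formal consequences of Theorem~\ref{prop:general.Serre}. Write $h^{k}:=\dim_{\C}H^{(0,k)}_{\bar{D}}(Q)$, so that $\chi(X,\bar{D})=\sum_{k=0}^{n}(-1)^{k}h^{k}$, and recall that Theorem~\ref{prop:general.Serre} supplies the symmetry $h^{k}=h^{n-k}$ for every $k$. First I would re-index the alternating sum by $k\mapsto n-k$, which gives
\[
\chi(X,\bar{D})=\sum_{k=0}^{n}(-1)^{n-k}h^{n-k}=(-1)^{n}\sum_{k=0}^{n}(-1)^{k}h^{k}=(-1)^{n}\,\chi(X,\bar{D}),
\]
so that $\bigl(1-(-1)^{n}\bigr)\chi(X,\bar{D})=0$; when $n$ is odd the factor $1-(-1)^{n}=2$ is nonzero and $\chi(X,\bar{D})=0$ follows. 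Equivalently, for odd $n$ one pairs the $k$-th term with the $(n-k)$-th term: the two have equal dimension by Serre duality but opposite sign, since $k$ and $n-k$ have opposite parity and are never equal, so the sum cancels in pairs.

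For the second assertion I would specialise to the heterotic $\SU(3)$ system, where $n=\dim_{\C}X=3$ is odd. By Proposition~\ref{prop:elliptic}, for $\alpha'$ sufficiently small relative to a bound on the Chern curvature, $\bar{D}+\bar{D}^{*}$ is overdetermined elliptic, so the complex $\bigl(\Omega^{0,\bullet}(Q),\bar{D}\bigr)$ is elliptic and the groups $H^{(0,k)}_{\bar{D}}(Q)$ are finite-dimensional. Next I would pass to the associated rolled-up operator
\[
\mathcal{D}:=\bar{D}+\bar{D}^{*}\colon\ \bigoplus_{k\text{ even}}\Gamma(Q^{0,k})\ \longrightarrow\ \bigoplus_{k\text{ odd}}\Gamma(Q^{0,k}),
\]
which is genuinely elliptic. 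Using $\bar{D}^{2}=0$ (Proposition~\ref{prop:Nilpotent}), the cross-terms in $\mathcal{D}^{*}\mathcal{D}$ and $\mathcal{D}\mathcal{D}^{*}$ vanish, so these operators are block-diagonal Laplacians over the degrees; combined with the Hodge decomposition of Theorem~\ref{prop:Hodge} this identifies $\ker\mathcal{D}\cong\bigoplus_{k\text{ even}}H^{(0,k)}_{\bar{D}}(Q)$ and $\coker\mathcal{D}\cong\ker\mathcal{D}^{*}\cong\bigoplus_{k\text{ odd}}H^{(0,k)}_{\bar{D}}(Q)$. Hence $\ind(\mathcal{D})=\sum_{k=0}^{3}(-1)^{k}h^{k}=\chi(X,\bar{D})$, which vanishes by the first part.

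I do not expect a serious obstacle: the corollary is a direct consequence of the Serre-type duality already proved, together with standard Hodge/index theory. The only step meriting a little care is the passage from the ``overdetermined elliptic'' single operator of Proposition~\ref{prop:elliptic} to the genuinely elliptic rolled-up operator $\mathcal{D}$ whose index equals the Euler characteristic; this amounts to checking that the full symbol sequence of $\bigl(\Omega^{0,\bullet}(Q),\bar{D}\bigr)$ is exact in every degree, and not merely that the degree-one symbol is injective. Since this exactness is precisely what underpins the Hodge theory invoked in Theorem~\ref{prop:Hodge}, once that is in hand the identity $\ind(\bar{D}+\bar{D}^{*})=\chi(X,\bar{D})$ is the standard index formula for an elliptic complex, and the statement follows.
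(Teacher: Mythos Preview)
Your proof is correct and follows essentially the same route as the paper: the vanishing of $\chi(X,\bar{D})$ is deduced from the Serre-type duality of Theorem~\ref{prop:general.Serre}, and the index statement then follows via the Hodge theory of Theorem~\ref{prop:Hodge}. Your write-up is considerably more detailed than the paper's two-line proof, and your remark about passing from the overdetermined ellipticity of Proposition~\ref{prop:elliptic} to exactness of the full symbol sequence is a fair point of care that the paper leaves implicit in its appeal to Hodge theory for the complex.
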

\begin{proof}
    The vanishing of the Euler characteristic is an immediate consequence of the Serre duality property of Theorem \ref{prop:general.Serre}. Using Theorem \ref{prop:Hodge}, we can then conclude that the index of $\bar{D}+\bar{D}^*$ vanishes. 
\end{proof}

\section{Examples}

In this section we describe two examples where our formalism can be applied, so that we can investigate the local properties of the moduli space of the heterotic $\SU(3)$ system in concrete terms.

\subsection{Calabi--Eckmann \texorpdfstring{$S^3\times S^3$}{S3xS3}}
Our first example 
is not strictly a heterotic $\SU(3)$ solution, but it leads to a solution to a generalisation of the heterotic $\SU(3)$ system known as the \emph{coupled $\SU(3)$-instanton} equation, cf.~\cites{GarciaFernandez2023,GarciaFernandez2023b,GarciaFernandez2024,daSilva2024}.  Nevertheless, our techniques still apply in this setting. 

\begin{remark} In the similar settings studied in the references above, several related results to those in this article were also established.  For example, an analogous operator to $\bar{D}$ was identified in \cite{GarciaFernandez2024}*{p.31}.    
\end{remark}

We begin with the definition of the ambient complex manifold.

\begin{definition}
\label{def:CE.cx}
    A natural action of $\mathbb{C}$ on $(\mathbb{C}^2\setminus\{0\})\times (\mathbb{C}^2\setminus\{0\})$ is given by:
\begin{equation}
\label{eq:CE.action}
    \gamma\cdot \big((z_1,z_2),(w_1,w_2)\big)
    =\big(e^\gamma(z_1,z_2), e^{i\gamma}(w_1,w_2)\big).
\end{equation}
The action \eqref{eq:CE.action} defines the Calabi--Eckmann complex structure $J$ on the quotient:
\begin{equation}
X^6=  \big((\mathbb{C}^2\setminus\{0\})\times (\mathbb{C}^2\setminus\{0\})\big)/\mathbb{C}\cong S^3\times S^3.
\end{equation}
\end{definition}

The Calabi--Eckmann complex structure in Definition \ref{def:CE.cx} is not compatible with even a balanced Hermitian structure (and thus no K\"ahler structure), but there is a natural compatible Hermitian form:  
\begin{definition}
\label{def:CE.omega}
    If we let $\pi_j:S^3\to\mathbb{CP}^1$, for $j=1,2$, denote the Hopf fibration on each factor in $X$, we can let $\alpha_j$ denote the associated connection $1$-form on $S^3$ for this fibration satisfying
\begin{equation}
    \rd\alpha_j
    = \pi_j^*\omega_{\mathbb{CP}^1},
\end{equation}
where $\omega_{\mathbb{CP}^1}$ is the K\"ahler form for the Fubini--Study metric on $\mathbb{CP}^1$.  We may then define a Hermitian form $\omega$ on $(X^6,J)$ in Definition \ref{def:CE.cx} by:
\begin{equation}
\label{eq:CE.omega}
  \omega
  =\rd\alpha_1 +\rd\alpha_2 +\alpha_1\wedge\alpha_2.
\end{equation}
This Hermitian form endows $X^6\cong S^3\times S^3$ with the product metric consisting of round metrics on the factors.
\end{definition}

It is useful here to record the torsion and properties of the Hermitian structure.

\begin{lemma}
\label{lem:CE.omega}
    The Hermitian structure $\omega$ on $(X^6,J)$ given in Definition \ref{def:CE.omega} satisfies
    \begin{equation}
    \label{eq:CE.T}
    T=i\delbar\omega= \frac{1}{2}(\alpha_1+i\alpha_2)\wedge(\rd\alpha_1-i\rd\alpha_2)\neq 0
\end{equation}
and
\begin{equation}
\label{eq:CE.PC}
    \del\delbar\omega=0, 
\end{equation}
so it is pluriclosed.  Moreover,
\begin{equation}
\label{eq:CE.domega2}
    \rd\omega^2
    =(\alpha_2-\alpha_1) \wedge\omega^2
\end{equation}
and 
\begin{equation}
    \rd(\alpha_2-\alpha_1)\neq 0,
\end{equation}
so the Hermitian structure is not even locally conformally balanced.
\end{lemma}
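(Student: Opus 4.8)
\textbf{Proof proposal for Lemma \ref{lem:CE.omega}.}

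The plan is to compute everything from the explicit formula \eqref{eq:CE.omega} for $\omega$ using the single structural relation $\rd\alpha_j = \pi_j^*\omega_{\mathbb{CP}^1}$, together with the fact that each $\rd\alpha_j$ is a pullback from the $j$-th $\mathbb{CP}^1$-factor (so $\rd\alpha_1\wedge\rd\alpha_1 = 0 = \rd\alpha_2\wedge\rd\alpha_2$ for dimension reasons, and $\alpha_j\wedge\rd\alpha_j$ behaves like a volume form on the $j$-th $S^3$). First I would record the basic exterior-derivative identities: $\rd\omega = \rd\alpha_1\wedge\alpha_2 - \alpha_1\wedge\rd\alpha_2$, and hence $\rd\omega^2 = 2\,\omega\wedge\rd\omega$. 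Expanding $\omega\wedge\rd\omega$ and using $\rd\alpha_j\wedge\rd\alpha_j=0$ collapses the sum to the two surviving terms $\rd\alpha_1\wedge\rd\alpha_2\wedge\alpha_2 - \rd\alpha_1\wedge\alpha_1\wedge\rd\alpha_2$, which one rewrites as $(\alpha_2-\alpha_1)\wedge(\rd\alpha_1\wedge\rd\alpha_2)$; since $\omega^2 = 2\,\rd\alpha_1\wedge\rd\alpha_2 + 2\,\rd\alpha_1\wedge\alpha_1\wedge\alpha_2 + \cdots$ one checks the wedge of $(\alpha_2-\alpha_1)$ kills all but the $\rd\alpha_1\wedge\rd\alpha_2$ piece, giving \eqref{eq:CE.domega2}. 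The pluriclosed statement \eqref{eq:CE.PC} is cleanest via bidegree: $\omega$ is type $(1,1)$, so $\rd\omega = \partial\omega + \delbar\omega$ with $\partial\omega$ of type $(2,1)$ and $\delbar\omega$ of type $(1,2)$; then $\partial\delbar\omega$ is the $(2,2)$-part of $\rd\rd\omega = 0$ up to the other mixed terms, but more directly one computes $\delbar\omega$ from \eqref{eq:CE.T} (taking that formula as the definition/identification of $T=i\delbar\omega$) and verifies $\partial(\delbar\omega)=0$ by another application of $\rd\alpha_j\wedge\rd\alpha_j=0$ after splitting $\alpha_1\pm i\alpha_2$ and $\rd\alpha_1\mp i\rd\alpha_2$ into types.

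For \eqref{eq:CE.T} itself I would introduce the holomorphic coordinate adapted to the Calabi--Eckmann structure in which the fibre direction of the two Hopf circles is complexified: concretely, on $S^3\times S^3$ the $(1,0)$-forms can be taken to be spanned by $\pi_1^*(\text{of a local }(1,0)\text{-form on }\mathbb{CP}^1)$, $\pi_2^*(\cdots)$, and the combination $\alpha_1+i\alpha_2$ (this is the standard fact that the Calabi--Eckmann complex structure pairs the two Hopf fibres into one complex direction). With this splitting, writing $\omega = \rd\alpha_1 + \rd\alpha_2 + \alpha_1\wedge\alpha_2$ and extracting the $(1,2)$-component gives precisely $\delbar\omega = -\tfrac{i}{2}(\alpha_1+i\alpha_2)\wedge(\rd\alpha_1 - i\rd\alpha_2)$ after a short bookkeeping of real-versus-complex forms, so that $T = i\delbar\omega$ is as stated; non-vanishing is immediate since $\alpha_j\wedge\rd\alpha_j$ restricts to a volume form on the corresponding $S^3$, so the wedge $(\alpha_1+i\alpha_2)\wedge\rd\alpha_1\wedge(\text{something involving }\rd\alpha_2)$ is nonzero.

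Finally, for the last two displayed claims: $\rd(\alpha_2-\alpha_1) = \rd\alpha_2 - \rd\alpha_1 = \pi_2^*\omega_{\mathbb{CP}^1} - \pi_1^*\omega_{\mathbb{CP}^1}$, which is manifestly nonzero (it is the difference of two Fubini--Study forms pulled back from two different factors, hence nonzero pointwise and not even locally exact), so $\alpha_2-\alpha_1$ is not closed; combined with \eqref{eq:CE.domega2} this means the Lee-type form appearing in $\rd\omega^2 = (\alpha_2-\alpha_1)\wedge\omega^2$ is not (even locally) of the form $\rd f$, so $\omega$ is not locally conformally balanced — because local conformal balancing would require $\rd\omega^2 = \rd\lambda\wedge\omega^2$ for a local function $\lambda$, forcing $\rd(\alpha_2-\alpha_1)\wedge\omega^2 = 0$, which fails since $(\rd\alpha_2-\rd\alpha_1)\wedge\omega^2 = 2(\rd\alpha_2-\rd\alpha_1)\wedge\rd\alpha_1\wedge\rd\alpha_2 + \cdots$, and one checks directly this is a nonzero top-degree form (indeed the only nonzero contribution of the right bidegree, say $(\rd\alpha_2)^2\wedge\cdots$, vanishes, leaving $-2\rd\alpha_1\wedge\rd\alpha_2\wedge(\rd\alpha_1+\rd\alpha_2)\wedge(\text{degree-two remainder})$, which I would evaluate in the explicit basis to confirm it is a volume form). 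The main obstacle I anticipate is purely bookkeeping: keeping the bidegree decomposition of $\alpha_1, \alpha_2$ straight (each $\alpha_j$ is a real $1$-form but contributes to both $(1,0)$ and $(0,1)$ parts, and only the combination $\alpha_1 + i\alpha_2$ is genuinely $(1,0)$), and being careful with the sign and factor conventions so that $T = i\delbar\omega$ comes out exactly as in \eqref{eq:CE.T}; once the correct complex frame is fixed, every identity reduces to the single relation $\rd\alpha_j\wedge\rd\alpha_j = 0$ plus $\alpha_j\wedge\rd\alpha_j \neq 0$.
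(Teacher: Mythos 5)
Your computations of $\rd\omega$, of $\omega\wedge\rd\omega = (\alpha_2-\alpha_1)\wedge\rd\alpha_1\wedge\rd\alpha_2$, and of $\omega^2 = 2\,\rd\alpha_1\wedge\rd\alpha_2 + 2\,\alpha_1\wedge\alpha_2\wedge(\rd\alpha_1+\rd\alpha_2)$ are correct, and the resulting verification of \eqref{eq:CE.domega2} goes through; this matches the paper's (much terser) proof. Your route to \eqref{eq:CE.T} and \eqref{eq:CE.PC} is also sound, modulo one type slip: with your (correct) identification of $\alpha_1+i\alpha_2$ as a $(1,0)$-form, the expression $-\tfrac{i}{2}(\alpha_1+i\alpha_2)\wedge(\rd\alpha_1-i\rd\alpha_2)$ has type $(2,1)$, so it is $\del\omega$, not $\delbar\omega$; accordingly $T$ in \eqref{eq:CE.T} is $i\del\omega$, consistent with Notation \ref{notation1} (the ``$T=i\delbar\omega$'' in the lemma's display is a misprint in the paper that you have inherited). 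Pluriclosedness is then cleanest the way the paper does it: from the explicit formula, $\rd T=\tfrac12(\rd\alpha_1+i\rd\alpha_2)\wedge(\rd\alpha_1-i\rd\alpha_2)=0$ using $(\rd\alpha_j)^2=0$ and $\rd\alpha_1\wedge\rd\alpha_2=\rd\alpha_2\wedge\rd\alpha_1$, and since $T$ is of pure type this forces $\del T=0$, i.e.\ $\del\delbar\omega=0$.

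The genuine error is in your final paragraph. You assert that $(\rd\alpha_2-\rd\alpha_1)\wedge\omega^2$ is a nonzero $6$-form, and rest the ``not locally conformally balanced'' conclusion on this. It is in fact identically zero: pairing against $\omega^2 = 2\rd\alpha_1\wedge\rd\alpha_2 + 2\alpha_1\wedge\alpha_2\wedge(\rd\alpha_1+\rd\alpha_2)$, the first contribution vanishes by $(\rd\alpha_j)^2=0$, and the second vanishes because $(\rd\alpha_2-\rd\alpha_1)\wedge(\rd\alpha_1+\rd\alpha_2)=0$. In fact the vanishing is automatic for any Lee form: applying $\rd$ to \eqref{eq:CE.domega2} and using $(\alpha_2-\alpha_1)\wedge(\alpha_2-\alpha_1)=0$ gives $\rd(\alpha_2-\alpha_1)\wedge\omega^2=0$. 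So the inference you propose is vacuous and the step collapses. The correct argument is linear-algebraic: on a $6$-manifold the pointwise Lefschetz map $\eta\mapsto\eta\wedge\omega^2$ from $1$-forms to $5$-forms is an isomorphism, so \eqref{eq:CE.domega2} identifies the Lee form as exactly $\alpha_2-\alpha_1$. Local conformal balancedness means $\rd(\lambda\omega^2)=0$ for some local $\lambda>0$, i.e.\ the Lee form equals $-\rd\log\lambda$ locally, hence is closed; but you have already shown $\rd(\alpha_2-\alpha_1)=\pi_2^*\omega_{\mathbb{CP}^1}-\pi_1^*\omega_{\mathbb{CP}^1}\neq 0$, a contradiction.
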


\begin{proof}
  We note from \eqref{eq:CE.omega} that
\begin{equation}
\label{eq:CE.domega}
    \rd\omega
    =\alpha_2\wedge\rd\alpha_1 -\alpha_1\wedge\rd\alpha_2.
\end{equation}
We see immediately from \eqref{eq:CE.omega} that the torsion $T$ is given by \eqref{eq:CE.T}. We deduce that $\rd T=0$ and thus $\del T=0$, so \eqref{eq:CE.PC} holds. Equation \eqref{eq:CE.domega2} follows from \eqref{eq:CE.omega} and \eqref{eq:CE.domega}, which completes the proof.
\end{proof}

\begin{remark}
    Notice that \eqref{eq:CE.PC} implies that the anomaly cancellation \eqref{eq:F2} can only be satisfied if the right-hand side is zero, i.e.~$\tr(R\wedge R)=\tr(F\wedge F)$.
\end{remark}

There is a natural nowhere-vanishing $(3,0)$-form $\Omega$, so that $(\omega,\Omega)$ is an $\SU(3)$-structure, as follows.

\begin{definition}
\label{def:CE.Omega}
    On  $(\mathbb{C}^2\setminus\{0\})\times (\mathbb{C}^2\setminus\{0\})$ we define
    \begin{equation}
\Omega=\frac{\big((z_1\rd z_2-z_2\rd z_1)+i(w_1\rd w_2-w_2\rd w_1)\big)\wedge (\rd z_1\wedge \rd z_2+\rd w_1\wedge \rd w_2)}{(|z_1|^2+|z_2|^2)(|w_1|^2+|w_2|^2)}.
\end{equation}
This $(3,0)$-form is invariant under the action \eqref{eq:CE.action}, and so it passes to the quotient $(X^6,J)$ as a nowhere-vanishing form.    It is straightforward to calculate that
\begin{equation}
\label{eq:CE.dOmega}
    \rd\Omega=(\alpha_2-\alpha_1)\wedge\Omega.
\end{equation}
This relation reflects the fact that the Calabi--Eckmann complex structure $J$ is integrable.
\end{definition}

\begin{remark}
    Altogether we see, from  \eqref{eq:CE.dOmega} and \eqref{eq:CE.domega2}, that the equations \eqref{eq:F1} and \eqref{eq:D2} from the heterotic $\SU(3)$ system are satisfied by the $\SU(3)$-structure $(\omega,\Omega)$ on $(X^6,J)$, modulo $\Omega$ and $\omega^2$ respectively.  
\end{remark}  


We now turn to the gauge-theoretic aspects of the heterotic $\SU(3)$ system in this setting.

\begin{prop}
\label{prop:CE.Chern}  
    Let $(X^6,J)$ be as in Definition \ref{def:CE.cx}. The Bismut and Hull connections on $T^{1,0}X$ are gauge-equivalent flat connections.  The Chern connection on $T^{1,0}X$ is not flat, but its curvature $R$ satisfies
\begin{equation}
\label{eq:CE.trR}
    \tr(R\wedge R)=0.
\end{equation}
\end{prop}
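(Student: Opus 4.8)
The plan is to work on the universal cover $(\mathbb{C}^2\setminus\{0\})\times(\mathbb{C}^2\setminus\{0\})$, where everything is explicit, and to exploit the homogeneity of the Calabi--Eckmann structure. First I would establish the claims about the Bismut and Hull connections. The torsion $T$ computed in Lemma \ref{lem:CE.omega} is (up to scale) $(\alpha_1+i\alpha_2)\wedge(\rd\alpha_1-i\rd\alpha_2)$; since $X\cong S^3\times S^3$ carries a product of bi-invariant metrics on two copies of $\SU(2)$, the natural candidate for a flat connection is the one whose parallel sections are the left-invariant (or right-invariant) vector fields. Concretely, I would write down the Maurer--Cartan coframe $\{\alpha_j,\pi_j^*(\text{coframe on }\mathbb{CP}^1)\}$, express $\nabla^+$ and the Hull connection in this coframe, and observe that both have connection $1$-form which, in the left-invariant frame, is given by constant structure-constant terms encoding $\rd$ of the coframe; the curvature is then the flat Maurer--Cartan curvature, i.e. zero. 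Gauge-equivalence of the two is the standard fact that for a bi-invariant metric the connections $\nabla^\pm$ (canonical connections with skew torsion $\pm\tfrac12[\cdot,\cdot]$) are related by the flip $L\leftrightarrow R$, hence conjugate by an element of the gauge group. This is essentially bookkeeping once the frame is chosen.

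Next, for the Chern connection: I would show $R\neq 0$ simply by contradiction — if $R=0$ then the Chern connection would be flat, forcing the metric to be Chern-flat, but a Chern-flat Hermitian metric on a compact complex manifold makes it (a quotient of) a complex Lie group with bi-invariant-type metric, and in any case one checks directly from \eqref{R-conv} in a local holomorphic frame that $R$ has a nonzero component (the Calabi--Eckmann $\omega$ is not Chern-flat since $\del\delbar\log\det g \neq 0$, or more concretely $\partial\bar\partial$ of the metric coefficients is nonzero). A clean route: the first Chern--Ricci form of the Chern connection equals $-i\partial\bar\partial\log\det g$, which is $c_1$-type data; while $c_1(X)=0$, the Chern--Ricci form need not vanish pointwise for a non-Kähler metric, and an explicit evaluation in the coordinates of Definition \ref{def:CE.cx} shows it is nonzero, so $R\neq 0$.

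Finally, and this is the heart of the statement, I would prove $\tr(R\wedge R)=0$. The cleanest argument uses the relation between the Chern and Bismut curvatures. From \eqref{eq:Chern.R.sym} (equivalently Corollary \ref{cor:commute}) one has, schematically, $R^{\text{Ch}} = R^{+} + \delbar T$-type correction terms relating the Chern curvature to the Bismut curvature $R^+$; since $\nabla^+$ is flat, $R^+=0$, so $\tr(R\wedge R)$ reduces to an expression built purely from $T$, $\partial T$, $\delbar T$. Using $\rd T = 0$ from Lemma \ref{lem:CE.omega} (so $\partial T = 0 = \delbar T$ as well, by type), these correction terms collapse and $\tr(R\wedge R)$ becomes a quadratic expression in $T$ alone of the form $\tr(\Theta\wedge\Theta)$ where $\Theta$ is the algebraic $T$-contribution to the connection $1$-form; this is a $\tr$ of a wedge square of a $(1,0)$-form-valued endomorphism built from the single form $\alpha_1+i\alpha_2$ and its derivative, which vanishes for degree/rank reasons (the relevant endomorphism-valued form has rank too small for its wedge-square trace to survive, or equivalently it is nilpotent). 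I expect this last computation — correctly tracking the difference $R^{\text{Ch}} - R^+$ as endomorphism-valued $2$-forms and then verifying the trace of the wedge-square of the residual term vanishes — to be the main obstacle, since it requires care with the non-Kähler identities and with which indices are being wedged; the physics literature's anomaly-polynomial identities (expressing $\tr R\wedge R$ for $\nabla$ in terms of $\tr R^+\wedge R^+$ plus $\rd$(something involving $T$)) provide a useful cross-check that the residual terms are exact or zero.
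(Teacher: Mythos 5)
Your argument for the first half of the statement — flatness of the Bismut and Hull connections, their gauge equivalence, and $R\neq 0$ — is essentially in line with the paper, which works in an explicit left-invariant coframe $\{e_1,\dots,e_6\}$ on $S^3\times S^3$, computes the Levi-Civita connection matrix, and observes directly that $\nabla^+ = \nabla^g - \tfrac12 g^{-1}\rd^c\omega$ has trivial connection matrix; gauge equivalence of Hull and Bismut then follows from flatness plus simple connectedness of $X$ (rather than your $L\leftrightarrow R$ flip, though both routes work), and your Chern--Ricci form argument for $R\neq 0$ is a valid alternative to the paper's explicit computation of $\Gamma$ and $R$.

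The gap is in your argument for $\tr(R\wedge R)=0$, which is the substantive claim. You propose to write $R=R^+ + (\text{corrections})$, kill $R^+$ by flatness, and then argue the corrections become a \emph{purely algebraic} expression in $T$ because $\rd T=0$, after which a rank/nilpotency argument would finish. This reduction does not hold. Writing the contorsion $\Theta_j{}^l{}_m = T^l{}_{jm}$ so that $\nabla^+ = \nabla - \Theta$ on the $(1,0)$-derivative, a direct commutator computation (this is essentially the content of Corollary \ref{cor:commute}) gives $R^+_{\bar k j}{}^l{}_m = R_{\bar k j}{}^l{}_m - \nabla_{\bar k}T^l{}_{jm}$, so that when $R^+=0$,
\begin{equation*}
  R_{\bar k j}{}^l{}_m = \nabla_{\bar k}T^l{}_{jm},
\end{equation*}
i.e.\ the Chern curvature is a \emph{covariant derivative} of $T$, not an algebraic tensor built from it. The hypothesis $\rd T=0$ kills only the antisymmetrised part $\partial_{[\bar k}T_{\bar n]jm}$; the covariant derivative $\nabla_{\bar k}T^l{}_{jm}=g^{l\bar n}\bigl(\partial_{\bar k}T_{\bar n jm}-\bar\Gamma^{\bar p}_{\bar k\bar n}T_{\bar p jm}\bigr)$ carries Christoffel terms and does not vanish — indeed the paper's explicit formula \eqref{eq:CE.R} exhibits $R\neq 0$. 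Your fallback via the anomaly-polynomial (Chern--Simons transgression) identity is weaker still: it shows that $\tr(R\wedge R)-\tr(R^+\wedge R^+)$ is $\rd$-exact, so with $R^+=0$ you conclude only that $\tr(R\wedge R)$ is an exact $4$-form, not that it vanishes pointwise, which is what the proposition asserts. The paper's route is a direct computation: it derives the Chern connection matrix $\Gamma$ from the Levi-Civita one via $g(\nabla_XY,Z)=g(\nabla^g_XY,Z)-\tfrac12\rd\omega(JX,Y,Z)$, reads off $R$, and then checks that the contributions of the $1\times1$ block and the $2\times2$ off-diagonal block of \eqref{eq:CE.R} to $\tr(R\wedge R)$ cancel exactly. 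Any structural shortcut would have to exploit that specific block form of $R$, not generalities about $T$.
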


\begin{proof}
We let $\{e_1,e_2,e_3\}$, $\{e_4,e_5,e_6\}$ be  standard left-invariant coframes on $\SU(2)\cong S^3$, so that
\begin{equation}
    \rd e_i=\frac{1}{2}\epsilon_{ijk} e_j\wedge e_k,
    \qforq \{i,j,k\}=\{1,2,3\},
\end{equation}
and a similar equation for $\{i,j,k\}=\{4,5,6\}$.  We can choose these coframes to be orthonormal for the metric $g$ on $X$, and we can choose $e_1=\alpha_1$ and $e_4=\alpha_2$ in the notation of Definition \ref{def:CE.omega}. This means that
\begin{equation}
    \rd\alpha_1=e_2\wedge e_3\quad\text{and}\quad \rd\alpha_2=e_5\wedge e_6,
\end{equation}
from which one can compute the matrix for the Levi-Civita connection: 
\begin{equation}
\label{eq:Gamma.LC}
  \Gamma^{g}=\frac{1}{2}\left[\begin{array}{cc} \Gamma_1 & 0 \\
  0 & \Gamma_2\end{array}\right],
\end{equation}
where
\begin{equation}
    \Gamma_1=\begin{bmatrix}
       0 & e_3 & -e_2 \\
       -e_3 & 0 & e_1 \\
       e_2 & -e_1 & 0
    \end{bmatrix}
\end{equation}
and a similar formula holds for $\Gamma_2$ involving $\{e_4,e_5,e_6\}$.  Recall that the Bismut connection $\nabla^+$ is related to the Levi-Civita connection $\nabla^g$ by
\begin{equation}
    \nabla^+
    =\nabla^g-\frac{1}{2}g^{-1}\rd^c\omega.
\end{equation}
We see that
\begin{equation}
    \rd^c\omega
    =\alpha_1\wedge\rd\alpha_1 +\alpha_2\wedge\rd\alpha_2
    =e_1\wedge e_2\wedge e_3+e_4\wedge e_5\wedge e_6.
\end{equation}
Hence, the Bismut connection $\nabla^+$ is trivial and flat as claimed.  
The curvature of the Hull and Bismut connections are related by $\del\delbar\omega$, which vanishes by \eqref{eq:CE.PC}, so the Hull connection is also flat.  Since $X$ is simply connected, the Hull and Bismut connections are then gauge-equivalent.

We now recall that the Chern connection $\nabla$ on $T^{1,0}X$ is related to the Levi-Civita connection $\nabla^g$ by
\begin{equation}\label{eq:Chern.LC}
    g(\nabla_XY,Z)=g(\nabla^g_XY,Z)-\frac{1}{2}\rd\omega(JX,Y,Z).
\end{equation}
If we use the basis for the $(1,0)$-forms on $X$ given by $\{e_1+ie_4,e_2+ie_3,e_5+ie_6\}$, \eqref{eq:CE.domega}, \eqref{eq:Gamma.LC}, \eqref{eq:Chern.LC} we see that the connection matrix $\Gamma$ for the Chern connection is
\begin{equation}\label{eq:Gamma}
    \Gamma=\frac{1}{2}\begin{bmatrix}
        0 & e_3+ie_3 & -e_5+ie_6 \\ 
        -e_3+ie_2 & -2ie_1 & 0 \\
        e_5+ie_6 & 0 & -2ie_4
    \end{bmatrix}.
\end{equation}
Using \eqref{eq:Gamma} one may compute that the curvature $R$ of $\nabla$ is given by
\begin{equation}\label{eq:CE.R}
    R=\frac{1}{4}\begin{bmatrix}
        -2ie_2\wedge e_3-2ie_5\wedge e_6 & 0 & 0\\
        0 & -2ie_2\wedge e_3 & (-e_3+ie_2)\wedge (-e_5+ie_6)\\
        0 & -(e_3+ie_2)\wedge (e_5+ie_6) & -2ie_5\wedge e_6
    \end{bmatrix}.
\end{equation}
Clearly, the Chern connection is not flat and one may calculate from \eqref{eq:CE.R} that \eqref{eq:CE.trR} holds as claimed.
\end{proof}

\begin{remark}\label{rmk:CE.barD}
    Lemma \ref{lem:CE.omega} and Proposition \ref{prop:CE.Chern} show that if we choose any flat connection $A$ on a holomorphic vector bundle $E\to X$ given in Definition \ref{def:CE.cx}, then the anomaly cancellation condition \eqref{eq:F2} and the instanton condition \eqref{eq:D1} are satisfied.  Hence, we can build the operator $\bar{D}$ as in \eqref{eq: barD}, which is a differential coboundary by Proposition \ref{prop:Nilpotent}, with cohomology groups $H^{0,p}_{\bar{D}}(Q)$.  Note that the operators $\sT$ and $\cR\nabla^+$ appearing in $\bar{D}$ are both nontrivial, even if the operator $\sF$  is zero.
\end{remark}


We may now combine our results in this section and deduce the following result.

\begin{theorem}
\label{thm:CE}
    Let $(X^6\cong S^3\times S^3,J)$ be the Calabi--Eckmann complex manifold given in Definition \ref{def:CE.cx}, endowed with the $\SU(3)$-structure $(\omega,\Omega)$ given in Definitions \ref{def:CE.omega} and \ref{def:CE.Omega}.  Let $E=T^{1,0}X$ and let $A$ be the Bismut (or Hull) connection on $E$.  These data solve the anomaly cancellation \eqref{eq:F2} and instanton conditions \eqref{eq:D1} in the heterotic $\SU(3)$ system, and solve the equations \eqref{eq:F1} and \eqref{eq:D2}, modulo $\Omega$ and $\omega^2$ respectively.  
    Moreover, the cohomology groups $H^{0,1}_{\bar{D}}(Q)$ and $H^{0,2}_{\bar{D}}(Q)$ are well-defined and trivial.
 \end{theorem}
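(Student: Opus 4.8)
The plan is to combine the structural results already established for this example with the Serre-duality isomorphism of Theorem \ref{thm:index} (equivalently Theorem \ref{prop:general.Serre}), reducing everything to a computation of a single cohomology group. First I would note that the well-definedness of $H^{0,p}_{\bar D}(Q)$ has already been secured: Remark \ref{rmk:CE.barD} observes that the anomaly cancellation \eqref{eq:F2} holds (both sides vanish, by \eqref{eq:CE.PC} and \eqref{eq:CE.trR}) and that $A$ is flat, hence $\bar D^2 = 0$ by Proposition \ref{prop:Nilpotent}. So the content is the triviality claim. By Theorem \ref{thm:index} we have $H^{0,1}_{\bar D}(Q)\cong H^{0,2}_{\bar D}(Q)$, so it suffices to prove one of them vanishes; I would aim at $H^{0,2}_{\bar D}(Q)$, or work with the Euler characteristic together with vanishing in degrees $0$ and $3$, whichever is shorter.

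The key step is to exploit the flatness established in Proposition \ref{prop:CE.Chern}. The idea is to use the short exact sequences \eqref{eq:SES1} and \eqref{eq:SES2} to filter $H^{0,p}_{\bar D}(Q)$ in terms of $H^{0,p}_{\delbar}(T^{1,0}X)$, $H^{0,p}_{\delbar_E}(\End E)$ and $H^{0,p}_{\delbar}((T^{1,0}X)^*)$, exactly as in the proof of Proposition \ref{prop:Serre.simple}. For the tangent and cotangent bundles of $X\cong S^3\times S^3$ with the Calabi--Eckmann complex structure, the relevant Dolbeault cohomology groups $H^{0,p}(T^{1,0}X)$ and $H^{0,p}(T^{*1,0}X)$ are classically known (Borel's spectral sequence / Bott--Chern-type computations for Calabi--Eckmann manifolds; see e.g. the literature on Hodge numbers of $S^{2p+1}\times S^{2q+1}$), and since $E = T^{1,0}X$ carries the flat Bismut/Hull connection one has $H^{0,p}_{\delbar_E}(\End E)\cong H^{0,p}(X;\mathbb C)\otimes \End(\mathbb C^3)$ up to parallel identifications — here flatness plus simple-connectedness of $X$ trivialises $\End E$ as a holomorphic(-ish) bundle in the appropriate sense. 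One then checks that in each total degree the relevant connecting maps $\sH$ and $\sF$ either kill everything or the source groups already vanish, forcing $H^{0,1}_{\bar D}(Q)=H^{0,2}_{\bar D}(Q)=0$. Alternatively, and perhaps more cleanly, one can invoke Theorem \ref{prop:Hodge}: since all the structures here are homogeneous under $S^3\times S^3$ acting by left translations, harmonic $Q$-valued forms for $\bar D + \bar D^*$ should themselves be invariant, reducing the kernel to a finite-dimensional linear-algebra problem on the Lie algebra $\su(2)\oplus\su(2)$, which one computes directly using the explicit connection matrices \eqref{eq:Gamma}, \eqref{eq:CE.R} and $T$ in \eqref{eq:CE.T}.

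The main obstacle I anticipate is the bookkeeping in the homogeneous/harmonic reduction: one must verify that $\bar D + \bar D^*$ is genuinely elliptic here (Proposition \ref{prop:elliptic} requires $\alpha'/R_0$ small, so for a fixed nonzero $\alpha'$ on this specific $X$ one needs either to rescale the metric on the factors or to argue ellipticity directly from \eqref{eq:symbol} using the explicit $R$), and then to show invariance of harmonic representatives — this last point needs that the relevant averaging over the compact group $S^3\times S^3$ commutes with $\bar D$ and $\bar D^*$, which in turn uses that $\omega$, $\Omega$, $T$, $R$ and $A$ are all invariant, i.e. the operators $\sF$, $\sT$, $\cR\nabla^+$ are built from invariant tensors. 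Granting that, the residual computation is a concrete (if slightly tedious) contraction of the matrices in \eqref{eq:CE.R} and \eqref{eq:CE.T} against invariant forms on $\su(2)\oplus\su(2)$, and I expect the $\sT$ and $\cR\nabla^+$ terms — which are nonzero even though $\sF=0$ — to be precisely what obstructs the would-be harmonic modes coming from $H^{0,1}(T^{1,0}X)$ and $H^{0,2}(T^{*1,0}X)$, yielding the stated vanishing.
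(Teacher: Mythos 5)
Your Option (a) is essentially the paper's proof: use the long exact sequence \eqref{eq:LES1} arising from \eqref{eq:SES2}, with the key simplification that the flat connection $A$ on $E$ kills $\sF$ and trivialises $\delbar_E$, so $\bar D_1$ reduces to the standard $\delbar$ on each summand of $Q_1$. Where you diverge is in justifying the vanishing of the input Dolbeault groups: the paper asserts that all bundles appearing in \eqref{eq:LES1} are trivial and thereby reduces everything to $H^{0,1}(X,\mathcal{O}) = H^{0,2}(X,\mathcal{O}) = 0$ for $X\cong S^3\times S^3$, whereas you appeal to Borel-type Hodge-number computations for the Calabi--Eckmann structure. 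Your route is arguably the more careful one --- the triviality claim is clear for $\End(E)$ (flat unitary connection on a simply connected base gives a holomorphic trivialisation) but is delicate for $T^{1,0}X$ and $(T^{1,0}X)^*$ with their natural holomorphic structures, since the left-invariant $(1,0)$-frame is not holomorphic --- but be aware that what you actually need are the twisted Dolbeault groups $H^{0,p}(T^{1,0}X)$ and $H^{0,p}((T^{1,0}X)^*)$ for $p=1,2$, not the Hodge numbers $h^{0,p}(X)$. Finally, your invocation of Serre duality (Theorem \ref{thm:index}) and your homogeneous harmonic-reduction Option (b) via Theorem \ref{prop:Hodge} are both dispensable: the long exact sequence delivers the vanishing of $H^{0,1}_{\bar D}(Q)$ and $H^{0,2}_{\bar D}(Q)$ simultaneously, without duality; and the ellipticity concern you flag for Option (b) is moot in this example because both sides of \eqref{eq:F2} vanish identically, so $\alpha'$ is unconstrained and may be taken small enough for Proposition \ref{prop:elliptic} to apply directly.
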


\begin{proof}
  The first part of the statement follows from Lemma \ref{lem:CE.omega}, equation  \eqref{eq:CE.dOmega} and Proposition \ref{prop:CE.Chern}.  

  For the second part of the statement, we first observe that the cohomology groups are well-defined by Remark \ref{rmk:CE.barD}.  We now note that, since the connection we chose on $E$ is flat, the operator $\bar{D}_1$ given in \eqref{eq:barD1} defined on the bundle $Q_1$ in \eqref{eq:SES1} is just the usual $\delbar$-operator.  Note also that all of the bundles we are considering in the long exact sequence \eqref{eq:LES1} are trivial, so their ordinary Dolbeault cohomology is determined by that of $X$.  However, $H^1(X)=H^2(X)=0$ as $X\cong S^3\times S^3$, so the Dolbeault cohomology in these degrees for $T^*$ and $Q_1$ vanish.  Therefore, the terms $H^j(T^*)$ and $H^j(Q_1)$ for $j=1,2$ in \eqref{eq:LES1} are zero, which means that   $H^{0,1}_{\bar{D}}(Q)=H^{0,2}_{\bar{D}}(Q)=0$ by \eqref{eq:LES1}.
\end{proof}

\begin{remark}
    We can interpret Theorem \ref{thm:CE} as saying that the solution of the modified heterotic $\SU(3)$ system given here, with Calabi--Eckmann $X\cong S^3\times S^3$ as its base manifold, is rigid and unobstructed.  Therefore, the moduli space in this setting is locally just an isolated point.
\end{remark}


\subsection{Iwasawa manifold} 
\label{sec:Iwasawa}
We now consider a solution to the heterotic $\SU(3)$ system which is strongly motivated by work in \cite{LopesCardoso2002}. It should be noted that this solution is also non-physical, in the sense that it requires $\alpha'<0$. Nevertheless, it provides a good toy model for our results.  

We begin by describing the complex manifold on which we solve the system.

\begin{definition}
\label{dfn:Iwasawa}
    Let $G$ be the complex Heisenberg group and let $\Gamma\subseteq G $ be a lattice, as follows:
\begin{equation}
\label{eq:Heisenberg}
 G=\left\{
 \begin{bmatrix}
     1 & z_1 & z_3 \\
     0 & 1 & z_2 \\
     0 & 0 & 1
\end{bmatrix}\,:\,z_1,z_2,z_3\in\mathbb{C}\right\}    \qandq \Gamma=\left\{
 \begin{bmatrix}
     1 & z_1 & z_3 \\
     0 & 1 & z_2 \\
     0 & 0 & 1
\end{bmatrix}\,:\,z_1,z_2,z_3\in\mathbb{Z}\oplus i\mathbb{Z}\right\}.
\end{equation}
    The variables $z_1,z_2,z_3$ define complex coordinates on $G$ and hence a complex structure $J$ on $X^6=G/\Gamma$, which is called the \emph{Iwasawa manifold}.  
\end{definition}

\begin{remark}
   We see that $G$ in \eqref{eq:Heisenberg} is nilpotent, and thus $X$ is a nilmanifold endowed with the left-invariant complex structure $J$.  Such complex manifolds, including their deformations of complex structure, have been well-studied and have a rich theory. In fact, the Iwasawa manifold is a canonical and important example in this context.
\end{remark}

We can now define a natural left-invariant $\SU(3)$-structure on the Iwasawa manifold $(X,J)$.

\begin{definition}\label{dfn:Iwasawa.SU3} Let $(X,J)$ be as in Definition \ref{dfn:Iwasawa}. 
We define a global (left-invariant) basis for $\Omega^{1,0}(X)$ by:
\begin{equation}\label{eq:Iwasawa.coframe}
    \alpha_1=\rd z_1,\quad 
    \alpha_2=\rd z_2,\quad \alpha_3=-\rd z_3+z_1\rd z_2,
\end{equation}
which satisfy
\begin{equation} \label{eq:Iwasawa.coframe.diff} \rd\alpha_1=0,\quad\rd\alpha_2=0,\quad \rd\alpha_3=\alpha_1\wedge\alpha_2.  
\end{equation}
We may then define a Hermitian form $\omega$ by
\begin{equation}
\label{eq:Iwasawa.omega}
    \omega=\frac{i}{2}(\alpha_1\wedge\overline{\alpha_1}+
\alpha_2\wedge\overline{\alpha_2}+
\alpha_3\wedge\overline{\alpha_3}).
\end{equation}
With respect to the complex structure $J$, the compatible metric $g$ on $X$ is simply
\begin{equation}
\label{eq:Iwasawa.g}
    g=\alpha_1\overline{\alpha_1} +\alpha_2\overline{\alpha_2} +\alpha_3\overline{\alpha_3}.
\end{equation}
We finally define a nowhere vanishing $(3,0)$-form $\Omega$ by
\begin{equation}\label{eq:Iwasawa.Omega}
\Omega=\alpha_1\wedge\alpha_2\wedge\alpha_3.
\end{equation}
Note that $|\Omega|_{\omega}$ is constant, by \eqref{eq:Iwasawa.g}.
\end{definition}

Given this $\SU(3)$-structure, we may now study its torsion, and its relation to  the heterotic $\SU(3)$ system.

\begin{lemma}\label{lem:Iwasawa.torsion}
    The $\SU(3)$-structure in Definition \ref{dfn:Iwasawa.SU3} satisfies
    \begin{equation}\label{eq:Iwasawa.torsion}
        \rd\Omega=0,\quad T=i\del\omega=-\frac{1}{2}\alpha_1\wedge\alpha_2\wedge\overline{\alpha_3},\quad 2i\del\delbar\omega=\alpha_1\wedge\alpha_2\wedge\overline{\alpha_1}\wedge\overline{\alpha_2}.
    \end{equation}
\end{lemma}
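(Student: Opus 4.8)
The plan is to compute everything directly from the structure equations \eqref{eq:Iwasawa.coframe.diff} for the left-invariant coframe, together with their conjugates $\rd\overline{\alpha_1}=\rd\overline{\alpha_2}=0$ and $\rd\overline{\alpha_3}=\overline{\alpha_1}\wedge\overline{\alpha_2}$. The only point requiring any care is bookkeeping of the bidegree decomposition: each $\alpha_j$ has type $(1,0)$, and $\rd\alpha_3=\alpha_1\wedge\alpha_2$ is of pure type $(2,0)$, so $\del\alpha_3=\alpha_1\wedge\alpha_2$ and $\delbar\alpha_3=0$, while $\del\overline{\alpha_j}=0$ for all $j$.

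For $\rd\Omega$, I would expand $\Omega=\alpha_1\wedge\alpha_2\wedge\alpha_3$ by the Leibniz rule; the terms with $\rd\alpha_1$ and $\rd\alpha_2$ vanish, and the surviving term is $\alpha_1\wedge\alpha_2\wedge\rd\alpha_3=\alpha_1\wedge\alpha_2\wedge\alpha_1\wedge\alpha_2=0$. Hence $\rd\Omega=0$, consistent with the integrability of $J$ already recorded in Definition \ref{dfn:Iwasawa}.

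For the torsion, differentiating \eqref{eq:Iwasawa.omega} and discarding the vanishing contributions leaves only $\rd(\alpha_3\wedge\overline{\alpha_3})=\rd\alpha_3\wedge\overline{\alpha_3}-\alpha_3\wedge\rd\overline{\alpha_3}$, so
\[
\rd\omega=\tfrac{i}{2}\big(\alpha_1\wedge\alpha_2\wedge\overline{\alpha_3}-\alpha_3\wedge\overline{\alpha_1}\wedge\overline{\alpha_2}\big).
\]
Projecting onto bidegrees identifies $\del\omega=\tfrac{i}{2}\alpha_1\wedge\alpha_2\wedge\overline{\alpha_3}$ and $\delbar\omega=-\tfrac{i}{2}\alpha_3\wedge\overline{\alpha_1}\wedge\overline{\alpha_2}$; multiplying the former by $i$ yields $T=i\del\omega=-\tfrac12\alpha_1\wedge\alpha_2\wedge\overline{\alpha_3}$, as claimed.

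For the last identity I would apply $\del$ to $\delbar\omega=-\tfrac{i}{2}\alpha_3\wedge\overline{\alpha_1}\wedge\overline{\alpha_2}$; since $\del\overline{\alpha_1}=\del\overline{\alpha_2}=0$ and $\del\alpha_3=\alpha_1\wedge\alpha_2$, only one term survives and we obtain $\del\delbar\omega=-\tfrac{i}{2}\alpha_1\wedge\alpha_2\wedge\overline{\alpha_1}\wedge\overline{\alpha_2}$, hence $2i\del\delbar\omega=\alpha_1\wedge\alpha_2\wedge\overline{\alpha_1}\wedge\overline{\alpha_2}$. There is no genuine obstacle here: the computation is routine and the only thing worth flagging is that, unlike the Calabi--Eckmann example, $\omega$ is \emph{not} pluriclosed, which is precisely what makes the right-hand side of the anomaly equation \eqref{eq:F2} nonzero in this case and is ultimately responsible for the later requirement $\alpha'<0$.
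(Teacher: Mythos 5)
Your proof is correct and follows essentially the same route as the paper's: compute $\rd\Omega$ and $\rd\omega$ from the structure equations \eqref{eq:Iwasawa.coframe.diff}, read off the $(2,1)$-piece to get $T=i\del\omega$, and then differentiate once more. The only cosmetic difference is that the paper computes $\rd T$ (and uses $\rd T=-i\del\delbar\omega$ implicitly, since $T$ is $\del$-closed here), whereas you apply $\del$ directly to $\delbar\omega$ — the same one-line computation.
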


\begin{proof}
 The first equation in \eqref{eq:Iwasawa.torsion} clearly follows from \eqref{eq:Iwasawa.coframe.diff} and \eqref{eq:Iwasawa.Omega}.  We then note from \eqref{eq:Iwasawa.coframe.diff} and \eqref{eq:Iwasawa.omega} that
 \begin{equation*}
     \rd\omega=\frac{i}{2}(\alpha_1\wedge\alpha_2\wedge\overline{\alpha_3}-\alpha_3\wedge \overline{\alpha_1}\wedge\overline{\alpha_2}).
 \end{equation*}
 The formula for $T$ in \eqref{eq:Iwasawa.torsion} then follows.  We then calculate from \eqref{eq:Iwasawa.coframe.diff} that
 \begin{equation*}
     \rd T=-\frac{1}{2}\alpha_1\wedge\alpha_2\wedge\overline{\alpha_1}\wedge\overline{\alpha_2},
 \end{equation*}
 which yields the final equation in \eqref{eq:Iwasawa.torsion}.
\end{proof}

\begin{remark} Lemma \ref{lem:Iwasawa.torsion} shows that \eqref{eq:F1} and \eqref{eq:D1} in the heterotic $\SU(3)$ system are satisfied.
\end{remark}

Before turning to the gauge theory aspects of the heterotic $\SU(3)$ system, 
we note the following well-known fact (see e.g.~\cite{DiScalaVezzoni}*{\S4.1}), which one may straightforwardly deduce from the formulae in Definitions \ref{dfn:Iwasawa} and \ref{dfn:Iwasawa.SU3}.

\begin{lemma}
\label{lem:Iwasawa.R}
    The Chern connection $\nabla$ arising from the $\SU(3)$-structure in Definition \ref{dfn:Iwasawa.SU3} is flat, i.e.~it has curvature $R=0$. 
\end{lemma}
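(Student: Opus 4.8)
The plan is to prove $R=0$ by computing the Chern connection matrix explicitly in the global coframe $\{\alpha_1,\alpha_2,\alpha_3\}$ of Definition \ref{dfn:Iwasawa.SU3} and observing that its curvature vanishes. First I would recall that, since the metric \eqref{eq:Iwasawa.g} is diagonal with constant coefficients in this coframe, the Chern connection form $\theta = (\theta^i{}_j)$ is the unique matrix of $(1,0)$-forms satisfying $\rd\alpha^i = -\theta^i{}_j\wedge\alpha^j + \tau^i$, where $\tau^i$ is the $(2,0)$-part of the torsion, together with the unitarity condition $\theta + \theta^\dagger = 0$ forced by $\rd g=0$. Using the structure equations \eqref{eq:Iwasawa.coframe.diff}, namely $\rd\alpha_1=\rd\alpha_2=0$ and $\rd\alpha_3=\alpha_1\wedge\alpha_2$, one sees that all the exterior derivatives are already of type $(2,0)$, so the $(1,1)$-part of $\rd\alpha^i$ vanishes. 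This immediately yields that the Chern connection form in this frame is identically zero: indeed the defining equation $\delbar\alpha^i = -\theta^i{}_j\wedge\alpha^j$ (the $(1,1)$-component of the structure equation, noting $\theta$ is type $(1,0)$) has right-hand side $\delbar\alpha^i=0$, and since $\{\alpha^j\}$ is a pointwise frame for $(T^{1,0}X)^*$ this forces $\theta^i{}_j=0$.

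With $\theta\equiv 0$, the curvature $R=\rd\theta + \theta\wedge\theta = 0$, which is the claim. I would also remark that this is consistent with, and indeed equivalent to, the well-known fact that the holomorphic cotangent bundle of the Iwasawa manifold is holomorphically trivial — the $\alpha_i$ furnish a global holomorphic coframe — and a holomorphic bundle with a flat Hermitian metric (here the metric with constant coefficients $\delta_{ij}$ in that coframe) has vanishing Chern curvature. The only minor point requiring care is the verification that the $\alpha_i$ are genuinely holomorphic sections of $(T^{1,0}X)^*$, i.e.\ that $\delbar\alpha_i=0$; but this is immediate from \eqref{eq:Iwasawa.coframe.diff}, since $\rd\alpha_1=\rd\alpha_2=0$ and $\rd\alpha_3=\alpha_1\wedge\alpha_2$ is of pure type $(2,0)$, so the $(1,1)$-parts all vanish.

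The main (and only) obstacle is essentially bookkeeping: one must make sure the sign and index conventions for the Chern connection match those fixed in \eqref{R-conv} of Notation \ref{notation1}, so that ``$\theta\equiv 0$ in this frame'' really does translate to ``$R_{\bar k j}{}^l{}_m = 0$'' in the convention used throughout the paper. Since the vanishing of a tensor is independent of frame, no genuine difficulty arises — once the connection form is zero in one global frame, $R$ vanishes as a section of $\Omega^{1,1}(\mathrm{End}(T^{1,0}X))$, hence in any coordinates. I would therefore present the argument succinctly: exhibit the global holomorphic coframe, observe the Hermitian metric has constant coefficients, conclude the Chern connection form vanishes, and hence $R=0$.
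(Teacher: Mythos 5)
Your proof is correct and gives the argument the paper implicitly has in mind when it says the lemma may be ``straightforwardly deduced from the formulae in Definitions \ref{dfn:Iwasawa} and \ref{dfn:Iwasawa.SU3}'': from \eqref{eq:Iwasawa.coframe.diff} each $\rd\alpha_i$ is of pure type $(2,0)$, so $\delbar\alpha_i=0$ and $\{\alpha_1,\alpha_2,\alpha_3\}$ is a global holomorphic coframe for $(T^{1,0}X)^*$; by \eqref{eq:Iwasawa.g} the Hermitian metric has constant matrix $\delta_{ij}$ in this coframe, so the Chern connection matrix $\theta=h^{-1}\del h$ vanishes and $R=\delbar\theta=0$. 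Your second paragraph states this cleanly and is the version to keep.

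The first paragraph, however, contains a type-counting slip. If $\theta^i{}_j$ has type $(1,0)$ (as you correctly note), then $\theta^i{}_j\wedge\alpha^j$ has type $(2,0)$, so the equation $\delbar\alpha^i=-\theta^i{}_j\wedge\alpha^j$ cannot be the $(1,1)$-component of the structure equation: the $(1,1)$-part of $\rd\alpha^i=-\theta^i{}_j\wedge\alpha^j+\tau^i$ (with $\tau^i$ of type $(2,0)$) is simply $\delbar\alpha^i=0$, which confirms holomorphicity of the frame but extracts no information about $\theta$. What actually forces $\theta=0$ is the combination of $\theta$ being pure type $(1,0)$ (from holomorphicity of the frame) with the skew-Hermitian condition $\theta+\theta^\dagger=0$ (from constancy of the metric in that frame); since $\theta^\dagger$ is type $(0,1)$, decomposing by type gives $\theta=\theta^\dagger=0$. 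This is exactly the content of your second paragraph, so the conclusion is unaffected, but the route taken in the first paragraph is not a valid derivation of $\theta=0$ and should be replaced by the metric-compatibility argument.
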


\begin{remark}
    In \cite{LopesCardoso2002}, the Bismut connection is adopted in the anomaly condition, rather than the Chern connection, but this also turns out to be flat.
\end{remark}

We may now define the connection we desire, which is a variation on the connection used in \cite{LopesCardoso2002}.

\begin{definition}
\label{dfn:Iwasawa.A} 
    In the context of Definitions \ref{dfn:Iwasawa} and \ref{dfn:Iwasawa.SU3}, define a connection $A$ on a rank 2 holomorphic vector bundle $E\to X$ so that it has curvature given by   
    \begin{equation}
    \label{eq:Iwasawa.F}
    F = \frac{i}{4}
    \begin{bmatrix}
        \alpha_1\wedge\overline{\alpha_1} -\alpha_2\wedge\overline{\alpha_2} & 0\\
        0 &  -\alpha_1\wedge\overline{\alpha_1} + \alpha_2\wedge\overline{\alpha_2}
    \end{bmatrix}.
    \end{equation}
    Note that $F$ takes values in a $\mathfrak{u}(1)\subset \mathfrak{su}(2)$ subalgebra, so we can view $E$ as having  structure group $\SU(2)$.
\end{definition}

We may now show that the remaining equations \eqref{eq:F2} and \eqref{eq:D2} in the heterotic $\SU(3)$ system are satisfied, given Lemmas \ref{lem:Iwasawa.torsion} and \ref{lem:Iwasawa.R}.

\begin{lemma}\label{lem:Iwasawa.F}
The Hermitian form $\omega$ in \eqref{eq:Iwasawa.omega} and curvature $F$ of the connection $A$ in Definition \ref{dfn:Iwasawa.A} satisfy
\begin{equation}\label{eq:Iwasawa.F2D2}
F\wedge\omega\wedge\omega=0\quad\text{and}\quad 2i\del\delbar \omega=-4\tr(F\wedge F).
\end{equation}
\end{lemma}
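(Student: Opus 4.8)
The plan is to verify both identities by a direct computation in the global left-invariant coframe $\{\alpha_1,\alpha_2,\alpha_3\}$ of Definition \ref{dfn:Iwasawa.SU3}, using only the algebraic relations $\alpha_j\wedge\alpha_j=0=\overline{\alpha_j}\wedge\overline{\alpha_j}$ together with the structure equations \eqref{eq:Iwasawa.coframe.diff}. Since $\omega$, $F$ and $\del\delbar\omega$ are all left-invariant, the whole computation lives in the exterior algebra on the six generators $\alpha_1,\overline{\alpha_1},\alpha_2,\overline{\alpha_2},\alpha_3,\overline{\alpha_3}$, and the only thing requiring care is sign bookkeeping when reordering wedge products of these $1$-forms (and, for the second identity, the correct reading of $F\wedge F$ as a matrix product of $\End(E)$-valued forms).

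For the instanton equation $F\wedge\omega\wedge\omega=0$ I would first expand $\omega^2$ from \eqref{eq:Iwasawa.omega}: the diagonal terms $\alpha_j\wedge\overline{\alpha_j}\wedge\alpha_j\wedge\overline{\alpha_j}$ vanish, and the cross terms commute ($2$-forms), so $\omega^2=-\tfrac12\big(\alpha_1\wedge\overline{\alpha_1}\wedge\alpha_2\wedge\overline{\alpha_2}+\alpha_1\wedge\overline{\alpha_1}\wedge\alpha_3\wedge\overline{\alpha_3}+\alpha_2\wedge\overline{\alpha_2}\wedge\alpha_3\wedge\overline{\alpha_3}\big)$. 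Since $F$ in Definition \ref{dfn:Iwasawa.A} is diagonal with entries $\pm\tfrac{i}{4}(\alpha_1\wedge\overline{\alpha_1}-\alpha_2\wedge\overline{\alpha_2})$, it is enough to check $(\alpha_1\wedge\overline{\alpha_1}-\alpha_2\wedge\overline{\alpha_2})\wedge\omega^2=0$. Wedging $\alpha_1\wedge\overline{\alpha_1}$ into $\omega^2$ kills the first two summands (repeated $\alpha_1$ or $\overline{\alpha_1}$) and leaves $-\tfrac12\,\alpha_1\wedge\overline{\alpha_1}\wedge\alpha_2\wedge\overline{\alpha_2}\wedge\alpha_3\wedge\overline{\alpha_3}$; the same value comes out for $\alpha_2\wedge\overline{\alpha_2}\wedge\omega^2$, so the difference cancels, giving $F\wedge\omega\wedge\omega=0$.

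For the anomaly identity $2i\del\delbar\omega=-4\tr(F\wedge F)$ I would compute $F\wedge F$ entry-wise: $F$ being diagonal, the off-diagonal entries of $F\wedge F$ vanish and each diagonal entry equals $\left(\tfrac{i}{4}\right)^2(\alpha_1\wedge\overline{\alpha_1}-\alpha_2\wedge\overline{\alpha_2})^{\wedge 2}$. Expanding and using $\alpha_j\wedge\alpha_j=0$ gives $(\alpha_1\wedge\overline{\alpha_1}-\alpha_2\wedge\overline{\alpha_2})^{\wedge 2}=-2\,\alpha_1\wedge\overline{\alpha_1}\wedge\alpha_2\wedge\overline{\alpha_2}$, hence $\tr(F\wedge F)=2\left(\tfrac{i}{4}\right)^2(-2)\alpha_1\wedge\overline{\alpha_1}\wedge\alpha_2\wedge\overline{\alpha_2}=\tfrac14\,\alpha_1\wedge\overline{\alpha_1}\wedge\alpha_2\wedge\overline{\alpha_2}$. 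Reordering $\alpha_1\wedge\overline{\alpha_1}\wedge\alpha_2\wedge\overline{\alpha_2}=-\alpha_1\wedge\alpha_2\wedge\overline{\alpha_1}\wedge\overline{\alpha_2}$ and comparing with the value $2i\del\delbar\omega=\alpha_1\wedge\alpha_2\wedge\overline{\alpha_1}\wedge\overline{\alpha_2}$ already recorded in Lemma \ref{lem:Iwasawa.torsion} yields $2i\del\delbar\omega=-4\tr(F\wedge F)$, as claimed.

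There is no genuine obstacle here — the argument is a bounded computation in a six-generator Grassmann algebra — and the only pitfalls are the permutation signs among the $\alpha_j,\overline{\alpha_j}$ and treating $F\wedge F$ as a matrix product. I would also note in passing that, since $R=0$ by Lemma \ref{lem:Iwasawa.R}, the second identity says precisely that the anomaly cancellation \eqref{eq:F2} holds with $\alpha'=-4$, which accounts for the earlier observation that this solution forces $\alpha'<0$; together with $F$ taking values in a $\mathfrak{u}(1)\subset\mathfrak{su}(2)$ this also makes $A$ an HYM connection, so \eqref{eq:D1} holds as well.
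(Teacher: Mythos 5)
Your computation is correct and follows the same route as the paper: verify $F\wedge\omega^2=0$ by direct expansion of $\omega^2$ in the invariant coframe, compute $\tr(F\wedge F)=\tfrac14\,\alpha_1\wedge\overline{\alpha_1}\wedge\alpha_2\wedge\overline{\alpha_2}$, and compare with the value of $2i\del\delbar\omega$ from Lemma \ref{lem:Iwasawa.torsion}. The paper simply compresses all of this into two sentences; you have spelled out the sign-bookkeeping and the reordering $\alpha_1\wedge\overline{\alpha_1}\wedge\alpha_2\wedge\overline{\alpha_2}=-\alpha_1\wedge\alpha_2\wedge\overline{\alpha_1}\wedge\overline{\alpha_2}$ explicitly, which is the only step the paper leaves implicit.
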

\begin{proof}
    The first equation in \eqref{eq:Iwasawa.F2D2} follows immediately from \eqref{eq:Iwasawa.F} and \eqref{eq:Iwasawa.omega}.  We may also calculate from \eqref{eq:Iwasawa.F} that
    \begin{equation*}
        \tr(F\wedge F)=\frac{1}{4}\alpha_1\wedge\overline{\alpha_1}\wedge\alpha_2\wedge\overline{\alpha_2}.
    \end{equation*}
    This, combined with Lemma \ref{lem:Iwasawa.torsion}, yields the second equation in \eqref{eq:Iwasawa.F2D2}.
\end{proof}

In summary, the heterotic $\SU(3)$ system is satisfied with $\alpha'=-4$, which is nonzero, but neither small nor positive.  This means the solution is ``non-physical'' but still of mathematical interest.  We conclude with our main result in this subsection.  

\begin{theorem}
\label{thm:Iwasawa}
    Let $(X^6 = G/\Gamma, J)$ be the Iwasawa manifold given in Definition \ref{dfn:Iwasawa}, endowed with the $\SU(3)$-structure $(\omega, \Omega)$ given in Definition \ref{dfn:Iwasawa.SU3}. Let $E\to X$ be the rank 2 holomorphic vector bundle with connection $A$ and curvature $F$ as given in Definition \ref{dfn:Iwasawa.A}. These data solve the anomaly cancellation condition \eqref{eq:F2}, with $\alpha'=-4$, as well as the instanton condition \eqref{eq:D1} and equations \eqref{eq:F1} and \eqref{eq:D2}, i.e.~the full  heterotic $\SU(3)$ system.  Moreover, the cohomology groups $H^{0,1}_{\bar{D}}(Q)\cong H^{0,2}_{\bar{D}}(Q)$  are well-defined and have dimension 11.
\end{theorem}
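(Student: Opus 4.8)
The plan is to compute $H^{0,1}_{\bar D}(Q)$ directly by running the two short exact sequences \eqref{eq:SES1} and \eqref{eq:SES2} — precisely as in the proof of Proposition \ref{prop:Serre.simple} — and exploiting the fact that on the Iwasawa manifold everything is left-invariant, so all the Dolbeault cohomology groups involved can be computed on the finite-dimensional complex of left-invariant forms (this reduction is classical for nilmanifolds with left-invariant complex structure, e.g.\ by Nomizu-type arguments). The isomorphism $H^{0,1}_{\bar D}(Q)\cong H^{0,2}_{\bar D}(Q)$ is already given by Theorem \ref{prop:general.Serre} once $\bar D^2=0$, which holds by Proposition \ref{prop:Nilpotent} and Lemmas \ref{lem:Iwasawa.torsion}, \ref{lem:Iwasawa.R}, \ref{lem:Iwasawa.F}; so the real content is the dimension count giving $11$.

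First I would assemble the building blocks. Since $R=0$ by Lemma \ref{lem:Iwasawa.R}, the operator $\cR\nabla^+$ vanishes, so $\sE=\sT$ and $\sH\begin{bmatrix}\gamma\\ W\end{bmatrix}=\alpha'\sF\gamma+\sT W$; the torsion $T$ and curvature $F$ are the explicit left-invariant forms of Lemmas \ref{lem:Iwasawa.torsion} and \ref{lem:Iwasawa.F}, with $\alpha'=-4$. Next I would compute the relevant invariant Dolbeault cohomologies of the Iwasawa manifold: $h^{0,1}(\cO)=2$, $h^{0,2}(\cO)=2$ (these are standard), and then $H^{0,p}_{\delbar}(T^{1,0}X)$, $H^{0,p}_{\delbar}((T^{1,0}X)^*)$ and $H^{0,p}_{\delbar_E}(\End E)$ for $p=0,1,2$ — all computable on the invariant complex using \eqref{eq:Iwasawa.coframe.diff} and the explicit frame. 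Because $\End E$ is flat and abelian-valued (Definition \ref{dfn:Iwasawa.A}), $H^{0,p}_{\delbar_E}(\End E)$ splits as a sum of twisted scalar cohomologies. I would then run \eqref{eq:SES1} to get $H^{0,p}(Q_1)$ together with the connecting map $\sF\colon H^{0,p}(T^{1,0}X)\to H^{0,p+1}(\End E)$, and then \eqref{eq:SES2} to get $H^{0,p}(Q)$ together with $\sH\colon H^{0,p}(Q_1)\to H^{0,p+1}((T^{1,0}X)^*)$; the ranks of these two connecting maps are the only genuinely new computations, and they are finite linear-algebra problems on the invariant forms. Adding up, $h^{0,1}(Q)=h^{0,1}(T^*)+\dim\ker(\sH_1)$ and one checks this equals $11$; the Serre-duality isomorphism of Theorem \ref{prop:general.Serre} (with $n=3$) then gives $H^{0,2}_{\bar D}(Q)\cong H^{0,1}_{\bar D}(Q)$, of the same dimension.

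The main obstacle I anticipate is not the Dolbeault cohomology of the individual bundles — those are well-documented for the Iwasawa manifold — but pinning down the \emph{connecting homomorphisms} $\sF$ and $\sH$ on cohomology, i.e.\ deciding exactly which classes in $H^{0,1}(T^{1,0}X)$ are killed by wedging with $F$ (landing in $H^{0,2}(\End E)$) and which classes in $H^{0,1}(Q_1)$ are killed by $\sH$. This requires choosing explicit harmonic (or at least explicit invariant) representatives and carefully tracking the antisymmetrisations and the $\mathfrak u(1)$-grading of $\End E$; an off-by-one in the rank of either map changes the final count. A secondary point to be careful about is that Theorem \ref{prop:Hodge} (the Hodge-theoretic identification with $\ker(\bar D+\bar D^*)$) is only asserted for $\alpha'$ small, whereas here $\alpha'=-4$; but this is immaterial for the present statement, since $\bar D^2=0$ holds on the nose, the cohomology $H^{0,p}_{\bar D}(Q)$ is defined purely algebraically, and its finite-dimensionality and the value $11$ come entirely from the long-exact-sequence computation above, not from ellipticity.
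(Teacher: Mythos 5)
Your proposal takes a genuinely different route from the paper. The paper fixes the explicit left-invariant (co)frame, writes out $\bar D s = 0$ and $\bar D^* s = 0$ in coordinates, and solves them by direct linear algebra: $\ker\bar D$ among invariant $Q$-valued $(0,1)$-forms turns out to be $14$-dimensional, and imposing $\bar D^* s = 0$ cuts down by $3$ to give $11$; the identification of cohomology with $\ker(\bar D+\bar D^*)$ is justified because $R=0$ kills the $\cR\nabla^+$ term, so $\bar D$ becomes an honest holomorphic structure and ordinary Hodge theory (as in Theorem \ref{prop:Hodge}) applies even though $\alpha'$ is not small. Your plan is instead to run the two long exact sequences in the pattern of Proposition \ref{prop:Serre.simple}, feeding in the invariant Dolbeault cohomology of $T^{1,0}X$, $(T^{1,0}X)^*$ and $\End E$ and the ranks of the connecting maps.

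The gap is that the formula you write, $h^{0,1}(Q)=h^{0,1}(T^*)+\dim\ker(\sH_1)$, is the one from \eqref{eq:LES.iso.1}, which is derived in Proposition \ref{prop:Serre.simple} under the standing hypothesis that $T^{1,0}X$, $(T^{1,0}X)^*$ and $\End E$ have \emph{no holomorphic sections}. That hypothesis fails badly for the Iwasawa manifold: $T^{1,0}X$ and $(T^{1,0}X)^*$ are holomorphically trivial, so $H^0(T)\cong H^0(T^*)\cong\C^3$, and $\End E$ and hence $Q_1$ also carry holomorphic sections. The long exact sequence from \eqref{eq:SES2} then reads
\[
\cdots\to H^0(Q_1)\xrightarrow{\ \sH_0\ } H^1(T^*)\to H^1(Q)\to H^1(Q_1)\xrightarrow{\ \sH_1\ }H^2(T^*)\to\cdots,
\]
so the correct count is $h^1(Q)=h^1(T^*)-\rank(\sH_0)+\dim\ker(\sH_1)$; the analogous long exact sequence from \eqref{eq:SES1} has a nonzero $\sF_0\colon H^0(T)\to H^1(\End E)$ as well. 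These degree-zero connecting maps do not vanish here: from \eqref{eq:Iwasawa.F}, $\sF(V_1)$ and $\sF(V_2)$ are constant multiples of $\mathrm{diag}(1,-1)\otimes\overline{\alpha_1}$ and $\mathrm{diag}(1,-1)\otimes\overline{\alpha_2}$, which are nontrivial in $H^1(\End E)$, while $\sF(V_3)=0$, so $\rank(\sF_0)=2$. Your plan only discusses the degree-one connecting maps $\sF_1$ and $\sH_1$, so the final number $11$ is not pinned down by the argument as written. The fix is to include $\rank(\sF_0)$ and $\rank(\sH_0)$ in the bookkeeping, but then the amount of explicit linear algebra is at least as much as in the paper's direct harmonic computation, and one must also check that the twisted Dolbeault cohomologies $H^{0,p}(L^{\pm 2})$ inside $\End E\cong\cO\oplus L^2\oplus L^{-2}$ really reduce to invariant forms on this nilmanifold.
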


\begin{proof}
The first part of the statement follows from Lemmas \ref{lem:Iwasawa.torsion}, \ref{lem:Iwasawa.R} and \ref{lem:Iwasawa.F}.  We then know that the cohomology groups are well-defined by Proposition \ref{prop:Nilpotent} and that they are isomorphic by Theorem \ref{prop:general.Serre}.  We are therefore only left with computing their dimension.

First, we note that we have a global holomorphic (co-)frame $\{\alpha_1,\alpha_2,\alpha_3\}$ for $(T^{1,0}X)^*$ in \eqref{eq:Iwasawa.coframe}, which in turn defines a global holomorphic frame $\{V_1,V_2,V_3\}$ for $T^{1,0}X$ (where $V_j$ is dual to $\alpha_j$), so these bundles are holomorphically trivial.      
Our interest is in the kernel of $\bar{D}$, from \eqref{eq: Dbar}, among $Q$-valued $(0,1)$-forms, and we have a basis of $\Omega^{0,1}(X)$ given by $\{\overline{\alpha_j}\}$, which satisfy
\begin{equation}
\label{eq:Iwasawa.coframe.diff.2}
\delbar\overline{\alpha_1}=0,  \quad\delbar\overline{\alpha_2}=0, \quad\delbar\overline{\alpha_3}= \overline{\alpha_1}\wedge\overline{\alpha_2}
\end{equation}
by \eqref{eq:Iwasawa.coframe.diff}. Moreover, $\overline{\alpha_1}$ and $\overline{\alpha_2}$ are not globally $\delbar$-exact, as one can see by the $\delbar$-cohomology of $X$,  e.g.~\cite{Angella}*{Table 3.1}. 
We deduce that any $Q$-valued $(0,1)$-form may be written as $s=(\kappa,\gamma,W)^{\rm T}$, locally of the form
\begin{equation}
\label{eq:Iwasawa.s}
    \kappa=\kappa_{j\bar{k}}\alpha_j\otimes \overline{\alpha_{\bar{k}}},\quad \gamma=\begin{bmatrix}
       i\gamma_{1\bar{k}} & -\gamma_{2\bar{k}}+i\gamma_{3\bar{k}}\\
       \gamma_{2\bar{k}}+i\gamma_{3\bar{k}} &-i\gamma_{1\bar{k}}
    \end{bmatrix}\otimes \overline{\alpha_{\bar{k}}},\quad W=W_{j\bar{k}}V_j\otimes\overline{\alpha_{\bar{k}}}.
\end{equation}

By Lemma \ref{lem:Iwasawa.R} we see that the operator $\bar{D}$ in \eqref{eq: Dbar} reduces to
\begin{equation}
    \bar{D}=\begin{bmatrix}
        \delbar & -4\sF & \sT \\ 0 & \delbar & \sF \\ 0 & 0 &\delbar
    \end{bmatrix},
\end{equation}
where we denote the holomorphic structure on $E$ as simply $\delbar$.  Hence $\bar{D}s=0$ if and only if
\begin{equation}
\label{eq:Iwasawa.Dbar.2}
    \delbar\kappa+\alpha'\sF\gamma+\sT W=0,\quad \delbar\gamma+\sF W=0,\quad \delbar W=0.
\end{equation}
Substituting \eqref{eq:Iwasawa.s} into the last equation in \eqref{eq:Iwasawa.Dbar.2} shows that
\begin{equation}
\label{eq:Iwasawa.W3}
    W_{j\bar{3}}=0.
\end{equation}
Instead substituting \eqref{eq:Iwasawa.s} into the first equation in \eqref{eq:Iwasawa.Dbar.2}, we see that $\delbar\kappa$ and $\sF \gamma$ can only be multiples of $\overline{\alpha_1}\wedge\overline{\alpha_2}$ by \eqref{eq:Iwasawa.coframe.diff.2} and \eqref{eq:Iwasawa.F}, whereas 
\begin{equation}
\label{eq:Iwasawa.TW}
    2\sT W = \left( -W_{1\bar{1}}\alpha_2 + W_{2\bar{1}}\alpha_1 \right) \otimes \overline{\alpha_3}\wedge\overline{\alpha_1}
    + \left( -W_{1\bar{2}}\alpha_2 + W_{2\bar{2}}\alpha_1 \right) \otimes \overline{\alpha_3}\wedge\overline{\alpha_2}.
\end{equation}
We deduce that \eqref{eq:Iwasawa.TW} vanishes which, together with \eqref{eq:Iwasawa.W3} allows us to deduce that
\begin{equation}
W=V_3\otimes(W_{3\bar{1}}\overline{\alpha_{\bar{1}}}+W_{3\bar{2}}\overline{\alpha_{\bar{2}}}).
\end{equation}
Note that $\sF W=0$ now holds automatically.

Hence, \eqref{eq:Iwasawa.Dbar.2} now reduces to
\begin{equation}\label{eq:Iwasawa.Dbar.3}
    \delbar\kappa-4\sF\gamma=0,\quad \delbar\gamma=0.
\end{equation}
The second equation in \eqref{eq:Iwasawa.Dbar.3} together with \eqref{eq:Iwasawa.s} gives
\begin{equation}
\label{eq:Iwasawa.gamma3}
    \gamma_{j\bar{3}}=0,
    \qforq j=1,2,3.
\end{equation}
Finally, the first equation in \eqref{eq:Iwasawa.Dbar.3} with \eqref{eq:Iwasawa.s} yields
\begin{equation}
\label{eq:Iwasawa.kappa3}
    \kappa_{j\bar{3}}\alpha_j+2\gamma_{1\bar{2}}\alpha_1+2\gamma_{1\bar{1}}\alpha_2=0.
\end{equation}
This determines each $\kappa_{j\bar{3}}$.
Overall, we have the following free parameters: $\kappa_{j\bar{k}}$, $\gamma_{j\bar{k}}$ and $W_{3\bar{k}}$ for $j=1,2,3$ and $k=1,2$.  In total these are 14 for elements of $\ker\bar{D}$ acting on $Q^{0,1}$.

We now wish to impose $\bar{D}^*s=0$.  By Proposition \ref{prop:adjoint} this is equivalent to
\begin{equation}\label{eq:Iwasawa.adjoint}
    \delbar^*\kappa=0,\quad -4\sF^*\kappa+\delbar^*\gamma=0,\quad \sT^*\kappa+\sF^*\gamma+\delbar^*W=0.
\end{equation}
We first notice that the differential terms in \eqref{eq:Iwasawa.adjoint} all vanish, since $\delbar^* \overline{\alpha_{\bar{k}}} =0$,
for $k=1,2,3$. 
We then see that
\begin{equation*}
    \sF^*\kappa=\frac{1}{4}\begin{bmatrix}
        i & 0 \\ 0 & -i
    \end{bmatrix}\otimes (\kappa_{1\bar{1}}-\kappa_{2\bar{2}}).
\end{equation*}
Since this must be zero, we have $\kappa_{1\bar{1}}=\kappa_{2\bar{2}}$.  
Using \eqref{eq:Iwasawa.kappa3}, we now compute
\begin{equation}
\label{eq:Iwasawa.T*}
    \sT^*\kappa=-\frac{1}{2}\kappa_{2\bar{3}}V_1+\frac{1}{2}\kappa_{1\bar{3}}V_2=\gamma_{1\bar{1}}V_1-\gamma_{1\bar{2}}V_2.
\end{equation}
We also have
\begin{equation}
\label{eq:Iwasawa.F*}
    \sF^*\gamma=-\frac{1}{2}\gamma_{1\bar{1}}V_1+\frac{1}{2}\gamma_{1\bar{2}}V_2.
\end{equation}
Combining \eqref{eq:Iwasawa.T*} and \eqref{eq:Iwasawa.F*} with \eqref{eq:Iwasawa.adjoint} and \eqref{eq:Iwasawa.gamma3}, we find that 
\begin{equation}
    \gamma_1=0.
\end{equation}
Notice that this means $\kappa_{j\bar{3}}=0$, by \eqref{eq:Iwasawa.kappa3}.

We conclude that the kernel of $\bar{D}+\bar{D}^*$ acting on $Q^{0,1}$ is a codimension $3$ subspace of $\ker\bar{D}$, which means that  it has dimension $11$.  Note that, even though $\alpha'$ is not assumed to be sufficiently small, since the $\cR\nabla^+$ term drops out of the operator $\bar D$ (as the Chern connection is flat) and so Proposition \ref{prop:Hodge} may be applied to deduce the result.  In fact, in this case the operator $\bar D$ defines an ordinary holomorphic structure, and so normal Hodge theory can be applied.
\end{proof}

  For comparison, it can be easily checked that the dimension of $H^{0,1}_{\delbar}(Q)$ with the diagonal complex structure $\delbar$ on $Q$ is $18$. This is thus an example where the cohomology $H^{0,1}_{\bar D}(Q)$ does not decompose into the usual cohomology groups of the individual bundles, as is the case e.g.~of the Standard Embedding \cites{candelas1985vacuum, Chisamanga:2024xbm}.

Note that though we have succeeded in computing the dimension of $H^{0,1}_{\bar{D}}(Q)\cong H^{0,2}_{\bar{D}}(Q)$ in the example in Theorem \ref{thm:Iwasawa}, it seems challenging to employ this direct approach more generally.  This further motivates the use of the isomorphism we establish, in the next section,  between the $\bar{D}$-cohomology groups and the \v{C}ech cohomology groups, which may be calculated using  algebraic methods related to sheaf cohomology.

\begin{remark}
    The example in Theorem \ref{thm:Iwasawa} shows that it is important to understand the relation between the infinitesimal deformations in $H^{0,1}_{\bar{D}}(Q)$ and the obstructions in $H^{0,2}_{\bar{D}}(Q)$.  Specifically, whether the obstructions are effective or not, and whether there are conditions which ensure that the obstructions are ineffective or, on the other extreme, are fully effective, for example in sufficiently generic situations as one might expect.
\end{remark}

\section{\v{C}ech cohomology and a Dolbeault theorem}
\label{sec: Cech and Dolbeault}

In this section we show that a heterotic $\SU(3)$ solution defines a notion of \v{C}ech cohomology, and we prove (part of) a Dolbeault-type theorem, namely that the first \v{C}ech cohomology group is isomorphic to the first cohomology group defined by the deformation operator $\bar{D}$.
This more algebraic perspective on the cohomology associated to $\bar{D}$ should prove useful in understanding and computing the deformation and obstruction spaces for the heterotic $\SU(3)$ system. 

\subsection{Motivation}

By Proposition \ref{prop:Nilpotent}, if we satisfy the anomaly cancellation condition \eqref{eq:F2}, which is one of the heterotic F-term constraints, then $\bar D^2=0$ and we may define cohomology groups $H^{{0,p}}_{\bar{D}}(Q)$.  Since we are working on  holomorphic vector bundles, we are led to view $\bar{D}$ as a ``Dolbeault-type'' operator with that nilpotency as an integrability condition.  
  
A Dolbeault operator $\delbar_{V}$, or holomorphic structure (since $\delbar_V^2=0$), on a rank $r$ holomorphic bundle $\pi:V\to X$, can always be identified locally with the usual Dolbeault operator: for suitable open covers $\{U_i\}$ of $X$ and trivialisations $\phi_i:\pi^{-1}(U_i)\to U_i\times\C^r$, one has
\begin{equation*}
    \delbar_{V}|_{U_i} =\phi_i^{-1}\circ\delbar\circ \phi_i\:,
\end{equation*}
where $\delbar$ is the standard Dolbeault operator on $U_i\times\C^r$ arising from $X$ and $\C^r$. In a sense, we can view the $\phi_i$ as `local gauge transformations' which are well-adapted to the operator $\delbar_V$. Even though $\delbar_V$ cannot be identified with $\delbar$ globally in general, using such trivialisations one can prove a  Dolbeault-type theorem identifying Dolbeault cohomology and \v{C}ech cohomology of $V$, cf.~eg \cite{dolbeault1953}:
$$
H^{(0,p)}_{\delbar_V}(V)\cong\check{H}^p(V)\:.
$$

Now, while our operator $\bar D$ \emph{does not} in principle define a holomorphic structure on $Q$, we will show in this section that it nonetheless also admits such a local identification with $\delbar$:
\begin{equation*}
    \bar D|_{U_i}=\phi_i^{-1}\circ\delbar\circ \phi_i.
\end{equation*}
However, the `local gauge transformations' $\phi_i$ will also involve holomorphic derivatives, and hence so do the `transition functions'
\begin{equation*}
    \psi_{ij}=\phi_i^{-1}\circ \phi_j
    \quad\text{on}\quad
    U_i\cap U_j,
\end{equation*}
which we should now think of as more general `holomorphic transition operators'. 
We use these local trivialisations to define a \v{C}ech cohomology and prove our Dolbeault-type theorem for the operator $\bar D$.

\begin{remark}
    As well as helping towards the moduli problem for the heterotic $\SU(3)$ system, the perspective we take in this section may also, in the future, be used for understanding notions of stability and related ideas for the `holomorphic bundle' $(Q,\bar D)$.
\end{remark}

\subsection{Local trivialisation of \texorpdfstring{$\bar D$}{barD}}
\label{ss:local.triv.1}

In order not to clutter the discussion, let us begin by considering a simpler situation where $E$ is either altogether absent or,  equivalently, has trivial structure group. As we shall see in \S\ref{sec:CheckGauge}, the gauge bundle can be included in a straightforward manner. 

The bundle $Q$ in Definition \ref{def:Q} then simplifies to 
\begin{equation}
\label{eq:Q.simple}
    Q=(T^{1,0}X)^*\oplus T^{1,0}X
\end{equation}
and the operator $\bar{D}$ in Definition \ref{def:Dbar} reduces to 
\begin{equation}\label{eq:D.simple}
\bar{D}
=\left[\begin{array}{cc}
    \delbar & \sT+\alpha'\cR  \nabla^+ \\
    0 & \delbar
\end{array}\right]\:.
\end{equation}
This setting sufficiently includes the nuances of interest and, of course, we still have $\bar{D}^2=0$ due to \eqref{eq:F2} (cf. Proposition \ref{prop:Nilpotent}) which here becomes
\begin{equation}
\label{eq:anomaly.simple}
    i\del\delbar\omega
    =-\tfrac{\alpha'}{2}\tr(R\wedge R) \:.  
\end{equation}

Recall that $\nabla$ is the Chern connection of the metric $g$ on $X$ and so it satisfies $\nabla^{0,1}=\delbar$.  Thus, $\nabla$ and its curvature $R$ can be locally expressed, respectively, by a matrix of $(1,0)$-forms $\Gamma$
and a matrix of $(1,1)$-forms
\begin{equation}
\label{eq:delbar.Gamma}
    R=\delbar\Gamma\:.
\end{equation}
We can then also locally define  
\begin{equation}
    \Gamma\cdot\nabla^+W =\Gamma_a{}^c{}_b\nabla^+_cW^bdz^a.
\end{equation}

\begin{prop}
\label{prop: D bar (loc)= delbar}
    In the setting above, given any sufficiently small open chart $U$ on $X$, there exists a local section $\tau\in\Gamma(U,(T^{1,0}X)^*\otimes (T^{1,0}X)^*)$  such that the map 
\begin{equation}
\label{eq: phi matrix 2x2}
    \phi
    := \left[\begin{array}{cc}
        1 & \tilde\tau+\alpha'\Gamma \cdot \nabla^+ \\
        0 & 1
    \end{array}\right] \quad 
\begin{gathered}
   (T^{1,0}X)^* \\ 
   T^{1,0}X 
\end{gathered}\:
\end{equation}
   trivialises $\bar D$, in the following sense: 
\begin{equation}
    \bar{D}|_{U} =\phi^{-1}\circ\delbar\circ\phi.
\end{equation}
\end{prop}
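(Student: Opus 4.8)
The plan is to solve for $\tau$ explicitly by imposing the desired relation $\bar D = \phi^{-1}\circ\delbar\circ\phi$, i.e. $\phi\circ\bar D = \delbar\circ\phi$, and showing that the resulting equation on $\tilde\tau$ is solvable on any sufficiently small chart. Writing $\phi = \mathrm{id} + N$ with $N$ the strictly-upper-triangular operator $\tilde\tau + \alpha'\Gamma\cdot\nabla^+$ mapping $T^{1,0}X$ into $(T^{1,0}X)^*$, and writing $\bar D = \delbar_Q + \mathcal{E}$ where $\delbar_Q$ is the diagonal Dolbeault operator and $\mathcal{E} = \sT + \alpha'\cR\nabla^+$ is the off-diagonal term, the identity $\phi\circ\bar D = \delbar\circ\phi$ becomes $(\mathrm{id}+N)(\delbar_Q + \mathcal{E}) = \delbar_Q(\mathrm{id}+N)$, where on the right $\delbar_Q$ acts on $N$ as an operator (graded commutator). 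Since $N$ and $\mathcal{E}$ are both strictly upper triangular, $N\circ\mathcal{E} = 0$, so the identity collapses to the single off-diagonal equation
\begin{equation*}
    \mathcal{E} = [\delbar_Q, N] = \delbar_Q\circ N - (-1)^{?}N\circ\delbar_Q,
\end{equation*}
which, spelled out on $W\in\Gamma(T^{1,0}X)$, reads $\sT W + \alpha'\cR\nabla^+ W = \delbar(NW) + N(\delbar W)$ (the $\delbar W$-terms appearing with the sign dictated by the form degree — the key point being that these are exactly the "commutator" contributions).

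The next step is to expand $\delbar(NW) + N(\delbar W)$ using $N W = \tilde\tau W + \alpha'(\Gamma\cdot\nabla^+ W)$. For the $\alpha'\Gamma\cdot\nabla^+$ piece, the computation is precisely the one already carried out for the $(\mathrm{III})$-term in the proof of Proposition \ref{prop:Nilpotent}: using \eqref{eq:delbar.Gamma} (so $\delbar\Gamma = R$) and Corollary \ref{cor:commute} (the commutation relation $\delbar(\nabla^+_l W^k) = R_j{}^k{}_l W^j + \nabla^+_l\delbar W^k$), the holomorphic-derivative terms organise so that $\delbar(\alpha'\Gamma\cdot\nabla^+ W) + \alpha'\Gamma\cdot\nabla^+(\delbar W)$ produces exactly $\alpha'\cR\nabla^+ W$ plus a purely algebraic term in $R$ and $\Gamma$ acting on $W$ (no derivatives of $W$). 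For the $\tilde\tau$ piece, $\delbar(\tilde\tau W) \pm \tilde\tau(\delbar W) = (\delbar\tilde\tau)W$ is again algebraic in $W$. Matching against $\sT W + \alpha'\cR\nabla^+ W$, the $\alpha'\cR\nabla^+$ terms cancel automatically, leaving an algebraic equation of the schematic form
\begin{equation*}
    \delbar\tilde\tau = \sT + (\text{algebraic curvature/connection term from }\alpha'\,\delbar(\Gamma)\cdot\nabla^+\text{-bookkeeping}),
\end{equation*}
i.e. $\delbar\tilde\tau$ must equal a specified, explicitly-computable $(T^{1,0}X)^*\otimes(T^{1,0}X)^*$-valued $(0,1)$-form built from $T$, $R$ and $\Gamma$. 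I would identify this right-hand side precisely by carrying out the index computation once.

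The final step is the solvability: one needs a local $\tau$ with $\delbar\tau$ equal to the prescribed $(0,1)$-form-valued section. This is where the "sufficiently small $U$" hypothesis is used — on a polydisc (or any Stein/contractible chart) the Dolbeault lemma (the local $\delbar$-Poincaré lemma) guarantees a solution, \emph{provided} the prescribed right-hand side is $\delbar$-closed. So the one genuine thing to check — and I expect this to be the main obstacle, or at least the only non-formal point — is that the target $(0,2)$-form $\delbar(\text{RHS})$ vanishes; but this closedness is forced by $\bar D^2 = 0$, which holds by Proposition \ref{prop:Nilpotent} precisely because of the anomaly condition \eqref{eq:anomaly.simple}. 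Concretely, applying $\delbar$ to the equation $\mathcal{E} = [\delbar_Q, N]$ and using $\delbar_Q^2 = 0$ shows $[\delbar_Q,\mathcal{E}] = 0$ as operators, which is another way of writing the vanishing computed in Proposition \ref{prop:Nilpotent}; this is exactly the integrability needed to invoke the $\delbar$-Poincaré lemma and produce $\tau$. Once $\tau$ exists, $\phi$ is invertible (it is unipotent, being $\mathrm{id}$ plus a strictly upper-triangular operator, with inverse $\mathrm{id} - N$), and unwinding the algebra gives $\bar D|_U = \phi^{-1}\circ\delbar\circ\phi$ as claimed. I would close by remarking that $\phi^{-1} = \mathrm{id} - (\tilde\tau + \alpha'\Gamma\cdot\nabla^+)$ will be needed in the subsequent construction of the transition operators $\psi_{ij} = \phi_i^{-1}\circ\phi_j$.
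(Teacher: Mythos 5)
Your proposal is correct and takes essentially the same approach as the paper: both arguments hinge on the commutator expansion of $[\delbar,\,\tilde\tau + \alpha'\Gamma\cdot\nabla^+]$ using $\delbar\Gamma = R$ and Corollary~\ref{cor:commute}, and both produce $\tilde\tau$ via the local $\delbar$-Poincar\'e lemma with closedness coming from the anomaly condition (equivalently, $\bar D^2=0$ from Proposition~\ref{prop:Nilpotent}). The only difference is one of organisation — you derive the equation $\delbar\tilde\tau_{ab} = T_{ab} - \alpha'\tr(\Gamma_a\delbar\Gamma_b)$ backwards from the trivialisation requirement, whereas the paper works forward, first observing $\delbar\bigl(T - \tfrac{\alpha'}{2}\tr(\Gamma\wedge\delbar\Gamma)\bigr)=0$ to obtain a skew $(2,0)$-form $\tau$ and setting $\tilde\tau_{ab}=\tau_{ab}-\tfrac{\alpha'}{2}\tr(\Gamma_a\Gamma_b)$, then verifying the commutator identity.
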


\begin{proof}
    Recall that $T=i\del\omega$ in Notation \ref{notation1} and that $\del\delbar=-\delbar\del$. Hence, \eqref{eq:anomaly.simple} can be written as
\begin{equation}
\label{eq: delbar Tab}
   \delbar T=\tfrac{\alpha'}{2}\tr (R\wedge R)\:.
\end{equation} 
    On any local chart $U \subset X$, it follows from \eqref{eq:delbar.Gamma} and \eqref{eq: delbar Tab} that
$$
\delbar\big(T-\tfrac{\alpha'}{2}\tr(\Gamma\wedge\delbar\Gamma)\big)=0\:.$$
    Making $U$ smaller if necessary, we may apply the $\delbar$-Poincaré lemma to deduce that 
\begin{equation}
\label{eq:tau.def}
    T-\tfrac{\alpha'}{2}\tr(\Gamma\wedge\delbar\Gamma) = \delbar \tau\:,
\end{equation}
    for some $(2,0)$-form $\tau$. The expression \eqref{eq:tau.def} can be written in coordinates as
\begin{equation}\label{eq:tau.def.local}
    T_{ab}-\alpha'\tr(\Gamma_{[a}\delbar\Gamma_{b]}) =\delbar\tau_{ab}\:,
\end{equation}
    where the square brackets indicate skew-symmetrisation, and we have suppressed antiholomorphic indices. Defining  $\tilde{\tau}$ by
\begin{equation}
\label{eq:tildetau.def}
     \tilde\tau_{ab}
     :=\tau_{ab}-\tfrac{\alpha'}{2}\tr(\Gamma_a\Gamma_b)\:,
\end{equation}
    then  \eqref{eq:tau.def.local} gives 
\begin{equation}
\label{eq:DefTildeTau}
    \delbar\tilde{\tau}_{ab}=\delbar\left(\tau_{ab}-\tfrac{\alpha'}{2}\tr(\Gamma_a\Gamma_b)\right)
    = T_{ab}-\alpha'\tr(\Gamma_{a}\delbar\Gamma_{b})\:.
\end{equation}
    Note that $\tilde\tau$ is not skew-symmetric in its indices. 

    Consider next the action of the extension map $\sT+\cR\nabla^+$, for which we recall Definitions \ref{def:T} and \ref{def:Rnabla+}.  For $W \in \Omega^{0,p}\left(T^{1,0}\right)$, it is given locally on $U$ (suppressing local antiholomorphic indices) by: 
\begin{align}
    T_{a b} W^{b}+\alpha'R_{a}{}^{c}{}_{b} \nabla^+_{c} W^{b}
    &= T_{a b} W^{b}+\alpha'\delbar \Gamma_{a}{ }^{c}{ }_{b} \nabla^+_{c} W^{b}\notag \\
    &=T_{a b} W^{b} +\alpha'\delbar\left(\Gamma_{a}{}^{c}{}_{b} \nabla^+_{c} W^{b}\right) -\alpha'\Gamma_{a}{}^{c}{}_{b} \delbar\left(\nabla^+_{c} W^{b}\right)\notag \\
    &=T_{a b} W^{b} -\alpha'\Gamma_{a}{}^{c}{}_{b} {{R_d}^b}_cW^{d}+\alpha'\delbar\left(\Gamma_{a}{}^{c}{}_{b} \nabla^+_{c} W^{b}\right)-\alpha'\Gamma_{a}{}^{c}{}_{b}\nabla^+_{c} \delbar W^{b}\notag\\
    &=\delbar \tilde{\tau}_{a b} W^{b} +\alpha'\left[\delbar, \Gamma_{a}{}^{c}{}_{b} \nabla^+_{c}\right] W^{b}\notag\\
    &=\left[\delbar, \tilde{\tau}_{a b}+\alpha'\Gamma_{a}{}^{c}{}_{b} \nabla^+_{c}\right] W^{b}\:,
    \label{eq:LocTrivAnomaly} 
\end{align}
    where we have used Corollary \ref{cor:commute} in the third equality, and the fact that, by \eqref{eq:delbar.Gamma}, 
$$\tr( \Gamma_a\delbar \Gamma_d)= \Gamma_{a}{}^{c}{}_{b}\delbar \Gamma_{d}{}^{b}{}_{c}=\Gamma_{a}{}^c{}_b R_{d}{}^b{}_c$$ 
    in the fourth equality. Thus 
$$\bar{D}|_U
=\underbrace{\left[\begin{array}{cc}
    1 & -\tilde\tau-\alpha'\Gamma \cdot \nabla^+ \\
    0 & 1
\end{array}\right]}_{\phi^{-1}} {\circ\; \delbar \;\circ} \underbrace{\left[\begin{array}{cc}
    1 & \tilde\tau+\alpha'\Gamma \cdot \nabla^+ \\
    0 & 1
\end{array}\right]}_{\phi}
$$
    as claimed.
\end{proof}

\begin{remark}
    It should be noted that a similar operator and local trivialisation can be defined for any complex manifold with a Hermitian metric $(X,g)$, satisfying \eqref{eq: delbar Tab} for some $(2,1)$-form $T$.
\end{remark}

\subsection{Including the gauge sector}
\label{sec:CheckGauge}
We now wish to turn on the gauge bundle, and consider the full operator $\bar D$ in \eqref{eq: Dbar}. Recall $T$, $R$ and $F$ given in Notation \ref{notation1} and that locally we can write $F=\delbar A$ and $R=\delbar\Gamma$ where here, and throughout the section, we simply write $\delbar$ for the Dolbeault operator on $E$.

\begin{prop}\label{prop:local.barD.delbar}
   Given any sufficiently small  chart $U$ on $X$,  there exists a local section $\phi\in\Gamma(U,\End(Q))$, determined by $T$, $R$, $\Gamma$, $F$ and $A$, such that $\bar{D}$ in \eqref{eq: Dbar} satisfies
\begin{equation}
\label{eq:trivial}
    \bar{D}|_U=\phi^{-1}\circ \delbar\circ\phi.
\end{equation}
\end{prop}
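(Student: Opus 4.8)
The plan is to extend the $E$-free trivialisation of Proposition \ref{prop: D bar (loc)= delbar} to a $3\times 3$ block-triangular `gauge transformation' on $Q=(T^{1,0}X)^*\oplus\End(E)\oplus T^{1,0}X$ which additionally absorbs the two $\sF$-entries of $\bar D$. First I would pass to a local holomorphic frame for $E$ over a sufficiently small (contractible) chart $U$, so that $\delbar_E$ becomes the ordinary $\delbar$; in this frame the Chern connection $A$ is a local $\End(E)$-valued $(1,0)$-form with $F=\delbar A$, and, as in \eqref{eq:delbar.Gamma}, $R=\delbar\Gamma$ for a local $(1,0)$-form $\Gamma$. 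Writing $\phi=1+N$ with
\[
N=\begin{bmatrix}0&a&b\\0&0&c\\0&0&0\end{bmatrix},\qquad \phi^{-1}=1-N+N^{2}
\]
(the series terminates since $N^{3}=0$, so $\phi$ is automatically invertible), one has $\phi^{-1}\circ\delbar\circ\phi=\delbar+\phi^{-1}\circ[\delbar,N]$; expanding the block-triangular product, the identity $\bar D|_U=\phi^{-1}\circ\delbar\circ\phi$ becomes equivalent to the three operator equations
\[
[\delbar,a]=\alpha'\sF,\qquad [\delbar,c]=\sF,\qquad [\delbar,b]=\sE+a\circ\sF,
\]
with $\sE=\sT+\alpha'\cR\nabla^+$ as in \eqref{eq:sE}; the cross-term $a\circ\sF=a\circ[\delbar,c]$ in the last equation is exactly what $\phi^{-1}\cdot[\delbar,N]$ contributes in the $(1,3)$-slot.

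The first two equations are the `obvious' trivialisations of the (dual) Atiyah-type extensions: taking $c$ to be the contraction $W\mapsto A_jW^j\otimes(\cdot)$ and $a=\alpha'\sA$ with $\sA\gamma=\tr(A_j\gamma)\,dz^j\otimes(\cdot)$, a one-line computation using $F=\delbar A$ gives $[\delbar,c]=\sF$ and $[\delbar,a]=\alpha'\sF$. The substantive step is the $(1,3)$-equation. Guided by Proposition \ref{prop: D bar (loc)= delbar}, I would look for $b=\tilde\tau+\alpha'\,\Gamma\cdot\nabla^+$ with $\tilde\tau$ a local section of $(T^{1,0}X)^*\otimes(T^{1,0}X)^*$ acting by multiplication. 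By Corollary \ref{cor:commute}, exactly the manipulation carried out in \eqref{eq:LocTrivAnomaly}, the operator part $\alpha'[\delbar,\Gamma\cdot\nabla^+]$ contributes $\alpha'\cR\nabla^+$ together with an algebraic term built from $\tr(\Gamma\wedge\delbar\Gamma)$, while the new algebraic term $a\circ\sF$ is built from $\tr(A\wedge\delbar A)$; so, after the non-skew index corrections of \eqref{eq:tildetau.def}, the $(1,3)$-equation reduces to finding $\tilde\tau$ whose $\delbar$-derivative is the local $(2,1)$-form
\[
T-\tfrac{\alpha'}{2}\tr(\Gamma\wedge\delbar\Gamma)+\tfrac{\alpha'}{2}\tr(A\wedge\delbar A).
\]
The $\delbar$-derivative of this form is $\delbar T-\tfrac{\alpha'}{2}\tr(R\wedge R)+\tfrac{\alpha'}{2}\tr(F\wedge F)$, which vanishes precisely by the anomaly condition \eqref{eq:F2} (equivalently \eqref{eq:anomaly.modified}); shrinking $U$ once more, the $\delbar$-Poincaré lemma supplies the potential, and hence $\phi$.

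I expect the main obstacle to be the bookkeeping in this last step: keeping track of the antiholomorphic-index placements and Leibniz signs so that the cross-term $a\circ\sF$ combines with the $\tr(\Gamma\wedge\delbar\Gamma)$-part of $[\delbar,\Gamma\cdot\nabla^+]$ into exactly the combination $-\tfrac{\alpha'}{2}\tr(R\wedge R)+\tfrac{\alpha'}{2}\tr(F\wedge F)$ that the anomaly cancels, and verifying that the non-skew modification turning the $\delbar$-Poincaré potential into $\tilde\tau$ is the correct one. Everything else is a routine extension of the $E$-free case of \S\ref{ss:local.triv.1}.
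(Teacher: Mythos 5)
Your proposal is correct and follows essentially the same route as the paper: the $\phi$ matrix you construct is identical to the paper's \eqref{eq: phi matrix 3x3} (with $a=\alpha'\sA$, $c=$ contraction with $A$, $b=\tilde\tau+\alpha'\Gamma\cdot\nabla^+$), and the $(1,3)$-equation is resolved by exactly the same application of the $\delbar$-Poincaré lemma to the anomaly condition, followed by the non-skew correction of \eqref{eq:tildetau.def}. The only difference is presentational: you phrase the verification via $\phi=1+N$ and the commutator identities $[\delbar,a]=\alpha'\sF$, $[\delbar,c]=\sF$, $[\delbar,b]=\sE+a\circ\sF$, whereas the paper computes $\phi^{-1}\circ\delbar\circ\phi$ directly using \eqref{eq: phi^-1 matrix 3x3}, but the underlying computation is the same.
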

\begin{proof}
    Recall that \eqref{eq:F2} can be written
\begin{equation}
\label{eq:GenBI}
    \delbar T=\tfrac{\alpha'}{2}\left(\tr \, R \wedge R - \tr F \wedge F\right)\:,
\end{equation}
where $T$, $R$ and $F$ are given in Notation \ref{notation1}. Locally, we can write $F=\delbar A$,  $R=\delbar\Gamma$ as we noted above and thus we can write \eqref{eq:GenBI} as
\begin{equation}
    \delbar\left( T_{ab}-\alpha'\tr(\Gamma_aR_b)+\alpha'\tr(A_aF_b)\right)=0\:.
\end{equation}
As in the proof of Proposition \ref{prop: D bar (loc)= delbar} leading up to equation \eqref{eq:DefTildeTau}, this implies that
\begin{equation}
\label{eq:Defb}
    T_{ab}-\alpha'\tr(\Gamma_aR_b)+\alpha'\tr(A_aF_b)=\delbar\tilde\tau_{ab}\:,
\end{equation}
for some locally defined $\tilde\tau$ which is a section of $(T^*X)^{1,0}\otimes (T^*X)^{1,0}$. 

If we then define $\phi$ by
\begin{equation}
\label{eq: phi matrix 3x3}
    \phi=\begin{bmatrix}
        1 & \alpha'A & \tilde\tau+\alpha'\Gamma\cdot\nabla^+  \\
        0 & 1 & A \\
        0 & 0 & 1
    \end{bmatrix} \quad 
\begin{gathered}
   (T^{1,0}X)^* \\
   \End(E)\\
   T^{1,0}X 
\end{gathered}\:,
\end{equation}
we see that it is invertible, and a similar computation leading to equation \eqref{eq:LocTrivAnomaly} shows that its inverse can be written as
\begin{equation}
\label{eq: phi^-1 matrix 3x3}
    \phi^{-1}=\begin{bmatrix}
        1 & -\alpha'A & \alpha'A\cdot A-(\tilde\tau+ \alpha'\Gamma\cdot\nabla^+)  \\
        0 & 1 & -A \\
        0 & 0 & 1
    \end{bmatrix},
\end{equation}
where
\begin{equation}\label{eq:AdotA}
  \left[(A\cdot A) W\right]_a=\tr(A_{a}A_d)W^d.
\end{equation}
We can then compute
\begin{equation}
    \phi^{-1}\circ\delbar\circ \phi=
    \begin{bmatrix}
        \delbar & \alpha'\sF & (\delbar\tilde\tau-\alpha'A\cdot F+\alpha'\Gamma\cdot R) +\alpha'\cR\nabla^+ \\
        0 & \delbar & \sF \\
        0 & 0 & \delbar
    \end{bmatrix},
\end{equation}
whre $A\cdot F$ and $\Gamma\cdot R$ are defined in a similar manner to \eqref{eq:AdotA}. 
In the top right corner, note that the action of $\delbar \tilde{\tau}-\alpha'A\cdot F +\alpha'\Gamma\cdot R$ is precisely the action of $\sT$, due to equation \eqref{eq:Defb}. Therefore $\bar{D}$ is locally trivialisable as desired in \eqref{eq:trivial}.
\end{proof}

\begin{remark}
    We see that the discussion above is valid as soon as we have \eqref{eq:GenBI} for \emph{some} $T$ (not necessarily defined by the torsion of the $\SU(3)$-structure), as we long as we suitably modify the definition of $\bar{D}$.  Note that we do not require the full anomaly cancellation \eqref{eq:F2}; in particular, we do not actually require the first Pontrjagin classes of $TX$ and $E$ to match (the sometimes called \emph{omalous} condition).
\end{remark}
 
\subsection{\v{C}ech cohomology}
\label{sec: rel to Cech coh}
We now wish to define our notion of \v{C}ech 
 cohomology. 
We start with the basic set up, which we will use throughout this subsection.
\begin{definition}
\label{dfn:Cech.setup}
      Let $\mathcal{U}=\{U_i\}$ be an atlas cover of  $X$  and suppose that each $\phi_i\in \Gamma(U_i,\End(Q))$ satisfies
\begin{equation}
\label{eq:phi.i}
    \bar{D}|_{U_i} =\phi_i^{-1}\circ\delbar\circ\phi_i
    \quad\text{for all $i$.}
\end{equation}
    Such a collection of pairs $\mathcal{P}=\{(U_i,\phi_i)\}$ is guaranteed to exist if $X$ is compact by Proposition \ref{prop:local.barD.delbar}. 

    We then let 
\begin{equation}
\label{eq:intersections}
    U_{i_1\ldots i_k}=\cap_{j=1}^kU_{i_j}
\end{equation}
    and let 
\begin{equation}
\label{eq:psi.ij}
    \psi_{ij} =\phi_i\circ\phi_j^{-1}\in \Gamma(U_{ij},\End(Q)).
\end{equation}
\end{definition}

Before proceeding with our definition of \v{C}ech cohomology, we make the following observations.
\begin{lemma}
\label{Lem: psi_ij}  
    The $\{\psi_{ij}\}$ in \eqref{eq:psi.ij}   satisfy
    \begin{enumerate}
        \item $\left[\delbar, \psi_{i j}\right]=0$, i.e. each $\psi_{ij}$ is holomorphic;
        \item the cocycle conditions:
$$ \psi_{ij}\circ\psi_{ji}=\id\quad \text{on}\; U_{ij}\quad\text{and}\quad
\psi_{ij}\circ \psi_{jk}\circ \psi_{k i}=\id
\quad\text{on}\;
U_{ijk}.
$$
    \end{enumerate}
\end{lemma}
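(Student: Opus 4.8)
The plan is to prove both claims by unwinding the definition $\psi_{ij} = \phi_i \circ \phi_j^{-1}$ together with the defining relation \eqref{eq:phi.i}, which says $\bar D|_{U_i} = \phi_i^{-1}\circ\delbar\circ\phi_i$ on each $U_i$.

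\smallskip

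\textit{Holomorphicity.} First I would observe that on the overlap $U_{ij}$ we have two expressions for $\bar D$, namely $\phi_i^{-1}\circ\delbar\circ\phi_i = \bar D|_{U_{ij}} = \phi_j^{-1}\circ\delbar\circ\phi_j$. Composing on the left with $\phi_i$ and on the right with $\phi_j^{-1}$ rearranges this to $\delbar\circ(\phi_i\circ\phi_j^{-1}) = (\phi_i\circ\phi_j^{-1})\circ\delbar$ on $U_{ij}$, i.e. $[\delbar,\psi_{ij}] = 0$. One subtlety worth spelling out: the $\phi_i$ (and hence $\psi_{ij}$) involve holomorphic derivatives, so they are differential operators rather than bundle endomorphisms in the naive sense, and the cancellation $\phi_i\circ\phi_i^{-1} = \id$ must be read as an identity of operators; but this is exactly the sense in which $\phi_i^{-1}$ was constructed in Propositions \ref{prop: D bar (loc)= delbar} and \ref{prop:local.barD.delbar}, so the manipulation is legitimate. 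The bracket $[\delbar,\psi_{ij}]=0$ is then precisely the statement that $\psi_{ij}$ is a ``holomorphic transition operator'' in the sense introduced in the motivation subsection.

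\smallskip

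\textit{Cocycle conditions.} For the first cocycle identity, I would compute directly: $\psi_{ij}\circ\psi_{ji} = (\phi_i\circ\phi_j^{-1})\circ(\phi_j\circ\phi_i^{-1}) = \phi_i\circ(\phi_j^{-1}\circ\phi_j)\circ\phi_i^{-1} = \phi_i\circ\phi_i^{-1} = \id$ on $U_{ij}$. For the second, on the triple overlap $U_{ijk}$,
\[
\psi_{ij}\circ\psi_{jk}\circ\psi_{ki} = (\phi_i\circ\phi_j^{-1})\circ(\phi_j\circ\phi_k^{-1})\circ(\phi_k\circ\phi_i^{-1}) = \phi_i\circ\phi_i^{-1} = \id,
\]
where the intermediate cancellations $\phi_j^{-1}\circ\phi_j = \id$ and $\phi_k^{-1}\circ\phi_k = \id$ are again identities of (differential) operators on the relevant overlaps. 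This is entirely formal once the composition law for the $\phi_i$ is in place.

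\smallskip

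\textit{Main obstacle.} The only genuine point requiring care — and the place I would be most careful in writing — is justifying that the $\phi_i$ compose and cancel as honest invertible operators on the overlaps, since they are not pointwise-invertible matrices but rather triangular operators with derivative entries (of the explicit shape \eqref{eq: phi matrix 3x3}, \eqref{eq: phi^-1 matrix 3x3}). Concretely, one must check that $\phi_i\circ\phi_i^{-1}$ and $\phi_i^{-1}\circ\phi_i$ really equal the identity operator, not merely the identity modulo lower-order terms; this follows from the explicit unipotent-triangular form, where $\phi_i = I + N_i$ with $N_i$ strictly upper triangular (hence $N_i^3 = 0$ as an operator on the three-step filtration of $Q$) and $\phi_i^{-1} = I - N_i + N_i^2$, so the composition telescopes exactly. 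Once this operator-level bookkeeping is granted, both parts of the lemma are immediate algebra. I would therefore preface the proof with a one-line remark that all compositions below are of differential operators and that invertibility is understood in that sense, then give the two short displayed calculations above.
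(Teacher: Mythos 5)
Your proof is correct and follows essentially the same route as the paper's: part (2) is immediate from the definition $\psi_{ij}=\phi_i\circ\phi_j^{-1}$, and part (1) is obtained by equating the two expressions $\phi_i^{-1}\circ\delbar\circ\phi_i=\phi_j^{-1}\circ\delbar\circ\phi_j$ on $U_{ij}$ and rearranging. The operator-level caveat you raise — that $\phi_i$ is a differential operator, not a bundle automorphism, so cancellations like $\phi_i^{-1}\circ\phi_i=\id$ must be checked as operator identities — is a genuinely useful point that the paper treats implicitly; your observation that the $\phi_i$ are unipotent triangular with nilpotent part $N_i$ satisfying $N_i^3=0$, so $\phi_i^{-1}=I-N_i+N_i^2$ inverts it exactly, correctly supplies the missing justification.
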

\begin{proof}
    Property {\it (2)} follows directly from \eqref{eq:psi.ij}. For Property {\it (1)} we note that on the overlap $U_{ij}$ the $\bar{D}$ operator may be written in two ways
    \begin{equation}\label{eq:barD.Uij}
\phi_i^{-1}\circ\delbar\circ\phi_i=\bar{D}|_{U_{ij}}=\phi_j^{-1}\circ\delbar\circ\phi_j\:.
    \end{equation}
    Applying $\phi_i$ to the left hand side, and $\phi_j^{-1}$ to the right hand side of \eqref{eq:barD.Uij}, we get
    \begin{equation*}
        \delbar\circ\phi_i\circ\phi_j^{-1}-\phi_i\circ\phi_j^{-1}\circ\delbar=0\:,
    \end{equation*}
    which is the desired result.
\end{proof}

\begin{remark} Lemma \ref{Lem: psi_ij} suggests that we should think of the $\psi_{ij}$ in \eqref{eq:psi.ij} as `transition operators' defining a ‘holomorphic bundle’ structure on $Q$.
\end{remark}

We may now define the cochains and differential which we lead to our cohomology groups. 

\begin{definition}
\label{dfn:Cech.cochain}
    We define the $p$-cochains $C^p(Q,\mathcal{P})$ as sets  of pairs $\eta=\{(\eta_{i_0\ldots i_p},U_{i_0\ldots i_p})\}$, where we consider all intersections $U_{i_0\ldots i_p}$ of $p+1$ elements of $\mathcal{U}$ and $\eta_{i_0\ldots i_p}\in\Gamma(U_{i_0\ldots i_p},Q)$ satisfy 
\begin{align}
\label{eq:Cech.holo}
    \delbar\eta_{i_0\ldots i_p}&=0\:,\\
\label{eq:Cech.sym.1}
    \eta_{i_0\dots i_l\dots i_k\dots i_p}&=-\eta_{i_0\dots i_k\dots i_l\dots i_p}\:,\\
\label{eq:Cech.sym.2}
    \eta_{i_0\dots i_l\dots i_p}&=-\psi_{i_0i_l}\eta_{i_l\dots i_0\dots i_p}\:,
\end{align}
where there is no summation over repeated indices in \eqref{eq:Cech.sym.2} and $k>l>0$.  

Given  $\eta=\{(\eta_{i_0\ldots i_p},U_{i_0\ldots i_p})\}\in C^p(Q,\mathcal{P})$ we define $\delta\eta=\{((\delta\eta)_{i_0\ldots i_{p+1}},U_{i_0\ldots i_{p+1}})\}$ by
 \begin{equation}\label{eq:Cech.diff}
(\delta\eta)_{i_0\dots i_{p+1}}=\psi_{i_0 i_1} \eta_{i_1...i_{p+1}} + \sum_{k=1}^{p+1} (-1)^k \eta_{i_0\dots\hat{i}_k\dots i_{p+1}}
    \quad \text{on} \quad 
    U_{i_0,...,i_{p+1}}\:,
\end{equation}
where $\hat{j}$ denotes omission of the index $j$ and again there is no summation over repeated indices. We say that $\eta\in C^p(Q,\cP)$ is a $p$-cocycle if $\delta\eta=0$.
\end{definition}

\begin{remark}
In our notation in Definition \ref{dfn:Cech.cochain} we use $\mathcal{P}$ to stress that the cochains and maps $\delta$ depend on the $\phi_i$ (through the $\psi_{ij}$) and not just the open cover $\mathcal{U}$.
\end{remark}

In view of the properties established in Lemma \ref{Lem: psi_ij} for the cocycles $\{\psi_{ij}\}$, we now show that $\delta$ in \eqref{eq:Cech.diff} is indeed a differential, by a standard argument.

\begin{prop}\label{prop:Cech.diff}
We have that $\delta$ in Definition \ref{dfn:Cech.cochain} satisfies $\delta:C^p(Q,\cP)\to C^{p+1}(Q,\cP)$ and $\delta^2=0$.
\end{prop}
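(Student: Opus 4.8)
The plan is to verify the two assertions separately: first that $\delta$ lands in the correct cochain space (i.e.\ $\delta\eta$ really satisfies the defining conditions \eqref{eq:Cech.holo}, \eqref{eq:Cech.sym.1}, \eqref{eq:Cech.sym.2} of a $(p+1)$-cochain), and then that $\delta^2=0$. For the first part, the holomorphy condition \eqref{eq:Cech.holo} for $(\delta\eta)_{i_0\ldots i_{p+1}}$ follows by applying $\delbar$ to \eqref{eq:Cech.diff}, using $\delbar\eta_{i_1\ldots i_{p+1}}=0$ together with $[\delbar,\psi_{i_0i_1}]=0$ from Lemma \ref{Lem: psi_ij}(1), and $\delbar\eta_{i_0\ldots\hat i_k\ldots i_{p+1}}=0$ for the remaining terms. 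The antisymmetry conditions \eqref{eq:Cech.sym.1} and \eqref{eq:Cech.sym.2} for $\delta\eta$ are combinatorial: swapping two adjacent indices in $(\delta\eta)_{i_0\ldots i_{p+1}}$ and using the antisymmetry of $\eta$ itself (plus the cocycle relations $\psi_{ij}\circ\psi_{ji}=\id$ and $\psi_{ij}\circ\psi_{jk}\circ\psi_{ki}=\id$ when the swap involves the index $i_0$) reproduces the sign. I expect this part to be a careful but routine index-shuffling argument, closely paralleling the classical \v{C}ech situation; the only new subtlety is that the leading term carries the operator $\psi_{i_0i_1}$ rather than a mere multiplication, so one must track where $\psi$'s appear when $i_0$ moves.

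For $\delta^2=0$, I would compute $(\delta\delta\eta)_{i_0\ldots i_{p+2}}$ directly from \eqref{eq:Cech.diff} applied twice. Expanding gives the usual double sum, which splits into terms where neither omitted index is $i_0$, terms from the leading $\psi_{i_0i_1}$ factor acting on the inner $\delta\eta$, and cross terms. The terms not involving $i_0$ cancel in pairs exactly as in the classical proof that the \v{C}ech differential squares to zero. The terms involving $\psi$'s are where the cocycle condition $\psi_{i_0i_1}\circ\psi_{i_1i_2}=\psi_{i_0i_2}$ (equivalently the second identity in Lemma \ref{Lem: psi_ij}(2), rewritten using the first) must be invoked: when one applies $\delta$ twice, the outer $\psi_{i_0i_1}$ hits a term of the inner $\delta\eta_{i_1\ldots i_{p+2}}$ that itself begins with $\psi_{i_1i_2}$, and the composite $\psi_{i_0i_1}\circ\psi_{i_1i_2}$ must equal $\psi_{i_0i_2}$ so that it matches the term arising when $i_1$ is the omitted index.

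The main obstacle I anticipate is precisely this bookkeeping around the non-commutative, operator-valued coefficients $\psi_{ij}$: in the classical scalar-valued \v{C}ech complex the coefficient in the leading term of $\delta$ is a function and everything commutes, whereas here $\psi_{ij}$ is a differential operator (it involves holomorphic derivatives $\nabla^+$), so one must be disciplined about the order of composition and cannot silently rearrange factors. Concretely, I would first reduce to checking the identity $\psi_{i_0 i_1}\circ\psi_{i_1 i_2}=\psi_{i_0 i_2}$ on $U_{i_0i_1i_2}$, which is immediate from the definition $\psi_{ij}=\phi_i\circ\phi_j^{-1}$ in \eqref{eq:psi.ij}, and then show every mismatch in the $\delta^2$ expansion is resolved by exactly one application of this composition rule together with the adjacent-index antisymmetry \eqref{eq:Cech.sym.1}. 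Once that is in place, the cancellation is formally identical to the standard one, so I would present the $i_0$-free cancellation briefly and devote the detail to the $\psi$-laden terms.
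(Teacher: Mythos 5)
Your proposal is correct and follows essentially the same strategy as the paper: verify holomorphy of $\delta\eta$ via $[\delbar,\psi_{ij}]=0$, verify the (anti)symmetry relations \eqref{eq:Cech.sym.1}--\eqref{eq:Cech.sym.2} by checking transpositions (using $\psi_{ij}\psi_{ji}=\id$ and the triple cocycle relation when $i_0$ is involved), and prove $\delta^2=0$ by a direct expansion in which the only non-classical cancellation is $\psi_{i_0i_1}\psi_{i_1i_2}\eta_{i_2\ldots}=\psi_{i_0i_2}\eta_{i_2\ldots}$. The paper packages the antisymmetry checks as an induction with explicit base cases, but the substance is the same direct verification you outline, and your identification of the single place the cocycle identity $\psi_{i_0i_1}\circ\psi_{i_1i_2}=\psi_{i_0i_2}$ enters the $\delta^2=0$ computation is exactly right.
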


\begin{proof}
We first observe that 
\begin{equation}
    \delbar (\delta\eta)_{i_0\ldots i_{p+1}}=0 
\end{equation}
by Lemma \ref{Lem: psi_ij} and \eqref{eq:Cech.holo}.  We now wish to show that $\delta$ preserves the relations \eqref{eq:Cech.sym.1}--\eqref{eq:Cech.sym.2}. We proceed by induction. 

Consider a $0$-cochain $\eta=\{(\eta_{i_0},U_{i_0})\}$. We then have, using Lemma \ref{Lem: psi_ij} and \eqref{eq:Cech.diff}, that 
    \begin{equation}
(\delta\eta)_{i_0i_1}=\psi_{i_0i_1}\eta_{i_1}-\eta_{i_0}=-\psi_{i_0i_1}(\psi_{i_1i_0}\eta_{i_0}-\eta_{i_1})=-\psi_{i_0i_1}(\delta\eta)_{i_1i_0}\:.
        \label{eq:Ind0a}
    \end{equation}
    Next, consider a 1-cochain $\eta=\{(\eta_{i_0i_1},U_{i_0i_1})\}$. We have by \eqref{eq:Cech.sym.2} and \eqref{eq:Cech.diff} that
    \begin{align}
(\delta\eta)_{i_0i_1i_2}&=\psi_{i_0i_1}\eta_{i_1i_2}-\eta_{i_0i_2}+\eta_{i_0i_1}=-\psi_{i_0i_1}\psi_{i_1i_2}\eta_{i_2i_1}-\eta_{i_0i_2}+\eta_{i_0i_1}\notag\\
        &=-(\psi_{i_0i_2}\eta_{i_2i_1}-\eta_{i_0i_1}+\eta_{i_0i_2})=-(\delta\eta)_{i_0i_2i_1}\:,
        \label{eq:Ind0b}
    \end{align}
    and also
    \begin{align}
(\delta\eta)_{i_0i_1i_2}&=\psi_{i_0i_1}\eta_{i_1i_2}-\eta_{i_0i_2}+\eta_{i_0i_1}=-\psi_{i_0i_1}(-\eta_{i_1i_2}+\psi_{i_1i_0}\eta_{i_0i_2}-\psi_{i_1i_0}\eta_{i_0i_1})\notag\\
        &=-\psi_{i_0i_1}(\psi_{i_1i_0}\eta_{i_0i_2}-\eta_{i_1i_2}+\eta_{i_1i_0})=-\psi_{i_0i_1}(\delta\eta)_{i_1i_0i_2}\:.
        \label{eq:Ind0c}
    \end{align}
    Equations \eqref{eq:Ind0a}, \eqref{eq:Ind0b} and \eqref{eq:Ind0c} give the base case for the induction proof. 

    Next, consider a $p$-cochain $\eta=\{(\eta_{i_0i_1\dots i_p},U_{i_0i_1\dots i_p})\}$. We begin with showing \eqref{eq:Cech.sym.1} for $\delta\eta$. For $l,k\ge2$ it is straightforward to check using \eqref{eq:Cech.diff} and \eqref{eq:Cech.sym.1}--\eqref{eq:Cech.sym.2} for $\eta$ that interchanging indices $i_k$ and $i_l$ gives
    \begin{equation}\label{eq:Cech.skew.1}
        (\delta\eta)_{i_0\dots i_l\dots i_k\dots i_{p+1}}=-(\delta\eta)_{i_0\dots i_k\dots i_l\dots i_{p+1}}\:,
    \end{equation}
    while switching $i_1$ with $i_l$ for $l\ge2$ gives
    \begin{align}
        (\delta\eta)_{i_0i_1\dots i_l\dots i_{p+1}}&=\psi_{i_0 i_1} \eta_{i_1\dots i_l\dots i_{p+1}} + \sum_{k=1}^{p+1} (-1)^k \eta_{i_0\dots\hat{i}_k\dots i_{p+1}}\notag\\
        &=-\psi_{i_0 i_1}\psi_{i_1i_l} \eta_{i_l\dots i_1\dots i_{p+1}}
        -\eta_{i_0i_2\dots i_{p+1}}+(-1)^{l}\eta_{i_0i_1\dots\hat{i}_l\dots i_{p+1}}\nonumber\\
        &\quad+ \sum_{k=2,\: k\neq l}^{p+1} (-1)^k \eta_{i_0\dots\hat{i}_k\dots i_{p+1}}\notag\\
        &=-\psi_{i_0i_l} \eta_{i_l\dots i_1\dots i_{p+1}}-(-1)^l\eta_{i_0i_li_2\dots \hat{i}_l\dots i_{p+1}}+\eta_{i_0i_2\dots i_1\dots i_{p+1}}\nonumber\\
        &\quad-\sum_{k=2,\: k\neq l}^{p+1} (-1)^k \eta_{i_0i_l\dots i_1\dots\hat{i}_k\dots i_{p+1}}\notag\\
        &=-(\delta\eta)_{i_0i_l\dots i_1\dots i_{p+1}}\:,\label{eq:Cech.skew.2}
    \end{align}
    where we have also used Lemma \ref{Lem: psi_ij}.
Thus \eqref{eq:Cech.sym.1} holds for $\delta\eta$. Now we wish to show that $\delta\eta$ satisfies \eqref{eq:Cech.sym.2}. Note that as we have shown that $(\delta\eta)_{i_0i_l\dots i_{p+1}}$ is skew-symmetric in the  indices excluding $i_0$ in \eqref{eq:Cech.skew.1}--\eqref{eq:Cech.skew.2}, we only need to show \eqref{eq:Cech.sym.2} when interchanging $i_0$ and $i_1$. To that end, we compute
    \begin{align}
        (\delta\eta)_{i_0i_1\dots i_{p+1}}&=\psi_{i_0 i_1} \eta_{i_1\dots i_{p+1}} -\eta_{i_0i_2\dots i_{p+1}}+ \sum_{k=2}^{p+1} (-1)^k \eta_{i_0i_1\dots\hat{i}_k\dots i_{p+1}}\notag\\
        &=-\psi_{i_0i_1}\psi_{i_1i_0}\eta_{i_0i_2\dots i_{p+1}}+\psi_{i_0 i_1} \eta_{i_1\dots i_l\dots i_{p+1}}- \psi_{i_0i_1}\sum_{k=2}^{p+1} (-1)^k \eta_{i_1i_0\dots\hat{i}_k\dots i_{p+1}}\notag\\
        &=-\psi_{i_0i_1}\left(\psi_{i_1i_0}\eta_{i_0i_2\dots i_{p+1}}-\eta_{i_1\dots i_l\dots i_{p+1}}+\sum_{k=2}^{p+1} (-1)^k \eta_{i_1i_0\dots\hat{i}_k\dots i_{p+1}}\right)\notag\\
        &=-\psi_{i_0i_1}(\delta\eta)_{i_1i_0\dots i_{p+1}}\:,\label{eq:Cech.skew.3}
    \end{align}
    where we have again used Lemma \ref{Lem: psi_ij}, \eqref{eq:Cech.diff} and \eqref{eq:Cech.sym.1}--\eqref{eq:Cech.sym.2} for $\eta$.
    By induction, it follows from \eqref{eq:Cech.skew.1}--\eqref{eq:Cech.skew.3} that $\delta$ preserves the relations \eqref{eq:Cech.sym.1}--\eqref{eq:Cech.sym.2} as claimed.

  We deduce that $\delta$ defines a map between cochains as desired.  Finally, we show that $\delta^2=0$. Let $\eta=\{(\eta_{i_0i_1\dots i_p},U_{i_0i_1\dots i_p})\}$ be a $p$-cochain. We have, by \eqref{eq:Cech.diff}, 
    \begin{align}
        (\delta^2\eta)_{i_0i_1\dots i_{p+2}}&=\psi_{i_0i_1}\left(\psi_{i_1 i_2} \eta_{i_2\dots i_{p+2}} + \sum_{k=2}^{p+2} (-1)^{k+1} \eta_{i_1i_2\dots\hat{i}_k\dots i_{p+2}}\right)\label{eq:line.1}\\
        &-\psi_{i_0 i_2} \eta_{i_2\dots i_l\dots i_{p+2}} - \sum_{k=2}^{p+2} (-1)^{k} \eta_{i_0i_2\dots\hat{i}_k\dots i_{p+2}}\label{eq:line.2}\\
        &+\sum_{l=2}^{p+2}(-1)^l\psi_{i_0 i_1} \eta_{i_1\dots \hat{i}_l\dots i_{p+1}} + \sum_{k=1}^{p+2}\sum_{l=2}^{k-1}(-1)^l (-1)^{k} \eta_{i_0i_1\dots\hat{i}_l\dots\hat{i}_k\dots i_{p+2}}\label{eq:line.3}\\
        &+\sum_{k=1}^{p+2}\sum_{l=k+1}^{p+2}(-1)^{l+1} (-1)^k \eta_{i_0i_1\dots\hat{i}_k\dots\hat{i}_l\dots i_{p+2}}\:.\label{eq:line.4}
    \end{align}
    We see that the first term on the right-hand side  of \eqref{eq:line.1} cancels the first term in \eqref{eq:line.2} by Lemma \ref{Lem: psi_ij}. The second term on the right-hand side of \eqref{eq:line.1} cancels the first term in \eqref{eq:line.3} directly. The remaining terms from \eqref{eq:line.1}--\eqref{eq:line.4} may be written as:
    \begin{align}        (\delta^2\eta)_{i_0i_1\dots i_{p+2}}
        &= \sum_{k=1}^{p+2}\sum_{l=1}^{k-1}(-1)^l (-1)^{k} \eta_{i_0\dots\hat{i}_l\dots\hat{i}_k\dots i_{p+2}}\label{eq:line.1.a}\\
        &+\sum_{k=1}^{p+2}\sum_{l=k+1}^{p+2}(-1)^{l+1} (-1)^k \eta_{i_0\dots\hat{i}_k\dots\hat{i}_l\dots i_{p+2}}\:.\label{eq:line.2.a}
    \end{align}
  We see that the sum in \eqref{eq:line.1.a} is over all pairs $(k,l)$ with $1\leq l<k\leq p+2$, whereas the sum in \eqref{eq:line.2.a}, where the roles of $k,l$ in the indices, the sum is over pairs with $1\leq k<l\leq p+2$.  Therefore, swapping $k$ and $l$ in \eqref{eq:line.2.a}, we see that the sum there becomes
  \begin{equation}
\sum_{k=1}^{p+2}\sum_{l=1}^{k-1}(-1)^{k+1} (-1)^l \eta_{i_0\dots\hat{i}_l\dots\hat{i}_k\dots i_{p+2}},
  \end{equation}
  which precisely cancels with the sum in \eqref{eq:line.1.a}. It follows that $\delta^2=0$.
\end{proof}

Proposition \ref{prop:Cech.diff} immediately yields that the following cohomology groups are well-defined.

\begin{definition}
We define the \emph{\v{C}ech cohomology groups}  $\check{H}^p(Q)$ as the cohomology groups associated to the differential complex $(C^{\bullet}(Q,\cP),\delta)$ given by Definition \ref{dfn:Cech.cochain}.
\end{definition}

\subsection{Dolbeault theorem} Now that we have defined our \v{C}ech cohomology, we want to prove that the cohomology group $H^{0,1}_{\bar{D}}(Q)$, which governs the deformation theory of the heterotic $\SU(3)$ system, is isomorphic to $\check{H}^1(Q)$.  We will achieve this by constructing the isomorphism explicitly. The construction leading to the isomorphism follows a standard approach using a partition of unity. However, due to the transition maps being linear operators, rather than holomorphic linear maps, and as the functions of the partition of unity are non-(anti-)holomorphic, the argument needs to be modified in various places. We therefore give the argument leading to Theorem \ref{thm: Cech} in full.

Throughout this section we continue to assume we are in the setting of Definition \ref{dfn:Cech.setup}.  
We begin with the following useful lemma.

\begin{lemma}\label{lem:eta.i}
Refining the open cover $\mathcal{U}=\{U_i\}$ in Definition \ref{dfn:Cech.setup} if necessary, for all  $s\in \Gamma(Q^{0,1})$ with $\bar{D}s=0$ there exists $\eta_{i}\in \Gamma(U_i,Q)$ such that
\begin{equation}
\label{eq:Poincare}
    s|_{U_i}=\bar{D}\eta_i.
\end{equation}
For all such $s$ satisfying $s=\bar{D}t$, we choose $\eta_i=t|_{U_i}$.
\end{lemma}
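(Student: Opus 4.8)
The plan is to transplant the ordinary $\delbar$-Poincaré (Dolbeault–Grothendieck) lemma through the local trivialisations of $\bar D$ supplied by Proposition \ref{prop:local.barD.delbar}. First I would pass to a refinement of the atlas cover $\mathcal U=\{U_i\}$ in which each $U_i$ is biholomorphic to a polydisc in $\C^3$, replacing each $\phi_i$ by the restriction of a trivialising operator from the original cover; the defining relation \eqref{eq:phi.i} is preserved under restriction, so this does no harm. Fix $s\in\Gamma(Q^{0,1})$ with $\bar D s=0$. On $U_i$ we have $\bar D|_{U_i}=\phi_i^{-1}\circ\delbar\circ\phi_i$, where $\phi_i\in\Gamma(U_i,\End(Q))$ is extended to $Q^{0,p}$-valued forms as in \eqref{eq: phi matrix 3x3} (acting as the Chern connection on the form indices); by the explicit triangular formulae \eqref{eq: phi matrix 3x3}–\eqref{eq: phi^-1 matrix 3x3}, both $\phi_i$ and $\phi_i^{-1}$ are first-order differential operators carrying $\Gamma(U_i,Q^{0,p})$ into itself.

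Applying $\phi_i$ to $\bar D s=0$ on $U_i$ gives $\delbar(\phi_i s)=0$. Since $Q$ is tautologically trivialised over the chart $U_i$, the section $\phi_i s$ is an ordinary $\delbar$-closed $\C^r$-valued $(0,1)$-form with $r=\rank Q$, so the componentwise $\delbar$-Poincaré lemma on the polydisc $U_i$ produces $\zeta_i\in\Gamma(U_i,Q)$ with $\delbar\zeta_i=\phi_i s$. Setting $\eta_i:=\phi_i^{-1}\zeta_i\in\Gamma(U_i,Q)$, we then have
\[
\bar D\eta_i=\phi_i^{-1}\circ\delbar\circ\phi_i\,(\phi_i^{-1}\zeta_i)=\phi_i^{-1}\delbar\zeta_i=\phi_i^{-1}\phi_i s=s|_{U_i},
\]
which is \eqref{eq:Poincare}. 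Finally, when $s=\bar D t$ for a globally defined $t\in\Gamma(Q)$, the choice $\eta_i:=t|_{U_i}$ trivially satisfies $\bar D\eta_i=(\bar D t)|_{U_i}=s|_{U_i}$, so we simply fix this particular choice in that case (this is the only place where a canonical, rather than arbitrary, choice of local primitive will be needed later).

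There is no genuine obstacle here: the whole argument is the $\delbar$-Poincaré lemma conjugated by $\phi_i$. The only points that merit a sentence of care are that $\phi_i$ intertwines $\bar D$ with $\delbar$ as operators on $Q$-valued forms of all degrees — so that the conjugation identity used above is legitimate — and that $\phi_i,\phi_i^{-1}$ preserve smoothness of sections; both follow at once from Proposition \ref{prop:local.barD.delbar} and the explicit forms \eqref{eq: phi matrix 3x3}, \eqref{eq: phi^-1 matrix 3x3}.
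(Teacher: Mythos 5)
Your proof is correct and takes essentially the same route as the paper's: conjugate $\bar D s=0$ by $\phi_i$ to reduce to $\delbar(\phi_i s)=0$, apply the componentwise $\delbar$-Poincar\'e lemma on a suitably small chart, and pull the primitive back by $\phi_i^{-1}$. The extra remarks you include — that $\phi_i$ is a first-order differential operator (not merely a bundle endomorphism) which must intertwine $\bar D$ with $\delbar$ on $Q$-valued forms of all degrees, and that $\phi_i^{\pm 1}$ preserve smoothness — are a useful clarification that the paper leaves implicit.
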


\begin{proof}
 By \eqref{eq:phi.i} we know that $\bar{D}s=0$ on $U_i$ forces
 \begin{equation}\label{eq:delbar.s}
     \delbar(\phi_i s|_{U_i})=0.
 \end{equation}
Making $U_i$ smaller and modifying it so that the $\delbar$-Poincar\'e lemma applies, if necessary, we deduce from \eqref{eq:delbar.s} that there exists a section $\zeta_i$ of $Q|_{U_i}$ such that
\begin{equation}
\label{eq:zeta.i}
     \delbar\zeta_i=\phi_i s|_{U_i}.
 \end{equation}
 If we define
 \begin{equation}\label{eq:eta.i}
\eta_i=\phi_i^{-1}\zeta_i
 \end{equation}
 then \eqref{eq:zeta.i} and \eqref{eq:phi.i} imply \eqref{eq:eta.i}.  The final statement is obvious.
\end{proof}

Lemma \ref{lem:eta.i} enables us to prove the following key step in building our first map between cohomology groups.  We recall the notation in Definitions \ref{dfn:Cech.setup} and \ref{dfn:Cech.cochain}, including 
\eqref{eq:phi.i} and \eqref{eq:intersections}.

\begin{prop}
\label{prop:Phi} 
    Let $s\in \Gamma(Q^{0,1})$ with $\bar{D}s=0$ and let $\eta_i\in\Gamma(U_i,Q)$ be given by Lemma \ref{lem:eta.i}.  Define 
 \begin{equation}
     \Phi(s)=\{(\eta_{ij},U_{ij})\},
 \end{equation}
    where 
\begin{equation}
\label{eq:eta.ij}
     \eta_{ij}
     =\phi_i(\eta_i-\eta_j).
 \end{equation}
 Then $\Phi(s)\in C^1(Q,\cP)$ with $\delta\Phi(s)=0$; i.e.~$\Phi(s)$ is a 1-cocycle.
\end{prop}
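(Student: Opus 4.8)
The plan is to verify directly that the collection $\{(\eta_{ij},U_{ij})\}$ with $\eta_{ij}=\phi_i(\eta_i-\eta_j)$ satisfies the defining conditions of a $1$-cochain in Definition \ref{dfn:Cech.cochain}, and then that $\delta$ kills it. First I would note that $\eta_{ij}$ is a genuine section of $Q$ over $U_{ij}$: by Lemma \ref{lem:eta.i} the difference $\eta_i-\eta_j$ lies in $\Gamma(U_{ij},Q)$, and $\phi_i\in\Gamma(U_i,\End(Q))$ acts on it. For the holomorphicity condition \eqref{eq:Cech.holo}, I would use the hypothesis $\bar{D}s=0$: on $U_{ij}$ we have $\bar{D}(\eta_i-\eta_j)=s|_{U_{ij}}-s|_{U_{ij}}=0$ by \eqref{eq:Poincare}, and since $\bar{D}|_{U_i}=\phi_i^{-1}\circ\delbar\circ\phi_i$ with $\phi_i$ invertible, applying $\phi_i$ gives $\delbar\eta_{ij}=\delbar(\phi_i(\eta_i-\eta_j))=\phi_i\,\bar{D}(\eta_i-\eta_j)=0$.

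Next, for a $1$-cochain the skew-symmetry relation \eqref{eq:Cech.sym.1} is vacuous (there are no index slots $k>l>0$ among just two indices), so only \eqref{eq:Cech.sym.2} needs to be checked, which here reads $\eta_{ij}=-\psi_{ij}\eta_{ji}$. Using $\psi_{ij}=\phi_i\circ\phi_j^{-1}$ from \eqref{eq:psi.ij} together with $\eta_{ji}=\phi_j(\eta_j-\eta_i)$, I would compute $\psi_{ij}\eta_{ji}=\phi_i\phi_j^{-1}\phi_j(\eta_j-\eta_i)=\phi_i(\eta_j-\eta_i)=-\eta_{ij}$, as required. This shows $\Phi(s)\in C^1(Q,\cP)$.

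Finally, to see $\delta\Phi(s)=0$ I would expand $(\delta\Phi(s))_{ijk}$ via \eqref{eq:Cech.diff} and substitute the definitions. Since $\psi_{ij}\eta_{jk}=\phi_i\phi_j^{-1}\phi_j(\eta_j-\eta_k)=\phi_i(\eta_j-\eta_k)$, the three terms combine on $U_{ijk}$ to
\[
(\delta\Phi(s))_{ijk}=\psi_{ij}\eta_{jk}-\eta_{ik}+\eta_{ij}=\phi_i\big((\eta_j-\eta_k)-(\eta_i-\eta_k)+(\eta_i-\eta_j)\big)=\phi_i(0)=0.
\]

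The verification is essentially formal once Lemma \ref{lem:eta.i} provides the local primitives $\eta_i$; the only thing requiring attention is the bookkeeping of the twists, namely that $\psi_{ij}=\phi_i\phi_j^{-1}$ converts a $\phi_j$-framed object into a $\phi_i$-framed one so that the telescoping differences $(\eta_i-\eta_j)$ collapse. I do not expect any serious obstacle here: the substance lies in the earlier results, Proposition \ref{prop:local.barD.delbar} (existence of the $\phi_i$) and Lemma \ref{lem:eta.i} (existence of the $\eta_i$ on a sufficiently fine cover).
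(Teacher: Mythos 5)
Your proof is correct and follows essentially the same plan as the paper's. One small point worth noting: for the holomorphicity condition \eqref{eq:Cech.holo} you compute $\delbar\eta_{ij}=\phi_i\,\bar D(\eta_i-\eta_j)=0$ directly from $\bar D|_{U_i}=\phi_i^{-1}\circ\delbar\circ\phi_i$, which is slightly cleaner than the paper's route, as it sidesteps the need to invoke Lemma~\ref{Lem: psi_ij} (that $\psi_{ij}$ commutes with $\delbar$) at this step.
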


\begin{proof}
To prove that $\Phi(s)$ is a 1-cochain we need to show that \eqref{eq:Cech.holo}--\eqref{eq:Cech.sym.2} hold for $\eta_{ij}$ in \eqref{eq:eta.ij}.   Note that condition \eqref{eq:Cech.sym.1} is vacuous and condition \eqref{eq:Cech.sym.2} can be stated as 
\begin{equation}\label{eq:eta.sym}
    \eta_{ij}=-\psi_{ij}\eta_{ji},
\end{equation}
which is immediate from the definition of $\eta_{ij}$ in \eqref{eq:eta.ij} and $\psi_{ij}$ in \eqref{eq:psi.ij}.

We are left with \eqref{eq:Cech.holo}. Since  \eqref{eq:eta.i} holds on $U_{ij}$ for both $i$ and $j$, we obtain a compatibility identity using \eqref{eq:phi.i}:
\[
  0=\bar{D}\eta_i-\bar{D}\eta_j=  \phi_{i}^{-1} \delbar\left(\phi_{i} \eta_{i}\right) 
    - \phi_{j}^{-1} \delbar\left(\phi_{j} \eta_{j}\right).
\]
Hence, by definition of $\psi_{ij}$ in \eqref{eq:psi.ij}, 
\[
\delbar\left(\phi_{i} \eta_{i}\right) 
    - \psi_{ij}\delbar\left(\phi_{j} \eta_{j}\right) = 0.
\]
By Lemma \ref{Lem: psi_ij}, $\psi_{ij}$ commutes with $\delbar$, so
\begin{equation}
\label{eq: gamma_ij is hol}
   { \delbar \eta_{ij}=\delbar\left(\phi_{i} \eta_{i}-\phi_{i} \eta_{j}\right)=0.}
\end{equation}
In other words, \eqref{eq:Cech.holo} holds.

Finally, it is immediate from \eqref{eq:eta.ij} that
\begin{equation}
\label{eq:cocycle}
    \phi_{i}^{-1} \eta_{ij} + \phi_{j}^{-1} \eta_{jk} + \phi_{k}^{-1} \eta_{ki} = 0
    \quad \text{on}\; 
    U_{ijk}\:.
\end{equation}
It follows from the definition of $\psi_{ij}$ and \eqref{eq:eta.sym}  that \eqref{eq:cocycle} is equivalent to
\begin{equation}
    \label{eq:cocycle2}
    \psi_{ij} \eta_{jk} - \eta_{ik} + \eta_{ij}= 0
    \quad \text{on}\; 
    U_{ijk}\:.
\end{equation}
Equation \eqref{eq:cocycle2} is the statement that $\delta\Phi(s)=0$ by \eqref{eq:Cech.diff}.
\end{proof}

\begin{corollary}\label{cor:Phi}
The map $\Phi$ in Proposition \ref{prop:Phi} induces a well-defined linear map $\Phi:H^{0,1}_{\bar{D}}(Q)\to \check{H}^1(Q)$.
\end{corollary}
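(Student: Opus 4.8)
The plan is to verify that $\Phi$ respects the two equivalence relations that turn cochains into cohomology classes: the choice of the local primitives $\eta_i$ (which are unique only up to $\bar D$-closed sections of $Q$ over $U_i$, i.e. up to $\delbar$-closed local sections after conjugation by $\phi_i$, which by the $\delbar$-Poincar\'e lemma and shrinking are $\delbar$-exact, hence come from a $0$-cochain) and the choice of representative $s$ within its class $[s]\in H^{0,1}_{\bar D}(Q)$ (changing $s$ by $\bar Dt$ changes $\eta_i$ by $t|_{U_i}$, which produces a $\delta$-coboundary). Linearity is immediate from the linearity of $\bar D$, the $\delbar$-Poincar\'e lemma construction, and the formula \eqref{eq:eta.ij}.

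First I would establish independence of the choice of $\eta_i$. Suppose $\eta_i'$ is another local primitive with $s|_{U_i}=\bar D\eta_i'$. Then $\bar D(\eta_i-\eta_i')=0$ on $U_i$, so by \eqref{eq:phi.i} we have $\delbar(\phi_i(\eta_i-\eta_i'))=0$; shrinking $U_i$ if necessary, the $\delbar$-Poincar\'e lemma gives $\phi_i(\eta_i-\eta_i')=\delbar\sigma_i$ for some $\sigma_i\in\Gamma(U_i,Q)$, hence $\eta_i-\eta_i'=\bar D(\phi_i^{-1}\sigma_i)$ with $\mu_i:=\phi_i^{-1}\sigma_i$. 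Now $\mu_i$ need not be $\delbar$-closed, so $\{\mu_i\}$ is not directly a $0$-cochain; instead one observes that the difference of the two resulting $1$-cocycles is $\eta_{ij}-\eta_{ij}'=\phi_i(\eta_i-\eta_j)-\phi_i(\eta_i'-\eta_j')=\phi_i(\eta_i-\eta_i')-\phi_i(\eta_j-\eta_j')$. Setting $\nu_i:=\phi_i(\eta_i-\eta_i')=\delbar\sigma_i$, which \emph{is} $\delbar$-closed, and $\zeta_i:=\sigma_i$ we get $\eta_{ij}-\eta_{ij}'=\nu_i-\psi_{ij}\nu_j$ (using $\phi_i(\eta_j-\eta_j')=\phi_i\phi_j^{-1}\phi_j(\eta_j-\eta_j')=\psi_{ij}\nu_j$); but $\nu_i=\delbar\sigma_i$, and one checks $\{(\delbar\sigma_i)\}$ is precisely $\delta$ of... — actually the clean statement is that $\{(\nu_i,U_i)\}$ with $\delbar\nu_i=0$ is a $0$-cochain in $C^0(Q,\mathcal P)$ \emph{provided} it satisfies \eqref{eq:Cech.sym.2}, which here is $\nu_i=-\psi_{i i}\nu_i$, vacuous since $\psi_{ii}=\id$ would give $\nu_i=-\nu_i$; so the correct normalisation is to note $\eta_{ij}-\eta_{ij}'=(\delta\{\nu_i\})_{ij}$ only after passing to the antisymmetrised cochain. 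I would spell this out by verifying directly from \eqref{eq:Cech.diff} that $(\delta\{(\nu_i,U_i)\})_{ij}=\psi_{ij}\nu_j-\nu_i$, so $\eta_{ij}-\eta_{ij}'=-(\delta\{\nu_i\})_{ij}$, exhibiting the difference as a coboundary; the $\delbar$-closedness of $\nu_i$ is exactly what makes $\{\nu_i\}$ a legitimate element of $C^0(Q,\mathcal P)$.

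Next I would treat the dependence on $s$. If $s'=s+\bar Dt$ with $t\in\Gamma(Q^{0,1})$... wait, $t\in\Gamma(Q)$ here since $\bar D:\Gamma(Q)\to\Gamma(Q^{0,1})$; then by the last sentence of Lemma \ref{lem:eta.i} we may take $\eta_i'=\eta_i+t|_{U_i}$, so $\eta_{ij}'=\phi_i(\eta_i'-\eta_j')=\eta_{ij}+\phi_i(t|_{U_i}-t|_{U_j})=\eta_{ij}$, since $t$ is a \emph{global} section and $t|_{U_i}=t|_{U_j}$ on $U_{ij}$. Hence $\Phi(s')=\Phi(s)$ on the nose, not merely up to coboundary — so changing $s$ within its $\bar D$-exact ambiguity does not even change the cocycle. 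Combining this with the previous paragraph, $\Phi$ descends to a well-defined map $H^{0,1}_{\bar D}(Q)\to\check H^1(Q)$, and linearity is clear since all constructions (solving $\bar D\eta_i=s|_{U_i}$ via the linear $\delbar$-Poincar\'e lemma, and forming $\eta_{ij}=\phi_i(\eta_i-\eta_j)$) are linear in $s$; any two choices of local primitives for $s_1+s_2$ versus the sum of choices for $s_1$ and $s_2$ differ by a $\bar D$-closed local section and hence, by the argument above, by a coboundary.

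The main obstacle is the bookkeeping in the first step: one must be careful that the ``correction'' sections appearing when one changes the local primitives $\eta_i$ genuinely assemble into an element of $C^0(Q,\mathcal P)$ respecting \eqref{eq:Cech.holo}--\eqref{eq:Cech.sym.2}, and that its $\delta$-image matches the difference of cocycles with the right sign. This is a routine but slightly delicate unwinding of the definitions \eqref{eq:Cech.diff} and \eqref{eq:psi.ij}, using that conjugation by $\phi_i$ intertwines $\bar D$ with $\delbar$ (Proposition \ref{prop:local.barD.delbar}) and that $\psi_{ij}$ commutes with $\delbar$ (Lemma \ref{Lem: psi_ij}); everything else is formal.
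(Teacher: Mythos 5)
Your proof is correct and takes the same essential approach as the paper, but is considerably more thorough. The paper's proof is terse: it only verifies that if $s=\bar Dt$, then with the specific choice $\eta_i=t|_{U_i}$ from Lemma~\ref{lem:eta.i} one gets $\eta_{ij}=0$ identically, and then asserts that linearity is ``clear by construction.'' What the paper leaves implicit --- and what you correctly identify and fill in --- is the independence of $[\Phi(s)]\in\check H^1(Q)$ from the non-unique choice of local primitives $\eta_i$. Your argument for this is right: if $\bar D\eta_i = \bar D\eta_i' = s|_{U_i}$, then $\nu_i := \phi_i(\eta_i-\eta_i')$ is $\delbar$-closed (no need for the detour through $\phi_i(\eta_i-\eta_i') = \delbar\sigma_i$, which you eventually abandon), so $\{(\nu_i,U_i)\}$ is a legitimate $0$-cochain (conditions \eqref{eq:Cech.sym.1}--\eqref{eq:Cech.sym.2} are vacuous for $p=0$), and $\eta_{ij}-\eta_{ij}' = \nu_i - \psi_{ij}\nu_j = -(\delta\{\nu_i\})_{ij}$ is a coboundary. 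Your treatment of the dependence on the representative $s$ and of linearity is also correct. One minor slip: you write ``which here is $\nu_i=-\psi_{ii}\nu_i$'' --- the condition \eqref{eq:Cech.sym.2} requires $k>l>0$ and a $0$-cochain has only the index $i_0$, so the condition is vacuous on different grounds than the one you give; but this does not affect the conclusion.
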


\begin{proof}
Given Proposition \ref{prop:Phi}, for well-definedness one need only check that $[\Phi(s)]=0\in \check{H}^1(Q)$, when $[s]=0\in H^{0,1}_{\bar{D}}(Q)$.  Indeed, if any representative of the latter has the form $s=\bar{D}t$, then $\eta_{ij}=0$ by \eqref{eq:eta.ij} and Lemma \ref{lem:eta.i}, so $[\Phi(s)]=0$ trivially.  Linearity of $\Phi$ is clear by construction.
\end{proof}

We now wish to define a linear map from $\check{H}^1(Q)$ to $H^{0,1}_{\bar{D}}(Q)$, which we want to be the inverse of $\Phi$ in Corollary \ref{cor:Phi}.  Throughout this construction, we recall that given the open cover $\mathcal{U}=\{U_i\}$, by re-indexing and removing unnecessary open sets, we may assume that it is countable and indexed by $i\in\mathbb{N}$, and that there exists a  partition of unity $\{\rho_i\}$ such that
\begin{equation}\label{eq:po1}
    \supp\rho_i\subseteq U_i.
\end{equation}

Our goal is the following result.

\begin{prop}\label{prop:Psi}
Let $\eta\in C^{1}(Q,\cP)$ with $\delta\eta=0$ and recall $\Phi$ from Proposition \ref{prop:Phi}. There exists $\Psi(\eta)\in \Gamma(Q^{0,1})$ with $\bar{D}\Psi(\eta)=0$ such that the following holds.
\begin{itemize}
    \item[\emph{(1)}] If $\eta=\Phi(s)$ then $[\Psi\circ\Phi(s)]=[\Psi(\eta)]=[s]\in H^{0,1}_{\bar{D}}(Q)$.
    \item[\emph{(2)}]  $[\Phi\circ\Psi(\eta)]=[\eta]\in \check{H}^1(Q,\cP)$.
 \end{itemize}
\end{prop}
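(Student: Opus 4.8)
The plan is to construct $\Psi(\eta)$ from the $\check{C}$ech data by the standard partition-of-unity trick, then show it is $\bar{D}$-closed and that the two compositions are identities up to coboundary.

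\textbf{Construction of $\Psi(\eta)$.} Given $\eta=\{(\eta_{ij},U_{ij})\}$ with $\delta\eta=0$, set locally on $U_i$
\[
    \xi_i=\sum_j \rho_j\, \phi_i^{-1}\eta_{ij}\in\Gamma(U_i,Q),
\]
where the sum is over those $j$ with $U_{ij}\neq\emptyset$ and we interpret $\rho_j\phi_i^{-1}\eta_{ij}$ as extended by zero outside $U_{ij}$ (legitimate since $\supp\rho_j\subseteq U_j$). On an overlap $U_{ik}$ the cocycle condition \eqref{eq:cocycle2} rearranged via $\psi_{ij}=\phi_i\circ\phi_j^{-1}$ gives $\phi_i^{-1}\eta_{ij}-\phi_k^{-1}\eta_{kj}=\phi_i^{-1}\eta_{ik}$, independent of $j$; multiplying by $\rho_j$ and summing yields $\xi_i-\xi_k=\phi_i^{-1}\eta_{ik}$ on $U_{ik}$. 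In particular the global $(0,1)$-forms $\bar{D}\xi_i$ agree on overlaps, because $\bar{D}(\xi_i-\xi_k)=\bar{D}(\phi_i^{-1}\eta_{ik})=\phi_i^{-1}\delbar(\phi_i\phi_i^{-1}\eta_{ik})=\phi_i^{-1}\delbar\eta_{ik}=0$ by \eqref{eq:phi.i} and \eqref{eq:Cech.holo}. Hence we may define $\Psi(\eta)\in\Gamma(Q^{0,1})$ by $\Psi(\eta)|_{U_i}=\bar{D}\xi_i$, and $\bar{D}\Psi(\eta)=0$ since $\bar{D}^2=0$.

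\textbf{Verifying (1).} Suppose $\eta=\Phi(s)$, so $\eta_{ij}=\phi_i(\eta_i-\eta_j)$ with $s|_{U_i}=\bar{D}\eta_i$ as in Lemma \ref{lem:eta.i}. Then $\phi_i^{-1}\eta_{ij}=\eta_i-\eta_j$, so $\xi_i=\sum_j\rho_j(\eta_i-\eta_j)=\eta_i-\sum_j\rho_j\eta_j=\eta_i-t'$ where $t':=\sum_j\rho_j\eta_j$ is a global section of $Q$ (the ambiguity in extending $\rho_j\eta_j$ by zero is harmless). Therefore $\Psi(\eta)|_{U_i}=\bar{D}\xi_i=\bar{D}\eta_i-\bar{D}t'=s|_{U_i}-\bar{D}t'$, i.e. $\Psi(\eta)=s-\bar{D}t'$ globally, so $[\Psi\circ\Phi(s)]=[s]$.

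\textbf{Verifying (2).} Starting from an arbitrary cocycle $\eta$, apply $\Phi$ to $\Psi(\eta)$. By Lemma \ref{lem:eta.i} we may take, as local primitives of $\Psi(\eta)$ on $U_i$, precisely the $\xi_i$ constructed above (since $\Psi(\eta)|_{U_i}=\bar{D}\xi_i$). Then $\Phi(\Psi(\eta))_{ij}=\phi_i(\xi_i-\xi_j)=\phi_i\,\phi_i^{-1}\eta_{ij}=\eta_{ij}$ on $U_{ij}$, using the overlap identity $\xi_i-\xi_j=\phi_i^{-1}\eta_{ij}$ derived above. Hence $\Phi\circ\Psi(\eta)=\eta$ on the nose, and in particular $[\Phi\circ\Psi(\eta)]=[\eta]$. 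One should also note $\Psi$ descends to cohomology: if $\eta=\delta\mu$ for a $0$-cochain $\mu=\{(\mu_i,U_i)\}$ with $\delbar\mu_i=0$, then $\eta_{ij}=\psi_{ij}\mu_j-\mu_i$, so $\phi_i^{-1}\eta_{ij}=\phi_j^{-1}\mu_j-\phi_i^{-1}\mu_i$ and $\xi_i=\sum_j\rho_j(\phi_j^{-1}\mu_j-\phi_i^{-1}\mu_i)=m-\phi_i^{-1}\mu_i$ with $m:=\sum_j\rho_j\phi_j^{-1}\mu_j$ global; then $\bar{D}\xi_i=\bar{D}m-\phi_i^{-1}\delbar\mu_i=\bar{D}m$, so $\Psi(\delta\mu)=\bar{D}m$ is $\bar{D}$-exact.

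\textbf{Main obstacle.} The delicate point is bookkeeping with the partition of unity: ensuring that each term $\rho_j\phi_i^{-1}\eta_{ij}$ genuinely extends by zero to a smooth section of $Q$ over $U_i$ (it does, because $\supp\rho_j\subset U_j$ forces the term to vanish near $\partial U_{ij}$ inside $U_i$), and that the sums are locally finite (arrange $\mathcal{U}$ locally finite when refining). Once the overlap identity $\xi_i-\xi_j=\phi_i^{-1}\eta_{ij}$ is in hand — which is exactly where the cocycle condition $\delta\eta=0$ enters — everything else is formal, so the real content is this one computation; the rest is verifying the two compositions, which come out to be the identity essentially on the nose.
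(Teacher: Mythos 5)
Your proof is correct and takes a genuinely different, and in fact cleaner, route than the paper's.

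The crux is where you put the partition of unity relative to the operator $\phi_i^{-1}$, which is a first-order differential operator because of the $\Gamma_i\cdot\nabla^+$ term. You set $\xi_i=\sum_j\rho_j\,\phi_i^{-1}\eta_{ij}$ with $\rho_j$ \emph{outside} $\phi_i^{-1}$, then use the cocycle relation to get $\xi_i-\xi_k=\phi_i^{-1}\eta_{ik}$ on overlaps, and finally note that $\phi_i\circ\phi_i^{-1}=\id$ holds as an equality of operators (the $N^3=0$ nilpotency of the upper-triangular blocks makes the differential terms cancel exactly), so $\bar{D}(\phi_i^{-1}\eta_{ik})=\phi_i^{-1}\delbar\eta_{ik}=0$ and $\bar{D}\xi_i$ glues without correction. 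The paper instead takes $\tilde{s}_i=\sum_j\phi_i^{-1}(\eta_{ij}\,\delbar\rho^j)$, with $\rho^j$ \emph{inside} $\phi_i^{-1}$; the $\Gamma_i\cdot\nabla^+$ part then hits $\rho^j$ and produces $\del\delbar\rho^j$ obstructions to gluing (Lemma \ref{lem:tilde.s.i}), which the paper then cancels by an auxiliary construction (Lemma \ref{lem:zeta.ij}) invoking the ordinary Dolbeault theorem for $T^{1,0}X$ to build the correctors $\beta_i$ and $\zeta_{ij}$. Your placement of the partition of unity sidesteps all of that, and parts (1) and (2) then become the formal computations you carry out. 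Two small things worth flagging: in (2), "$\Phi\circ\Psi(\eta)=\eta$ on the nose" holds only for the specific choice $\mu_i=\xi_i$ of local $\bar{D}$-primitives, which is legitimate because $[\Phi(s)]$ is independent of that choice (that is effectively Corollary \ref{cor:Phi}); and the paper's version of $\Psi$ differs from yours by a $\bar{D}$-exact term, which is consistent since both satisfy (1)–(2). Finally, your closing remark that $\Psi(\delta\mu)$ is $\bar{D}$-exact is not needed for the proposition as stated (that is Corollary \ref{cor:Psi}), but it is a clean bonus and is consistent with the paper's corollary.
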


The proof of Proposition \ref{prop:Psi} is rather lengthy, so we break it into several steps.  We begin by constructing the map $\Psi$.

Let $\eta=\{(\eta_{ij},U_{ij}\}$ be a 1-cocycle. 
Throughout this construction we will not sum over repeated indices $i,j$, but will sum over repeated indices $a,b,c$, and we will omit antiholomorphic indices.

We recall that
\begin{equation}
\label{eq:phi.i.matrix}
    \phi=\begin{bmatrix}
        1 & \alpha'A_i & \tilde\tau_i+\alpha'\Gamma_i\cdot\nabla^+  \\
        0 & 1 & A_i \\
        0 & 0 & 1
    \end{bmatrix} \quad 
\begin{gathered}
   (T^{1,0}X)^* \\
   \End(E)\\
   T^{1,0}X 
\end{gathered}\:,
\end{equation}
where $\tilde\tau_i$ is a section of $(T^{1,0}X)^*\otimes (T^{1,0}X)^*$ over $U_i$, 
\begin{equation}\label{eq:A.Gamma.i}
F|_{U_i}=\delbar A_i\qandq R|_{U_i}=\delbar\Gamma_i.
\end{equation}
We also recall that
\begin{equation}
\label{eq:phi.i.inverse.matrix}
    \phi^{-1}=\begin{bmatrix}
        1 & -\alpha'A_i & \alpha'A_i\cdot A_i-(\tilde\tau_i+ \alpha'\Gamma_i\cdot\nabla^+)  \\
        0 & 1 & -A_i \\
        0 & 0 & 1
    \end{bmatrix} \quad 
\begin{gathered}
   (T^{1,0}X)^* \\
   \End(E)\\
   T^{1,0}X 
\end{gathered}\:.
\end{equation}
We may also decompose $\eta_{ij}$ as 
\begin{equation}\label{eq:eta.ij.vector}
    \eta_{ij}=\begin{bmatrix}
        \kappa_{ij}\\ \gamma_{ij}\\ W_{ij}
    \end{bmatrix}\quad 
\begin{gathered}
   (T^{1,0}X)^* \\ 
   \End(E) \\
   T^{1,0}X 
\end{gathered}.
\end{equation}

We want to construct 
\begin{equation}\label{eq:Psi.eta.vector}
\Psi(\eta)=\begin{bmatrix}
    \kappa\\ \gamma \\ W
\end{bmatrix} \in\Gamma(Q^{0,1})
\end{equation}
such that $\bar{D}\Psi(\eta)=0$, i.e.
\begin{equation}\label{eq:Dbar.Psi}
    \delbar \kappa+\alpha'\cF\gamma +(\sT+\alpha'\cR\nabla^+)W=0,\quad \delbar\gamma+\cF W=0 \qandq \delbar W=0.
\end{equation} We recall that \eqref{eq:Dbar.Psi} is equivalent to 
\begin{equation}\label{eq:Dbar.Psi.2}
(\phi_i^{-1}\circ\delbar\circ\phi_i)\Psi(\eta)|_{U_i}=0
\end{equation}
begin satisfied for all $i$ by Proposition \ref{prop:local.barD.delbar}.

\begin{remark} Note that the $T^{1,0}X$-component  of \eqref{eq:Dbar.Psi.2} is the final equation in \eqref{eq:Dbar.Psi}, so the construction for this component $W$ of $\Psi(\eta)$ in \eqref{eq:Psi.eta.vector} will follow the standard approach. Further, the $\End(E)$-component of \eqref{eq:Dbar.Psi.2} is the second equation of \eqref{eq:Dbar.Psi}, which differs from the usual $\delbar\gamma=0$ by a linear term in $W$, so again $\gamma$ in $\Psi(\eta)$ can be constructed by standard methods.  However, we have to work harder to construct the $(T^{1,0}X)^*$-component $\kappa$ in \eqref{eq:Psi.eta.vector}.
\end{remark}

We first define on $U_i=\cup_jU_{ij}$ the following local section $\tilde{s}_i$ of $Q^{0,1}$:
\begin{equation}\label{eq:tilde.s.i}
  \tilde{s}_i =\sum_j\phi_i^{-1}\delbar(\eta_{ij}\rho^j)=\sum_j\phi_i^{-1}(\eta_{ij}\delbar\rho^j),
\end{equation}
using that $\delbar\eta_{ij}=0$.  We now observe the following.

\begin{lemma}
\label{lem:tilde.s.i} 
    Suppose that $\eta=\{(\eta_{ij},U_{ij})\}$ is a 1-cocycle. Then the $\tilde{s}_i \in \Gamma(U_i,Q^{0,1})$ given in \eqref{eq:tilde.s.i} satisfy $\bar{D}\tilde{s}_i=0$ and, on $U_{ik}$, we have
\begin{align}
\label{eq:tilde.s.i.k}
    \tilde{s}_i - \tilde{s}_k 
    &= \alpha' \sum_j \left( \Gamma_k \cdot (W_{kj} \del \delbar \rho^j) 
    - \Gamma_i \cdot (W_{ij} \del \delbar \rho^j) \right) \otimes \be_1,
\end{align}
where
\begin{equation}
    \Gamma_i \cdot (W_{ij} \del \delbar \rho^j) 
    = (\Gamma_i)_a{}^b{}_c dz^a \otimes (W_{ij})^c \del_b \delbar \rho^j.
\end{equation}
\end{lemma}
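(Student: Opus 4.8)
The plan is to derive both statements from the defining trivialisation \eqref{eq:phi.i}, the explicit local matrices \eqref{eq:phi.i.matrix}--\eqref{eq:phi.i.inverse.matrix} for the $\phi_i$, the $1$-cocycle identity \eqref{eq:cocycle2} for $\eta$, the relation $\phi_i^{-1}\circ\psi_{ik}=\phi_k^{-1}$, and Lemma \ref{Lem: psi_ij}. The vanishing $\bar D\tilde s_i=0$ is the easy part: by \eqref{eq:phi.i} we have $\bar D\tilde s_i=\phi_i^{-1}\,\delbar(\phi_i\tilde s_i)$, and from \eqref{eq:tilde.s.i} one reads off $\phi_i\tilde s_i=\sum_j\delbar(\eta_{ij}\rho^j)$, so $\delbar(\phi_i\tilde s_i)=\sum_j\delbar^2(\eta_{ij}\rho^j)=0$.

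For the comparison formula I would work on $U_{ik}$ and substitute the cocycle identity \eqref{eq:cocycle2}, written as $\eta_{ij}=\psi_{ik}\eta_{kj}+\eta_{ik}$ on $U_{ikj}$, into $\tilde s_i=\sum_j\phi_i^{-1}(\eta_{ij}\otimes\delbar\rho^j)$; this is legitimate because, restricted to $U_{ik}$, the $j$-th summand is supported in $U_{ikj}$, where the relation holds. The terms involving $\eta_{ik}$ sum to $\phi_i^{-1}\big(\eta_{ik}\otimes\delbar(\sum_j\rho^j)\big)=0$ since $\sum_j\rho^j\equiv 1$, leaving $\tilde s_i=\sum_j\phi_i^{-1}\big((\psi_{ik}\eta_{kj})\otimes\delbar\rho^j\big)$. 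The key observation is that $\psi_{ik}=\phi_i\circ\phi_k^{-1}$ is \emph{not} a pointwise endomorphism: a direct multiplication of the matrix forms of $\phi_i$ and $\phi_k^{-1}$ in \eqref{eq:phi.i.matrix}--\eqref{eq:phi.i.inverse.matrix} shows $\psi_{ik}$ is upper-triangular with unit diagonal, all of whose entries are algebraic \emph{except} that the top-right corner (taking the $T^{1,0}X$-summand to the $(T^{1,0}X)^*$-summand) also contains the first-order operator $\alpha'(\Gamma_i-\Gamma_k)\cdot\nabla^+$. Hence $(\psi_{ik}\eta_{kj})\otimes\delbar\rho^j$ and $\psi_{ik}(\eta_{kj}\otimes\delbar\rho^j)$ differ only by the Leibniz defect of that operator against the scalar factor $\delbar\rho^j$, and I would compute this defect locally exactly as in the proof of Proposition \ref{prop: D bar (loc)= delbar} — using that $\nabla^+$ acts on the antiholomorphic form index by the Chern connection, whose $(1,0)$-direction Christoffel symbols on $(T^{0,1}X)^*$ vanish — to find it equals $-\alpha'(\Gamma_i-\Gamma_k)\cdot(W_{kj}\,\del\delbar\rho^j)\otimes\be_1$, where $W_{kj}$ is the $T^{1,0}X$-component of $\eta_{kj}$. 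Applying $\phi_i^{-1}$, using $\phi_i^{-1}\circ\psi_{ik}=\phi_k^{-1}$ and the fact that $\phi_i^{-1}$ is the identity on $\be_1$-valued forms (its first column is $(1,0,0)^{\rm T}$), and then summing over $j$, gives $\tilde s_i=\tilde s_k-\alpha'\sum_j(\Gamma_i-\Gamma_k)\cdot(W_{kj}\,\del\delbar\rho^j)\otimes\be_1$.

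To reach the stated symmetric form \eqref{eq:tilde.s.i.k}, I would finally take the $T^{1,0}X$-components of the same cocycle identity, which (again because $\psi_{ik}$ is upper-triangular with unit diagonal) reads $W_{ij}=W_{kj}+W_{ik}$; since $\sum_j\Gamma_i\cdot(W_{ik}\,\del\delbar\rho^j)=0$ by $\sum_j\rho^j\equiv 1$, I may replace $W_{kj}$ by $W_{ij}$ in the $\Gamma_i$-term, producing exactly \eqref{eq:tilde.s.i.k}. I expect the single genuinely delicate step to be the Leibniz-defect computation: one must check that no contribution survives from the algebraic entries of $\psi_{ik}$ (nor from $\phi_i^{-1}$), that the surviving term lands in the $\be_1$-slot, and that its coefficient is precisely $\Gamma\cdot(W\,\del\delbar\rho^j)$; the remainder is routine bookkeeping with the partition of unity and the triangular structure of the matrices.
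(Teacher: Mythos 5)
Your proof is correct and rests on exactly the two ingredients the paper uses — the $1$-cocycle identity and the Leibniz defect of the first-order $\Gamma\cdot\nabla^+$ part of the local trivialisation against the scalar $\delbar\rho^j$ — but you arrange them in the reverse order. The paper's proof first Leibniz-expands $\phi_i^{-1}(\eta_{ij}\delbar\rho^j)$, pulling out the defect $-\alpha'\Gamma_i\cdot(W_{ij}\del\delbar\rho^j)\otimes\be_1$ straight away to obtain \eqref{eq:tilde.s.i.2}, and only then invokes the cocycle condition to show that the ``algebraic'' parts $\sum_j(\phi_i^{-1}\eta_{ij}-\phi_k^{-1}\eta_{kj})\delbar\rho^j$ collapse to $-\phi_k^{-1}\eta_{ki}\,\delbar\sum_j\rho^j=0$; this lands directly on the symmetric expression \eqref{eq:tilde.s.i.k}. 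You instead substitute the cocycle relation first, compose away $\phi_i^{-1}\psi_{ik}=\phi_k^{-1}$, compute the defect from $\psi_{ik}$ (whose top-right differential part is $\alpha'(\Gamma_i-\Gamma_k)\cdot\nabla^+$), and then need a final re-indexing $W_{kj}=W_{ij}-W_{ik}$ together with $\sum_j\del\delbar\rho^j=0$ to recover the stated form. Both routes are valid; the paper's is marginally more economical at the end because it never needs the re-indexing, while yours has the conceptual advantage of cleanly isolating the role of the transition operators $\psi_{ik}$ and of the identity $\phi_i^{-1}\psi_{ik}=\phi_k^{-1}$. All the genuinely delicate points you flag — that the Chern $(1,0)$-Christoffel symbols on $(T^{0,1}X)^*$ vanish so that the defect is precisely $\Gamma\cdot(W\,\del\delbar\rho^j)$, that the defect lives purely in the $\be_1$-slot, and that $\phi_i^{-1}$ acts as the identity on that slot — are correct and are exactly the facts the paper's computation relies on implicitly.
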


\begin{proof}
    Clearly, by \eqref{eq:phi.i}, we have $\bar{D}\tilde{s}_i=0$. Moreover, from \eqref{eq:phi.i.inverse.matrix} and \eqref{eq:tilde.s.i}, we see that
    \begin{equation}\label{eq:tilde.s.i.2}
        \tilde{s}_i = \sum_j (\phi_i^{-1} \eta_{ij}) \delbar \rho^j 
        - \alpha' \sum_j \left( \Gamma_i \cdot (W_{ij} \del \delbar \rho^j) \otimes \be_1 \right).
    \end{equation}
    
    We now want to compare $\tilde{s}_i$ and $\tilde{s}_k$ on $U_{ik}$. From the condition \eqref{eq:eta.sym}, we know that on $U_{jk}$:
    \begin{equation}\label{eq:eta.sym.jk}
        \phi_j^{-1} \eta_{jk} = - \phi_k^{-1} \eta_{kj}.
    \end{equation}
    We deduce that on $U_{ijk}$:
    \begin{equation}\label{eq:eta.sym.ijk}
        \phi_i^{-1} \eta_{ij} - \phi_k^{-1} \eta_{kj} = \phi_i^{-1} \eta_{ij} + \phi_j^{-1} \eta_{jk} = -\phi_k^{-1} \eta_{ki},
    \end{equation} 
    from \eqref{eq:cocycle} and \eqref{eq:eta.sym.jk}. Hence, on $U_{ik} = \cup_j U_{ijk}$, we have:
    \begin{equation}\label{eq:tilde.s.vanish}
        \sum_j (\phi_i^{-1} \eta_{ij} - \phi_k^{-1} \eta_{kj}) \delbar \rho^j = -\sum_j \phi_k^{-1} \eta_{ki} \delbar \rho^j = -\phi_k^{-1} \eta_{ki} \delbar \sum_j \rho^j = 0,
    \end{equation}
    as $\{\rho^j\}$ is a partition of unity.

    Therefore, using \eqref{eq:tilde.s.i.2} and \eqref{eq:tilde.s.vanish}, we find that on $U_{ik}$:
    \begin{align}
        \tilde{s}_i - \tilde{s}_k 
        &= -\sum_j \phi_k^{-1} \eta_{ki} \delbar \rho^j 
        + \alpha' \sum_j \left( \Gamma_k \cdot (W_{kj} \del \delbar \rho^j) 
        - \Gamma_i \cdot (W_{ij} \del \delbar \rho^j) \right) \otimes \be_1 \nonumber \\
        &= \alpha' \sum_j \left( \Gamma_k \cdot (W_{kj} \del \delbar \rho^j) 
        - \Gamma_i \cdot (W_{ij} \del \delbar \rho^j) \right) \otimes \be_1,
    \end{align}
    as desired.
\end{proof}
We see from \eqref{eq:tilde.s.i.k} that $\tilde{s}_i$ may not patch together to give a globally defined section of $Q^{0,1}$.  We therefore need to modify $\tilde{s}_i$.  This is achieved by the following lemma.

\begin{lemma}\label{lem:zeta.ij}
   Suppose that $\eta=\{(\eta_{ij},U_{ij})\}$ is a 1-cocycle as in \eqref{eq:eta.ij.vector}.  Then there exist $\beta_i\in\Omega^{0,1}(T^{1,0}X)$ over $U_i$ such that the $T^{1,0}X$-valued $(0,1)$-forms $\zeta_{ij}$ on $U_{ij}$  given by
\begin{equation}\label{eq:zeta.ij}
    (\zeta_{ij})^b{}_a=(\Gamma_i)_{a}{}^b{}_c(W_{ij})^c-(\beta_i)^b{}_a+(\beta_j)^b{}_a
\end{equation}
satisfy
\begin{equation}\label{eq:zeta.ij.holo}
    \delbar\zeta_{ij}=0.
\end{equation}
\end{lemma}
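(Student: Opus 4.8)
\textbf{Proof strategy for Lemma \ref{lem:zeta.ij}.} The plan is to read off the obstruction to holomorphicity of the naive candidate $(\Gamma_i)_a{}^b{}_c(W_{ij})^c$ and show it is a $\delbar$-coboundary of a \emph{Čech} $1$-cocycle built from the $W_{ij}$, so that the $\delbar$-Poincaré lemma plus a partition of unity can absorb it into local corrections $\beta_i$. First I would compute $\delbar\big((\Gamma_i)_a{}^b{}_c(W_{ij})^c\big)$ on $U_{ij}$. Since $R|_{U_i}=\delbar\Gamma_i$ (equation \eqref{eq:A.Gamma.i}) and $\delbar W_{ij}=0$ (the Čech cochains are holomorphic, \eqref{eq:Cech.holo}), we get
\begin{equation*}
    \delbar\big((\Gamma_i)_a{}^b{}_c(W_{ij})^c\big)=(R_i)_a{}^b{}_c\wedge(W_{ij})^c=:(\mu_{ij})_a{}^b,
\end{equation*}
which is a $(0,2)$-form on $U_{ij}$ with values in $(T^{1,0}X)^*\otimes T^{1,0}X$; note $\mu_{ij}$ is independent of the $i$-trivialisation away from the $\Gamma$-ambiguity, but I will track it through the $i$ label.

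The key point is that $\{\mu_{ij}\}$ is a $\delbar$-closed Čech $1$-cochain whose Čech coboundary vanishes. Indeed $\delbar\mu_{ij}=0$ because $R$ is $\delbar$-closed (Bianchi, as used in Proposition \ref{prop:Nilpotent}) and $\delbar W_{ij}=0$; and the cocycle condition $\delta\eta=0$ in the form \eqref{eq:cocycle2} for the $T^{1,0}X$-components reads $\psi_{ij}W_{jk}-W_{ik}+W_{ij}=0$, where the $T^{1,0}X$-block of $\psi_{ij}$ is the identity (it is upper-triangular with $1$'s on the diagonal, cf.~\eqref{eq:phi.i.matrix}), so simply $W_{jk}-W_{ik}+W_{ij}=0$ on $U_{ijk}$. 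Wedging with $R$ and matching trivialisations shows the appropriate alternating sum of the $\mu_{ij}$ over $U_{ijk}$ vanishes. Then I would run the standard sheaf-theoretic argument (exactly as in the usual proof that $\check H^1$ of a fine-resolution sheaf injects into Dolbeault cohomology): set $\nu_i:=\sum_k\mu_{ik}\rho^k\in\Omega^{0,2}(\cdots)$ on $U_i$ using the partition of unity $\{\rho^k\}$ from \eqref{eq:po1}; the Čech-coboundary vanishing gives $\nu_i-\nu_j=\mu_{ij}$ on $U_{ij}$, and $\delbar$-closedness of each $\mu_{ik}$ gives that $\delbar\nu_i$ glues to a global $\delbar$-closed form — but actually for our purposes we only need $\delbar\nu_i=\delbar\nu_j$ on overlaps, so after shrinking the cover we may apply the $\delbar$-Poincaré lemma to solve $\delbar\beta_i=(\Gamma_i\cdot W_{ij})$-type correction. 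More precisely: since $\delbar\big((\Gamma_i)_a{}^b{}_c(W_{ij})^c-\nu_i\big)=\mu_{ij}-\mu_{ij}=0$ is \emph{not} quite what we want; instead I would note $\nu_i$ is defined on all of $U_i$ and $\delbar\nu_i$ is globally defined, but to get holomorphicity of $\zeta_{ij}$ we want $\beta_i$ on $U_i$ with $\delbar\beta_i$ matching $\delbar\big((\Gamma_i\cdot W_{ij})-(\beta_i-\beta_j)\big)=0$, i.e.\ $\delbar\beta_i-\delbar\beta_j=\mu_{ij}=\nu_i-\nu_j$. So it suffices to take $\beta_i$ with $\delbar\beta_i=\nu_i$; such $\beta_i$ exists on a possibly-shrunk $U_i$ by the $\delbar$-Poincaré lemma provided $\delbar\nu_i=0$ there, which holds because $\delbar\nu_i=\sum_k(\delbar\mu_{ik})\rho^k\pm\mu_{ik}\wedge\delbar\rho^k$ and the first term vanishes while the second, summed with the Čech relation, gives $\sum_k\mu_{ik}\wedge\delbar\rho^k=\delbar\big(\sum_k\mu_{ik}\rho^k\big)$ tautologically — so I instead directly solve $\delbar\beta_i=\nu_i$ only after checking $\nu_i$ is $\delbar$-exact locally, which follows since $\delbar\nu_i$ is a globally defined $\delbar$-closed $(0,3)$-form equal to the $\delbar$ of a local primitive. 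Let me streamline: the clean route is that $\{\mu_{ij}\}$ represents a class in $\check H^1$ with values in the sheaf of $\delbar$-closed $(0,2)$-forms, this sheaf is fine (partitions of unity), so $\check H^1=0$ and $\mu_{ij}=\nu_i-\nu_j$ with each $\nu_i$ a $\delbar$-closed $(0,2)$-form on $U_i$; then $\delbar$-Poincaré on a refinement gives $\nu_i=\delbar\beta_i$, and \eqref{eq:zeta.ij.holo} follows by direct substitution.

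Then I would verify \eqref{eq:zeta.ij.holo} by the computation
\begin{equation*}
    \delbar(\zeta_{ij})^b{}_a=(R_i)_a{}^b{}_c\wedge(W_{ij})^c-\delbar(\beta_i)^b{}_a+\delbar(\beta_j)^b{}_a=\mu_{ij}-\nu_i+\nu_j=0,
\end{equation*}
being careful that the $\Gamma_i$ versus $\Gamma_j$ discrepancy on $U_{ij}$ (the two trivialisations differ) is itself holomorphic, since $\Gamma_i-\Gamma_j$ has $\delbar(\Gamma_i-\Gamma_j)=R-R=0$, so it can be folded into the choice of $\beta_i$ without affecting $\delbar$-closedness; this means I should really define $\mu_{ij}$ using whichever $\Gamma$ is consistent, and the harmless holomorphic term $(\Gamma_i-\Gamma_j)_a{}^b{}_c(W_{ij})^c$ gets absorbed. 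The main obstacle is exactly this bookkeeping: the $\Gamma_i$ are only local and do not satisfy a clean cocycle relation, so one must check that the ambiguity they introduce in $\mu_{ij}$ is $\delbar$-closed (indeed holomorphic), and that the full object $\{\mu_{ij}\}$ — not just its "principal part" — is a Čech cocycle with values in the fine sheaf of $\delbar$-closed forms. Once that is in place, the fineness of that sheaf and the $\delbar$-Poincaré lemma do the rest, exactly as in the classical proof of the Dolbeault isomorphism, and shrinking the cover $\mathcal U$ (which the statement explicitly permits, "Refining the open cover if necessary") handles the passage from local $\nu_i$-primitives to the $\beta_i$.
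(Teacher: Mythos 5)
Your strategy — read off the obstruction $\mu_{ij}=R\cdot W_{ij}$ to holomorphicity, observe it is a \v{C}ech $1$-cocycle, split it using a partition of unity, and solve $\delbar\beta_i=\nu_i$ by $\delbar$-Poincar\'e — has a genuine gap at the load-bearing step. You assert that ``the sheaf of $\delbar$-closed $(0,2)$-forms is fine (partitions of unity), so $\check H^1=0$.'' This is false: multiplying a $\delbar$-closed form by a cutoff function does not yield a $\delbar$-closed form, so that sheaf is not fine; in fact its first \v{C}ech cohomology is (via the Dolbeault--\v{C}ech spectral sequence) a nontrivial Dolbeault group in general. Concretely, if you set $\nu_i=\sum_k\rho^k\mu_{ik}$, then $\delbar\nu_i=\sum_k\delbar\rho^k\wedge\mu_{ik}$, and expanding $W_{ik}=V_i-V_k+Y_i-Y_k$ (the Dolbeault decomposition of the holomorphic cocycle $\{W_{ij}\}$, with $\delbar V_k=\tilde W$ and $\delbar Y_k=0$) one finds $\delbar\nu_i=R\cdot\bigl(\tilde W-\delbar\sum_k\rho^k(V_k+Y_k)\bigr)$, which is a globally defined $\delbar$-closed form that is not zero in general. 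So the equation $\delbar\beta_i=\nu_i$ is unsolvable as stated, and your self-flagged ``circular'' attempt to show $\delbar\nu_i=0$ was indeed circular because $\delbar\nu_i\ne 0$.

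The paper's proof patches exactly this obstruction, and the missing ingredient in your argument is the commutator identity $\delbar(\nabla_a\tilde W^b)=R_a{}^b{}_c\,\delbar(V_i)^c$ from Corollary \ref{cor:commute}. Using the Dolbeault decomposition $W_{ij}=V_i-V_j+Y_i-Y_j$, the paper applies $\delbar$-Poincar\'e not to $\nu_i$ but to $R_a{}^b{}_c(V_i)^c-\nabla_a\tilde W^b$: the extra term $-\nabla\tilde W$ is a \emph{global} primitive for precisely the obstruction $R\cdot\tilde W$ you are left with, so the combination is honestly $\delbar$-closed on $U_i$, producing $\tilde\beta_i$. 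Adding the holomorphic correction $\Gamma_i\cdot Y_i$ to account for the $Y$-part gives the $\beta_i$ with $\delbar(\beta_i-\beta_j)=R\cdot W_{ij}$, and \eqref{eq:zeta.ij.holo} follows. So your overall framing (obstruction $\mu_{ij}$, cocycle structure, local $\delbar$-primitives) is on the right track, but it cannot work without the structural input $[\delbar,\nabla]=R$; generic sheaf-theoretic splitting is insufficient.
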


\begin{proof}
    We first see from the cocycle condition \eqref{eq:cocycle} and our definition of $1$-cochains, together 
with \eqref{eq:phi.i.inverse.matrix} and \eqref{eq:eta.ij.vector}
, that $\{(W_{ij},U_{ij})\}$ is a standard 1-cocycle in usual \v{C}ech cohomology, i.e.
\begin{equation}\label{eq:W.cocycle}
    W_{ij}+W_{jk}+W_{ki}=0\quad \text{on $U_{ijk}$}.
\end{equation}
By the usual $\delbar$-Dolbeault theorem, $\{W_{ij},U_{ij}\}$ defines $\tilde{W}\in \Omega^{0,1}(T^{1,0}X)$ with $\delbar \tilde{W}=0$.  Moreover, again by the $\delbar$-Dolbeault theorem, for each $i$ there exist sections $V_i,Y_i$ of $T^{1,0}X$ over $U_i$ such that 
\begin{equation}\label{eq:V.i}
\tilde{W}|_{U_i}=\delbar V_i,\quad \delbar Y_i=0
\end{equation}
and
\begin{equation}\label{eq:W.ij}
W_{ij}=V_i-V_j+Y_i-Y_j.
\end{equation}

We note from \eqref{R-conv} and $\delbar \tilde{W}=0$ that, on $U_i$, we have
\begin{equation}\label{eq:beta.i.a}
    \delbar(\nabla_a\tilde{W}^b)=[\delbar,\nabla_a]\tilde{W}^b=R_{a}{}^b{}_cW^c=R_a{}^b{}_c\delbar(V_i)^c,
\end{equation}
where we used \eqref{eq:V.i} in the last step.  Therefore, as $\delbar R=0$, we may use the $\delbar$-Poincar\'e Lemma on $U_i$ to deduce from \eqref{eq:beta.i.a} the existence of a $T^{1,0}X$-valued $(0,1)$-form $\tilde{\beta}_i$ on $U_i$ such that
\begin{equation}\label{eq:tilde.beta.i}
\delbar(\tilde{\beta}_i)^b{}_{a}=R_{a}{}^b{}_c(V_i)^c-\nabla_a\tilde{W}^b.
\end{equation}
Recalling \eqref{eq:A.Gamma.i} 
and the $Y_i$ in \eqref{eq:V.i}--\eqref{eq:W.ij}, we then define $\beta_i$ on 
\begin{equation}\label{eq:beta.i}
(\beta_i)^b{}_a=(\tilde{\beta}_i)^b{}_a+(\Gamma_i)_{a}{}^b{}_c(Y_i)^c.
\end{equation}
We deduce from \eqref{eq:A.Gamma.i}, \eqref{eq:V.i} and \eqref{eq:tilde.beta.i} that
\begin{equation}\label{eq:delbar.beta.i}
    \delbar(\beta_i)^{b}{}_a=R_{a}{}^b{}_c(V_i)^c-\nabla_a\tilde{W}^b+R_a{}^b{}_c(Y_i)^c.
\end{equation}

The upshot is that on $U_{ij}$ we have
\begin{equation}\label{eq:delbar.beta.i.j}
    \delbar(\beta_i-\beta_j)^{b}{}_a=R_{a}{}^b{}_c(W_{ij})^c=\delbar\left((\Gamma_i)_{a}{}^b{}_c(W_{ij})^c\right),
\end{equation}
using 
\eqref{eq:A.Gamma.i}, $\delbar W_{ij}=0$ and \eqref{eq:delbar.beta.i}.   This gives the result.
\end{proof}

Given Lemma \ref{lem:zeta.ij}, we therefore define on $U_i$ a new section $s_i$ of $Q^{0,1}$:
\begin{equation}\label{eq:s.i}
    s_i = \tilde{s}_i + \alpha' \sum_j \left( \zeta_{ij} \cdot \del \delbar \rho^j \right) \otimes \be_1,
\end{equation}
where
\begin{equation}\label{eq:zeta.cdot}
    \zeta_{ij} \cdot \del \delbar \rho^j = (\zeta_{ij})^b{}_a dz^a \otimes \del_b \delbar \rho^j.
\end{equation}

We now conclude this discussion by constructing the map $\Psi$. 

\begin{lemma}\label{lem:Psi}
   Suppose that 
   $\eta$ is a 1-cocycle. Then there exists $s=\Psi(\eta)\in\Gamma(Q^{0,1})$ with $\bar{D}s=0$ such that $$s|_{U_i}=s_i$$
    given in \eqref{eq:s.i}.
\end{lemma}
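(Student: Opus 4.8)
The plan is to show that the locally defined sections $s_i\in\Gamma(U_i,Q^{0,1})$ of \eqref{eq:s.i} are each annihilated by $\bar D$ and agree on all pairwise overlaps, so that they glue to a global section $s\in\Gamma(Q^{0,1})$ with $\bar D s=0$; we then set $\Psi(\eta):=s$.

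\textbf{Step 1: $\bar D s_i=0$ on each $U_i$.} We already know $\bar D\tilde s_i=0$ from Lemma \ref{lem:tilde.s.i}. The correction term $\alpha'\sum_j(\zeta_{ij}\cdot\del\delbar\rho^j)\otimes\be_1$ lies entirely in the $(T^{1,0}X)^*$-summand of $Q$, so, since the first column of the matrix \eqref{eq: Dbar} is $(\delbar,0,0)^{\mathrm T}$, the operator $\bar D$ acts on it simply as $\delbar$. By Lemma \ref{lem:zeta.ij} we have $\delbar\zeta_{ij}=0$, and $\delbar(\del_b\delbar\rho^j)=0$ by a direct check (using $\delbar\del=-\del\delbar$ and $\delbar^2=0$ on the smooth function $\rho^j$); the Leibniz rule then gives $\delbar(\zeta_{ij}\cdot\del\delbar\rho^j)=0$, and hence $\bar D s_i=0$.

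\textbf{Step 2: $s_i=s_k$ on $U_{ik}$.} Here I would write $s_i-s_k=(\tilde s_i-\tilde s_k)+\alpha'\sum_j(\zeta_{ij}-\zeta_{kj})\cdot\del\delbar\rho^j\otimes\be_1$, the sum being supported on the triple overlaps $U_{ijk}$ since $\supp\rho^j\subseteq U_j$. On $U_{ijk}$ the definition \eqref{eq:zeta.ij} gives $\zeta_{ij}-\zeta_{kj}=\Gamma_i\cdot W_{ij}-\Gamma_k\cdot W_{kj}-\beta_i+\beta_k$, where the $\beta$-terms no longer depend on $j$; contracting against $\del\delbar\rho^j$ and summing over $j$, these $\beta$-terms drop out because $\sum_j\del\delbar\rho^j=\del\delbar(\sum_j\rho^j)=0$. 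What survives is exactly $\alpha'\sum_j\big(\Gamma_i\cdot(W_{ij}\del\delbar\rho^j)-\Gamma_k\cdot(W_{kj}\del\delbar\rho^j)\big)\otimes\be_1$, which is precisely the negative of the discrepancy $\tilde s_i-\tilde s_k$ recorded in \eqref{eq:tilde.s.i.k}. Hence $s_i-s_k=0$ on $U_{ik}$.

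\textbf{Step 3: conclusion.} The agreement on all overlaps lets the $\{s_i\}$ patch to a well-defined $s\in\Gamma(Q^{0,1})$ with $s|_{U_i}=s_i$, and $\bar D s=0$ holds globally since it holds on each chart of the cover; we set $\Psi(\eta):=s$. I expect Step 2 to be the only substantive point: it is where the explicit discrepancy formula of Lemma \ref{lem:tilde.s.i} and the specific construction of $\zeta_{ij}$ in Lemma \ref{lem:zeta.ij} (the $\beta_i$ being local primitives tuned precisely so that the partition of unity annihilates them) conspire to produce the cancellation, whereas $\bar D$-closedness of the $s_i$ and the final gluing are routine. Throughout, one must keep in mind that each $\zeta_{ij}$ is a priori only defined on $U_{ij}$, so all manipulations of $\sum_j(\zeta_{ij}-\zeta_{kj})\cdot\del\delbar\rho^j$ take place on the overlaps $U_{ijk}$ where the relevant summand is supported.
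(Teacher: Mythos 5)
Your proposal is correct and takes essentially the same route as the paper: you verify $\bar D s_i=0$ from Lemma~\ref{lem:tilde.s.i} and $\delbar\zeta_{ij}=0$, and then on $U_{ik}$ you substitute \eqref{eq:zeta.ij} so that the $\Gamma\cdot W$-pieces of $\zeta_{ij}-\zeta_{kj}$ cancel the discrepancy $\tilde s_i-\tilde s_k$ in \eqref{eq:tilde.s.i.k}, while the remaining $\beta_i,\beta_k$ terms vanish upon pairing with $\sum_j\del\delbar\rho^j=\del\delbar(1)=0$. This is exactly the paper's computation, only presented with slightly more commentary in Step 1.
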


\begin{proof}
    From Lemma \ref{lem:tilde.s.i}, \eqref{eq: Dbar}, \eqref{eq:zeta.ij.holo} and \eqref{eq:s.i}, we quickly see that $\bar{D}s_i=0$, so we just need to show that the $s_i$ agree on all overlaps $U_{ik}$.

 We compute from \eqref{eq:tilde.s.i.k} and \eqref{eq:s.i} that the $(T^{1,0}X)^*$-component of $s_i-s_k$ is
 \begin{equation}\label{eq:s.i.k}
     \alpha'\sum_j \Gamma_k\cdot (W_{kj}\del\delbar\rho^j)-\Gamma_i\cdot(W_{ij}\del\delbar\rho^j)+\zeta_{ij}\cdot\del\delbar\rho^j-\zeta_{kj}\cdot\del\delbar\rho^j.
 \end{equation}
 It follows directly by substituting \eqref{eq:zeta.ij} in \eqref{eq:s.i.k} that it becomes
 \begin{equation}
     \alpha'\sum_j(-\beta_i\cdot\del\delbar\rho^j+\beta_j\cdot\del\delbar\rho^j+\beta_k\cdot\del\delbar\rho^j-\beta_j\cdot\del\delbar\rho^j)=\alpha'(\beta_k-\beta_i)\cdot\sum_j\del\delbar\rho^j=0,
 \end{equation}
 where we use a similar notation to \eqref{eq:zeta.cdot} and the fact that $\sum_j\rho^j=1$.  
 
 Since the $T^{1,0}X$-component and the $\End(E)$-component of $s_i-s_k$ are both zero by \eqref{eq:tilde.s.i.k} and \eqref{eq:s.i}, the result now follows.
\end{proof}

It follows from Lemma \ref{lem:Psi}  that any 1-cocycle $\eta$ defines an element $[\Psi(\eta)]\in H^{0,1}_{\bar{D}}(Q)$.  In particular, this is true if $\eta=\Phi(s)$ for $s\in\ker\bar{D}\subseteq\Gamma(Q^{0,1})$. 

We now demonstrate that property (1) in Proposition \ref{prop:Psi} holds.
\begin{lemma}\label{lem:Psi.Phi}
Let $s\in \Gamma(Q^{0,1})$ with $\bar{D}s=0$.  Then for $\Psi$ as in Lemma \ref{lem:Psi} we have that $[\Psi\circ\Phi(s)]=[s]\in H^{0,1}_{\bar{D}}(Q)$. 
\end{lemma}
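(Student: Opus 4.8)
The plan is to exhibit an explicit global section $t\in\Gamma(Q)$ with $\Psi\circ\Phi(s)-s=\bar D t$, which then gives $[\Psi\circ\Phi(s)]=[s]$ at once. First I would specialise the construction of $\Psi$ to the cocycle $\eta=\Phi(s)$: writing $s|_{U_i}=\bar D\eta_i$ as in Lemma~\ref{lem:eta.i} and $\eta_i=(\kappa_{\eta_i},\gamma_{\eta_i},W_{\eta_i})$, one has $\phi_i^{-1}\eta_{ij}=\eta_i-\eta_j$ and $W_{ij}=W_{\eta_i}-W_{\eta_j}$, so \eqref{eq:tilde.s.i.2} together with $\sum_j\rho^j\equiv1$ (hence $\sum_j\delbar\rho^j=0$ and $\sum_j\del\delbar\rho^j=0$) collapses to $\tilde s_i=-\sum_j\eta_j\,\delbar\rho^j+\alpha'\sum_j(\Gamma_i\cdot(W_{\eta_j}\,\del\delbar\rho^j))\otimes\be_1$. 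Choosing $V_j=W_{\eta_j}$ and $Y_j=0$ in the proof of Lemma~\ref{lem:zeta.ij} (legitimate because $\tilde W|_{U_j}=\delbar W_{\eta_j}$ is the $T^{1,0}X$-component of $s$), equation \eqref{eq:zeta.ij} and $\sum_j\del\delbar\rho^j=0$ yield $\sum_j\zeta_{ij}\cdot\del\delbar\rho^j=-\sum_j\Gamma_i\cdot(W_{\eta_j}\,\del\delbar\rho^j)+\sum_j\beta_j\cdot\del\delbar\rho^j$, so the $\Gamma_i$-terms cancel in \eqref{eq:s.i} and I am left with the manifestly $i$-independent formula
\[
\Psi\circ\Phi(s)=-\sum_j\eta_j\,\delbar\rho^j+\alpha'\sum_j(\beta_j\cdot\del\delbar\rho^j)\otimes\be_1 ,
\]
each summand supported in $U_j$, so the right-hand side is a genuine global section of $Q^{0,1}$.

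Next I would peel off the ``naive primitive'' $t_0:=-\sum_j\rho^j\eta_j\in\Gamma(Q)$. Since the only non-algebraic entry of $\bar D$ is $\cR\nabla^+$, the Leibniz rule reads $\bar D(\rho^j\eta_j)=\rho^j\,\bar D\eta_j+\eta_j\,\delbar\rho^j+\alpha'\,\mathcal X(\del\rho^j,W_{\eta_j})\otimes\be_1$, where $\mathcal X(\del\rho^j,W)$ denotes the term $R_a{}^l{}_m(\del_l\rho^j)W^m$ contributed by $\cR\nabla^+$. Summing over $j$ and using $\bar D\eta_j=s$ on overlaps together with $\sum_j\rho^j\equiv1$,
\[
s+\bar D t_0=-\sum_j\eta_j\,\delbar\rho^j-\alpha'\sum_j\mathcal X(\del\rho^j,W_{\eta_j})\otimes\be_1 ,
\]
so that $\Psi\circ\Phi(s)-s-\bar D t_0=\alpha'\sum_j\big(\beta_j\cdot\del\delbar\rho^j+\mathcal X(\del\rho^j,W_{\eta_j})\big)\otimes\be_1$, a global $(T^{1,0}X)^*$-valued $(0,1)$-form.

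It remains to show this residual is $\bar D$-exact. I would set $\kappa_1:=\alpha'\sum_j(\beta_j)^b{}_a(\del_b\rho^j)\,dz^a\in\Gamma\big((T^{1,0}X)^*\big)\subset\Gamma(Q)$ (again a sum of terms supported in the $U_j$, hence global), so that $\bar D\kappa_1=(\delbar\kappa_1)\otimes\be_1$. Using $\delbar(\del_b\rho^j)=\del_b\delbar\rho^j$ and the identity $\delbar(\beta_j)^b{}_a=R_a{}^b{}_c(W_{\eta_j})^c-\nabla_a\tilde W^b$ from \eqref{eq:delbar.beta.i} (with the above choices $V_j=W_{\eta_j}$, $Y_j=0$), the Leibniz rule gives
\[
\delbar\kappa_1=\alpha'\sum_j(\beta_j\cdot\del\delbar\rho^j)+\alpha'\sum_j\mathcal X(\del\rho^j,W_{\eta_j})-\alpha'\,(\nabla_a\tilde W^b)\Big(\del_b\textstyle\sum_j\rho^j\Big)dz^a ,
\]
whose last term vanishes since $\sum_j\rho^j\equiv1$. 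Hence $\bar D\kappa_1$ is exactly the residual, and therefore $\Psi\circ\Phi(s)-s=\bar D(t_0+\kappa_1)$ with $t_0+\kappa_1\in\Gamma(Q)$, which proves $[\Psi\circ\Phi(s)]=[s]\in H^{0,1}_{\bar D}(Q)$.

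The step I expect to be the main obstacle is the bookkeeping of the two first-order pieces---$\Gamma_i\cdot\nabla^+$ inside $\phi_i,\phi_i^{-1}$ and $\cR\nabla^+$ inside $\bar D$---since these are exactly what force the $\beta_j$-dependent correction in $\Psi$ and, in turn, the $\kappa_1$-correction to $t_0$. The real point is that the two corrections match, i.e.\ that the $\del\delbar\rho^j$-terms assemble precisely into $\delbar\kappa_1$; this relies on Corollary~\ref{cor:commute} (through \eqref{eq:delbar.beta.i}) together with the partition-of-unity identity $\sum_j\rho^j\equiv1$.
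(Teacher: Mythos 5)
Your proof is correct and follows essentially the same route as the paper's: the final exactness witness you produce, $t_0+\kappa_1=-\sum_j\rho^j\eta_j+\alpha'\big(\sum_j\beta_j\cdot\del\rho^j\big)\otimes\be_1$, is (up to sign) the same global section $\sum_j\eta_j\rho^j-\alpha'\big(\sum_j\beta_j\cdot\del\rho^j\big)\otimes\be_1$ that the paper exhibits in a single chain of equalities. The main presentational difference is that you first simplify $\Psi\circ\Phi(s)$ to a manifestly $i$-independent global formula and then peel off $\bar D t_0$ and $\bar D\kappa_1$ in two steps, whereas the paper computes $s|_{U_i}-\Psi\circ\Phi(s)|_{U_i}$ directly. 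Your specific choice $V_j=W_{\eta_j}$, $Y_j=0$ in the construction of $\beta_j$ is a legitimate specialisation (since $\delbar W_{\eta_i}$ is precisely the $T^{1,0}X$-component of $s$, a global $\delbar$-closed representative of the cocycle $\{W_{ij}\}$), and in fact the argument also works for arbitrary admissible $V_j,Y_j$: the difference $V_j+Y_j-W_{\eta_j}$ patches to a global section $U$ of $T^{1,0}X$, and the resulting correction $\sum_j R\cdot U\,\del_b\rho^j$ vanishes by $\sum_j\rho^j\equiv 1$. So the choice is a convenience, not a restriction, and the two proofs agree in substance.
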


\begin{proof} 
    For each $i$, let $\eta_i$ be given by Lemma \ref{lem:eta.i} and let $\eta_{ij}$ be as in \eqref{eq:eta.ij}. By Proposition \ref{prop:Phi} and Lemma \ref{lem:Psi}, we see that
    \begin{equation}
    \label{eq:Psi.Phi.U.i}
        \Psi\circ\Phi(s)|_{U_i} = \sum_j \phi_i^{-1} \delbar(\phi_i(\eta_i - \eta_j)\rho^j) + \alpha' \sum_j \left( \zeta_{ij} \cdot \del \delbar \rho^j \right) \otimes \be_1,
    \end{equation}
    where $\zeta_{ij}$ is given as in Lemma \ref{lem:zeta.ij}, and
    \begin{equation}
    \label{eq:s.U.i}
        s|_{U_i} = \phi_i^{-1} \delbar(\phi_i \eta_i).
    \end{equation}
    We may therefore calculate, using the same computation leading to \eqref{eq:tilde.s.i.2} and Lemma \ref{lem:zeta.ij}, that
    \begin{align}
        s|_{U_i} & - \Psi\circ\Phi(s)|_{U_i} \nonumber \\
        &= \phi_i^{-1} \delbar(\phi_i \eta_i) - \sum_j \phi_i^{-1} \delbar(\phi_i (\eta_i - \eta_j) \rho^j) 
        - \alpha' \sum_j \left( \zeta_{ij} \cdot \del \delbar \rho^j \right) \otimes \be_1 \nonumber \\
        &= \phi_i^{-1} \delbar(\phi_i \eta_i) - (\phi_i^{-1} \circ \delbar \circ \phi_i) \left( \sum_j (\eta_i - \eta_j) \rho^j \right) \nonumber \\
        & \quad + \alpha' \delbar \left( \sum_j (\Gamma_i \cdot W_{ij} - \zeta_{ij}) \cdot \del \rho^j \right) \otimes \be_1 \nonumber \\
        &= \bar{D} \left( \sum_j \eta_j \rho^j \right) + \alpha' \bar{D} \left( \sum_j (\beta_i - \beta_j) \cdot \del \rho^j \right) \otimes \be_1 \nonumber \\
        &= \bar{D} \left( \sum_j \eta_j \rho^j - \alpha' \left( \sum_j \beta_j \del \rho^j \right) \otimes \be_1 \right),\label{eq:globally.exact.1}
    \end{align}
    where we used $\delbar \zeta_{ij} = 0$, \eqref{eq: Dbar}, and that $\{\rho^j\}$ is a partition of unity.
    The term \eqref{eq:globally.exact.1} is independent of $i$ and globally exact, which gives the result.
\end{proof}

We finally show that property (2) in Proposition \ref{prop:Psi} holds, to complete the proof.

\begin{lemma}
    For a 1-cocycle $\eta$ and $\Psi$ as in Lemma \ref{lem:Psi}, we have that $[\Phi\circ\Psi(\eta)] = [\eta] \in \check{H}^1(Q)$. 
\end{lemma}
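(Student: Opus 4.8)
The plan is to prove $[\Phi\circ\Psi(\eta)]=[\eta]\in\check H^1(Q)$ by exhibiting \emph{explicit} local $\bar D$-primitives of the global section $\Psi(\eta)\in\Gamma(Q^{0,1})$, feeding them through the formula for $\Phi$ in Proposition~\ref{prop:Phi}, and recognising the discrepancy from $\eta$ as a $\delta$-coboundary in the twisted \v{C}ech complex.

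First I would produce the primitives. On $U_i$ we have $\Psi(\eta)|_{U_i}=s_i=\tilde s_i+\alpha'\sum_j(\zeta_{ij}\cdot\del\delbar\rho^j)\otimes\be_1$ from \eqref{eq:s.i}. Since $\bar D|_{U_i}=\phi_i^{-1}\circ\delbar\circ\phi_i$ and $\delbar\eta_{ij}=0$, one checks directly from \eqref{eq:tilde.s.i} that $\tilde s_i=\bar D\hat\eta_i$ with $\hat\eta_i:=\phi_i^{-1}\bigl(\sum_j\eta_{ij}\rho^j\bigr)$; and since $\bar D$ acts as $\delbar$ on the $\be_1$-slot while $\delbar\zeta_{ij}=0$ and $\delbar\del=-\del\delbar$, the remaining term equals $\bar D c_i$ with $c_i:=-\alpha'\bigl(\sum_j\zeta_{ij}\cdot\del\rho^j\bigr)\otimes\be_1$. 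Hence $\eta_i':=\hat\eta_i+c_i$ satisfies $\bar D\eta_i'=\Psi(\eta)|_{U_i}$, so by Proposition~\ref{prop:Phi} the class $[\Phi\circ\Psi(\eta)]$ is represented by the $1$-cocycle with entries $\phi_i(\eta_i'-\eta_j')$.

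Next I would compute $\phi_i(\eta_i'-\eta_j')=\phi_i(\hat\eta_i-\hat\eta_j)+(c_i-c_j)$, the $c$-term being untouched since $\phi_i$ is the identity on the $\be_1$-slot. For the first summand one expands $\psi_{ij}\bigl(\sum_k\eta_{jk}\rho^k\bigr)$ using the Leibniz rule for the differential (Bismut-covariant) part of $\psi_{ij}$ — whose symbol, by \eqref{eq: phi matrix 3x3}, is essentially $\alpha'(\Gamma_i-\Gamma_j)\cdot\nabla^+$ on the $T^{1,0}X$-slot — together with the cocycle identity \eqref{eq:cocycle2} in the form $\psi_{ij}\eta_{jk}=\eta_{ik}-\eta_{ij}$ and $\sum_k\rho^k=1$; this collapses to $\eta_{ij}$ plus a term in the $\be_1$-slot proportional to $(\Gamma_i-\Gamma_j)$ contracted with $\nu:=\sum_kW_{ik}\otimes\del\rho^k$, where $W_{ik}$ is the $T^{1,0}X$-component of $\eta_{ik}$. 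Inserting \eqref{eq:zeta.ij} into $c_i-c_j$ and using $\sum_k\del\rho^k=\del(\sum_k\rho^k)=0$ and the ordinary cocycle relation \eqref{eq:W.cocycle}, the $\beta_i,\beta_j$ contributions drop and the $\{W_{ik}\}$-terms assemble into the same object $\nu$; crucially, \eqref{eq:W.cocycle} and $\sum_k\del\rho^k=0$ force $\sum_kW_{ik}\otimes\del\rho^k=\sum_kW_{jk}\otimes\del\rho^k$ on $U_{ij}$, so $\nu$ is \emph{globally defined} on $X$. Altogether $\phi_i(\eta_i'-\eta_j')=\eta_{ij}+\bigl((\Gamma_i-\Gamma_j)\cdot\nu\bigr)\otimes\be_1$ up to an explicit nonzero constant, and it remains to show the residual $1$-cochain $\bigl\{\bigl((\Gamma_i-\Gamma_j)\cdot\nu\bigr)\otimes\be_1\bigr\}$ is $\delta\mu$ for a $0$-cochain $\mu=\{(\mu_i,U_i)\}$. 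One takes $\mu_i$ in the $\be_1$-slot of the form $(\Gamma_i\cdot\nu+\theta_i)\otimes\be_1$ (up to the same constant), with $\theta_i$ a correction assembled from the auxiliary forms $\beta_i$ of Lemma~\ref{lem:zeta.ij} and from $\nabla\tilde W$: then $(\delta\mu)_{ij}=\psi_{ij}\mu_j-\mu_i=\mu_j-\mu_i$ reproduces the residual (as $\psi_{ij}$ acts trivially on the $\be_1$-slot and the globally defined pieces cancel), while $\delbar\mu_i=0$ follows precisely from the defining relation \eqref{eq:delbar.beta.i} for $\beta_i$, which was engineered to cancel the non-holomorphic part $\Gamma_i\cdot\delbar\nu$ of $\Gamma_i\cdot\nu$. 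This yields $[\Phi\circ\Psi(\eta)]=[\eta]$.

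The hard part is this last step: the residual is not manifestly a coboundary, and extracting it requires simultaneously juggling the Leibniz expansion of the differential operator $\psi_{ij}$, the twisted cocycle relation for $\{\eta_{ij}\}$, the induced ordinary cocycle relation \eqref{eq:W.cocycle} for $\{W_{ij}\}$, the partition-of-unity identities, and the construction of $\beta_i$, all while keeping the $(T^{1,0}X)^*$-slot bookkeeping consistent. Combined with Lemma~\ref{lem:Psi.Phi}, which gives property~(1), this completes the proof of Proposition~\ref{prop:Psi}, whence $\Phi$ and $\Psi$ descend to mutually inverse isomorphisms and $H^{0,1}_{\bar D}(Q)\cong\check H^1(Q)$, establishing Theorem~\ref{thm:Cech}.
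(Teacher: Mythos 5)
Your overall strategy — exhibit local $\bar D$-primitives of $\Psi(\eta)$ on each $U_i$, push them through $\Phi$ as in Proposition~\ref{prop:Phi}, and compare with $\eta_{ij}$ — is the correct one and is exactly what the paper does. The problem is a sign error at the very first step. You set $c_i:=-\alpha'\bigl(\sum_j\zeta_{ij}\cdot\del\rho^j\bigr)\otimes\be_1$, citing $\delbar\del=-\del\delbar$. But the $\del$ in $\zeta_{ij}\cdot\del\rho^j$ is, by the convention fixed in \eqref{eq:zeta.cdot}, the \emph{component} derivative $\del_b$ contracted into the upper endomorphism index of $\zeta_{ij}$; it is not the uncontracted exterior $(1,0)$-operator, and as an ordinary partial derivative it \emph{commutes} with $\delbar$. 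Hence $\delbar\bigl(\zeta_{ij}\cdot\del\rho^j\bigr)=+\zeta_{ij}\cdot\del\delbar\rho^j$ once $\delbar\zeta_{ij}=0$, and the local primitive must carry a \emph{plus} sign: $\tilde\eta_i=\hat\eta_i+\alpha'\bigl(\sum_j\zeta_{ij}\cdot\del\rho^j\bigr)\otimes\be_1$, as in \eqref{eq:tilde.eta.i}.

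This is not cosmetic. With your sign, $\bar D\eta_i'\neq s_i=\Psi(\eta)|_{U_i}$, so the $\eta_i'$ are not primitives and $\{\phi_i(\eta_i'-\eta_j')\}$ does not represent $\Phi\circ\Psi(\eta)$; it is not even a $1$-cochain, since one finds $\delbar\bigl(\phi_i(\eta_i'-\eta_j')\bigr)=\phi_i\bar D(\eta_i'-\eta_j')=-2\alpha'(\Gamma_i-\Gamma_j)\cdot\delbar\nu\otimes\be_1$, which has no reason to vanish. The nonzero residual $(\Gamma_i-\Gamma_j)\cdot\nu$ you extract is precisely the artifact of the sign error. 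With $+\alpha'$ the contribution $\phi_i(c_i-c_j)=+\alpha'(\Gamma_i-\Gamma_j)\cdot\nu\otimes\be_1$ exactly cancels the Leibniz term $-\alpha'(\Gamma_i-\Gamma_j)\cdot\nu\otimes\be_1$ produced when $\psi_{ij}$ acts on $\sum_k\eta_{jk}\rho^k$, giving $\phi_i(\tilde\eta_i-\tilde\eta_j)=\eta_{ij}$ identically on $U_{ij}$ — no coboundary correction is needed. The final portion of your argument, producing a $0$-cochain $\mu$ with $\delta\mu$ matching the residual, is therefore both superfluous and unjustified as stated: $\theta_i$ is left undetermined, and the requirement $\delbar\mu_i=0$ is never verified, which is a genuine obstruction since $\Gamma_i\cdot\delbar\nu$ depends on $i$ while $\delbar\nu$ need not vanish. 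Fixing the sign dissolves the entire issue and reproduces the paper's direct equality.
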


\begin{proof} 
    Let $\eta = \{(\eta_{ij}, U_{ij})\}$ be a 1-cocycle.  
    For ease of notation, we let
    \begin{equation}
    \label{eq:tilde_eta_def}
        \tilde{\eta} = \{(\tilde{\eta}_{ij}, U_{ij})\} = \Phi \circ \Psi(\eta).
    \end{equation} 
    Recall that we have $W_{ij}$ as in \eqref{eq:eta.ij.vector}, and that we have $\beta_i$ and $\zeta_{ij}$ as in Lemma \ref{lem:zeta.ij}.
    
    By Corollary \ref{cor:Phi} and the construction of $\Psi$ in Lemma \ref{lem:Psi}, we see that, up to a term which lies in the image of $\delta$ on $0$-cochains,
    \begin{equation}
    \label{eq:tilde.eta.ij}
        \tilde{\eta}_{ij} = \phi_i (\tilde{\eta}_i - \tilde{\eta_j}),
    \end{equation}
    where, on $U_i$, we define
    \begin{equation}
    \label{eq:tilde.eta.i}
        \tilde{\eta}_i = \phi_i^{-1} \sum_j \left( \eta_{ij} \rho^j + \alpha' \left( \zeta_{ij} \cdot \del \rho^j \right) \otimes \be_1 \right).
    \end{equation}
    
    Substituting \eqref{eq:tilde.eta.i} into \eqref{eq:tilde.eta.ij}, we see that on $U_{ik}$ we have
    \begin{equation}
    \label{eq:tilde.eta.i.k}
        \tilde{\eta}_{ik} = \sum_j \left( \eta_{ij} \rho^j - \psi_{ik} (\eta_{kj} \rho^j) \right) + \alpha' \sum_j \left( (\zeta_{ij} - \zeta_{kj}) \del \rho^j \right) \otimes \be_1,
    \end{equation}
    where we have used \eqref{eq:phi.i.matrix} and \eqref{eq:phi.i.inverse.matrix}.
    
    We now see from \eqref{eq:phi.i.matrix} and \eqref{eq:phi.i.inverse.matrix} that
    \begin{align}
        -\psi_{ik} (\eta_{kj} \rho^j) 
        &= -(\phi_{ik} \eta_{kj}) \rho^j - \alpha' \left( \Gamma_i \cdot W_{kj} \cdot \del \rho^j + \Gamma_k \cdot W_{kj} \cdot \del \rho^j \right) \otimes \be_1 \nonumber \\
        &= -(\psi_{ik} \eta_{kj}) \rho^j + \alpha' \left( (\beta_k - \beta_j + \zeta_{kj} - \Gamma_i \cdot W_{kj}) \cdot \del \rho^j \right) \otimes \be_1. \label{eq:third.term}
    \end{align}
    
    As we noted before in \eqref{eq:W.cocycle}, we have that 
    \begin{equation}
    \label{eq:W.cocycle.2}
        W_{kj} = W_{ij} + W_{ki},
    \end{equation}
    and since $\{\rho^j\}$ is a partition of unity, we have that
    \begin{equation}
    \label{eq:third.term.vanish}
        \sum_j \Gamma_i \cdot W_{ki} \cdot \del \rho^j = 0.
    \end{equation}
    
    Summing \eqref{eq:third.term} over $j$ and using \eqref{eq:W.cocycle.2} and \eqref{eq:third.term.vanish}, we see that
    \begin{align}
        -\sum_j \psi_{ik} (\eta_{kj} \rho^j) 
        &= -\sum_j (\psi_{ik} \eta_{kj}) \rho^j + \alpha' \sum_j \left( (\beta_k - \beta_j + \zeta_{kj} - \Gamma_i \cdot W_{ij}) \cdot \del \rho^j \right) \otimes \be_1 \nonumber \\
        &= -\sum_j (\psi_{ik} \eta_{kj}) \rho^j + \alpha' \sum_j \left( (\beta_k - \beta_j + \zeta_{kj} - \beta_i + \beta_j - \zeta_{ij}) \cdot \del \rho^j \right) \otimes \be_1 \nonumber \\
        &= -\sum_j (\psi_{ik} \eta_{kj}) \rho^j + \alpha' \sum_j \left( (\zeta_{kj} - \zeta_{ij}) \cdot \del \rho^j \right) \otimes \be_1, \label{eq:third.term.2}
    \end{align}
    where we again used that $\sum_j \rho^j = 1.$
    
    Substituting \eqref{eq:third.term.2} into \eqref{eq:tilde.eta.i.k}, we deduce from \eqref{eq:cocycle2} that
    \begin{align*}
        \tilde\eta_{ik} &= \sum_j (\eta_{ij} - \psi_{ik} \eta_{kj}) \rho^j = \sum_j \eta_{ik} \rho^j = \eta_{ik}
    \end{align*}
    as desired.
\end{proof}

Having completed the proof of Proposition \ref{prop:Psi}, we have the following corollary.

\begin{corollary}
\label{cor:Psi}
    The map $\Psi$ from Proposition \ref{prop:Psi} induces a well-defined linear map $\Psi:\check{H}^1(Q)\to H^{0,1}_{\bar{D}}(Q)$. 
\end{corollary}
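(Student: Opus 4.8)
\textbf{Proof proposal for Corollary \ref{cor:Psi}.}

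The plan is to show that the assignment $\eta\mapsto\Psi(\eta)$, constructed at the level of cocycles in Lemma \ref{lem:Psi}, descends to cohomology and is linear. Since Proposition \ref{prop:Psi} already gives, for each $1$-cocycle $\eta$, an element $\Psi(\eta)\in\Gamma(Q^{0,1})$ with $\bar D\Psi(\eta)=0$, the class $[\Psi(\eta)]\in H^{0,1}_{\bar D}(Q)$ makes sense; what remains is to check independence of the two choices made along the way, namely (i) the choice of a representative cocycle in a given \v{C}ech class, and (ii) the auxiliary data used in the construction (the sections $V_i,Y_i$ and $\beta_i$ appearing in Lemma \ref{lem:zeta.ij}, and the partition of unity $\{\rho^j\}$). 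For (i), suppose $\eta'=\eta+\delta\mu$ for a $0$-cochain $\mu=\{(\mu_i,U_i)\}$, so $\eta'_{ij}=\eta_{ij}+\psi_{ij}\mu_j-\mu_i$ with $\delbar\mu_i=0$. Tracing this through the definition \eqref{eq:tilde.s.i} of $\tilde s_i$ and the correction term in \eqref{eq:s.i}, one finds that $\Psi(\eta')$ and $\Psi(\eta)$ differ by $\bar D$ of a globally defined section of $Q$ — indeed the difference is built from $\sum_j\mu_i\rho^j=\mu_i$ type terms exactly as in the computation \eqref{eq:globally.exact.1} in the proof of Lemma \ref{lem:Psi.Phi} — so $[\Psi(\eta')]=[\Psi(\eta)]$.

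A cleaner route to both (i) and (ii) at once is to invoke the bijection already established in Proposition \ref{prop:Psi}: parts (1) and (2) of that proposition say $\Psi\circ\Phi(s)\sim s$ and $\Phi\circ\Psi(\eta)\sim\eta$ at the level of classes, where $\Phi$ is the map of Corollary \ref{cor:Phi}. Since $\Phi:H^{0,1}_{\bar D}(Q)\to\check H^1(Q)$ is already a well-defined linear map, and $\Phi\circ\Psi=\id$ on $\check H^1(Q)$ while $\Psi\circ\Phi=\id$ on $H^{0,1}_{\bar D}(Q)$ (as classes), it follows formally that $\Phi$ is a linear isomorphism with $[\Psi]$ as its two-sided inverse; in particular $[\Psi(\eta)]$ depends only on $[\eta]$, because if $[\eta]=[\eta']$ then $[\Psi(\eta)]=[\Psi(\Phi(\Psi(\eta)))]=[\Psi(\Phi(\Psi(\eta')))]=[\Psi(\eta')]$ using part (1) applied to $s=\Psi(\eta)$ and $s=\Psi(\eta')$. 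Linearity of the induced map then follows since $\Phi$ is linear and bijective on cohomology: $\Psi([\eta]+[\eta'])=\Phi^{-1}([\eta]+[\eta'])=\Phi^{-1}[\eta]+\Phi^{-1}[\eta']=\Psi([\eta])+\Psi([\eta'])$.

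Thus the two statements to verify are really just the bookkeeping that the formal inverse of a linear bijection is linear and well-defined, which is immediate, plus the observation — already contained in the construction — that $\Psi$ is additive on cocycles up to coboundaries (the maps $\eta\mapsto\tilde s_i$ in \eqref{eq:tilde.s.i} and $\eta_{ij}\mapsto W_{ij}\mapsto\beta_i$ are all linear, and the only nonlinearity could enter through choices of Poincar\'e-lemma primitives, which affect $\Psi(\eta)$ only by $\bar D$-exact terms). The main obstacle, such as it is, is purely organisational: one must be careful that the auxiliary choices in Lemma \ref{lem:zeta.ij} (the splitting $W_{ij}=V_i-V_j+Y_i-Y_j$ and the primitives $\tilde\beta_i$) can be made linearly in $\eta$, or else argue — as above, via $\Phi$ — that the cohomology class is insensitive to them. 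Either way the corollary follows in a few lines, so I would present it as a short deduction from Proposition \ref{prop:Psi} together with the linearity of $\Phi$ from Corollary \ref{cor:Phi}.
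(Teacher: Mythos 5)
Your first, direct approach --- tracing $\eta' = \eta + \delta\mu$ through the construction of $\Psi$ and checking that $\Psi(\eta')$ and $\Psi(\eta)$ differ by a globally defined $\bar D$-exact term --- is the right idea and is essentially what the paper does: it verifies that if $\eta$ is a $\delta$-coboundary, then $\Psi(\eta)$ is globally $\bar D$-exact, which (together with linearity of $\Psi$ up to exact terms, as you note) gives well-definedness.

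The ``cleaner route'' you then propose and say you would actually write up, however, is circular. In the chain
\[
[\Psi(\eta)] = [\Psi(\Phi(\Psi(\eta)))] = [\Psi(\Phi(\Psi(\eta')))] = [\Psi(\eta')],
\]
the outer equalities follow from Proposition \ref{prop:Psi}(1), but the middle equality applies $\Psi$ to the two cocycles $\Phi(\Psi(\eta))$ and $\Phi(\Psi(\eta'))$, which are merely cohomologous (by (2)), not equal. To conclude that $\Psi$ gives the same class on cohomologous cocycles is precisely the statement of Corollary \ref{cor:Psi}. More generally, parts (1) and (2) of Proposition \ref{prop:Psi} are pointwise identities at the cocycle level --- they say $[\Psi(\Phi(s))] = [s]$ and $[\Phi(\Psi(\eta))] = [\eta]$ --- and do not formally imply that $\Psi$ descends. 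The natural attempt to extract injectivity of the induced map $\bar\Phi$ from (1) runs into the same issue: if $[\Phi(s)]=0$, then $\Phi(s)=\delta\mu$, and from (1) we get $[s] = [\Psi(\Phi(s))] = [\Psi(\delta\mu)]$; to conclude $[s]=0$ one again needs that $\Psi$ of a coboundary is $\bar D$-exact. So the explicit coboundary computation is not optional bookkeeping but the substantive content of the corollary. Note also the logical order in the paper: Corollary \ref{cor:Psi} is proved before Theorem \ref{thm: Cech}, so one cannot invoke the inverse-isomorphism statement that your cleaner route implicitly appeals to.

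In short: keep your first approach, drop the second. To match the paper, take $\eta = \delta\mu$, write $\eta_{ij} = \eta_i - \psi_{ij}\eta_j$ with $\delbar\eta_i = 0$, substitute into $s_i$ from \eqref{eq:s.i}, and use the identity from Lemma \ref{lem:zeta.ij} together with $\sum_j\rho^j=1$ to produce an explicit global primitive for $\Psi(\eta)$ under $\bar D$, as in \eqref{eq:globally.exact.0}.
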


\begin{proof}
    By Proposition \ref{prop:Psi} and its proof, we see that if the map $\Psi$ between cohomology groups is well-defined, it is linear. We therefore need only show that if $[\eta] = 0 \in \check{H}^1(Q)$, then $[\Psi(\eta)] = 0 \in H^{0,1}_{\bar{D}}(Q)$. 

    Suppose $\eta = \{(\eta_{ij}, U_{ij})\}$ lies in the image of $\delta$ acting on $0$-cochains. Then for all $i$, there exist holomorphic sections $\eta_i$ of $Q$ over $U_i$ such that
    \begin{equation}
    \label{eq:eta.ij.0}
        \eta_{ij} = \eta_i - \psi_{ij} \eta_j.
    \end{equation}
    In particular, this means that if we define $W_{ij}$ as in \eqref{eq:eta.ij.vector}, we see that
    \begin{equation}
    \label{eq:W.ij.0}
        W_{ij} = W_i - W_j,
    \end{equation}
    for holomorphic $W_i$ which are $T^{1,0}X$-components of $\eta_i$.  

    Therefore, if we let $s = \Psi(\eta)$, $s_i = s|_{U_i}$, and recall Lemmas \ref{lem:zeta.ij} and \ref{lem:Psi}, we see that
    \begin{align}
        s_i 
        &= \phi_i^{-1} \delbar \sum_j (\eta_i - \psi_{ij} \eta_j) \rho^j + \alpha' \phi_i^{-1} \delbar \sum_j \left( \zeta_{ij} \del \rho^j \right) \otimes \be_1 \nonumber \\
        &= -\phi_i^{-1} \delbar \sum_j \phi_i \left( (\phi_j^{-1} \eta_j) \rho^j \right) 
        + \alpha' \delbar \sum_j \left( (\Gamma_i \cdot W_j + \zeta_{ij}) \cdot \del \rho^j \right) \otimes \be_1, \label{eq:s.i.0}
    \end{align}
    where we once again used that $\sum_j \rho^j = 1$. 

    We now observe from Lemma \ref{lem:zeta.ij} and \eqref{eq:W.ij.0} that
    \begin{equation}
    \label{eq:Gamma.W.0}
        \Gamma_i \cdot W_j = \beta_j - \beta_i - \zeta_{ij} + \Gamma_i \cdot W_i.
    \end{equation}
    Using \eqref{eq:Gamma.W.0} in \eqref{eq:s.i.0}, we conclude that
    \begin{equation}
    \label{eq:globally.exact.0}
        s_i = \bar{D} \left( \sum_j (\phi_j^{-1} \gamma_j) \rho^j + \sum_j \left( \beta_j \del \rho^j \right) \otimes \be_1 \right).
    \end{equation}
    The right-hand side of \eqref{eq:globally.exact.0} is independent of $i$, and so $s = \Psi(\eta)$ is globally $\bar{D}$-exact as required.
\end{proof}

We now conclude this section by putting together all of our results to prove our Dolbeault-type theorem.
\begin{theorem}
\label{thm: Cech}
    The cohomology groups $H^{0,1}_{\bar D}(Q)$ and $\check{H}^1(Q)$ are isomorphic.  Moreover, the maps $\Phi:H^{0,1}_{\bar D}(Q)\to \check{H}^1(Q)$ and $\Psi:\check{H}^1(Q)\to H^{0,1}_{\bar{D}}(Q)$ given in Corollary \ref{cor:Phi} and \ref{cor:Psi} respectively are inverses of each other.
\end{theorem}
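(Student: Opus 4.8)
The plan is to note that, by this point, essentially all of the work has been done, and the theorem follows by assembling the results already established. First I would record that Corollary~\ref{cor:Phi} gives a well-defined linear map $\Phi\colon H^{0,1}_{\bar D}(Q)\to\check H^1(Q)$ and Corollary~\ref{cor:Psi} gives a well-defined linear map $\Psi\colon\check H^1(Q)\to H^{0,1}_{\bar D}(Q)$; the content of the theorem is then reduced to showing that these two maps are mutually inverse.

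The key step is to read off the two halves of Proposition~\ref{prop:Psi} at the level of cohomology. Applying part~(1) with $\eta=\Phi(s)$ yields $[\Psi\circ\Phi(s)]=[s]$ in $H^{0,1}_{\bar D}(Q)$ for every $\bar D$-closed $s\in\Gamma(Q^{0,1})$, hence $\Psi\circ\Phi=\id$ on $H^{0,1}_{\bar D}(Q)$. Part~(2) gives $[\Phi\circ\Psi(\eta)]=[\eta]$ in $\check H^1(Q)$ for every $1$-cocycle $\eta$, hence $\Phi\circ\Psi=\id$ on $\check H^1(Q)$. Since a linear map admitting a two-sided inverse is an isomorphism, it follows at once that $\Phi$ and $\Psi$ are mutually inverse isomorphisms and, in particular, that $H^{0,1}_{\bar D}(Q)\cong\check H^1(Q)$.

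I do not anticipate any genuine obstacle in this final step: the substantive difficulties — the repeated use of the $\delbar$-Poincar\'e lemma in Lemmas~\ref{lem:eta.i} and~\ref{lem:zeta.ij}, the construction of the correction terms $\beta_i$ and $\zeta_{ij}$ that repair the failure of the local sections $\tilde s_i$ to glue, and the bookkeeping showing that the composites $\Psi\circ\Phi$ and $\Phi\circ\Psi$ differ from the identity only by a globally $\bar D$-exact (respectively $\delta$-exact) term — have all been dealt with in Propositions~\ref{prop:Phi} and~\ref{prop:Psi}. If anything, the only thing worth adding is the remark that, as a consequence, the \v{C}ech cohomology group $\check H^1(Q)$ does not depend on the choice of trivialising data $\mathcal P=\{(U_i,\phi_i)\}$, being isomorphic to the intrinsically defined group $H^{0,1}_{\bar D}(Q)$.
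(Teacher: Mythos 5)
Your argument is correct and follows the paper's own proof exactly: the theorem is assembled from Corollaries~\ref{cor:Phi} and~\ref{cor:Psi} together with the two halves of Proposition~\ref{prop:Psi}, which supply the identities $\Psi\circ\Phi=\id$ and $\Phi\circ\Psi=\id$ at the level of cohomology classes. The closing observation that $\check H^1(Q)$ is therefore independent of the choice of trivialising data $\mathcal P$ is a nice bonus not explicitly recorded in the paper.
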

\begin{proof}
The result follows directly from Corollary \ref{cor:Phi}, Corollary \ref{cor:Psi} and Proposition \ref{prop:Psi}.
\end{proof}
\begin{remark} The isomorphism proved in Theorem \ref{thm: Cech} between the $\bar{D}$-cohomology and \v{C}ech cohomology groups  should also work similarly for higher degrees, but we leave a general proof of such a Dolbeault theorem to future work. 
\end{remark}

\appendix

\section{Heterotic moduli}
\label{app:Moduli}

The aim of this appendix is to briefly review the previous moduli theory for the heterotic $\SU(3)$ system as presented in \cite{McOrist2022}, and to augment that study to demonstrate further the links to the cohomology theory studied in this article.

We assume that $X$ is a complex 3-dimensional manifold endowed with a Hermitian $(1,1)$-form $\omega$ and a nowhere vanishing $(3,0)$-form $\Omega$.  We also assume that $\nabla$ is a connection on $TX$, which is the Chern connection associated to $\omega$, or the Chern connection associated to a Hermitian--Einstein metric on $TX$ (so that $\nabla$ is Hermitian Yang--Mills): this marks a difference from Notation \ref{notation1}.  We further assume that $A$ is a unitary connection on a holomorphic vector $E$ with a fibrewise Hermitian metric.  By the \emph{F-terms} we shall mean that these data satisfy \eqref{eq:F1} and the $3$-form version of the anomaly cancellation condition \eqref{eq:anomaly.def}, which we rewrite here: 
\begin{equation}
\label{eq:anomaly2}
    2\textrm{d}^c\omega
    =\alpha'(\text{CS}(A)-\text{CS}(\nabla))+\textrm{d} B\:.
\end{equation}
The \emph{D-terms} will be \eqref{eq:D1}--\eqref{eq:D2} as before.

\subsection{Review} In \cites{Anderson2014,Ossa2014a} it was shown that the infinitesimal moduli of the heterotic $\SU(3)$ system are counted by a cohomology similar to the one studied here. In \cite{GarciaFernandez2017}, the moduli were studied with a viewpoint motivated by generalised geometry and the moduli problem was shown to be elliptic. However, as pointed out in Remark \ref{rem:HYM1}, in each of these cases an additional instanton connection $\tilde\nabla$ on the tangent bundle of the manifold is introduced, giving rise to "spurious" degrees of freedom that are not justified from a physics perspective.

These extra degrees of freedom were removed from the analysis of \cite{McOrist2022}, where it was found that the infinitesimal moduli which parameterise deformations of the F-terms are counted by the kernel of an operator $\bar D$, similar to the operator studied in this paper.  
It was then shown in \cite{McOrist2022} that, in order that the deformations additionally solve the D-terms \eqref{eq:D1}-\eqref{eq:D2}, they should also be in the kernel of the adjoint operator $\bar D^*$, with respect to a suitable non-degenerate pairing on $Q$. It follows from Hodge theory that the infinitesimal deformations are parametrised by $H^{0,1}_{\bar D}(Q)$.
However, the analysis of \cite{McOrist2022} was only done to a physics rigour, up to  ${\cO}(\alpha'^2)$ corrections. We shall take some steps towards mathematically remedying that situation in this appendix.

\subsection{Infinitesimal deformations and cohomology}

To begin, note that simultaneous infinitesimal deformations of the holomorphic bundle and complex structure are provided by $(\gamma,W)$ in the kernel of $\bar D_1$, see \eqref{eq:barD1}, which defines the Atiyah algebroid \cite{Atiyah1957}.  In other words,
\begin{equation}\label{eq:gamma.W}
    \delbar_E\gamma+\sF W=0\quad\text{and}\quad \delbar W=0,
\end{equation}
where $\sF$ is given by the curvature $F$ of $A$ as in Definition \ref{def:F}. Here $W\in\Omega^{0,1}(T^{1,0}X)$ denotes the deformation of the complex structure, or Beltrami differential, while $\gamma\in\Omega^{0,1}(\End(E))$ is the deformation of the holomorphic connection. The deformation of the anomaly constraint \eqref{eq:anomaly2} can be written as
\begin{equation}
\label{eq:DefAnomaly}
    \delbar \kappa+\alpha'\sF\gamma+\sT W-\alpha'\tr(R\wedge \beta)=0\:.
\end{equation}
where $R$ is the curvature of $\nabla$ and $\beta\in\Omega^{(0,1)}(\End(TX))$ is the deformation of $\nabla$. Here $\kappa\in\Omega^{1,1}(X)$ denotes the deformation of the hermitian form $\omega$, complexified by the deformation of the $B$-field, and twisted at $\mathcal{O}(\alpha')$ by the Green--Schwarz mechanism to get a well-defined 2-form.

Physics dictates that $\beta$ must be a function of the other deformations involved. Furthermore, it must preserve holomorphicity of $\End(TX)$, meaning it satisfies the equation given locally by
\begin{equation}\label{eq:holo.app}
    W^a\wedge R_a{}^b{}_c =\delbar \beta^b{}_c\:,
\end{equation}
where we have suppressed antiholomorphic indices.  Using Corollary \ref{cor:commute}, we can rewrite \eqref{eq:holo.app} as:
\begin{equation}
\delbar\left(\nabla^+_c W^b\right)+\delbar\beta^b{}_c=0\:.
\end{equation}
 It follows that
\begin{equation}\label{eq:lambda}
    \beta^b{}_c=-\nabla^+_c W^b+\lambda^b{}_c\:,
\end{equation}
where $\lambda$ is a $\delbar$-closed function of the other deformations. 

We now demonstrate the following.

\begin{lemma}\label{lem:lambda}
For $\lambda$ as given by \eqref{eq:lambda}, we have that $\tr(R\wedge\lambda)$ is $\delbar$-exact.
\end{lemma}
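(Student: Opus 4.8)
The plan is to use the linearised anomaly equation \eqref{eq:DefAnomaly} together with \eqref{eq:lambda} to re‑express $\tr(R\wedge\lambda)$ through the operator $\bar D$, and then to exploit the fact that $\lambda$ is not merely $\delbar$‑closed but actually $\delbar$‑exact.

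First I would substitute $\beta^b{}_c=-\nabla^+_cW^b+\lambda^b{}_c$ from \eqref{eq:lambda} into \eqref{eq:DefAnomaly}. The term $-\alpha'\tr(R\wedge(-\nabla^+W))$ produced this way is, with the index conventions of \eqref{R-conv}, exactly the term $\alpha'\cR\nabla^+W$ occurring in $\bar D$ (Definition \ref{def:Rnabla+}), so recalling $\sE=\sT+\alpha'\cR\nabla^+$ from \eqref{eq:sE} the equation rearranges to
\begin{equation*}
\alpha'\tr(R\wedge\lambda)=\delbar\kappa+\alpha'\sF\gamma+\sE W .
\end{equation*}
Combined with \eqref{eq:gamma.W}, i.e.\ $\delbar_E\gamma+\sF W=0$ and $\delbar W=0$, this says precisely that for $s=(\kappa,\gamma,W)^{\mathrm T}\in\Gamma(Q^{0,1})$ one has
\begin{equation*}
\bar D s=\big(\alpha'\tr(R\wedge\lambda)\big)\otimes\be_1,
\end{equation*}
with the $\End(E)$‑ and $T^{1,0}X$‑components of $\bar D s$ vanishing. (This is also the reason the lemma is useful: once $\tr(R\wedge\lambda)$ is known to be $\delbar$‑exact one may absorb a primitive of it into $\kappa$ and conclude $s\in\ker\bar D$; when $\lambda=0$ one already has $\bar D s=0$.) This step is a purely formal rearrangement.

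Next I would bring in the explicit form of the connection variation from \cite{McOrist2022}. Beyond the Beltrami piece $-\nabla^+W$, which accounts for the deformation of the complex structure, the remaining part of $\beta$ is the variation of the Chern connection induced by the deformation of the Hermitian metric on $T^{1,0}X$; concretely this gives $\lambda=\delbar\Delta$ for a globally defined $\Delta\in\Gamma(\End(T^{1,0}X))$, the metric‑variation endomorphism. Granting this, the proof is finished by a short Bianchi‑identity computation: $R$ is $\delbar$‑closed as an $\End(T^{1,0}X)$‑valued $(1,1)$‑form — this is the $(1,2)$‑component of $\textrm{d}_\nabla R=0$, exactly the Bianchi identity already used in the proof of Proposition \ref{prop:Nilpotent} — so $R\wedge\delbar\Delta=\delbar(R\wedge\Delta)$, and since the trace $\End(T^{1,0}X)\to\cO_X$ is a holomorphic bundle map it commutes with $\delbar$; hence
\begin{equation*}
\tr(R\wedge\lambda)=\tr(R\wedge\delbar\Delta)=\delbar\,\tr(R\wedge\Delta),
\end{equation*}
which is $\delbar$‑exact, as claimed.

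The hard part is the middle step: justifying rigorously that $\lambda$ is $\delbar$‑exact, i.e.\ pinning down the precise dependence of $\beta$ on the deformation data — the content of the phrase ``$\lambda$ is a $\delbar$‑closed function of the other deformations''. Once the splitting $\beta=-\nabla^+W+\delbar\Delta$ is in hand, everything else is immediate. If one only knew $\lambda$ to be $\delbar$‑closed, the statement would reduce to the vanishing of the Dolbeault class $\tr\big(\mathrm{at}(T^{1,0}X)\cup[\lambda]\big)\in H^{1,2}_{\delbar}(X)$; its trace part vanishes automatically, since $\textrm{d}\Omega=0$ forces $K_X$ to be holomorphically trivial and hence $\tr R$ to be $\delbar$‑exact, but controlling the trace‑free part appears to require the explicit structure of $\lambda$ in any case.
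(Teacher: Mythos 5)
Your proof takes a genuinely different route from the paper, and it contains a gap that you yourself flag as ``the hard part'': it relies on $\lambda$ being $\delbar$\emph{-exact}, $\lambda=\delbar\Delta$ for a global $\Delta\in\Gamma(\End(T^{1,0}X))$, but you offer no rigorous derivation of this. The paper never asserts exactness; it only records (as part of the set-up in \eqref{eq:lambda}) that $\lambda$ is $\delbar$-\emph{closed}, and proves the lemma under that weaker hypothesis. Your first paragraph (the rearrangement into $\bar D s=\alpha'\tr(R\wedge\lambda)\otimes\be_1$) is a fine sanity check of the claim's role, and your last display (from $\lambda=\delbar\Delta$ and $\delbar R=0$ to $\tr(R\wedge\lambda)=\delbar\tr(R\wedge\Delta)$) is correct as far as it goes; but the middle step is exactly what is missing, so the argument is incomplete.

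More importantly, your closing diagnosis --- that ``controlling the trace-free part appears to require the explicit structure of $\lambda$'' --- is not correct: the paper's proof shows $\delbar$-closedness of $\lambda$ suffices. The argument is Hodge-theoretic: one pairs $\tr(R\wedge\lambda)$ against an arbitrary $\delbar$-harmonic $(1,2)$-form $\chi$, writes $*\bar\chi=V\lrcorner\Omega$ for a $\delbar$-closed $V\in\Omega^{0,1}(T^{1,0}X)$ using the nowhere-vanishing $\Omega$, and moves the contraction onto $R$. Then Corollary~\ref{cor:commute} gives $V\lrcorner R=\delbar(\nabla^+V)$ --- this is the one place where the Chern connection's structure is used, and it is the substitute for your $\lambda=\delbar\Delta$ --- after which $\delbar\lambda=0$, $\delbar\Omega=0$, and Stokes kill the integral. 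In other words, the cup-product class $\tr\big(\mathrm{at}(T^{1,0}X)\cup[\lambda]\big)\in H^{1,2}_{\delbar}(X)$ you identify vanishes for \emph{every} $\delbar$-closed $\lambda$, trace-free part included, with no extra input on $\lambda$. If you can justify $\lambda=\delbar\Delta$ from the physics input, your route gives a short alternative; but as written the proof has a genuine hole, and the paper's approach is strictly more economical in its hypotheses.
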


\begin{proof}
For $\tr(R\wedge\lambda)$ to be non-trivial in $\delbar$-cohomology, there must exist a $\delbar$-harmonic $(1,2)$-form $\chi$ so that the inner-product
\begin{equation}\label{eq:chi}
\left(\tr(R\wedge\lambda),\chi\right)=\int_X\tr(R\wedge\lambda)\wedge*\bar\chi
\end{equation}
is non-zero.  If $\chi$ is such a $\delbar$-harmonic form,  note that $*\bar\chi$ is also $\delbar$-harmonic, and may be written as 
\begin{equation}\label{eq:starchi}
*\bar\chi= V\lrcorner\Omega 
\end{equation}
for some $\delbar$-harmonic $V\in\Omega^{0,1}(T^{1,0}X)$.  Note that  the contraction in \eqref{eq:starchi} involves a wedge product on the form part of $V$.  
Some elementary manipulations using \eqref{eq:starchi} give that the inner product in \eqref{eq:chi} can be written for
\begin{equation}\label{eq:chi.2}
\left(\tr(R\wedge\lambda),\chi\right)=\int_X 
\tr (V\lrcorner R\wedge\lambda)\wedge\Omega=\int_X\tr(\delbar\mu\wedge\lambda)\wedge\Omega\:
\end{equation}
for some $\mu$.  Clearly, the right-hand side  of \eqref{eq:chi.2} is zero, which shows that $\tr(R\wedge\lambda)$ is $\delbar$-exact.
\end{proof}

By Lemma \ref{lem:lambda} if we substitute the right-hand side of \eqref{eq:lambda} into \eqref{eq:DefAnomaly}, this term may be absorbed into $\delbar \kappa$ by an ${\cO}(\alpha')$ field-redefinition of $\kappa$, which we still call $\kappa$. Equation \ref{eq:DefAnomaly} then becomes
\begin{equation}
\label{eq:DefAnomaly2}
    \delbar \kappa+\alpha'\sF\gamma+\sT W+\alpha'\cR\nabla^+W
    =0\:.
\end{equation}
This is precisely the $(T^{1,0}X)^*$-row of the equation $\bar D s=0$ for our operator $\bar D$ in \eqref{eq: barD}. 

We then deduce the following from \eqref{eq: barD}, \eqref{eq:gamma.W} and \eqref{eq:DefAnomaly2}.

\begin{prop}
     Infinitesimal deformations of the F-terms are parameterised by  $s\in\Gamma(Q^{0,1})$ satisfying $$\bar{D}s=0.$$
\end{prop}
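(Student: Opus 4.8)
The plan is to collect the equations established in the preceding discussion and recognise them, line by line, as the three components of the single equation $\bar{D}s=0$. First I would assemble the deformation data into a section $s=(\kappa,\gamma,W)^{\rm T}\in\Gamma(Q^{0,1})$, where $W\in\Omega^{0,1}(T^{1,0}X)$ is the Beltrami differential encoding the infinitesimal deformation of the complex structure of $X$, $\gamma\in\Omega^{0,1}(\End E)$ is the infinitesimal deformation of the connection $A$, and $\kappa\in\Omega^{1,1}(X)$ is the (complexified) deformation of the Hermitian form $\omega$.

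Next I would invoke the Atiyah-algebroid description of simultaneous deformations of the holomorphic bundle $(E,A)$ and the complex structure: the pair $(\gamma,W)$ must lie in $\ker\bar{D}_1$, that is, satisfy \eqref{eq:gamma.W}. These two equations are exactly the $\End(E)$- and $T^{1,0}X$-rows of the system $\bar{D}s=0$ with $\bar{D}$ as in \eqref{eq: barD}. Then I would turn to the linearised anomaly constraint \eqref{eq:DefAnomaly}, which a priori also involves the deformation $\beta$ of $\nabla$; using \eqref{eq:lambda} to express $\beta$ in terms of $W$ and a residual $\delbar$-closed piece $\lambda$, and Lemma \ref{lem:lambda} to absorb $\alpha'\tr(R\wedge\lambda)$ into $\kappa$ by an $\cO(\alpha')$ field redefinition, \eqref{eq:DefAnomaly} becomes \eqref{eq:DefAnomaly2}, which is precisely the $(T^{1,0}X)^*$-row of $\bar{D}s=0$. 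Conversely, unwinding these identifications shows that any $s\in\Gamma(Q^{0,1})$ with $\bar{D}s=0$ determines an infinitesimal deformation of the F-terms, and the proof is complete.

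Since all the substantive work has been done in the earlier parts of the appendix, this statement is essentially a bookkeeping corollary and there is no genuine analytic obstacle. The one point I would flag as requiring care — and the main limitation — is that, following \cite{McOrist2022}, the identification of $\beta$ and the field redefinition of $\kappa$ are only valid modulo $\cO(\alpha'^2)$, so the resulting correspondence between F-term deformations and $\ker\bar{D}\subset\Gamma(Q^{0,1})$ holds to that order of physical rigour; promoting it to an all-orders mathematical statement is left to future work.
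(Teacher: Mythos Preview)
Your proposal is correct and matches the paper's approach exactly: the paper's proof is the single sentence ``We then deduce the following from \eqref{eq: barD}, \eqref{eq:gamma.W} and \eqref{eq:DefAnomaly2},'' and you have spelled out precisely how those three equations assemble into the rows of $\bar{D}s=0$. Your flag about the $\cO(\alpha'^2)$ caveat is also appropriate and aligns with the paper's own discussion following the proposition.
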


As shown in \cite{McOrist2022}, in order to also preserve the D-terms, we must further impose
\begin{equation}
\label{eq:alphaprimesquared}
    \bar D^*s=t=\mathcal{O}(\alpha'^2)\:
\end{equation}
on elements $s$ of the kernel of $\bar{D}$, for some $t$ 
 of $\mathcal{O}(\alpha'^2)$ which is linear in $s$ but yet to be fully determined. It should however be noted that the Hull--Strominger system itself, and in particular the D-terms, receive corrections at $\mathcal{O}(\alpha'^2)$: that is, such terms are neglected when the system is derived from heterotic String Theory. Therefore, as far as physics is concerned, we are warranted in studying $H^{0,1}_{\bar D}(Q)$ as the cohomology parametrising infinitesimal moduli.  In general, we are only currently guaranteed that some subspace of $H^{0,1}_{\bar{D}}(Q)$ gives the infinitesimal deformations of the heterotic $\SU(3)$ system.

\begin{remark} 
In the case where $H^0_{\bar D}(Q)=0$, such as when the individual bundles have no holomorphic sections, the $\mathcal{O}(\alpha'^2)$ term $t$ in \eqref{eq:alphaprimesquared} must be $\bar D^*$-exact.  If $X$ is compact and $\alpha'$ is sufficiently small, Proposition \ref{prop:elliptic} allows us to deduce a version of Hodge decomposition for $\bar{D}$, which implies that $t=\bar{D}^*\bar{D}u$ for some $u$.   Therefore, we may replace $s$ by $s-\bar{D}u$, which means that the higher order corrections to $s$ arising from \eqref{eq:alphaprimesquared} can only serve to change the explicit representative of the $\bar D$-cohomology class $[s]\in H^{0,1}_{\bar{D}}(Q)$.  We deduce that $H^{0,1}_{\bar{D}}(Q)$ indeed parameterises the infinitesimal moduli for the heterotic $\SU(3)$ system in this case.  We expect this to be the generic behaviour, but leave a full investigation to future work.  
\end{remark}

\subsection*{Declarations}
\quad

\medskip
\noindent\textbf{Funding:}
HdL was supported by the  São Paulo Research Foundation (Fapesp) {[2020/15525-5, 2021/11442-0, 2021/11467-3]}.
JDL was partially supported by the Simons Collaboration on Special Holonomy
in Geometry, Analysis, and Physics (\#724071 Jason Lotay).
HSE was supported by the Fapesp    \mbox{[2021/04065-6]} \emph{BRIDGES collaboration} and a Brazilian National Council for Scientific and Technological Development (CNPq) Productivity Grant level 1D \mbox{[311128/2020-3]}.

JDL and HSE benefited from a Newton Mobility  bilateral collaboration (2019-2023), granted by the UK Royal Society [NMG$\backslash$R1$\backslash$191068].  During the final stages of this project, JDL was a Simons Visiting Professor for the Fall 2024 semester at SLMath (formerly MSRI) in Berkeley, California, supported by the National Science Foundation (Grant No.~DMS-1928930). 

\medskip
\noindent\textbf{Data availability:} The authors declare that all data supporting the findings of this study are available within the paper and cited references.

\medskip
\noindent\textbf{Competing interests:} The authors have no competing interests to declare that are relevant to the content of this article.

\newpage

\begin{bibdiv}
\begin{biblist}

\bib{Anderson2014}{article}{
      author={Anderson, Lara~B.},
      author={Gray, James},
      author={Sharpe, Eric},
       title={{Algebroids, heterotic moduli spaces and the Strominger system}},
        date={2014},
     journal={J. High Energ. Phys.},
      volume={07},
       pages={037},
}

\bib{Angella}{book}{
      author={Angella, Daniele},
       title={Cohomological aspects in complex non-{K}\"{a}hler geometry},
      series={Lecture Notes in Mathematics},
   publisher={Springer, Cham},
        date={2014},
      volume={2095},
        ISBN={978-3-319-02440-0; 978-3-319-02441-7},
         url={https://doi.org/10.1007/978-3-319-02441-7},
      review={\MR{3137419}},
}

\bib{Atiyah1957}{article}{
      author={Atiyah, Michael~F.},
       title={Complex analytic connections in fibre bundles},
    language={English},
        date={1957},
        ISSN={0002-9947},
     journal={Trans. Am. Math. Soc.},
      volume={85},
       pages={181\ndash 207},
}

\bib{candelas1985vacuum}{article}{
      author={Candelas, Philip},
      author={Horowitz, Gary~T},
      author={Strominger, Andrew},
      author={Witten, Edward},
       title={Vacuum configurations for superstrings},
        date={1985},
     journal={Nuclear Physics B},
      volume={258},
       pages={46\ndash 74},
}

\bib{Chisamanga:2024xbm}{article}{
      author={Chisamanga, Beatrice},
      author={McOrist, Jock},
      author={Picard, Sebastien},
      author={Svanes, Eirik~Eik},
       title={{The decoupling of moduli about the standard embedding}},
        date={2024},
      eprint={2409.04350},
}

\bib{CollinsPicardYau2021}{article}{
      author={Collins, T.~C.},
      author={Picard, S.},
      author={Yau, S.-T.},
       title={Stability of the tangent bundle through conifold transitions},
        date={2021},
      eprint={2102.11170},
         url={https://arxiv.org/abs/2102.11170},
}

\bib{CollinsPicardYau2022}{article}{
      author={Collins, T.~C.},
      author={Picard, S.},
      author={Yau, S.-T.},
       title={The {S}trominger system in the square of a {K}\"ahler class},
        date={2022},
      eprint={2211.03784},
         url={https://arxiv.org/abs/2211.03784},
}

\bib{Ossa2014}{article}{
      author={de~la Ossa, X.},
      author={Svanes, E.~E.},
       title={Connections, field redefinitions and heterotic supergravity},
        date={2014},
     journal={J. High Energ. Phys.},
      volume={8},
}

\bib{Ossa2014a}{article}{
      author={de~la Ossa, Xenia},
      author={Svanes, Eirik~E},
       title={{Holomorphic bundles and the moduli space of N=1 supersymmetric
  heterotic compactifications}},
        date={2014oct},
        ISSN={1029-8479},
     journal={J High Energ. Phys.},
      volume={2014},
      number={10},
       pages={123},
         url={https://doi.org/10.1007/JHEP10(2014)123
  http://link.springer.com/10.1007/JHEP10(2014)123},
}

\bib{dolbeault1953}{article}{
      author={Dolbeault, Pierre},
       title={Sur la cohomologie des vari{\'e}t{\'e}s analytiques complexes},
        date={1953},
     journal={CR Acad. Sci. Paris},
      volume={236},
      number={2},
       pages={175\ndash 177},
}

\bib{daSilva2024}{article}{
      author={da~Silva~Jr., Agnaldo~A.},
      author={Garcia-Fernandez, Mario},
      author={Lotay, Jason~D.},
      author={Sá~Earp, Henrique~N.},
       title={Coupled $\operatorname{G}_2$-instantons},
        date={2024},
      eprint={2404.12937},
         url={https://arxiv.org/abs/2404.12937},
}

\bib{DiScalaVezzoni}{article}{
      author={Di~Scala, Antonio~J.},
      author={Vezzoni, Luigi},
       title={Quasi-{K}\"{a}hler manifolds with trivial {C}hern holonomy},
        date={2012},
        ISSN={0025-5874},
     journal={Math. Z.},
      volume={271},
      number={1-2},
       pages={95\ndash 108},
         url={https://doi.org/10.1007/s00209-011-0853-3},
      review={\MR{2917134}},
}

\bib{Fino2021}{article}{
      author={Fino, Anna},
      author={Grantcharov, Gueo},
      author={Vezzoni, Luigi},
       title={Solutions to the {H}ull-{S}trominger system with torus symmetry},
        date={2021},
        ISSN={0010-3616},
     journal={Comm. Math. Phys.},
      volume={388},
      number={2},
       pages={947\ndash 967},
         url={https://doi.org/10.1007/s00220-021-04223-7},
      review={\MR{4334251}},
}

\bib{FriedmanPicardSuan2024}{article}{
      author={Friedman, B.},
      author={Picard, S.},
      author={Suan, C.},
       title={Gromov-–{H}ausdorff continuity of non-{K}\"ahler
  {C}alabi–-{Y}au conifold transitions},
        date={2024},
      eprint={2404.11840},
         url={https://arxiv.org/abs/2404.11840},
}

\bib{Fu2008}{article}{
      author={Fu, Ji-Xiang},
      author={Yau, Shing-Tung},
       title={The theory of superstring with flux on non-{K}\"{a}hler manifolds
  and the complex {M}onge-{A}mp\`ere equation},
        date={2008},
        ISSN={0022-040X},
     journal={J. Differential Geom.},
      volume={78},
      number={3},
       pages={369\ndash 428},
         url={http://ezproxy-prd.bodleian.ox.ac.uk:3797/euclid.jdg/1207834550},
      review={\MR{2396248}},
}

\bib{GarciaFernandez2016}{incollection}{
      author={Garc\'ia-Fern\'andez, Mario},
       title={Lectures on the {S}trominger system},
        date={2016},
   booktitle={Travaux math\'{e}matiques. {V}ol. {XXIV}},
      series={Trav. Math.},
      volume={24},
   publisher={Fac. Sci. Technol. Commun. Univ. Luxemb., Luxembourg},
       pages={7\ndash 61},
      review={\MR{3643933}},
}

\bib{GarciaFernandez2020b}{article}{
      author={Garcia-Fernandez, Mario},
       title={T-dual solutions of the {H}ull–{S}trominger system on
  non-{K}ähler threefolds},
        date={2020},
     journal={J. reine angew. Math.},
      volume={2020},
      number={766},
       pages={137\ndash 150},
         url={https://doi.org/10.1515/crelle-2019-0013},
}

\bib{GarciaFernandez2023}{article}{
      author={Garcia-Fernandez, Mario},
      author={Jordan, Joshua},
      author={Streets, Jeffrey},
       title={Non-{K{\"a}hler} {Calabi}-{Yau} geometry and pluriclosed flow},
    language={English},
        date={2023},
        ISSN={0021-7824},
     journal={J. Math. Pures Appl. (9)},
      volume={177},
       pages={329\ndash 367},
}

\bib{GarciaFernandez2023b}{article}{
      author={Garcia-Fernandez, Mario},
      author={Molina, Raul~Gonzalez},
       title={Futaki invariants and {Y}au's conjecture on the
  {H}ull-{S}trominger system},
        date={2023},
      eprint={2303.05274},
         url={https://arxiv.org/abs/2303.05274},
}

\bib{GarciaFernandez2024}{article}{
      author={Garcia-Fernandez, Mario},
      author={Molina, Raul~Gonzalez},
      author={Streets, Jeffrey},
       title={Pluriclosed flow and the {H}ull--{S}trominger system},
        date={2024},
      eprint={2408.11674},
         url={https://arxiv.org/abs/2408.11674},
}

\bib{GarciaFernandez2022}{article}{
      author={Garcia-Fernandez, Mario},
      author={Rubio, Roberto},
      author={Shahbazi, Carlos},
      author={Tipler, Carl},
       title={Canonical metrics on holomorphic {C}ourant algebroids},
        date={2022},
        ISSN={0024-6115},
     journal={Proc. Lond. Math. Soc. (3)},
      volume={125},
      number={3},
       pages={700\ndash 758},
         url={https://doi.org/10.1112/plms.12468},
      review={\MR{4480887}},
}

\bib{GarciaFernandez2017}{article}{
      author={Garc{\'i}a-Fern{\'a}ndez, M.},
      author={Rubio, R.},
      author={Tipler, C.},
       title={Infinitesimal moduli for the {S}trominger system and {K}illing
  spinors in generalized geometry},
        date={2017},
        ISSN={0025-5831},
     journal={Math. Ann.},
      volume={369},
      number={1-2},
       pages={539\ndash 595},
         url={https://doi.org/10.1007/s00208-016-1463-5},
      review={\MR{3694654}},
}

\bib{Hull1986}{article}{
      author={Hull, C.~M.},
       title={Compactifications of the heterotic superstring},
        date={1986},
        ISSN={0370-2693},
     journal={Phys. Lett. B},
      volume={178},
      number={4},
       pages={357\ndash 364},
         url={https://doi.org/10.1016/0370-2693(86)91393-6},
      review={\MR{862401}},
}

\bib{Ivanov2010}{article}{
      author={Ivanov, Stefan},
       title={Heterotic supersymmetry, anomaly cancellation and equations of
  motion},
        date={2010},
        ISSN={0370-2693},
     journal={Phys. Lett. B},
      volume={685},
      number={2},
       pages={190 \ndash  196},
  url={http://www.sciencedirect.com/science/article/pii/S0370269310001085},
}

\bib{LopesCardoso2002}{article}{
      author={Lopes~Cardoso, Gabriel},
      author={Curio, G.},
      author={Dall'Agata, G.},
      author={Lust, D.},
      author={Manousselis, P.},
      author={Zoupanos, G.},
       title={{NonKahler string backgrounds and their five torsion classes}},
        date={2003},
     journal={Nucl. Phys. B},
      volume={652},
       pages={5\ndash 34},
      eprint={hep-th/0211118},
}

\bib{McOrist:2024zdz}{article}{
      author={McOrist, Jock},
      author={Picard, Sebastien},
      author={Svanes, Eirik~Eik},
       title={{A heterotic Hermitian--Yang--Mills equivalence}},
        date={2024},
      eprint={2409.04350},
}

\bib{Martelli2011}{article}{
      author={Martelli, Dario},
      author={Sparks, James},
       title={Non-{K}\"{a}hler heterotic rotations},
        date={2011},
        ISSN={1095-0761},
     journal={Adv. Theor. Math. Phys.},
      volume={15},
      number={1},
       pages={131\ndash 174},
         url={http://projecteuclid.org/euclid.atmp/1335278892},
      review={\MR{2888009}},
}

\bib{McOrist2022}{article}{
      author={McOrist, Jock},
      author={Svanes, Eirik~Eik},
       title={Heterotic quantum cohomology},
    language={English},
        date={2022},
        ISSN={1126-6708},
     journal={J. High Energy Phys.},
      volume={2022},
      number={11},
       pages={39},
        note={Id/No 96},
}

\bib{Phong:2016ggz}{article}{
      author={Phong, Duong~H.},
      author={Picard, Sebastien},
      author={Zhang, Xiangwen},
       title={{Anomaly flows}},
        date={2018},
     journal={Commun. Anal. Geom.},
      volume={26},
      number={4},
       pages={955\ndash 1008},
}

\bib{Strominger:1986uh}{article}{
      author={Strominger, Andrew},
       title={{Superstrings with Torsion}},
        date={1986},
     journal={Nucl. Phys. B},
      volume={274},
       pages={253},
}

\bib{Tellez-Dominguez:2023wwr}{article}{
      author={Tellez-Dominguez, Roberto},
       title={{Chern correspondence for higher principal bundles}},
        date={2023},
      eprint={2310.12738},
}

\bib{Wells1980}{book}{
      author={Wells, R.~O.},
       title={Differential analysis on complex manifolds. 2nd ed},
    language={English},
      series={Grad. Texts Math.},
   publisher={Springer, Cham},
        date={1980},
      volume={65},
}

\end{biblist}
\end{bibdiv}

\end{document}